\newtheorem{theorem}{Theorem}[section]
\newtheorem{lemma}[theorem]{Lemma}
\newtheorem{proposition}[theorem]{Proposition}
\newtheorem{corollary}[theorem]{Corollary}
\theoremstyle{definition}
\newtheorem{definition}{Definition}[section]
\newtheorem{remark}[theorem]{Remark}
\newtheorem{claim}[theorem]{Claim}
\newtheorem{propositiona}{Proposition X}
\theoremstyle{definition}
\newtheorem{remarka}{Remark X}
\numberwithin{equation}{section}
\newcommand{\eps}{\varepsilon}
\newcommand{\la}{\lambda}
\newcommand{\RR}{\mathbb{R}}
\newcommand{\EE}{\mathbb{E}}
\newcommand{\CC}{\mathbb{C}}
\newcommand{\ZZ}{\mathbb{Z}}
\newcommand{\q}{\mathfrak {h}}
\newcommand{\floor}[1]{\lfloor#1\rfloor}
\def\topp#1{^{(#1)}}
\def\topp#1{^{\left(#1\right)}}
\def\ind{\mathbf{1}}
\newcommand{\wt}[1]{\widetilde{#1}}
    \def\re{\textnormal {Re}}
     \def\d{{\textnormal d}}
    \def\i{{\textnormal i}}
    \def\vv#1{\mathbf{#1}}
\renewcommand*\env@matrix[1][\arraystretch]{%
  \edef\arraystretch{#1}%
  \hskip -\arraycolsep
  \let\@ifnextchar\new@ifnextchar
  \array{*\c@MaxMatrixCols c}}
\newcommand{\comment}[1]{\ovalbox{\footnotesize \color{magenta}#1}}
\newcommand{\hide}[1]{\comment{\tiny (hidden)} }
\newcounter{oldeq}
\newcounter{usesofarxiv}
 \newcommand{\arxiv}[1]{
\setcounter{oldeq}{\value{equation}}
 \addtocounter{usesofarxiv}{1}
 \setcounter{equation}{0}
\def\theoldeq{\theequation}
\def\theequation{x-\arabic{usesofarxiv}.\arabic{equation}}
\def\theequation{\arabic{section}.\arabic{usesofarxiv}.\arabic{equation}}
\def\theequation{\thesection.\arabic{usesofarxiv}.\arabic{equation}}
% \begin{ana}
  %\footnotesize
  \colorlet{shadecolor}{gray!10}
{%\footnotesize
\begin{shaded}
\footnotesize
#1  \normalsize
\end{shaded}
   \setcounter{equation}{\value{oldeq}}
\numberwithin{equation}{section}
}}
\newcommand{\A}{\mathsf{a}}
 \newcommand{\UU}{\mathfrak{u}}
 \newcommand{\Cab}{\mathsf{c}}
\newcommand{\Cb}{\mathsf{c}_2}
\newcommand{\Ca}{\mathsf{c}_1}
\renewcommand{\arxiv}[1]{}
\title{Geometric last passage percolation on a strip and free Askey--Wilson functionals}%
\title{Free Askey--Wilson functionals and geometric last passage percolation on a strip}%
\author{W{\l}odek Bryc}%
\address
{
W{\l}odzimierz Bryc\\
Department of Mathematical Sciences\\
University of Cincinnati\\
2815 Commons Way\\
Cincinnati, OH, 45221-0025, USA.
}
\email{wlodek.bryc@gmail.com}%
\author{Kamil Szpojankowski}%
\address{Kamil Szpojankowski, Faculty of Mathematics and Information Science,
Warsaw University of Technology, pl. Politechniki 1, 00-661
Warszawa, Poland}
\email{kamil.szpojankowski@pw.edu.pl}%
\author{Jacek  Weso{\l}owski}
\address{Jacek Weso{\l}owski, Faculty of Mathematics and Information Science,
Warsaw University of Technology, pl. Politechniki 1, 00-661
Warszawa, Poland; Statistics Poland, Al. Niepodleglosci 208, 00-925 Warsaw}
\email{jacek.wesolowski@pw.edu.pl}
\begin{document}

\begin{abstract}
Barraquand, Corwin, and Yang \cite{barraquand2024stationary} established that geometric last passage percolation (LPP) on a strip of $\ZZ^2$ has a unique stationary measure. Building on this, Barraquand \cite{Barraquand-2024-integral} derived explicit contour integral formulas for the model’s multipoint probability generating function. In this paper, we introduce {\em free Askey--Wilson functionals}  and use them to   extend these generating function formulas. Our framework yields explicit expressions valid over a broader range of boundary parameters than previously accessible. This generalization allows us to determine the full phase diagram that characterizes how the large-scale asymptotics of the stationary measure depend on the boundary conditions. In addition, we prove a Poisson approximation for the stationary measure when the %boundary
parameters vary with the strip width.
\end{abstract}
\maketitle
\arxiv{This is an expanded version of the paper}
\section{Introduction}\label{Sec:Intro}

Stationary measures of out-of-equilibrium systems in finite domains typically exhibit a nontrivial dependence on boundary conditions. A classical example is the asymmetric simple exclusion process (ASEP) on $N$ sites, coupled to boundary reservoirs (open ASEP). In this work, we demonstrate that analogous phenomena arise in the stationary measure of the directed geometric last passage percolation (LPP) model on a strip, recently introduced in \cite{barraquand2024stationary}. A technical reason underlying this similarity in phase diagrams is that, somewhat unexpectedly, the two models are linked through integral representations of their generating functions, first discovered in the fan region of ASEP in \cite{Bryc-Wesolowski-2015-asep}, and more recently for geometric LPP in \cite{Barraquand-2024-integral}. The objective of the present work is to deepen the theoretical understanding of such representations and to extend their applicability to the full range of boundary parameters.
\subsection{Geometric last passage percolation on a strip}

For $N=1,2,\dots$,
we consider a strip
$$\mathbb{S}_N:=\{(j,n): n\in \ZZ_{\ge 0}, j\in\{n, n+1, \dots, n+N\}\}\subset \ZZ_{\ge 0}^2$$
of width $N+1$  of the $\mathbb{Z}^2$ lattice  with vertices $(j,n)$. Vertices $(n,n)$ for $n\geq 0$ are called the left boundary vertices; vertices $(n+N,n)$ for $n\geq 0$  are called the right boundary vertices and the remaining vertices $(j,n)$ for $n\geq 0$ and $n<j<n+N$ are called the bulk vertices (at the $n$-th   level).

We   write $X\sim Geo(p)$ if a random variable $X$ has the geometric distribution  $P(X=n)=p^n(1-p)$, $n=0,1,\dots$  with parameter  $0\leq p<1$.
\begin{definition}[geometric LPP]
Let $\A\in(0,1)$ be the bulk parameter  and let $\Ca ,\Cb \geq 0$
be boundary parameters  such that $\A \Ca ,\A \Cb <1$.
Let $\left(\omega_{j,n}\right)_{ 1\leq n\leq j\leq n+N}$ be a sequence of independent geometric random variables indexed by the vertices of the strip above level  $0$. We assume that at the boundaries of the strip, $\omega_{n,n}\sim Geo(\A \Ca )$,
 $\omega_{n,n+N}\sim Geo(\A \Cb )$  and that in the bulk  $n<j<n+N$ we have  $\omega_{j,n}\sim Geo(\A^2)$, $n=1,2,\dots$.

For a given initial condition $G(i,0)\ge 0$,  $ 0\leq i \leq N$ with $G(0,0)=0$, the geometric LPP is  the sequence of random variables $G(j,n)$ indexed by the vertices $(j,n)$ of the strip,  which solve the following recursion: for $j\geq 0$, $n\geq 1$,
\begin{equation}
  \label{LPP-Geo+}
  G(j,n)=\omega_{j,n}+\max\{G(j-1,n), G(j,n-1)\}, \quad n\leq j\leq n+N,
\end{equation}
with the convention that  $G(n-1,n)\equiv 0$ and $G(n+N,n-1)\equiv 0$ for the vertices in $\ZZ^2$ that are outside of the strip $\mathbb{S}_N$.
 \end{definition}
In particular, $G(j,1)=\max_{1\leq k\leq j} \{G(k,0)+ \sum_{i=k}^j\omega_{k,1}\}$, $j=1,\dots,N+1$.
We are interested in the laws of the vectors $\vv G_n:=(G(j+n,n)-G(n,n))_{j=0,\dots,N}$  along the horizontal paths at level $n$.
\begin{definition}[stationary measure]
 We say that the law $\nu$ on $\ZZ_{\geq 0}^N$ is a stationary measure for the LPP if
 under the initial condition  $\vv G_0$ that has law $\nu$ and is independent of $\left(\omega_{j,n}\right)_{ 1\leq n\leq j\leq n+N}$,   the law of vector $\vv G_n$ is given by the same law $\nu$ for any $n\geq 1$.
\end{definition}

Barraquand, Corwin and Yang \cite[Theorem 1.3]{barraquand2024stationary} established a  remarkable result that if $\Ca ,\Cb \in(0,1/\A)$,  then the geometric LPP has a   unique stationary measure, which is given as a marginal of the following  two-layer ensemble.
For $\vv m=(m_1,\dots m_N)$, $\vv n=(n_1,\dots,n_N)\in \ZZ_{\geq 0}^N$  consider two sequences of   partial sums   $\vv L_1,\vv L_2$ given   by $\vv L_1(j)=\sum_{k=1}^j m_k$, $\vv L_2(j)=\sum_{k=1}^j  n_k$ for $j=1,\dots,N$,  with $\vv L_1(0)= \vv L_2(0)=0$.  With $\A\in(0,1)$, $\Ca ,\Cb \in(0,1/\A)$.
we assign the probability measure  to   the pair of sequences $(\vv L_1,\vv L_2)$ using the weight function
\begin{equation}
  \label{QQQ}
\mathbb{Q}\topp{\A,\Ca ,\Cb }_N( \vv L_1, \vv L_2)=\A^{\vv L_1(N)} \A^{\vv L_2(N)}(\Ca \Cb )^{\max_{1\leq j\leq N}(\vv L_2(j)-\vv L_1(j-1))}\Cb ^{\vv L_1(N)-\vv  L_2(N)}.
\end{equation}
which according to \cite[Proposition 2.18]{barraquand2024stationary} (see also Lemma \ref{Lem:conv}) can be normalized by the finite constant
\begin{equation}
  \label{ZN}
  \mathcal{Z}\topp{\A,\Ca ,\Cb }(N)=\sum_{\vv L_1,\vv L_2}
\mathbb{Q}\topp{\A,\Ca ,\Cb }_N( \vv L_1, \vv L_2).\end{equation}
\begin{theorem}[{\cite[Theorem 1.6]{barraquand2024stationary}}]
If $\A\in(0,1)$, $\A \Ca , \A \Cb \in(0,1)$ then the marginal law  $\nu$ of $\vv L_1$ under the normalized measure
\begin{equation}\label{P-BCY}
   \mathbb{P}\topp{\A,\Ca ,\Cb }_{\rm Geo}(\vv L_1,\vv L_2)=\frac{1}{\mathcal{Z}\topp{\A,\Ca ,\Cb }(N)} \mathbb{Q}\topp{\A,\Ca ,\Cb }_N( \vv L_1, \vv L_2)
\end{equation} is the unique stationary measure of the geometric LPP.
\end{theorem}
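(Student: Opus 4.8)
The plan is to show that $\nu$ is invariant for the height-increment Markov chain and then to obtain uniqueness by a soft argument. By the recursion \eqref{LPP-Geo+} and the independence of the weights $(\omega_{j,n})$, the process $\vv G_n=(G(n+j,n)-G(n,n))_{j=0,\dots,N}$ is a time-homogeneous Markov chain on a countable state space (the vectors $\vv L_1$, equivalently $\ZZ_{\ge 0}^N$ in increment coordinates $\vv m$); write $T$ for its one-step transition kernel, which is an explicit composition of $\max$'s and additions of geometric weights with left boundary weight $Geo(\A\Ca)$, right boundary weight $Geo(\A\Cb)$, and bulk weight $Geo(\A^2)$. Before anything else I would record two soft facts about $T$: it is irreducible and aperiodic. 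Both follow because every $\omega_{j,n}$ charges $0$ and charges every positive integer, so from any state one reaches in one step both ``flushed'' configurations and configurations with arbitrarily large entries, from which communication of all states and an aperiodic loop follow; the state space is not compact, so this is used only qualitatively.

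The core is the identity $\nu T=\nu$, and the device for it is the second layer. One lifts the dynamics on $\vv L_1$ to a Markov dynamics on the pair $(\vv L_1,\vv L_2)$ so that \textbf{(i)} its $\vv L_1$-marginal is exactly $T$, and \textbf{(ii)} the two-layer law $\mathbb{P}\topp{\A,\Ca,\Cb}_{\rm Geo}$ from \eqref{P-BCY} is invariant for it; projecting onto the first coordinate then gives $\nu T=\nu$, which is all that is needed for existence. The joint update is a geometric (RSK-type) two-layer rule: level $n-1$ of the pair, together with fresh geometric weights on the $n$-th row of the strip, is transported to level $n$ by an explicit sequential left-to-right cascade in which the second layer plays the role of an overflow channel; property (i) is designed into it. For (ii) one exploits that $\mathbb{Q}\topp{\A,\Ca,\Cb}_N$ is \emph{almost} a product of vertex and edge Boltzmann weights — the only nonlocal term is the single global factor $(\Ca\Cb)^{\max_{1\le j\le N}(\vv L_2(j)-\vv L_1(j-1))}$ — so invariance reduces to a local commutation (star-triangle / Yang--Baxter-type) relation between the elementary update gate and the elementary weight, plus a matching relation at the two boundary gates, propagated column by column; equivalently one checks a single-site detailed-balance identity and glues.

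The step I expect to be the main obstacle is exactly the verification of (ii). First, one must identify the correct second-layer update so that applying it and then summing over the new $\vv L_2$ reproduces both the geometric marginals $\A^{\vv L_1(N)},\A^{\vv L_2(N)}$ \emph{and} the delicate global $\max$ factor of \eqref{QQQ}; this is where the overflow interpretation of the second layer is forced. Second, the local commutation relation, innocuous-looking, is a genuine computation with geometric weights, and it is the boundary gates at $j=1$ (parameter $\Ca$) and $j=N$ (parameter $\Cb$) that require care: getting them right is what produces the factor $\Cb^{\vv L_1(N)-\vv L_2(N)}$, and the analysis is consistent precisely in the admissible range $\A\Ca,\A\Cb\in(0,1)$, where the boundary weights are genuine geometric laws and, by Lemma \ref{Lem:conv}, $\calZ\topp{\A,\Ca,\Cb}(N)<\infty$.

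Uniqueness is then soft. The measure $\nu$ is a bona fide \emph{probability} measure because $\calZ\topp{\A,\Ca,\Cb}(N)<\infty$, and we have just exhibited it as a stationary distribution of an irreducible Markov chain on a countable state space; hence the chain is positive recurrent and $\nu$ is its \emph{unique} stationary distribution. Aperiodicity furthermore yields $\vv G_n\Rightarrow\nu$ from every initial condition, which independently certifies that no other stationary law can exist.
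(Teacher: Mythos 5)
First, a point of context: the present paper does not prove this statement at all — it is imported verbatim as \cite[Theorem 1.6]{barraquand2024stationary}, so there is no in-paper argument to compare yours against. Your proposal must therefore stand on its own as a proof of the cited result, and as it stands it does not.

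The genuine gap is that the entire content of the theorem — the invariance $\nu T=\nu$ — is described but never established. You correctly identify the architecture (lift the one-layer dynamics to a two-layer Markov update, check that its first-marginal is $T$ and that $\mathbb{P}\topp{\A,\Ca,\Cb}_{\rm Geo}$ is invariant, then project), but you never write down the two-layer update rule, never verify property (i), and explicitly defer property (ii) as ``the main obstacle'' and ``a genuine computation with geometric weights.'' That computation is the theorem. The difficulty is not cosmetic: the weight \eqref{QQQ} is not a product measure because of the single global factor $(\Ca\Cb)^{\max_{1\le j\le N}(\vv L_2(j)-\vv L_1(j-1))}$, so ``invariance reduces to a local commutation relation, propagated column by column'' is precisely the assertion that needs proof, and it is far from automatic that any local gate reproduces a running-maximum functional after summation over the second layer. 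Gesturing at a star-triangle/Yang--Baxter identity without exhibiting the gate, the vertex weights, and the boundary relations that generate $\Cb^{\vv L_1(N)-\vv L_2(N)}$ leaves the existence half of the theorem unproven. A secondary, smaller issue: your irreducibility claim is asserted rather than argued. Since $G(j,n)\ge G(j,n-1)$ pointwise, reaching a prescribed increment configuration (e.g.\ the all-zero state) from an arbitrary one requires a multi-step construction, not just the observation that each $\omega_{j,n}$ charges $0$ and every positive integer; this is fixable but currently missing. Granting irreducibility, your uniqueness argument (a stationary probability for an irreducible countable-state chain is unique and forces positive recurrence) is sound and is indeed the standard soft endgame — but it only applies once a stationary probability has actually been exhibited, which is the part that remains to be done.
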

 More recently, Guillaume Barraquand \cite{Barraquand-2024-integral} derived explicit integral formulas for the $d$-point probability generating function. In the simplest case, $d=1$, his formula is equivalent to the following:

 \begin{equation}\label{GB-k=1}
    G_N(t):= \sum_{\vv L_1,\vv L_2}t^{2 \vv L_1(N)}\mathbb{Q}\topp{\A,\Ca ,\Cb }_N( \vv L_1, \vv L_2)=\frac{1-\Ca\Cb}{2\pi}\int_{-2}^2 \, \frac{\sqrt{4-y^2}}{\q_{\A t}^N(y)\,\q_{\Ca/t}(y)\,\q_{\Cb t}(y)}\,\d y,
 \end{equation}
 where
 \begin{equation}
    \label{Jck-q}
    \q_\alpha(y)=1+\alpha^2 - \alpha y.
\end{equation}
provided that $\Ca,\Cb<1$, $\A t<1$, $\Ca<t<1/\Cb$.  The  series on the left-hand side of
 \eqref{GB-k=1} converges when $\A \Ca, \A t, \A \Cb t^2<1$, so according to \cite{Barraquand-2024-integral}   formula \eqref{GB-k=1} holds  in this generality after the right-hand side is replaced by its analytic continuation in parameters $\Ca,\Cb$.
Our goal is to make this analytic continuation more explicit, so that we can use it to prove limit theorems by analyzing the Laplace transform $$\EE[e^{2 s \vv L_1(N)/N}]=G_N(e^{s/N})/\mathcal{Z}\topp{\A,\Ca ,\Cb }=G_N(e^{s/N})/G_N(1).$$
It turns out that formula \eqref{GB-k=1} yields a concise expression that remains valid
   over a larger range of parameters than allowed in \eqref{GB-k=1}.
 \begin{proposition}
   \label{Prop1}
 If $0<\A, \A t, \A \Ca ,\A \Cb t^2 <1$, then
   \begin{equation}
   \label{GenG}
   \sum_{N=1}^\infty z^{N-1} G_N(t)=\frac{B}{\A (t-\Ca  B)(1-\Cb t B)},
 \end{equation}
 where $B=B(z,\A t)<1$ is the smaller root of the quadratic equation
 $$
B+\frac1B=\frac{1-z}{\A t}+\A t
 $$
 with $0\leq z<(1-\A t)^2$ which is small enough to ensure that $B\Ca <t, B\Cb  t <1$.
 \end{proposition}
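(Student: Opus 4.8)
The plan is to start from Barraquand's contour-integral formula \eqref{GB-k=1}, valid in the restricted range $\Ca,\Cb<1$, $\A t<1$, $\Ca<t<1/\Cb$, form the generating function $\sum_{N\ge 1} z^{N-1} G_N(t)$, and evaluate the resulting integral in closed form; then extend the identity to the full range $0<\A,\A t,\A\Ca,\A\Cb t^2<1$ by analytic continuation. Concretely, I would first interchange sum and integral (justified for $|z|$ small, using that $\q_{\A t}(y)\ge(1-\A t)^2>0$ on $[-2,2]$, so $\sum_N z^{N-1}\q_{\A t}^{-N}(y)$ converges uniformly once $|z|<(1-\A t)^2\le\min_y\q_{\A t}(y)$), obtaining
\begin{equation*}
\sum_{N=1}^\infty z^{N-1}G_N(t)=\frac{1-\Ca\Cb}{2\pi}\int_{-2}^2\frac{\sqrt{4-y^2}}{\bigl(\q_{\A t}(y)-z\bigr)\,\q_{\Ca/t}(y)\,\q_{\Cb t}(y)}\,\d y .
\end{equation*}
Now $\q_{\A t}(y)-z=1+(\A t)^2-z-\A t\, y=\A t\bigl(\tfrac{1+(\A t)^2-z}{\A t}-y\bigr)=\A t\,\q_B(y)/B$, where $B=B(z,\A t)$ is exactly the smaller root of $B+1/B=\bigl(1+(\A t)^2-z\bigr)/(\A t)$, i.e. the root from the statement (and $B<1$, $B\to 0$ as $z\uparrow(1-\A t)^2$ forces $B+1/B\to\infty$; for $z$ small enough $B\Ca<t$, $B\Cb t<1$). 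So the integral becomes a product of three factors $\q_\alpha^{-1}$ with $\alpha\in\{B,\Ca/t,\Cb t\}$ against the semicircle weight.

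The computational heart is therefore a single closed-form evaluation: for parameters $\alpha_1,\alpha_2,\alpha_3$ with $|\alpha_i|<1$,
\begin{equation*}
I(\alpha_1,\alpha_2,\alpha_3):=\frac{1}{2\pi}\int_{-2}^2\frac{\sqrt{4-y^2}}{\q_{\alpha_1}(y)\,\q_{\alpha_2}(y)\,\q_{\alpha_3}(y)}\,\d y .
\end{equation*}
I would compute this by the standard substitution $y=2\cos\theta$ (so $\sqrt{4-y^2}=2\sin\theta$ and $\q_\alpha(2\cos\theta)=(1-\alpha e^{\i\theta})(1-\alpha e^{-\i\theta})$), turning $I$ into a contour integral over the unit circle $|w|=1$ of a rational function whose poles inside the disk are at $w=\alpha_1,\alpha_2,\alpha_3$ (and a zero of order two is cancelled at $w=0$ by the $\sin\theta$ factor). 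A residue computation gives a symmetric-rational answer; partial fractions in, say, $\alpha_1$ show
\begin{equation*}
I(\alpha_1,\alpha_2,\alpha_3)=\frac{1}{(1-\alpha_1\alpha_2)(1-\alpha_1\alpha_3)(1-\alpha_2\alpha_3)} .
\end{equation*}
(This is exactly the $k=1$ specialization of the free Askey--Wilson functional that the paper is built around, so I expect it to appear as a lemma just before this proposition; if so I would simply cite it.) Substituting $\alpha_1=B$, $\alpha_2=\Ca/t$, $\alpha_3=\Cb t$ gives
\begin{equation*}
\sum_{N=1}^\infty z^{N-1}G_N(t)=\frac{1-\Ca\Cb}{(1-B\Ca/t)(1-B\Cb t)(1-\Ca\Cb)}=\frac{1}{(1-B\Ca/t)(1-B\Cb t)} .
\end{equation*}
Finally I would reconcile this with the claimed form $\tfrac{B}{\A(t-\Ca B)(1-\Cb t B)}$: multiply numerator and denominator of $(1-B\Ca/t)^{-1}(1-B\Cb t)^{-1}$ by $t$, getting $\tfrac{t}{(t-\Ca B)(1-\Cb t B)}$, which matches provided $t=B/\A\cdot(\text{something})$—here one uses the defining quadratic for $B$ to replace a factor, or more cleanly one checks that the two rational functions of $z$ agree by re-deriving the denominator directly; I would present whichever algebraic route is shortest, noting $\A t\,\q_B(y)/B=\q_{\A t}(y)-z$ already records the needed relation between $B$, $\A t$ and $z$.

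The main obstacle is not any single step but the \emph{analytic continuation / range-extension} argument: \eqref{GB-k=1} as quoted needs $\Ca<t<1/\Cb$ and $\Ca,\Cb<1$, whereas Proposition~\ref{Prop1} only asks for $\A\Ca<1$ and $\A\Cb t^2<1$. I would handle this exactly as the paragraph after \eqref{GB-k=1} indicates: both sides of \eqref{GenG} are analytic in $(\Ca,\Cb,t,z)$ on the (connected) domain cut out by $0<\A,\A t,\A\Ca,\A\Cb t^2<1$ and $0\le z<(1-\A t)^2$ (the left side because the double series defining $G_N$ converges there and the geometric series in $z$ converges, the right side because $B<1$ and the chosen smallness of $z$ keeps $B\Ca<t$, $B\Cb t<1$ so no pole is hit), they agree on the smaller open set, hence agree throughout by the identity theorem. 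One must be slightly careful that the "smaller root" branch of $B$ is the one that stays analytic and tends to $0$ as $z\to 0$—this is immediate since $B+1/B$ is then large—so the continuation is along a single branch. Writing that continuity/connectedness argument carefully, and pinning down the precise inequalities on $z$ that keep $B\Ca<t$ and $B\Cb t<1$, is where I would spend the most care.
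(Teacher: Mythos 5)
Your proposal is correct in outline but takes a genuinely different route from the paper. The paper does not start from Barraquand's formula \eqref{GB-k=1} at all: it first invokes Theorem \ref{Thm1-} (proved independently via the recursions \eqref{Gc2Ga} and \eqref{Ic2Ia}) to write $G_N(t)=\mathbb{L}\topp{\Ca/t,\Cb t}\left[\q_{\A t}^{-N}\right]$ on the \emph{full} parameter range $0<\A t,\A\Ca,\A\Cb t^2<1$, then sums the geometric series inside the functional, $\sum_N z^{N-1}\q_{\A t}^{-N}=\tfrac{B}{\A t}\q_B^{-1}$, and finishes with the one-line evaluation \eqref{KS-T4.1abc++}. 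Because the functional identity already holds for all admissible $(\Ca,\Cb)$, no analytic continuation is needed --- which is exactly the step your route must supply and which you correctly flag as the main obstacle. Your continuation argument (identity theorem in $(\Ca,\Cb)$ at fixed $\A,t,z$, using real-analyticity of each $G_N$ from Lemma \ref{Lem:conv} and connectedness of the parameter region) is workable, but making the left-hand side real-analytic on the whole region requires locally uniform control of the growth of $G_N$ in $N$; that growth changes regime as $\Ca/t$ or $\Cb t$ crosses $1$ (compare Proposition \ref{P:ASE}), so this is more than a formality and is precisely what the hypothesis ``$z$ small enough that $B\Ca<t$, $B\Cb t<1$'' must be matched against. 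Your computational core is fine: the evaluation $\frac{1}{2\pi}\int_{-2}^2\frac{\sqrt{4-y^2}}{\q_{\alpha_1}(y)\q_{\alpha_2}(y)\q_{\alpha_3}(y)}\,\d y=\prod_{i<j}(1-\alpha_i\alpha_j)^{-1}$ is \eqref{J2c0a}, equivalently \eqref{KS-T4.1abc++}. One bookkeeping slip: when you replace $\q_{\A t}(y)-z$ by $\A t\,\q_B(y)/B$ you must carry the constant $\tfrac{B}{\A t}$ out of the integral; doing so turns your $\frac{1}{(1-B\Ca/t)(1-B\Cb t)}$ into $\frac{B}{\A t}\cdot\frac{1}{(1-B\Ca/t)(1-B\Cb t)}=\frac{B}{\A(t-\Ca B)(1-\Cb t B)}$, and the ``reconciliation'' you defer to the end becomes unnecessary.
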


\arxiv{Explicitly, we have
$$
B(z,\A t)=\frac{1+\A^2 t^2-z-\sqrt{\left((1-\A t)^2-z\right) \left((1+\A t)^2-z\right)}}{2 \A t}\leq 1.
$$
Since $\lim_{z\to 0}B(z,\A t)=\A t$ and $\A \Ca ,\A \Cb t^2 <1$, one can find $\delta>0$ such that $B(z,\A t)\Ca ,B(z,\A t)\Cb <1$ for all $0\leq z<\delta$.
 }

In this paper, however, we adopt a different approach, working directly with
\eqref{QQQ}. For parameters satisfying $0<\A \Ca, \A t, \A \Cb t^2<1$,
Theorem \ref{Thm1-} allows us to replace \eqref{GB-k=1}
with
\begin{equation}\label{GenG-L}
    G_N(t) = \mathbb{L}\topp{\Ca/t,\Cb t}[\q_{\A t}^{-N}],
\end{equation}
where
\begin{equation}\label{L-def0}
    \mathbb{L}\topp{a,b}[f]=\frac{1}{2\pi \i}\oint_{|w|=\rho} f(w+1/w)\frac{1-w^2}{(1-a w)(1-b w)} \frac{\d w}{w}, \quad \rho,|a|\rho,|b|\rho<1,
\end{equation}
is the moment functional for the Askey-Wilson polynomials \cite{Askey-Wilson-85} with parameters $(a,b,0,0)$ and $q=0$, extended to an appropriate class of analytic functions.

By analyzing the representation \eqref{GenG-L} for the Laplace transform $\EE\left[e^{2s \vv L_1(N)/N}\right] = G_N(e^{s/N})/G_N(1)$, we derive the phase diagram for the geometric LPP, which closely parallels the phase diagram known for the ASEP \cite{derrida93exact},
as predicted by universality; see Fig.~\ref{Fig-PhD}.
(A similar phase diagram also appears in \cite{Zongrui-2024-sic-vertex} for a different model.)
The large-scale limit of $\vv L_1(N)/N$ depends on the boundary parameters $\Ca$ and $\Cb$ as detailed below.
 \begin{figure}[hbt]
  \begin{tikzpicture}[scale=1.45]
\draw[scale = 1,domain=6.8:8,smooth,variable=\x,dotted,blue] plot ({\x},{1/((\x-7)*1/3+2/3)*3+5});
\draw[scale = 1,domain=8:10,smooth,variable=\x,dotted,red] plot ({\x},{1/((\x-7)*1/3+2/3)*3+5});

 \draw[->] (5,5) to (5,11);
 \draw[-,thick,magenta] (5,5) to (5,10);
  \draw[-,thick,red] (5,5) to (10,5);
 \draw[->] (5.,5) to (11,5);
   \draw[-, dashed] (5,8) to (8,8);
   \draw[-, dashed] (8,8) to (8,5);
   \draw[-,red,thick] (8,8) to (10,10);
   \node [left] at (4.9,8) { $1$};
   \node[below] at (8,4.9) {  $1$};
     \node [below] at (10.7,5) {$ {\Ca } $};
   \node [left] at (5,10.7) {$ {\Cb }$};

  \draw[-] (8,4.9) to (8,5.1);
   \draw[-] (4.9,8) to (5.1,8);

 \node [below] at (5,5) {\scriptsize$(0,0)$};
    \node [above] at (6.5,8.5) {\color{red}H};
    \node[above] at (8,9) {\color{red}H};
         \node [below] at (6.5,8.5){\color{red}$\rho=\tfrac{\A \Cb }{1-\A \Cb }$};
    \node [below] at (9,6) {\color{blue}L}; %
      \node [above] at (9,8) {\color{blue}L};
       \node [below] at (9,6.5)  {\color{blue}$\rho=\tfrac{\A }{\Ca -\A}$};
    \node [below] at (6.5,6.5) {$\rho=\tfrac{\A}{1-\A}$};
 \node [below] at (6.5,6) {I};%

\node [below] at (10,4.8){$\frac{1}{\A}$};
  \draw[-] (10,4.9) to (10,10);
\node [left] at (4.8,10){$\frac{1}{\A}$};
\draw[-] (4.9,10) to (10,10);
\end{tikzpicture}
  \caption{The limit
  $\frac{1}{N}\vv L_1(N)\to \rho=\rho(\A,\Ca ,\Cb )$ and the boundary parameters $\Ca ,\Cb \in(0,1/\A)$.
  Below the hyperbola $\Ca \Cb =1$, the functional $\pi^t$ in \eqref{M-Trans+} is given as an integral with respect to a positive Askey--Wilson measure. For $(\Ca,\Cb)\in I:=(0,1]^2$ the limit does not depend on the boundary parameters.
  The  line between the points (1,1) and $(1/\A,1/\A)$ separates the regions $L$ and $H$ with the low and high average and the limit of  $\frac{1}{N}\vv L_1(N)$ on that line is random, \eqref{U-lim}.  }
  \label{Fig-PhD}
\end{figure}
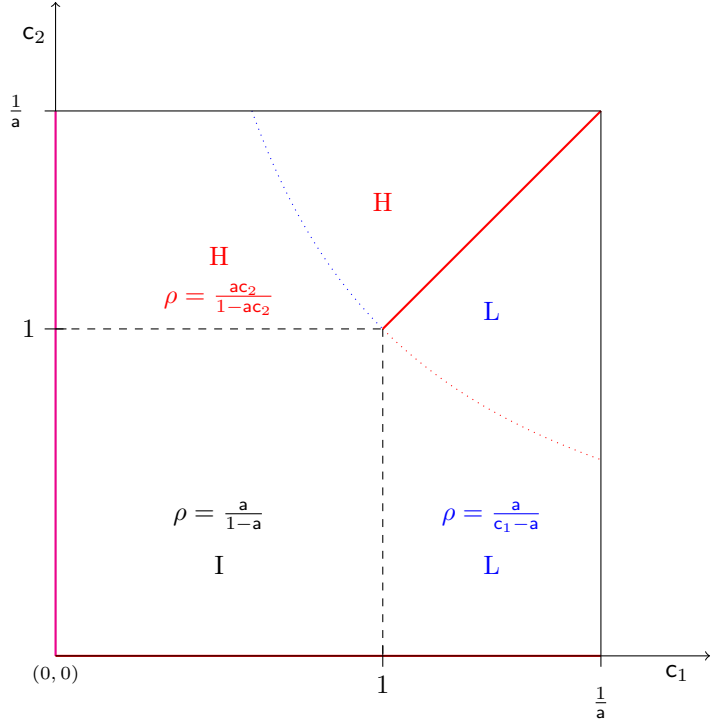

\begin{theorem}\label{Thm-PhD}
Let $\A\in(0,1)$ and $\Ca ,\Cb \in(0,1/\A)$.
\begin{enumerate}
  [(i)]
\item    If %
$\Ca ,\Cb \leq 1$,  then $\vv L_1(N)/N\to \A  /(1-\A)$ in probability.

\item   If $\Ca >1$ %
and $\Cb <\Ca <1/\A$, then $\vv L_1(N)/N\to {\A }/(\Ca -\A)$ in probability.

\item   If $\Cb >1$ %
and $\Ca <\Cb <1/\A$, then $\vv L_1(N)/N\to {\A \Cb }/({1-\A \Cb })$ in probability.
    \item   If  $\Ca =\Cb =\Cab\in (1,1/\A)$,  then  $\vv L_1(N)/N$ converges in distribution to  a random limit that interpolates between the high and low density limits from (ii) and (iii).  We have
    \begin{equation}
      \label{U-lim}  \frac{1}{N}\vv L_1(N)\Rightarrow   \frac{\A}{\Cab-\A} U+\frac{\A \Cab}{1-\A \Cab}(1- U)
    \end{equation}   with  $U$ distributed uniformly on $[0,1]$.
\end{enumerate}
\end{theorem}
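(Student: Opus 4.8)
\medskip\noindent
The whole theorem concerns the $N\to\infty$ behaviour of the Laplace transform $\phi_N(s):=\EE\big[e^{2s\vv L_1(N)/N}\big]=G_N(e^{s/N})/G_N(1)$, and the plan is to read it off from the generating function of Proposition~\ref{Prop1} by singularity analysis. Write $\Phi(z,t):=\sum_{N\ge1}z^{N-1}G_N(t)=\dfrac{B}{\A\,(t-\Ca B)(1-\Cb t B)}$ with $B=B(z,\A t)$; since $G_N(t)=[z^{N-1}]\Phi(z,t)$, its asymptotics are dictated by the singularity of $z\mapsto\Phi(z,t)$ nearest the origin, of which there are only three: the square-root branch point of $z\mapsto B(z,\A t)$ at $z=(1-\A t)^2$ (where $B=1$) and the two simple poles from the denominator, i.e.\ the solutions of $B(z,\A t)=t/\Ca$ and of $B(z,\A t)=1/(\Cb t)$. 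As $B(\,\cdot\,,\A t)$ increases from $\A t$ to $1$ on $[0,(1-\A t)^2]$, the first pole lies in that interval essentially iff $\Ca>t$ and the second iff $\Cb t>1$; using $B+1/B=\tfrac{1-z}{\A t}+\A t$ one finds their positions $z_\Ca(t)=(1-\A\Ca)(1-\A t^2/\Ca)$, $z_\Cb(t)=(1-\A/\Cb)(1-\A\Cb t^2)$, and the elementary identities $(1-\A t)^2-z_\Ca(t)=\tfrac{\A}{\Ca}(t-\Ca)^2$, $(1-\A t)^2-z_\Cb(t)=\tfrac{\A}{\Cb}(\Cb t-1)^2$ and $z_\Ca(t)-z_\Cb(t)=\tfrac{\A(\Ca\Cb-1)}{\Ca\Cb}(\Cb t^2-\Ca)$ decide which is smallest in each of the regions (i)--(iv). (Equivalently one may deform the contour in \eqref{GenG-L}, using $\q_\alpha(w+1/w)=\tfrac{(w-\alpha)(1-\alpha w)}{w}$ from \eqref{Jck-q}, onto a steepest-descent path through the critical point $w=1$ of $1/\q_{\A t}(w+1/w)$ and collect the residues at $w=t/\Ca$ and $w=1/(\Cb t)$; the two descriptions agree.)

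In the open regions the dominant singularity is, for $t$ in a fixed neighbourhood of $1$, simple and isolated: the branch point in (i) with $\Ca,\Cb<1$ (for $t=e^{s/N}$, fixed $s$, $N$ large, neither pole is present), the pole $z_\Ca(t)$ in (ii) (there $z_\Ca(t)<(1-\A t)^2$ since $t\ne\Ca$, and $z_\Ca(t)<z_\Cb(t)$ since $\Cb t^2<\Ca$), and $z_\Cb(t)$ in (iii) symmetrically. Then $G_N(t)=c(t)\,z_0(t)^{-N}(1+o(1))$ in the pole cases — $z_0\in\{z_\Ca,z_\Cb\}$, $c(t)$ a nonzero residue constant, smooth near $t=1$ — and $G_N(t)=\tilde c(t)\,N^{-3/2}(1-\A t)^{-2N}(1+o(1))$ at the branch point. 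Putting $t=e^{s/N}$ and dividing by the value at $t=1$, the subexponential prefactors converge to $1$ and what remains is $\phi_N(s)\to\exp\!\big({-s\,z_0'(1)/z_0(1)}\big)=e^{2s\rho}$ with $\rho=-z_0'(1)/(2z_0(1))$; differentiating the explicit $z_0(t)$ gives $\rho=\A/(1-\A)$, $\A/(\Ca-\A)$, $\A\Cb/(1-\A\Cb)$ respectively, i.e.\ exactly (i)--(iii). Since $\phi_N$ is the Laplace transform of $\vv L_1(N)/N$ and is finite on an interval around $0$ for all large $N$, convergence there to $e^{2s\rho}$ forces $\vv L_1(N)/N\to\rho$ in distribution, hence in probability.

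On the critical line (iv), $\Ca=\Cb=\Cab\in(1,1/\A)$, the two poles coincide at $t=1$, both equal to $z_*:=(1-\A\Cab)(1-\A/\Cab)$, and by the third identity above they separate at distance $z_\Ca(e^{s/N})-z_\Cb(e^{s/N})\sim\tfrac{2\A(\Cab^2-1)}{\Cab}\cdot\tfrac sN$, i.e.\ at order $1/N$, while staying at order $1$ from the branch point. Extracting the coefficient as the exact sum of the two simple residues of $\Phi(z,e^{s/N})/z^N$ and tracking the $N$-dependence, each residue carries a factor $\sim\pm N/(2s)$ (from $1/(1-t^{\pm2})$) times a constant times $z_\Ca(e^{s/N})^{-N}\to z_*^{-N}e^{2\rho_L s}$, resp.\ $z_\Cb(e^{s/N})^{-N}\to z_*^{-N}e^{2\rho_H s}$, where $\rho_L=\A/(\Cab-\A)$, $\rho_H=\A\Cab/(1-\A\Cab)$ are the two limits of the previous step applied to the respective poles. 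Hence $G_N(e^{s/N})=\mathrm{const}\cdot\tfrac N{2s}\,z_*^{-N}\big(e^{2\rho_L s}-e^{2\rho_H s}\big)(1+o(1))$, whereas at $s=0$ the collided pole is a genuine double pole and $G_N(1)=\mathrm{const}'\cdot N\,z_*^{-N}(1+o(1))$; a short computation (using $B'(z_*)=1/(\A(\Cab^2-1))$) shows the constants are compatible with the forced normalization $\phi_N(0)=1$, and one obtains
\[
\phi_N(s)\ \longrightarrow\ \frac{e^{2\rho_L s}-e^{2\rho_H s}}{2s(\rho_L-\rho_H)}\ =\ \int_0^1 e^{2s(\rho_L u+\rho_H(1-u))}\,\d u,
\]
the Laplace transform of $\rho_L U+\rho_H(1-U)$ with $U$ uniform on $[0,1]$. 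As $\rho_L\neq\rho_H$ for $\Cab\neq1$, this is a non-degenerate compactly supported law and $\vv L_1(N)/N$ converges to it in distribution, which is \eqref{U-lim}.

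The main obstacle is not the algebra — the three identities above pin down every constant once the dominant singularity is located — but justifying the coefficient asymptotics when singularities nearly collide. The genuinely delicate point is inside region (i), on the boundary $\Ca=1$ or $\Cb=1$ (for the appropriate sign of $s$): there a simple pole sits at distance $\Theta(1/N^2)$ from the branch point, so the transfer theorem cannot be applied with the singularities separated. One must instead localize at the natural scale $z-(1-\A t)^2=\Theta(1/N)$, on which the nearby pole is at distance $o(1/N)$ from the tip of the Hankel contour and hence perturbs the limiting Hankel integral only negligibly; that limit is governed by $1/\Gamma(1/2)$ and produces the same exponential rate $(1-\A)^{-2N}e^{2\A s/(1-\A)}$ and subexponential prefactor for numerator and denominator, so $\phi_N(s)\to e^{2\A s/(1-\A)}$ persists and (i) holds on the closed square $\Ca,\Cb\le1$. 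The colliding pair on line (iv) is comparatively mild (two simple poles at distance $\Theta(1/N)$, well separated from the branch point) and is handled by the residue-sum argument above; the remaining points — uniformity in $s$ over a fixed interval and finiteness of $\phi_N$ there for all large $N$ — are routine.
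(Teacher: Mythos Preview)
Your singularity-analysis route via Proposition~\ref{Prop1} is correct and takes a genuinely different path from the paper, though the two meet at the same computations. The paper writes $\phi_N(s)$ as a ratio of values of the free Askey--Wilson functional (formula~\eqref{phi0}), expands numerator and denominator via the atom-plus-integral decomposition of Proposition~\ref{P:J-rep} (specifically~\eqref{J2c1a} and~\eqref{J2c0a}), and analyzes each piece directly: Laplace's method on $[-2,2]$ for the continuous part, elementary estimates for the atoms. Your poles $z_{\Ca}(t)$, $z_{\Cb}(t)$ are exactly the atoms' exponential rates $\q_{\A t}(\Ca/t+t/\Ca)$ and $\q_{\A t}(\Cb t+1/(\Cb t))$, and your branch cut is the continuous part, so the competition between singularities in the $z$-plane is the same competition between atoms and integral in the paper. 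Your packaging yields a cleaner, unified explanation of the phase boundaries --- they are precisely where singularities swap dominance, read off from three one-line identities --- whereas the paper recomputes each region separately. The paper's approach in turn avoids the uniformity-in-$t$ argument you need when substituting $t=e^{s/N}$ into an asymptotic derived for fixed $t$, since the $N$-dependence of the parameters $\Ca/t_N,\Cb t_N,\A t_N$ is explicit from the start. Both treatments leave the boundary $\Ca=1$ or $\Cb=1$ of region~(i) only sketched: the paper omits it outright, and your Hankel localization is the right idea but the vanishing-residue observation (the residue of $\Phi$ at $z_{\Ca}$ is proportional to $t^2-\Ca^2=O(1/N)$) should be made explicit before the nearby pole is dismissed. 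For~(iv), your colliding-pole computation matches the paper's two-atom calculation in~\eqref{L2c1a} line by line.
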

The proof of Theorem \ref{Thm-PhD} is in Section \ref{Sec:ProofPhD}.

Formula \eqref{GenG-L} admits a natural generalization, providing a representation for the generating function
\begin{equation}\label{GG}
   \mathcal{G}\topp{\A,\Ca ,\Cb }_N(\vv t)
   = \sum_{\vv L_1, \vv L_2} \prod_{j=1}^N t_j^{2( \vv L_1(j) - \vv L_1(j-1))} \, \mathbb{Q}\topp{\A,\Ca ,\Cb }_N,
   \quad \vv t=(t_1, \dots, t_N)\in(0,\infty)^N . %
\end{equation}
in terms of a three-parameter family \( \mathbb{L}\topp{a,b,c} \) of functionals acting on analytic functions, introduced in Definition~\ref{Def-J-ext0}.
We focus on two specific types of such functionals, which are parametrized by \( \Ca \), \( \Cb \), \( 0 < s \leq t \), and a complex number \( x \). Since \( \Ca \) and \( \Cb \) remain fixed throughout, we omit their explicit appearance in the notation:
 \begin{equation}\label{M-Trans+}
   \pi^{t }  =\mathbb{L}\topp{\Cb t,\frac{\Ca}t} ,\quad
P_{x}^{s,t }  =\mathbb{L}\topp{\Cb t,\frac{s \UU(x)}t,\frac{s}{t \UU(x)}}
\end{equation}
with $\UU(x)$ satisfying $\UU(x)+1/\UU(x)=x$; for definiteness we will choose $\UU(x)$  given by \eqref{u(z)}.

We will show that for $0<t_1\leq t_2\le \dots\le  t_d$, these functionals can be composed to define  functionals that act on  tensor products of analytic functions. \color{black}
Recall that the tensor product   $f_1\otimes\dots\otimes f_d$ is defined by
$$\left(f_1\otimes\dots\otimes f_d\right)(x_1,\dots,x_d)=\prod_{j=1}^d\,f_j(x_j).$$
Define $\pi^{t_1,\dots,t_d}$  starting with $\pi^{t_1}$ as defined in \eqref{M-Trans+}, and proceeding recursively \color{black} by
\begin{equation}\label{pii}
\pi^{t_1,\dots,t_d }\left[\bigotimes_{j=1}^d f_j\right]:= \pi^{t_1,\dots,t_{d-1} }\left[\left(\bigotimes_{j=1}^{d-2}f_j\right)\otimes \wt f_{d-1} \right],\end{equation}
where
\begin{equation}
    \wt f_{d-1}(x)=f_{d-1}(x)P_{x}^{t_{d-1},t_d } [f_d].\end{equation}
\color{black}

Our main result is the following representation for the generating function \eqref{GG}.
\begin{theorem}%
\label{Thm1-} Fix $0<\A<1$ and  $\Ca ,\Cb >0$ such that $\A \Ca  <1$. Let
$0<t_1\leq t_2 \dots\leq t_N$ be such that $\A t_N<1$ and $\A t_N^2 \Cb <1$.
Then
  \begin{equation}\label{S2I}
  \mathcal{G}\topp{\A,\Ca ,\Cb }_N(\vv t)
  =\pi^{t_1,\dots,t_N}\left[\bigotimes_{j=1}^N\,\frac1{\q_{\A t_j}}\right].
\end{equation}
\end{theorem}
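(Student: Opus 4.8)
The plan is to induct on $N$, the width of the strip, using the recursive definition of $\pi^{t_1,\dots,t_N}$ in \eqref{pii}. The base case $N=1$ is exactly formula \eqref{GenG-L} (or rather its one-variable specialization): there $\mathcal{G}\topp{\A,\Ca,\Cb}_1(t_1)=G_1(t_1)$ and the claim reads $G_1(t_1)=\pi^{t_1}[\q_{\A t_1}^{-1}]=\mathbb{L}\topp{\Cb t_1,\Ca/t_1}[\q_{\A t_1}^{-1}]$, which is \eqref{GenG-L} for $N=1$ and can be verified directly by residue computation from \eqref{L-def0}, summing the geometric series defining $\mathbb{Q}$. To run the inductive step I first need a combinatorial identity: peeling off the last column of the strip, I want to write $\mathbb{Q}\topp{\A,\Ca,\Cb}_N(\vv L_1,\vv L_2)$ as a sum/convolution over the value $m_N=\vv L_1(N)-\vv L_1(N-1)$ of the increment and the auxiliary quantity $\max_{1\le j\le N}(\vv L_2(j)-\vv L_1(j-1))$, expressing $\mathcal{G}\topp{\A,\Ca,\Cb}_N(\vv t)$ as an expression in $\mathcal{G}\topp{\A,\Ca,\Cb}_{N-1}$ with the boundary data on the right edge modified, together with an extra weight factor for the appended column that is exactly the kind of sum the functional $P_x^{t_{N-1},t_N}$ encodes.

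Concretely, the key algebraic lemma I would isolate is a \emph{reproducing / composition property} of the functionals in \eqref{M-Trans+}: that applying $\mathbb{L}\topp{\Cb t_N,\Ca/t_N}$ in the last variable to $\frac{1}{\q_{\A t_N}}$ and to the kernel that glues level $N-1$ to level $N$ produces, as a function of the level-$(N-1)$ variable $x=w+1/w$, precisely $\frac{1}{\q_{\A t_{N-1}}(x)}\cdot P_x^{t_{N-1},t_N}[\q_{\A t_N}^{-1}]$, i.e. the modified integrand $\wt f_{N-1}$ appearing in \eqref{pii}. This is where the three-parameter functional $\mathbb{L}\topp{a,b,c}$ must do its work: the extra parameter $c$ (equal to $\frac{s}{t\,\UU(x)}$ in $P_x^{s,t}$) absorbs the dependence on the previous level's integration variable, so that the composition of two-parameter transforms collapses to a single application of $\pi^{t_1,\dots,t_{N-1}}$ with integrand $\bigotimes_{j=1}^{N-2}\q_{\A t_j}^{-1}\otimes\wt f_{N-1}$. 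I would prove this lemma by a direct contour-integral manipulation: expand $\frac{1-w^2}{(1-aw)(1-bw)}$ by partial fractions, pull the residues, and recognize the resulting rational function of the outer variable as $\frac{1}{\q}\cdot(\text{the }P_x\text{-integrand})$ — essentially a $q=0$ Askey--Wilson connection-coefficient computation.

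The step I expect to be the main obstacle is establishing that all the contour integrals actually converge and that the contour radii $\rho$ can be chosen consistently throughout the induction. The constraints $\rho,|a|\rho,|b|\rho,|c|\rho<1$ in the definition of $\mathbb{L}$, together with the poles of $\q_{\A t_j}^{-1}$ at $w=\A t_j$ and $w=1/(\A t_j)$, force $\rho$ to lie in a window that shrinks as more levels are composed; one must check that the hypotheses $0<t_1\le\dots\le t_N$, $\A t_N<1$, $\A t_N^2\Cb<1$ (and $\A\Ca<1$) are exactly what keep this window nonempty at every stage, and that $\UU(x)$ for $x$ on the inner contour stays in the region where $P_x^{t_{N-1},t_N}$ is well defined. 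I would handle this by first treating formal power series in $\vv t$ (where \eqref{GG} converges absolutely in the stated range and the identity is a bookkeeping statement about coefficients), proving the identity there, and then invoking analytic continuation in $\vv t$ to extend it to the full parameter range, using Proposition~\ref{Prop1} (and its generating-function bound) to control the domain of analyticity. The ordering hypothesis $t_1\le\dots\le t_N$ is essential precisely so that each nested contour can be taken slightly larger than the next, matching the telescoping in \eqref{pii}.
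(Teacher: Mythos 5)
Your overall architecture (induction on $N$, base case $N=1$ by a residue computation, careful tracking of nested contour radii using $t_1\le\dots\le t_N$) matches the paper's, and your base case and your remarks about why the ordering hypothesis keeps the contour windows nonempty are correct. But the core of your inductive step contains a genuine gap. You assert that peeling off the last column writes $\mathcal{G}\topp{\A,\Ca,\Cb}_N$ as $\mathcal{G}\topp{\A,\Ca,\Cb}_{N-1}$ composed with a single transfer kernel that ``is exactly the kind of sum the functional $P_x^{t_{N-1},t_N}$ encodes.'' This cannot work as stated: the weight \eqref{QQQ} contains the \emph{global} factor $(\Ca\Cb)^{\max_{1\le j\le N}(\vv L_2(j)-\vv L_1(j-1))}$, which does not factorize over columns, so the sum over the last increment does not produce a clean one-term convolution. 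What actually happens (Lemma \ref{Lem:S-rec}, proved in Appendix \ref{Sec:PoTL} by splitting the sum over $k_N$ according to whether the running maximum is updated) is a \emph{two-term} affine recursion \eqref{Gc2Ga}: $\mathcal{G}\topp{\A,\Ca,\Cb}_{N+1}$ is a combination of $\mathcal{G}\topp{\A,\Ca,\Cb}_{N}$ and $\mathcal{G}\topp{\A,\Ca,\A}_{N}$, i.e.\ one of the two terms has the boundary parameter $\Cb$ \emph{replaced by} $\A$. The matching statement on the functional side (Lemma \ref{Lem-Irec}) is not a reproducing property of $P_x^{s,t}$ but the parameter-swapping identity \eqref{magic-formula+}: multiplying the last factor by $\q_{\Cb t_d}/\q_{\A t_d}$ converts $\pi^{t_1,\dots,t_d;\Cb}$ into a constant multiple of $\pi^{t_1,\dots,t_d;\A}$, which rests on the reduction formula \eqref{Redukcja+}. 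This device is nowhere anticipated in your proposal, and without it (or an equivalent way of handling the max) your induction does not close.

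Two secondary points. First, the recursion \eqref{Gc2Ga} degenerates at $\Cb=\A$ (division by $\Cb-\A$), so the paper must separately prove continuity of both sides in $\Cb$ at $\Cb=\A$ (Lemma \ref{Lem:conv} for analyticity of $\mathcal{G}$, Lemma \ref{Lem-I-con} for the functional side); your proposal would need an analogous patch. Second, your plan to ``invoke analytic continuation in $\vv t$ \dots using Proposition \ref{Prop1}'' is circular in the logic of this paper, since Proposition \ref{Prop1} is itself a consequence of Theorem \ref{Thm1-}; the convergence and analyticity input actually used is Lemma \ref{Lem:conv}, proved directly from the path-counting bounds.
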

Our proof does not rely on \cite{Barraquand-2024-integral} and  is  based on  recursions \eqref{Gc2Ga} and \eqref{Ic2Ia}.
The proof of Theorem \ref{Thm1-} appears in Section \ref{Sec:Proof-T1}.

\arxiv{
\textbf{Remark.}
Under additional restrictions on the parameters, functionals \eqref{M-Trans} are have integral representations that already appeared in the literature, see Proposition \ref{pi2Int} and Proposition \ref{Pst2Int}.
}

We remark that if the argument  $\vv t$ of the generating function has only $d$ distinct  components, as in \eqref{GB-k=1},
where $d=1$,  then  formula \eqref{S2I} can be expressed using fewer functionals.
To state this formula we need additional  notation. Fix integers  $n_1,\dots,n_d\geq 1$ such that $n_1+\dots+n_d=N$  and real numbers $0<t_1<\dots<t_d$ satisfying  $\A t_d, \A\Ca, \A\Cb t_d^2<1$.
Denote
\begin{equation}\label{tn}
  \vv t^{\vv n}=(\underbrace{t_1,\dots,t_1}_{n_1},\underbrace{t_2,\dots,t_2}_{n_2},\dots, \underbrace{t_d,\dots, t_d}_{n_d}).
\end{equation}
Then, due to the property   $P_{x}^{t,t}[f]=f(x)$,
formula \eqref{S2I}  takes the following form:
\begin{equation}
    \label{G2I-r} \mathcal{G}\topp{\A,\Ca,\Cb}_N(\vv t^{\vv n})
\;=\pi^{t_1,\dots,t_d}\left[\bigotimes_{j=1}^d \frac{1}{\q_{\A t_j}^{n_j}}\right].
\end{equation}
The explicit formula for the right hand side is in Proposition \ref{P:comp}.
When  the boundary parameters \( \Ca, \Cb \) lie in the interval \( (0,1) \) and
 $\Ca<t_1<\dots<t_d<\min\{1/\A,1/\Cb\}$  the last expression becomes a $d$-multiple integral  with respect to the  $d$-variate density of the free Askey-Wilson process, and in this form expression \eqref{G2I-r} can be read out from the  contour integral   expression   discovered by Barraquand \cite[Theorem 1.10]{Barraquand-2024-integral}. Our Theorem \ref{Thm1-} was inspired by this remarkable   result. %

An application of %
formula \eqref{G2I-r}  gives:
\begin{theorem}\label{Prop:Poiss} Fix $\la>0$.
 Suppose    that the parameters of the geometric LPP  vary with the  width $N$ of the strip in one of the following ways:
 \begin{enumerate}[(a)]
     \item $\Ca=\Ca(N)=\sqrt{N/\la}$, $\A=\sqrt{\la/(N+1)}$, $\Cb$ is fixed.
     \item $\Ca$ is fixed, $\A=1/N^{\theta+1}$, $\Cb=\Cb(N)=\la N^\theta$ for some $\theta>0$.
 \end{enumerate}
 Then as $N\to\infty$, process  $(\vv L_1(\floor{Nx}))_{0\leq x\leq 1}$ converges in finite dimensional distributions  to  the Poisson process
$(\vv N_x)_{0\leq x\leq 1}$   with parameter $\la$.

\end{theorem}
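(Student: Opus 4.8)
The plan is to compute the joint probability generating function of $\big(\vv L_1(\floor{Nx})\big)_{0\le x\le 1}$ at finitely many times and to match its limit with that of the Poisson process $(\vv N_x)$ of intensity $\la$, for which $\EE\big[\prod_{k=1}^d s_k^{\vv N_{x_k}}\big]=\prod_{l=1}^d\exp\big(\la(x_l-x_{l-1})(T_l-1)\big)$ whenever $0<x_1<\dots<x_d\le 1$, $s_1,\dots,s_d\in(0,1]$, with $T_l:=\prod_{k\ge l}s_k$ and $x_0:=0$. Fix such data; we may assume $x_d=1$ (otherwise append the trivial time $x_d=1$ with $s_d=1$, which only enlarges the tuple), and put $m_k:=\floor{Nx_k}$, $m_0:=0$, $n_l:=m_l-m_{l-1}$ (so $n_1+\dots+n_d=N$) and $t_l:=\sqrt{T_l}$. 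Since $s_k\le 1$ we have $0<t_1\le\dots\le t_d\le 1$, so for large $N$ the hypotheses of Theorem~\ref{Thm1-} ($\A\Ca<1$, $\A t_d<1$, $\A\Cb t_d^2<1$) hold in both regimes (a) and (b). Grouping equal coordinates in \eqref{GG} with $\vv t=\vv t^{\vv n}$ from \eqref{tn} and using \eqref{P-BCY} together with the fact that $\vv L_1$ has law $\nu$ under $\mathbb{P}\topp{\A,\Ca,\Cb}_{\rm Geo}$ gives
$$
\EE_\nu\Big[\prod_{k=1}^d s_k^{\vv L_1(m_k)}\Big]=\EE_\nu\Big[\prod_{l=1}^d T_l^{\vv L_1(m_l)-\vv L_1(m_{l-1})}\Big]=\frac{\mathcal{G}\topp{\A,\Ca,\Cb}_N(\vv t^{\vv n})}{\mathcal{Z}\topp{\A,\Ca,\Cb}(N)}.
$$
As $\vv L_1(m_k)$ is $\ZZ_{\ge 0}$-valued, the asserted convergence in finite dimensional distributions is equivalent to showing that this ratio tends to $\prod_{l=1}^d\exp\big(\la(x_l-x_{l-1})(t_l^2-1)\big)$ for all admissible $s_1,\dots,s_d$.

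By \eqref{G2I-r} and Proposition~\ref{P:comp}, the numerator equals $\pi^{t_1,\dots,t_d}\big[\bigotimes_{j=1}^d\q_{\A t_j}^{-n_j}\big]$, an explicit $d$-fold contour integral assembled from the kernels in \eqref{L-def0}, \eqref{M-Trans+} and \eqref{pii}, and the denominator $\mathcal{Z}\topp{\A,\Ca,\Cb}(N)=\mathcal{G}\topp{\A,\Ca,\Cb}_N(\mathbf 1)$ is the instance $t_1=\dots=t_d=1$. Using the identity $\q_\alpha(w+1/w)=-\tfrac{\alpha}{w}(1-w/\alpha)(1-\alpha w)$, which gives $\q_{\A t_j}^{-n_j}(w+1/w)=\big(-w/(\A t_j)\big)^{n_j}\big[(1-w/(\A t_j))(1-\A t_j w)\big]^{-n_j}$, I would substitute $w_j=\A t_j v_j$ in the $j$-th integration variable. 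Then $\q_{\A t_j}^{-n_j}$ turns into $\big(-v_j/[(1-v_j)(1-\A^2 t_j^2 v_j)]\big)^{n_j}$, whose singularity of order $n_j$ sits at $v_j=1$; the measure $\tfrac{1-w_j^2}{w_j}\,dw_j$ becomes $\tfrac{1-\A^2 t_j^2 v_j^2}{v_j}\,dv_j$; the boundary factor becomes $(1-\A\Cb t_j^2 v_j)^{-1}$, present at every layer, plus $(1-\A\Ca v_1)^{-1}$ at layer $1$ only; and the factor $\q_{t_{j-1}w_j/t_j}(w_{j-1}+1/w_{j-1})$ coupling layers $j-1$ and $j$ becomes $-\tfrac{v_j-v_{j-1}}{v_{j-1}}\big(1-\A^2 t_{j-1}^2 v_{j-1}v_j\big)$. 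Crucially, after this substitution the $t_j$'s drop out of the measure and of the coupling factors, so all the $\vv t$-dependence is carried by $\q_{\A t_j}^{-n_j}$ and by $(1-\A\Cb t_j^2 v_j)^{-1}$.

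The two regimes differ only in where the parameter $\la$ enters, and hence in the local rescaling near $v_j=1$. In regime (a), $\A^2(N+1)=\la$, $\A\Ca=\sqrt{N/(N+1)}\to 1$ and $\A\Cb t_j^2\to 0$: setting $v_j=1+u_j/N$ on suitably nested small circles, one has $(1-\A^2 t_j^2 v_j)^{-n_j}\to e^{\la t_j^2(x_j-x_{j-1})}$, the pole of $(1-\A\Ca v_1)^{-1}$ converges to $u_1=\tfrac12$ and must stay outside the $u_1$-contour, and each coupling factor is of size $(u_{j-1}-u_j)/N$; a residue computation at $u_j=0$ then produces, layer by layer, the factor $e^{\la t_j^2(x_j-x_{j-1})}$ times a prefactor not depending on $\vv t$ (for $d=1$ this prefactor is $(2N)^N e^{1/2}$). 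In regime (b), $\A=N^{-\theta-1}$, $\Cb=\la N^\theta$, so $\A^2 t_j^2\Cb=\la t_j^2/N\to 0$ and $\A\Ca\to 0$: here $v_j=1+u_j$ stays at scale $1$, the factors $(1-\A^2 t_j^2 v_j)^{-n_j}$ and $(1-\A\Ca v_1)^{-1}$ tend to $1$, and $\la$ emerges from $(1-\tfrac{\la t_j^2 v_j}{N})^{-1}$ through the coefficient extraction $[u_j^{n_j-1}]$ and the Poisson-limit identity $\tfrac1{1-\mu}\sum_{k}\big(\tfrac{\mu}{1-\mu}\big)^k\binom{n_j-1}{k}\to e^{\la t_j^2(x_j-x_{j-1})}$, valid for $\mu=\la t_j^2/N$ and $n_j\sim N(x_j-x_{j-1})$. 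In both cases the coupling factors decouple in the limit, the numerator factors as $\big(\prod_{j=1}^d e^{\la t_j^2(x_j-x_{j-1})}\big)$ times a prefactor independent of $\vv t$ that cancels against the same prefactor in $\mathcal{Z}\topp{\A,\Ca,\Cb}(N)=\mathcal{G}\topp{\A,\Ca,\Cb}_N(\mathbf 1)$, and the ratio converges to $\prod_{l=1}^d e^{\la(x_l-x_{l-1})(t_l^2-1)}$, as required.

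The main obstacle is making the limits in the previous paragraph rigorous, that is, the dominated-convergence estimate for the $d$-fold integrals as $N\to\infty$ while $\A\to0$ and each $n_j\to\infty$. One must choose the $d$ contour radii so that (i) the order-$n_j$ singularity at $v_j=1$ is enclosed while the companion pole of $\q_{\A t_j}^{-n_j}$, the two poles of $\q_{t_{j-1}w_j/t_j}$ (one of which collapses onto $v_j=v_{j-1}$), and the boundary poles are excluded --- this is delicate in regime (a), where after rescaling the pole at $1/(\A\Ca)$ approaches $v=1$, forcing nested circles of radius $o(1)$ but not $o(1/N)$ and a careful separation of contours --- and (ii) the coupling factors remain bounded away from $0$ on the chosen contours, so that dominated convergence applies and the limiting integral factorizes over the layers; this factorization is precisely the asymptotic independence of the Poisson increments, and is the most technical point. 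Finally, since $\Ca>1$ in case (a) and $\Cb>1$ in case (b), the argument genuinely relies on the extended functionals of Definition~\ref{Def-J-ext0} and on Theorem~\ref{Thm1-}, and not on the integral formula \eqref{GB-k=1}, which is valid only for $\Ca,\Cb<1$.
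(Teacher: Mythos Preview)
Your setup is correct and matches the paper: you reduce the finite-dimensional convergence to the asymptotics of the ratio $\mathcal{G}_N^{(\A,\Ca,\Cb)}(\vv t^{\vv n})/\mathcal{Z}^{(\A,\Ca,\Cb)}(N)$, invoke \eqref{G2I-r}, and the change of variable $w_j=\A t_j v_j$ is a clean observation that eliminates most of the $t_j$-dependence. However, the asymptotic analysis you propose is not carried out, and the route you suggest --- extracting the residue at the order-$n_j$ pole $v_j=1$ layer by layer --- is not the mechanism that actually delivers the answer.

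The paper's proof works very differently. Instead of computing the high-order residue at $v_j=1$, it converts each functional $\mathbb{L}^{(a,b,c)}$ into the real integral over $[-2,2]$ plus point masses (atoms) via \eqref{J2c1a}. The entire proof then turns on the fact that one atom dominates: in regime (a) the atom coming from $\Ca/t_k>1$, in regime (b) the atom coming from $\Cb t_k>1$. The technical work (Claims~\ref{Cl-a} and~\ref{Cl-b}) consists of propagating that atom through the $d$ layers of the composition \eqref{pii}: each time the functional is applied, the point $y_k$ at which the atom sits is fed into the next layer via \eqref{F2F}, producing a telescoping product that in the limit yields exactly $\prod_j e^{\la t_j^2(x_j-x_{j-1})}$. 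The integrals over $[-2,2]$ that remain are shown to be of strictly lower order by crude supremum bounds on $\q_{\A t_j}^{-n_j}$.

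In contour-integral language, the atom is the residue at the \emph{simple} pole of $(1-\A\Ca v_1)^{-1}$ (case (a)) or of $(1-\A\Cb t_j^2 v_j)^{-1}$ (case (b)), obtained by deforming the $v$-contour \emph{outward} past that pole. You explicitly keep this pole outside your contour and instead attempt to evaluate the order-$n_j$ residue at $v_j=1$. That residue does equal the full integral, but it mixes the dominant atom contribution with the subdominant continuous part; disentangling them via an $(n_j{-}1)$-th derivative is exactly what the paper avoids. Your heuristic ``$(1+u/N)^{N-1}\to e^u$, then extract $[u^{N-1}]$'' is not valid as stated: pointwise convergence of the integrand does not control the coefficient of $u^{N-1}$, and the factor $(1-\A\Ca v_1)^{-1}\approx 2N/(1-2u)$ in regime (a) shows that a nearby singularity governs the growth --- which is precisely the atom you excluded. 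Your regime-(b) binomial identity is correct for $d=1$, but you do not show how the coupling denominator $(v_{j-1}-v_j)$ permits the layers to decouple for $d\ge 2$; in the paper this decoupling is a consequence of evaluating at the atom, where \eqref{F2Q} gives a closed product.

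In short: the missing idea is the atom decomposition from Proposition~\ref{P:J-rep}. Deforming outward to isolate the simple-pole residue, rather than computing the high-order residue at $v_j=1$, is what makes the multilayer limit tractable.
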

 The analytic proof of this result appears  in Section \ref{Sec:Proof:Poiss} and is unexpectedly involved.

\medskip

 The paper is organized as follows. In Section \ref{Sec:FreeAW}, we introduce the fundamental concept of free Askey--Wilson functionals. This section is comprehensive and begins with a discussion of monic free Askey--Wilson polynomials and their complex-valued moment functionals. We  extend these moment functionals from polynomials to analytic functions through contour integration. Additionally, we explore the use of polynomial approximation to derive key identities. Integral representations for specific parameter choices are derived, and we show how the functionals can be composed by allowing two of the parameters to depend on a variable upon which the subsequent functional acts. Asymptotic expansions required for subsequent sections are also discussed. In Section \ref{Proofs-Intro}, we provide proofs for  Theorem \ref{Thm1-}, Proposition \ref{Prop1}, Theorem \ref{Thm-PhD}, Theorem \ref{Prop:Poiss}, and several additional lemmas. Appendix \ref{Sec:PoTL} contains elementary but lengthy proofs for two auxiliary results needed in Section \ref{Proofs-Intro}, including the proof of the key recurrence relation \eqref{Gc2Ga} for the generating function. Appendix \ref{Sec:I-rep-of-AW} outlines sufficient conditions on the parameters that allow for an explicit representation of \( \pi_y^t \) as an integral with respect to the signed Askey--Wilson measure and provides a representation of \( P_{s,t}^{x,y} \) as a complex Askey--Wilson integral.

\section{Free Askey--Wilson  functionals}
\label{Sec:FreeAW}
\subsection{Free Askey--Wilson polynomials}
Our starting point is the four-parameter family of the "free" Askey--Wilson polynomials with $q=0$ which we introduce   as the (complex) linear combinations of the Chebyshev polynomials of the second kind.

For $n=-1,0,2\dots$, let $U_{n}$ be the $n$-th monic Chebyshev polynomial (also known as the Vieta–Fibonacci polynomial) defined by
\begin{equation}\label{U-def}
 U_n(y)=\frac{\sin ((n+1)\theta)}{\sin \theta}, \quad y=2\cos \theta.
\end{equation}
These polynomials satisfy  the three step recurrence  relation
\begin{equation}\label{U-rec}
  y U_n(y)=U_{n+1}(y)+U_{n-1}(y), \quad n\in\ZZ,
\end{equation}
 with initial conditions $U_{0}\equiv 1$ and $U_{-1}\equiv 0$.

\begin{definition}\label{Def:AW0}
  Let $a,b,c,d$ be real or complex such that $abcd\ne 1$.
Denote by $s_j$ the $j$-th elementary symmetric function reduced (specialized) to four non-zero arguments,  $s_j=e_j(a,b,c,d,0,0,\dots)$. Thus $s_0=1$, $s_1=a+b+c+d$, $s_2=ab+ac+ad+bc+bd+cd$, $s_3=abc+abd+acd+bcd$, $s_4=abcd$.
The {\em free Askey--Wilson polynomials} in indeterminate $y$ are
\begin{eqnarray}
W_0(y;a,b,c,d)&=&U_0(y) \label{W0} \\
W_1(y;a,b,c,d)&=&(1-s_4) U_1(y)+(s_3-s_1)U_0(y)  %
\label{W1}\\
  W_2(y;a,b,c,d)&=&U_2(y)-s_1U_1(y)+(s_2-s_4)U_0(y)  %
  \label{W2}\\
W_n(y;a,b,c,d)&=&U_n(y)-s_1U_{n-1}(y)+s_2U_{n-2}(y)-s_3U_{n-3}(y)+s_4U_{n-4}(y) \mbox{ for } n\geq 3. \label{Wn}
\end{eqnarray}

\end{definition}

Note that $W_n\in\CC[y]$ has complex coefficients as we allow complex parameters $a,b,c,d$.  However, if parameters $a,b,c,d$ are real, or if complex, appear in conjugate pairs, then
$s_1,s_2,s_3,s_4$ are real so $W_n\in\RR[y]$ is a real polynomial for all $n=0,1,\dots$, and then the free Askey--Wilson polynomials $\{W_n\}$ as defined here  are essentially the (monic)  Askey--Wilson polynomials with parameter $q=0$, compare \cite[formula (4.29)]{Askey-Wilson-85}, who discuss polynomials $W_n(x/2)$.

Since formula \eqref{U-def}   defines polynomials $U_n$ for all $n\in\ZZ$,   an equivalent but less  explicit version of Definition \ref{Def:AW0} is
\begin{equation}
  \label{U2W}W_n(y;a,b,c,d)=U_n(y)-s_1U_{n-1}(y)+s_2U_{n-2}(y)-s_3U_{n-3}(y)+s_4U_{n-4}(y), \quad n=1,2,\dots
\end{equation}
with  $W_0(y;a,b,c,d)=U_0=1$.
(Formula \eqref{U-def} gives $U_{-n}(y)=-U_{n-2}(y)$  for $n\in\ZZ$.)
\arxiv{
To see that the formulas are equivalent, we only need to  check the two exceptional entries \eqref{W1} and \eqref{W2}.
Recalling that $U_{-n}(y)=-U_{n-2}(y)$, we have $W_1(y;a,b,c,d)=(1-s_4) U_1(y)+(s_3-s_1)U_0(y) =U_1(y)-s_1 U_0(y)+s_2 U_{-1}(y)-s_3 U_{-2}(y)+s_4 U_{-3}(y)$
and $ W_2(y;a,b,c,d)=U_2(y)-s_1U_1(y)+(s_2-s_4)U_0(y)   =  U_2(y)-s_1U_1(y)+s_2U_0(y)-s_3 U_{-1}(y)+s_4 U_{-2}(y)$.   Assumption $abcd\ne 1$   ensures that polynomial $W_1$ is of degree 1.
}
 \subsection{The functionals}\label{Sec:Mon=mF}

We introduce a four-parameter family of functionals $\mathcal{L}\topp{a,b,c,d}$ defined on polynomials in the variable $y$. The action of $\mathcal{L}\topp{a,b,c,d}$ is first specified on monomials by prescribing the values $\mathcal{L}\topp{a,b,c,d}[y^n]$, and then extended to arbitrary polynomials by linearity. The functional is defined recursively, starting with the initial condition $\mathcal{L}\topp{a,b,c,d}[1] = 1$, together with the requirement that $\mathcal{L}^{a,b,c,d}[W_n(y)] = 0$ for all $n \geq 1$.
Since  $abcd\ne 1$, this determines $\mathcal{L}\topp{a,b,c,d}:\CC[y]\to\CC$  uniquely, and  $\mathcal{L}\topp{a,b,c,d}$ is  invariant under the permutations of the parameters $a,b,c,d$.
In view of permutation invariance,    we can put any zeros at the end and  shorten the notation by omitting trailing zeros, writing $\mathcal L$ for $\mathcal{L}\topp{0,0,0,0}$, $\mathcal{L}\topp{a}$  for $\mathcal{L}\topp{a,0,0,0}$, $\mathcal{L}\topp{a,b}$  for $\mathcal{L}\topp{a,b,0,0}$ and so on.
The above mimics  the usual approach to the  moment functional, although we consider complex-valued functionals and complex polynomials.
In particular,  $\mathcal{L}$ is the moment functional of the Chebyshev polynomials, and is given by an explicit
 integral
\begin{equation}
    \label{L0}
    \mathcal{L}[f]=\frac{1}{2\pi}\int_{-2}^2 f(y) \sqrt{4-y^2} \d y.
\end{equation}

Definition  \eqref{U2W}  gives a simple linear recurrence relation for the sequence
 $\la_n:=\mathcal{L}\topp{a,b,c,d}[U_n]$, $n=0,1,\dots$. We have
 \begin{equation}
   \label{JU-rec}\la_n=s_1 \la_{n-1}-s_2\la_{n-2}+s_3\la_{n-3}-s_4 \la_{n-4}, \quad n\geq 2.
 \end{equation}
 (Note that since $\la_{-k}=-\la_{k-2}$, the recurrence relation for  $n=2$ takes an irregular form, see \eqref{W1} and \eqref{W2}.)

Using the initial conditions $\la_{-2}=-1$, $\la_{-1}=0$, $\la_{0}=1$, and $\la_1 =\frac{s_1-s_3}{1-s_4}$,  which corresponds to the irregular term \eqref{W1}, recurrence \eqref{JU-rec} can be solved explicitly.    If all the parameters are distinct, the solution is
\begin{multline}\label{Jnabcd}
\mathcal{L}\topp{a,b,c,d}[U_n]=\frac{(1-b c) (1-b d) (1-c d)}{(a-b) (a-c) (a-d) (1-a b c d)} a^{n+3}+\frac{(1-a c) (1-a d) (1-c d)}{(b-a) (b-c) (b-d) (1-a b c d)} b^{n+3}
\\+ \frac{(1-a b) (1-a d) (1-b d) }{(c-a)
   (c-b) (c-d) (1-a b c d)} c^{n+3}+ \frac{ (1-a b) (1-a c) (1-b c)
 }{(d-a) (d-b) (d-c) (1-a b c d)} d^{n+3}.
\end{multline}
This formula is valid for all $n\geq -2$, but we only need it  for $n\geq 0$.
In this paper we focus on the case with only three non-zero parameters. Then
\begin{equation}\label{Jabc}
\mathcal{L}\topp{a,b,c}[U_n] = \frac{ (1-b c)}{(a-b) (a-c)}a^{n+2}+\frac{(1-a c) }{(b-a) (b-c)}b^{n+2}+\frac{(1-a b)}{(c-a)
   (c-b)}c^{n+2},
\end{equation}
which is valid for $n\geq -1$ and distinct parameters $a,b,c$.

We also note that for $n\geq -1$ we have
\begin{equation}
  \label{Jab00}
  \mathcal{L}\topp{a,b}[U_n]=\sum_{k=0}^n a^{n-k}b^k=\begin{cases}
    \frac{a^{n+1}-b^{n+1}}{a-b} & a\ne b \\
    (n+1)a^n & a= b,
  \end{cases}
\end{equation}
which is  the  case when $c=0$ in \eqref{Jabc}, and for $n\geq 0$ we have
\begin{equation}\label{Ja00}
\mathcal{L}\topp{a}[U_n] = a^n,
\end{equation}
which is the case when $b=0$ in \eqref{Jab00}.

These formulas give the following identity: %
\begin{equation}\label{Jabc2JaJbJc}
\mathcal{L}\topp{a,b,c}  =
\frac{ a^2(1-b c)}{(a-b) (a-c)}\mathcal{L}\topp{a}+\frac{b^2(1-a c) }{(b-a) (b-c)}\mathcal{L}\topp{b}+\frac{c^2(1-a b)}{(c-a)
   (c-b)}\mathcal{L}\topp{c}.
\end{equation}
For repeated parameters,  we obtain the following limiting case of \eqref{Jabc}:
\begin{equation}\label{Jabb}
\mathcal{L}\topp{a,b,b}[U_n]=\frac{b^{n+1} (n (b-a) (1-a b)+b+a^2b-2a )}{(a-b)^2}+\frac{\left(1-b^2\right) a^{n+2}}{(a-b)^2}.\end{equation}
 This gives  the following identity
\begin{equation}   \label{Dabb}
\mathcal{L}\topp{a,b,b}%
=\frac{ a^2(1-b^2)}{(a-b)^2}\mathcal{L}\topp{a} +\frac{b(b-a-a (1-a b)) }{(a-b)^2}\mathcal{L}\topp{b}+\frac{ b^2  (1-a b)}{b-a }\frac{\partial}{\partial b} \mathcal{L}\topp{b}.
\end{equation}
We also have
   \begin{equation}\label{Jaaa}
\mathcal{L}\topp{a,a,a}[U_n]=\frac{1}{2} (n+1) a^n \left(\left(1-a^2\right) n+2\right).
\end{equation}
\arxiv{which gives
\begin{equation}
    \label{Daaa}
    \mathcal{L}\topp{a,a,a} %
=\mathcal{L}\topp{a} + a(2-a^2)  \frac{\partial}{\partial a} \mathcal{L}\topp{a} + \frac{a^2}{2} (1-a^2)  \frac{\partial^2}{\partial a^2} \mathcal{L}\topp{a}
\end{equation}

}

\arxiv{
(To get \eqref{Jab00} directly from recurrence \eqref{JU-rec}, we  need to discard the initial condition $\la_{-2}=-1$.)

With $n\geq 0$, formulas \eqref{Jabb}  and  \eqref{Jab00} work also  for $b=0$, where $W_n=U_n-a U_{n-1}$, $n=0,1,2,\dots$ and  $\mathcal{L}\topp{a}[U_n]=a^n$. }
 From recursion \eqref{JU-rec}, or from explicit solutions, we  obtain the following version of  \cite[Theorem 4.1]{KS-2010}.
\begin{proposition} For   $z\in\CC$ such that   $|z|<1/\max\{1,|a|,|b|,|c|,|d|\}$,
 we have
\begin{multline}\label{KS-Thm4.1}
\sum_{n=0}^\infty z^n \mathcal{L}\topp{a,b,c,d}[U_n] =\frac{z ( s_1  s_4- s_3)}{( 1-s_4) (1-a z) (1-b z) (1-c z) (1-d
   z)}+\frac{ 1+s_4 z^2}{(1-a z) (1-b z) (1-c z) (1-d z)}
   \\
   =
  \frac{ z (abcd(a+b+c+d)- abc-abd-acd-bcd)  }{(1-a b c d)(1-az)(1-bz)(1-cz)(1-dz)}+\frac{1+z^2 abcd}{(1-az)(1-bz)(1-cz)(1-dz)}.
\end{multline}
In particular,  for $|z|<1/\max\{1,|a|,|b|,|c|\}$  we get
\begin{equation}\label{KS-Thm4.1abc}
 \sum_{n=0}^\infty z^n \mathcal{L}\topp{a,b,c}[U_n] =  \frac{1-a b c  z}{(1- a z)(1-bz)(1-cz)}.
\end{equation}
\end{proposition}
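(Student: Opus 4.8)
The plan is to obtain the generating function directly from the linear recurrence~\eqref{JU-rec}, by the standard device of multiplying by $z^n$ and summing; the formula for three parameters will then fall out by specialization. Write $\la_n=\mathcal{L}\topp{a,b,c,d}[U_n]$ and $F(z)=\sum_{n\ge0}\la_n z^n$. First I would observe that $F$ has positive radius of convergence: \eqref{JU-rec} gives $|\la_n|\le(|s_1|+|s_2|+|s_3|+|s_4|)\max_{n-4\le k\le n-1}|\la_k|$ for $n\ge2$ (reading $\la_{-1}=0$, $\la_{-2}=-1$), so $|\la_n|$ grows at most geometrically. (One could instead read this off the explicit solution~\eqref{Jnabcd} and its confluent forms, but the crude bound suffices for the manipulation.)

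Next I would multiply \eqref{JU-rec} by $z^n$ and sum over $n\ge2$. Reindexing the four shifted series and tracking the finitely many low-order terms — using $\la_{-1}=0$, $\la_{-2}=-1$ — the four sums become $F-\la_0$, $F$, $F$, and $F-z^{-2}$, to be multiplied by $s_1z$, $-s_2z^2$, $s_3z^3$, $-s_4z^4$ respectively, while the left-hand side equals $F-\la_0-\la_1z$. Collecting terms and using $\la_0=1$ yields
$$
F(z)\bigl(1-s_1z+s_2z^2-s_3z^3+s_4z^4\bigr)=1+(\la_1-s_1)z+s_4z^2 .
$$
Substituting $\la_1=(s_1-s_3)/(1-s_4)$ (legitimate since $abcd=s_4\ne1$) gives $(\la_1-s_1)z=z(s_1s_4-s_3)/(1-s_4)$, and because $s_j=e_j(a,b,c,d,0,0,\dots)$ one has $1-s_1z+s_2z^2-s_3z^3+s_4z^4=(1-az)(1-bz)(1-cz)(1-dz)$. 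Dividing produces the first equality in \eqref{KS-Thm4.1}; the second line is just the rewriting $s_1s_4-s_3=abcd(a+b+c+d)-abc-abd-acd-bcd$ and $s_4=abcd$. The manipulation is valid on the small disk where $\sum\la_nz^n$ converges; since the rational function on the right is analytic on $\{|z|<1/\max\{|a|,|b|,|c|,|d|\}\}$ (its only poles are at $1/a,1/b,1/c,1/d$), its Taylor series at the origin is $F$ and converges throughout that disk, hence in particular for $|z|<1/\max\{1,|a|,|b|,|c|,|d|\}$.

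Finally, \eqref{KS-Thm4.1abc} is the specialization $d=0$: then $s_4=0$ and $s_3=abc$, so the first summand of \eqref{KS-Thm4.1} becomes $-abc\,z/\bigl((1-az)(1-bz)(1-cz)\bigr)$ and the second becomes $1/\bigl((1-az)(1-bz)(1-cz)\bigr)$; their sum is $(1-abc\,z)/\bigl((1-az)(1-bz)(1-cz)\bigr)$, and $\max\{1,|a|,|b|,|c|,0\}=\max\{1,|a|,|b|,|c|\}$. The only step that requires care is the boundary-term bookkeeping when reindexing the four shifted sums; the ``irregular'' behavior at $n=1,2$ mentioned after \eqref{JU-rec} causes no difficulty here because it has already been folded into the explicit value of $\la_1$, while \eqref{JU-rec} itself holds uniformly for $n\ge2$. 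I do not anticipate any deeper obstacle.
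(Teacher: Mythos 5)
Your proof is correct, and it takes a genuinely different route from the paper. The paper obtains \eqref{KS-Thm4.1} by summing the explicit four-term partial-fraction solution \eqref{Jnabcd}, which is only valid when $a,b,c,d$ are distinct, and then disposes of coincident parameters by a continuity argument; \eqref{KS-Thm4.1abc} is again obtained by setting $d=0$. You instead run the standard shift-and-sum manipulation directly on the recurrence \eqref{JU-rec}, using exactly the initial data $\la_{-2}=-1$, $\la_{-1}=0$, $\la_0=1$, $\la_1=(s_1-s_3)/(1-s_4)$ that the paper records; your bookkeeping of the boundary terms is right (the four shifted sums contribute $s_1z(F-1)$, $-s_2z^2F$, $s_3z^3F$, $-s_4(z^4F-z^2)$, and $\la_1-s_1=(s_1s_4-s_3)/(1-s_4)$ produces the first summand of \eqref{KS-Thm4.1}), and the factorization $1-s_1z+s_2z^2-s_3z^3+s_4z^4=(1-az)(1-bz)(1-cz)(1-dz)$ plus the analytic-continuation step correctly yields convergence on the stated disk. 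What your route buys is uniformity in the parameters: no case split on distinctness, no limiting argument, and no reliance on the correctness of \eqref{Jnabcd}. What the paper's route buys is brevity, since \eqref{Jnabcd} is already on the table and each of its four terms is a geometric series in $z$. Both are sound; yours is arguably the more self-contained derivation.
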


\begin{proof}
If all the parameters are distinct, \eqref{KS-Thm4.1} follows immediately from \eqref{Jnabcd}.
If some of the parameters coincide, we rely on continuity.
To obtain formula \eqref{KS-Thm4.1abc}, we set \(d = 0\) in \eqref{KS-Thm4.1}.
\end{proof}
 Since the generating function $\sum_{n=0}^\infty z^n U_n(y)$ is  $1/\q_z(y)$,
formula \eqref{KS-Thm4.1abc} can be interpreted heuristically as the expression for $\mathcal{L}\topp{a,b,c}\left[\q_z^{-1} \right]$.
\subsubsection{Reduction formulas}
We now provide formulas that lower the number of parameters   of functionals. For every polynomial $f$, the following reduction formula holds:
 \begin{equation}
     \label{Redukcja}
      \mathcal{L}\topp{a,b,c}\left[\q_c f\right] =(1- a c)(1-b c)\mathcal{L}\topp{a,b}[f].
 \end{equation}
In particular, by setting \( a = 0 \) and \( a = b = 0 \) respectively, and relabeling the parameters, we obtain
\begin{equation}
     \label{Redukcja2}
      \mathcal{L}\topp{a,b}\left[\q_b f\right] =(1- a b)\mathcal{L}\topp{a}[f], \quad  \mathcal{L}\topp{a}\left[\q_af\right] =\mathcal{L}[f],
 \end{equation}
 where $\mathcal{L}$ is given by \eqref{L0}.
 (The reverse operation of adding a parameter in Lemma \ref{L:anti-reduce} is more intricate.)
\arxiv{
Combining the above formulas, we get
\begin{equation}\label{Jck-*}
  \mathcal{L}\topp{a,b,c}\left[\q_a\q_b\q_c f\right] =(1- a c)(1-b c)\mathcal{L}\topp{a,b}[\q_a\q_bf]
=(1- a b)
(1- a c)(1-b c)\mathcal{L}\topp{0}[f]
\end{equation}
}

To verify   formulas   \eqref{Redukcja}   and \eqref{Redukcja2}, we take $f=U_n$, $n=0,1,\dots$, in which case this is a calculation based on recurrence relation \eqref{U-rec} and formulas \eqref{Jabc},
 \eqref{Jabb}, \eqref{Jaaa}. %
\subsection{Extension to analytic functions}
Under appropriate additional assumptions on $a,b,c,d$,  functional $\mathcal{L}\topp{a,b,c,d}$ extends  from polynomials to more general functions and is given as an integral with respect to probability measure, as in \eqref{L0}. Under fewer assumptions on $a,b,c,d$, it is given as an integral with respect to a signed measure, compare \cite[Section 2.4]{WWY-2024}.  However,  without any additional conditions on the parameters, the functional $\mathcal{L}\topp{a,b,c,d}$ extends to analytic functions, but the resulting functional is not always given as an  integral, see Remark \ref{R:J-rep}. The extension essentially can be based on formula \eqref{KS-Thm4.1}; in application to geometric LPP   one of the parameters  vanishes,   so instead of developing the general theory we will
rely on much simpler formula
\eqref{KS-Thm4.1abc}.
\subsubsection{Joukowsky (Zhukovskii) map $w+1/w$}
The transformation $z= w+1/w$ maps the unit disk $|w|<1$ injectively onto  $\CC\setminus[-2,2]$.  It transforms a circle $|w|=\rho<1$, oriented counter-clockwise,   into the ellipse
\begin{equation}
  \label{ellipse}
  \gamma_\rho=\left\{z=x+\i y: \frac{x^2}{\left(\tfrac1\rho+\rho\right)^2}+\frac{y^2}{\left(\tfrac1\rho-\rho\right)^2}=1\right\},
\end{equation}
oriented clockwise.
The transformation maps the unit circle $|w|=1$ continuously onto $[-2,2]$, covering it twice, and it is an increasing function $[1,\infty)\to [2,\infty)$. It also maps  both  the exterior  and the interior of the unit circle injectively onto  $\CC\setminus[-2,2]$  (see Fig. \ref{Fig:w+1/w}).
 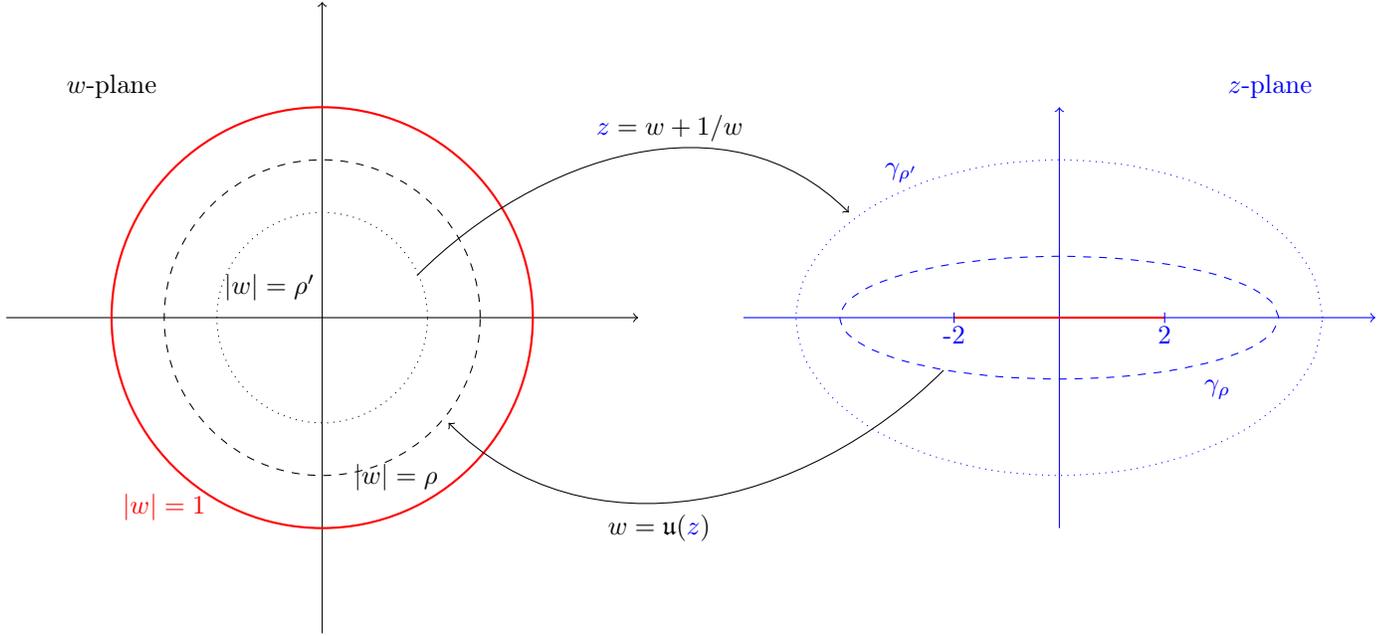
\begin{figure}[hbt]
  \begin{tikzpicture}[scale=1.4]
   \draw[->] (0,-3) to (0,3);
    \draw[->] (-3,0) to (3,0);
   \draw[thick, red] (0,0) circle (2);
   \node  at (-1.5,-1.8) {\color{red}$|w|=1$};
    \node [below] at (.7,-1.3) {$|w|=\rho$};
   \draw[dotted] (0,0) circle (1);
   \node [below] at (-.5,.5) {$|w|=\rho'$};
    \node [above] at (5.5,1.2) {\color{blue}$\gamma_{\rho'}$};
     \node [below] at (8.5,-.5) {\color{blue}$\gamma_{\rho}$};

   \draw[dashed] (0,0) circle (1.5);
  \node [above] at (9,2) {\color{blue}$z$-plane};
  \node [above] at (-2,2) {$w$-plane};
    \draw[->,blue] (7,-2) to (7, 2);
 \draw[->,blue] (4,0) to (10,0);
 \draw[-,thick,red] (6,0) to (8,0);
 \draw[-,blue] (6,-.05) to (6,.05);
  \draw[-,blue] (8,-.05) to (8,.05);
 \node [below] at (6,0) {\color{blue}-2};
 \node [below] at (8,0) {\color{blue}2};

  \draw[scale = 1,domain=0:360,smooth,variable=\x,dotted,blue] plot ({(7+2.5*cos(\x))},{1.5*sin(\x)});

    \draw[scale = 1,domain=0:360,smooth,variable=\x,dashed,blue] plot ({(7+25*cos(\x)/12)},{7*sin(\x)/12});
  \draw[->] (.9,.4) to [out=45,in=135] (5,1);
\node   at (3.3,1.8) {${\color{blue}z}=w+1/w$};

   \draw[<-,thin] (1.2,-1) to [out=-45,in=-135] (5.9,-.5);
\node   at (3.2,-2) {$w=\UU({\color{blue}z})$};
  \end{tikzpicture}
  \caption{Mapping $z=w+1/w$ and one of its two inverses $w=\UU(z)$. Note that with $\rho'<\rho$,
  ellipse $\gamma_{\rho}$ lies inside  $\gamma_{\rho'}$. }
  \label{Fig:w+1/w}
\end{figure}

 It is convenient to introduce notation $D_\rho$ for the image of the %
  annulus $\rho<|w|\leq 1$
 under the transformation $z=w+\tfrac1w$; $D_\rho$ is the %
 open set
 enclosed by the ellipse $\gamma_\rho$   defined by the  equation \eqref{ellipse} with $0<\rho<1$.

The inverse mapping
\begin{equation}
  \label{u(z)}
  \UU(z)=\tfrac12(z-\sqrt{z^2-4})
\end{equation}
maps injectively $\CC\setminus[-2,2]$ onto the unit disk $|w|<1$.  We extend $\UU$   to $z\in[-2,2]$ by taking $\UU(x)= e^{\i \theta}$ for $x=2\cos \theta$ with $0\leq \theta \le \pi$.
Then $\UU$ is the right inverse of the Joukowsky map: $\UU(z)+\frac{1}{\UU(z)}=z$ for all $z\in\CC$.
The second inverse mapping, $1/\UU(z)=(z+\sqrt{z^2-4})/2$  maps injectively $\CC\setminus[-2,2]$ onto the exterior of the unit circle $|w|=1$.
(This is a standard  material on mappings of the complex plane, see e.g. \cite[Vol. 1, Ch. 10, Section 51]{Markushevich-77} or \cite[Section 4.2]{Ahlfors}.)

 In what follows we adopt the standard  convention that the integrals  over the closed curves are in counter-clockwise orientation.
\arxiv{In particular, if $0<\rho<1$ then due to positive orientation convention, Joukowsky transformation $z=w+1/w$ with $\d z=(w^2-1)/w^2$ gives
\begin{equation}\label{dz2dw}
   \oint_{\gamma_\rho}  h(z)\d z=\oint_{|w|=\rho} h(w+1/w) \frac{1-w^2}{w^2}\d w,
\end{equation}
with both curves  oriented counter-clockwise.
}
\subsubsection{Extension to analytic functions}
For $0<r\leq 1$, let $\bar D_r$ be the closed region enclosed by the ellipse $\gamma_r$, with the convention that $\bar D_1=[-2,2]$.
Let
\begin{equation}
    \label{Def-A}  \mathcal{A}_{r}=\left\{f:\bar D_r\to\CC: \mbox{$f$ extends analytically to an open  set $D_{\rho'}$ for some $0<\rho'<r$, where $\rho'=\rho'(f)$ depends on $f$}\right\}.
\end{equation}
We note that if  $r'<r\leq 1$ then $\bar D_r\subset\bar{D}_{r'}$ and if  $f\in\mathcal{A}_{r'}$, then  its restriction to $\bar D_{r}$ is in $\mathcal{A}_{r}$. %
Therefore, when $r'<r$ we can treat $\mathcal{A}_{r'}$ as a subset of $\mathcal{A}_{r}$.

 \arxiv{ In order to clarify the extension procedure, we start with some formal and heuristic formulas.
With $|w|=\rho<1/R$, we rewrite \eqref{KS-Thm4.1abc} as a formal expression
\begin{equation}
    \label{***}
      \mathcal{L}_y\topp{a,b,c}\left[\frac{1}{w+1/w-y}\right]=\frac{w(1-a b c  w)}{(1- a w)(1-b w)(1-c w)}.
\end{equation}

and let $z=w+1/w\in\gamma_\rho $. We get
$$
 \mathcal{L}_y\topp{a,b,c}\left[\frac{1}{z-y}\right]=\frac{\UU(z)(1-a b c  \UU(z))}{(1- a \UU(z))(1-b \UU(z))(1-c \UU(z))}.
$$
This formal expression and the Cauchy formula   $f(x)=\frac{1}{2\pi\i}\oint_C \frac{f(z)}{z-x}\d z$ form a basis for our extension $ \mathbb{L}\topp{a,b,c}$  of $\mathcal{L}\topp{a,b,c}$ to analytic functions.
The heuristic is that for real $a,b,c$, and $f$ that are real on the real line, the value of the functional  $\mathbb{L}\topp{a,b,c}[f]$ should only depend   on the values of $f$ on the interval $[-2,2]$ and at some isolated points of the form $a+1/a$ when $a>1$, so the closed contour $C$ should enclose such points.  The ellipse $\gamma_\rho$ with $\rho<1/R$ works as such a contour.

{If $R=1$, then any ellipse with $\rho<1$ encloses $[-2,2]$. If $R=a>1$ and $\rho<1/R$ then point $a+1/a$ is enclosed by $\gamma_\rho$ because
$\UU(a+1/a)=1/a> \UU(\rho+1/\rho)=\rho$. }
}

\begin{definition}
  \label{Def-J-ext0}   For $a,b,c\in\CC$, let
%  \comment{W: Czy chcemy przerobic wszystko na: $R=R(a,b,c)=1/\max\{1,|a|,|b|,|c|\}$? (Z $f\in\mathcal{A}_{1/R}$)}
  \begin{equation}
  \label{R} R=R(a,b,c) %
  =\max\{1,|a|,|b|,|c|\}.
\end{equation}
For $f\in\mathcal{A}_{1/R}$,  we define the {\em free Askey--Wilson} functional
\begin{equation}
  \label{J[f]++}
  \mathbb{L}\topp{a,b,c}[f]=\frac{1}{2\pi\i}\oint_{\gamma_\rho}
\frac{f(z)\UU(z)(1-a b c  \UU(z))}{(1- a \UU(z))(1-b \UU(z))(1-c \UU(z))}\,  \d z,
\end{equation}
with arbitrary $\rho\in(\rho',1/R)$ where $\rho'$ is from  \eqref{Def-A} and
the ellipse $\gamma_\rho$ is oriented counterclockwise.

\end{definition}
We remark that the choice of $\rho'=\rho'(f)$ and $\rho$ may depend on $f$ but the value $ \mathbb{L}\topp{a,b,c}[f]$ of the functional depends only on $f\Big|_{\bar D_{1/R}}$ and does not depend on the choice of $\rho, \rho'$. Moreover,  $\mathbb{L}\topp{a,b,c}:\mathcal{A}_{1/R}\to\CC$ is a linear  functional.
We also note that functional $\mathbb{L}\topp{0}$ is in fact defined for all
bounded measurable functions $f:[-2,2]\to \RR$, see \eqref{L0}.
\arxiv{
To see that the value of the functional does not depend on the value of $\rho$,
suppose $\rho_1<\rho_2<1/R$ both define ellipses that lie within $D$ together with their interior regions. Since $\rho_2<1$, the ellipse $\gamma_{\rho_2}$ lies within the ellipse $\gamma_{\rho_1}$. Since $1-a \UU(z)=0$ for $z=a+1/a$ and $|\UU(z)|<1$, the poles arise only from $a,b,c$ that lie outside of the unit circle and then they all lie inside the ellipse $\gamma_{1/R}$,  which is enclosed by the ellipse $\gamma_{\rho_2}$. Therefore, there are no poles of the integrand in the regions between the ellipses $\gamma_{\rho_1}$ and $\gamma_{\rho_2}$.   The singularities of $\UU(z)$ lie in $[-2,2]$ which is within all ellipses $\gamma_\rho$. Thus, \eqref{J[f]++} gives the same value for $\rho=\rho_1$ and for $\rho=\rho_2$.
}
\begin{remark}
   Since $\UU(w+1/w)=w$ for $|w|<1$,
  change of variable $z=w+1/w$ in
    \eqref{J[f]++} with    $\d z= -(1-w^2)/{w^2} \,\d w$
  gives an equivalent expression
  \begin{equation}\label{J[f]-w}
     \mathbb{L}\topp{a,b,c}[f]=\frac{1}{2\pi\i} \oint_{|w|=\rho} \frac{f\left(w+\tfrac{1}{w}\right)(1-w^2)(1-a b c w) }{w(1-a w)(1-bw)(1-cw)}\,\d w,
  \end{equation}
   where the circle is   oriented counter-clockwise.
   \end{remark}
\arxiv{
 \begin{remarka}
   When $a,b,c$ are all distinct, we have
  \begin{multline}\label{J[f]}
  \mathbb{L}\topp{a,b,c}[f]=\frac{1}{2\pi\i}\frac{a (1-b c)}{(a-b) (a-c)}\oint_{\gamma_\rho}\frac{f(z)\d z}{1-a \UU(z)}
  +\frac{1}{2\pi\i}\frac{b (1-a c)}{(b-a)
   (b-c)}\oint_{\gamma_\rho}\frac{f(z)\d z}{1-b \UU(z)}+\frac{1}{2\pi\i}\frac{c (1-a b)}{(c-a) (c-b)}\oint_{\gamma_\rho}\frac{f(z)\d z}{1-c \UU(z)}.
\end{multline}
An equivalent expression is
   \begin{multline}\label{J[f]+}
  \mathbb{L}\topp{a,b,c}[f]=\frac{1}{2\pi\i}\frac{a (1-b c)}{(a-b) (a-c)}\oint_{|w|=\rho}\frac{f(w+1/w)(1-w^2)\d w}{w^2(1-a w)}
  \\+\frac{1}{2\pi\i}\frac{b (1-a c)}{(b-a)
   (b-c)}\oint_{|w|=\rho}\frac{f(w+1/w)(1-w^2)\d w}{w^2(1-b w)}+\frac{1}{2\pi\i}\frac{c (1-a b)}{(c-a) (c-b)}\oint_{|w|=\rho}\frac{f(w+1/w)(1-w^2)\d w}{w^2(1-c w)}.
\end{multline}
 \end{remarka}
}
\arxiv{
  Note that in general, functional $\mathbb{L}$ takes complex values. However, our focus is on analytic functions $f$   with real coefficients in the series expansion so that $f(x)$ is real for real $x$.
  One of such functions is  $f(z)=1/(1+a^2-a z)^k$ for $0<a<1$, which is real-analytic on the half-plane   $\re (z) < a+1/a$ (which incudes the domain $D_a$ enclosed by  the ellipse $\gamma_a$). Proposition \ref{Prop:J-real} addresses this issue.
}
We now verify that the functional $\mathbb{L}\topp{a,b,c}$ is indeed an extension of the functional $\mathcal{L}\topp{a,b,c}$.
\begin{proposition}
  \label{P:ext}  If $p$ is a polynomial then
  \begin{equation}\label{J=J}
    \mathbb{L}\topp{a,b,c}[p] =\mathcal{L}\topp{a,b,c}[p].
  \end{equation}
\end{proposition}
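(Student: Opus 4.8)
The plan is to verify \eqref{J=J} on the basis $\{U_n\}_{n\ge 0}$ of monic Chebyshev polynomials and then conclude by linearity of both $\mathcal{L}\topp{a,b,c}$ and $\mathbb{L}\topp{a,b,c}$, since $\{U_0,\dots,U_n\}$ spans the polynomials of degree $\le n$. Each $U_n$ is entire, hence lies in $\mathcal{A}_{1/R}$ for $R$ as in \eqref{R}, so $\mathbb{L}\topp{a,b,c}[U_n]$ is well defined; moreover there is no constraint coming from $\rho'(f)$, and one may evaluate the functional through the $w$-integral \eqref{J[f]-w} with any fixed $\rho\in(0,1/R)$, the value being independent of that choice as noted after Definition~\ref{Def-J-ext0}.

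Next I would carry out the residue computation. Writing $w+1/w=2\cos\theta$ with $w=e^{\i\theta}$ in \eqref{U-def} gives $U_n(w+1/w)=\frac{w^{n+1}-w^{-(n+1)}}{w-w^{-1}}$, equivalently $U_n(w+1/w)=\sum_{k=0}^n w^{n-2k}$, and a one-line telescoping then yields the Laurent-polynomial identity
\[
(1-w^2)\,U_n(w+1/w)=w^{-n}-w^{n+2}.
\]
Substituting into \eqref{J[f]-w},
\[
\mathbb{L}\topp{a,b,c}[U_n]=\frac{1}{2\pi\i}\oint_{|w|=\rho}\frac{\big(w^{-n}-w^{n+2}\big)(1-abcw)}{w(1-aw)(1-bw)(1-cw)}\,\d w .
\]
Since $\rho R<1$, the zeros $1/a,1/b,1/c$ of the denominator lie outside $|w|=\rho$, so the only singularity inside the contour is $w=0$ and the integral equals the residue there. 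The $w^{n+2}$-term contributes $-w^{n+1}(1-abcw)/\big((1-aw)(1-bw)(1-cw)\big)$, which is holomorphic at $0$ and vanishes there to order $n+1\ge 1$, hence contributes nothing; the $w^{-n}$-term contributes $\operatorname{Res}_{w=0}\frac{1-abcw}{w^{n+1}(1-aw)(1-bw)(1-cw)}$, which is the coefficient of $w^n$ in the Taylor expansion of $\frac{1-abcw}{(1-aw)(1-bw)(1-cw)}$. By the generating-function identity \eqref{KS-Thm4.1abc}, that coefficient is exactly $\mathcal{L}\topp{a,b,c}[U_n]$. Hence $\mathbb{L}\topp{a,b,c}[U_n]=\mathcal{L}\topp{a,b,c}[U_n]$ for all $n$, and \eqref{J=J} follows by linearity.

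I do not expect a genuine obstacle here: this is the routine consistency check that the contour-integral extension of Definition~\ref{Def-J-ext0} agrees with the moment functional on polynomials. The only points needing care are the bookkeeping that $w^{-n}/w$ has a pole of order $n+1$ at the origin, so the relevant residue is a Taylor coefficient (which is why the generating function \eqref{KS-Thm4.1abc} enters) rather than a point value; and the observation that all the poles $1/a,1/b,1/c$ lie outside $|w|=\rho$, which is precisely the standing hypothesis $\rho R<1$ built into the definition. An alternative route — checking $\mathbb{L}\topp{a,b,c}[1]=1$ and $\mathbb{L}\topp{a,b,c}[W_n]=0$ for $n\ge 1$ and invoking the uniqueness in the definition of $\mathcal{L}\topp{a,b,c}$ — also works, but it forces one to track the irregular low-degree terms \eqref{W1}--\eqref{W2}, so the $\{U_n\}$ computation above is cleaner.
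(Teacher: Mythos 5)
Your proposal is correct and follows essentially the same route as the paper: verify \eqref{J=J} on the Chebyshev basis $\{U_n\}$ via the contour form \eqref{J[f]-w}, observe that the $w^{n+1}$-part contributes nothing while the $w^{-(n+1)}$-part yields the coefficient of $w^n$ in $\tfrac{1-abcw}{(1-aw)(1-bw)(1-cw)}$, and match with the generating function \eqref{KS-Thm4.1abc}. The only cosmetic difference is that the paper expands the geometric series explicitly and re-sums, whereas you read off the residue as a Taylor coefficient directly; both are the same computation.
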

\begin{proof}
  It suffices to verify \eqref{J=J} for $p=U_n$, the $n$-th Chebyshev polynomial. We use the well know formula
  \begin{equation}\label{U(w+1/w)}
  U_n\left(w+\tfrac 1w\right)=\begin{cases}
    \frac{w^{n+1}-1/w^{n+1}}{w-1/w} & w\ne \pm 1 \\
    n+1 & w=1 \\
    (-1)^n(n+1) & w=-1
  \end{cases}
  \end{equation} which can be checked from the three step recurrence relation \eqref{U-rec}.

  Consider the integral
  \eqref{J[f]-w}.     For $n=1,2,\dots$ we have
   \begin{multline}\label{JabcUn}
 \mathbb{L}\topp{a,b,c}[U_n]= \frac{1}{2\pi\i} \oint_{|w|=\rho}\frac{U_n\left(w+\frac 1w\right)(1-w^2)(1-abcw)}{w(1-a w)}\,\d w
  \\= \frac{1}{2\pi\i} \oint_{|w|=\rho}\frac{\frac{1/w^{n+1}-w^{n+1}}{1/w-w}(1-w^2){(1-a b c w)}}{w(1-a w)(1-bw)(1-cw)}\,\d w
  = \frac{1}{2\pi\i} \oint_{|w|=\rho}\frac{(1/w^{n+1}-w^{n+1})(1-abcw)}{(1-a w)(1-bw)(1-cw)}\,\d w\\
  =\frac{1}{2\pi\i} \oint_{|w|=\rho}\frac{1/w^{n+1}(1-abcw)}{(1-a w)(1-bw)(1-cw)}\,\d w-\frac{1}{2\pi\i} \oint_{|w|=\rho}\frac{w^{n+1}(1-abcw)}{(1-a w)(1-bw)(1-cw)}\,\d w.
  \end{multline}
The second integral vanishes as  the integrand %
is an analytic function  on the disk $|w|<|\rho|$. In the first integral,   we use the fact that with $\max\{|a|,|b|,|c|\}\rho\le \rho R<1$ the geometric series $\sum_{k\ge 0}\,a^k w^k$, $\sum_{k\ge 0}\,b^k w^k$ and $\sum_{k\ge 0}\,c^k w^k$ converge. Expanding the integrand into these series we obtain
$$
\mathbb{L}\topp{a,b,c}[U_n]=\frac{1}{2\pi\i} \oint_{|w|=\rho}\frac{{ (1-a b c w)}}{w^{n+1}}\sum_{r=0}^\infty {w^r}\left(\sum_{\substack{i+j+k=r\\
  i,j,k\ge 0} }\,a^ib^jc^k\right)\,\d w
 =\sum_{\substack{i+j+k=n\\
  i,j,k\ge 0} }\,a^ib^jc^k   - a b c\sum_{\substack{i+j+k=n-1\\
  i,j,k\ge 0} }\,a^ib^jc^k, \quad n\geq 1.
$$
Clearly, $\mathbb{L}\topp{a,b,c}[U_0]=1$.

We thus see that the generating function
\begin{multline*}
   \sum_{n=0}^\infty z^n\mathbb{L}\topp{a,b,c}[U_n]=
 \sum_{n=0}^\infty \sum_{\substack{i+j+k=n\\
  i,j,k\ge 0} }\,(az)^i(bz)^j(cz)^k- abc z\sum_{n=1}^\infty \sum_{\substack{i+j+k=n-1\\
  i,j,k\ge 0} }\,(az)^i(bz)^j(cz)^k
\\ =(1-abc z)\sum_{i=0}^\infty(az)^i\sum_{j=0}^\infty(bz)^j\sum_{k=0}^\infty(cz)^k=\frac{1-abcz}{(1-az)(1-bz)(1-cz)}
\end{multline*}
matches the generating function \eqref{KS-Thm4.1abc}.  This shows that
\eqref{J=J} holds   for $p=U_n$, $n=0,1,\dots$.
\end{proof}
The following result will allow us to derive several properties of $\mathbb{L}\topp{a,b,c}$ by elementary calculations with polynomials.
\begin{proposition}\label{P:poly-appr}
Fix    $a,b,c\in\CC $ and let $R$ be given by \eqref{R}.
If $f\in \mathcal{A}_{1/R}$, then there exists a sequence $(p_n)$ of polynomials such that $p_n\to f$ uniformly on $\bar D_{1/R}$.
Furthermore, if a sequence of polynomials  $p_n\to f$ uniformly on $\bar D_{1/R}$, then $\mathbb{L}\topp{a,b,c}[f]=\lim_{n\to\infty}    \mathcal{L}\topp{a,b,c}[p_n]$.
\end{proposition}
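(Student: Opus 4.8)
Here is how I would approach the proof.

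\medskip

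\noindent\emph{Existence of the approximants.} Since $f\in\mathcal A_{1/R}$, it extends holomorphically to $D_{\rho'}$ for some $\rho'\in(0,1/R)$, so $w\mapsto f(w+1/w)$ is holomorphic on the annulus $\rho'<|w|<1/\rho'$ and invariant under $w\mapsto 1/w$. Consequently $f$ admits a Chebyshev expansion $f=\sum_{n\ge0}c_nU_n$ whose coefficients decay geometrically, $|c_n|\le C(f)\,\rho_1^{\,n}$ for every $\rho_1\in(\rho',1)$ (bound the Laurent coefficients of $f(w+1/w)$ on a circle $|w|=\sigma$ with $\sigma$ slightly below $1/\rho'$). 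Since $|U_n|\le C_\sigma\sigma^{-n}$ on $\gamma_\sigma$ by \eqref{U(w+1/w)}, hence on $\bar D_\sigma$ by the maximum principle, the partial sums $s_N:=\sum_{n=0}^{N}c_nU_n$ converge to $f$ uniformly on $\bar D_\sigma$ for every $\sigma\in(\rho',1)$ — in particular uniformly on $\bar D_{1/R}$, and also on $\bar D_\rho$ for each $\rho\in(\rho',1/R)$. This proves the first assertion with $p_N:=s_N$. (One could instead invoke Runge's theorem, $\bar D_{1/R}$ being compact with connected complement; the point of the Chebyshev sums is that they converge on a slightly larger ellipse, which I use below.)

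\medskip

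\noindent\emph{Strategy for the convergence statement.} I would first dispose of the distinguished sequence $s_N$. Fixing $\rho\in(\rho'(f),1/R)$ and using \eqref{J[f]-w} on $|w|=\rho$, both $\mathbb L\topp{a,b,c}[f]$ and $\mathcal L\topp{a,b,c}[s_N]=\mathbb L\topp{a,b,c}[s_N]$ (Proposition \ref{P:ext}) are integrals against the kernel $\tfrac{(1-w^2)(1-abcw)}{w(1-aw)(1-bw)(1-cw)}$, which is bounded on $|w|=\rho$ because $\rho R<1$; hence $\bigl|\mathbb L\topp{a,b,c}[f]-\mathcal L\topp{a,b,c}[s_N]\bigr|\le M_\rho\,\|f-s_N\|_{\bar D_\rho}\to0$. (Equivalently $\mathbb L\topp{a,b,c}[f]=\sum_{n\ge0}c_n\,\mathcal L\topp{a,b,c}[U_n]$, the series converging absolutely since $|\mathcal L\topp{a,b,c}[U_n]|\le C'R^{\,n}$ by \eqref{Jabc} while $|c_n|\le C\rho_1^{\,n}$ with $\rho_1R<1$.) To reach an arbitrary sequence $p_n\to f$ uniformly on $\bar D_{1/R}$, the only extra ingredient is a degree‑free bound
\[
\bigl|\mathcal L\topp{a,b,c}[p]\bigr|\ \le\ C\,\|p\|_{\bar D_{1/R}}\qquad\text{for all polynomials }p;
\]
granting it, an $\eps/3$ argument closes the proof: pick $N$ with $\|f-s_N\|_{\bar D_{1/R}}$ and $\bigl|\mathcal L\topp{a,b,c}[s_N]-\mathbb L\topp{a,b,c}[f]\bigr|$ small, then $n$ large, and estimate $\mathcal L\topp{a,b,c}[p_n-s_N]$ by the bound applied to the polynomial $p_n-s_N$.

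\medskip

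\noindent\emph{The bound, and the main obstacle.} To prove the displayed estimate I would deform the contour in $\mathcal L\topp{a,b,c}[p]=\mathbb L\topp{a,b,c}[p]=\tfrac1{2\pi\i}\oint_{\gamma_\rho}p(z)K(z)\,\d z$, with $K(z)=\tfrac{\UU(z)(1-abc\UU(z))}{(1-a\UU(z))(1-b\UU(z))(1-c\UU(z))}$ and $\rho<1/R$, inward onto the slit $[-2,2]$. The only singularities of $p\,K$ swept over are the poles $z_j=a_j+1/a_j$ of $K$ coming from the parameters $a_j\in\{a,b,c\}$ with $|a_j|>1$, and each such $z_j$ lies in $\bar D_{1/R}$ since $|\UU(z_j)|=1/|a_j|\ge1/R$; the residue theorem then gives
\[
\mathcal L\topp{a,b,c}[p]=\frac1{2\pi\i}\int_{-2}^{2}p(x)\bigl(\kappa^-(x)-\kappa^+(x)\bigr)\,\d x+\sum_j\mathrm{Res}_{\,z=z_j}\bigl(p\,K\bigr),
\]
where $\kappa^\pm$ are the boundary values of $K$ on the two sides of the cut. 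When $|a|,|b|,|c|\ne1$ these boundary values are bounded, so the integral term is $\le C_1\|p\|_{[-2,2]}\le C_1\|p\|_{\bar D_{1/R}}$, and each residue is a fixed linear combination of $p(z_j)$ and, if parameters coincide, of $p'(z_j),p''(z_j)$ — all bounded by $C_j\|p\|_{\bar D_{1/R}}$ through a Cauchy estimate on a small disc about $z_j$ inside $\bar D_{1/R}$. The step I expect to be delicate is exactly this Cauchy estimate: it requires $z_j$ to lie in the \emph{interior} $D_{1/R}$, which is automatic when $R=1$ (no poles at all) and, more generally, whenever the parameters of modulus $>1$ are simple so that no derivatives of $p$ enter — this is the case in the parameter ranges used in the sequel (where $\A\Ca<1$, $\A\Cb t^2<1$, etc.). The borderline configuration of a repeated parameter on or outside the unit circle (where also $\kappa^\pm$ need not be integrable) has to be handled separately, e.g.\ by analytic continuation in $(a,b,c)$, using that $\mathbb L\topp{a,b,c}[f]$ depends analytically on the parameters as long as no pole $a_j+1/a_j$ reaches the fixed contour $\gamma_\rho$.
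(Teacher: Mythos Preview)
The paper's proof is much shorter than yours: it applies Mergelyan's theorem on the compact $\bar D_\rho$ for some $\rho\in(\rho',1/R)$, producing polynomials that converge uniformly on the \emph{larger} ellipse $\bar D_\rho\supset\bar D_{1/R}$; since the contour $\gamma_\rho$ lies in $\bar D_\rho$ and the kernel in \eqref{J[f]-w} is bounded there, convergence of the integral is immediate. This disposes of existence and of the limit identity for that particular sequence, but the paper does not argue the ``Furthermore'' clause for an \emph{arbitrary} sequence converging only on $\bar D_{1/R}$. Your Chebyshev-expansion construction is a clean alternative to Mergelyan, and your contour-deformation bound correctly handles the generic situation of simple parameters (only point evaluations at the $z_j\in\bar D_{1/R}$ enter).

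Your instinct that the repeated-parameter boundary case is the genuine obstacle is correct, but the suggested fix by analytic continuation in $(a,b,c)$ cannot succeed: the ``Furthermore'' clause as literally stated is \emph{false} there. Take $a=b=2$, $c=0$, so $R=2$ and $z_a=\tfrac52\in\partial\bar D_{1/2}$. By \eqref{Jab00} one has $\mathcal L\topp{2,2}[U_n]=(n{+}1)2^n$, while \eqref{U(w+1/w)} and the estimate $|w-1/w|\ge\tfrac32$ on $|w|=\tfrac12$ give $\|U_n\|_{\bar D_{1/2}}\le\tfrac23\bigl(2^{n+1}+2^{-n-1}\bigr)$. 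Hence with $\eps_n=1/(n2^n)$ and Mergelyan approximants $p_n$ of a fixed $f\in\mathcal A_{1/2}$, the polynomials $q_n:=p_n+\eps_n U_n$ still converge to $f$ uniformly on $\bar D_{1/2}$, yet $\mathcal L\topp{2,2}[q_n]-\mathcal L\topp{2,2}[p_n]=(n{+}1)/n\to1$. So the degree-free bound you sought does not exist in this case, and no rearrangement of the argument can produce one. The applications later in the paper need only the weaker version---some sequence works---which both you and the paper establish.
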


\begin{proof}Choose  $\rho'<\rho<1/R$ with $\rho'=\rho'(f)$ as in \eqref{Def-A}. By Mergelyan's theorem \cite[Theorem 20.5]{Rudin-87}, we can find a sequence of polynomials $p_n$ such that $p_n\to f$ uniformly on $\bar{D}_{\rho}$ and hence also on $\bar{D}_{1/R} \subset\bar{D}_{\rho}$.  Since the other factors in the integrand in \eqref{J[f]++} are continuous on the ellipse $\gamma_\rho$, we get
$\mathbb{L}\topp{a,b,c}[p_n]\to\mathbb{L}\topp{a,b,c}[f]$.  Identity \eqref{J=J}  ends the proof.
\end{proof}
 We remark that in view of Proposition \ref{P:poly-appr}, identity \eqref{Redukcja} implies
  \begin{equation}
     \label{Redukcja+}
      \mathbb{L}\topp{a,b,c}\left[\q_c f\right] =(1- a c)(1-b c)\mathbb{L}\topp{a,b}[f].
 \end{equation}
Another useful property is continuity in parameters.
\begin{proposition}%
\label{P:cont-abc}   Fix $R_0\geq 1$,  and consider $a,b,c$ in the
disk $\bar D=\{|z|\leq R_0\}$.
If  $f\in\mathcal{A}_{1/R_0}$, then  the mapping $(a,b,c)\mapsto \mathbb{L}\topp{a,b,c}[f]$ is continuous on $\bar D^3$.
\end{proposition}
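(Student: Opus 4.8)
The idea is to represent $\mathbb{L}\topp{a,b,c}[f]$ by the contour integral \eqref{J[f]-w} over a \emph{single} circle $\{|w|=\rho\}$ that is admissible for every triple $(a,b,c)\in\bar D^3$, and then to invoke the standard continuity of an integral depending on a parameter.

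First I would fix the contour. Since $f\in\mathcal A_{1/R_0}$, definition \eqref{Def-A} supplies $\rho'=\rho'(f)$ with $0<\rho'<1/R_0$ to which $f$ extends analytically, so we may fix once and for all some $\rho\in(\rho',1/R_0)$. For every $(a,b,c)\in\bar D^3$ one has $|a|,|b|,|c|\le R_0$, hence $R(a,b,c)=\max\{1,|a|,|b|,|c|\}\le R_0$ and therefore $\rho'<\rho<1/R_0\le 1/R(a,b,c)$; in particular $f\in\mathcal A_{1/R(a,b,c)}$ (via the restriction $\mathcal A_{1/R_0}\subseteq\mathcal A_{1/R(a,b,c)}$ noted after \eqref{Def-A}), so $\mathbb{L}\topp{a,b,c}[f]$ is defined, and by the contour-independence observation following Definition~\ref{Def-J-ext0} it is given by \eqref{J[f]-w} with our fixed $\rho$:
$$
\mathbb{L}\topp{a,b,c}[f]=\frac{1}{2\pi\i}\oint_{|w|=\rho}\frac{f\!\left(w+\tfrac1w\right)(1-w^2)(1-abc\,w)}{w(1-aw)(1-bw)(1-cw)}\,\d w \qquad\text{for all }(a,b,c)\in\bar D^3.
$$

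Next I would establish joint continuity of the integrand on the compact set $K:=\bar D^3\times\{w:|w|=\rho\}$. The numerator is continuous on $K$: the map $w\mapsto f(w+1/w)$ is continuous on $\{|w|=\rho\}$ because $\rho>\rho'$ forces $\gamma_\rho\subset D_{\rho'}$, where $f$ is analytic, and the remaining factor $(1-w^2)(1-abc\,w)$ is a polynomial in $(a,b,c,w)$. The denominator $w(1-aw)(1-bw)(1-cw)$ is continuous on $K$ and nowhere zero there, since $|w|=\rho>0$ and $|aw|=|a|\rho\le R_0\rho<1$, so $1-aw\neq 0$, and likewise $1-bw\neq0$, $1-cw\neq0$. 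Hence the integrand is continuous on the compact set $K$, and therefore uniformly continuous and bounded on $K$.

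Finally, uniform continuity on $K$ (equivalently, dominated convergence using the bound just noted) shows that the contour integral above is a continuous function of $(a,b,c)$ on $\bar D^3$: if $(a_n,b_n,c_n)\to(a,b,c)$ in $\bar D^3$, the corresponding integrands converge uniformly in $w$ over $\{|w|=\rho\}$, hence the integrals converge. This proves that $(a,b,c)\mapsto\mathbb{L}\topp{a,b,c}[f]$ is continuous on $\bar D^3$. The only step that requires any care is the first one — checking that one fixed contour is admissible throughout $\bar D^3$ — and this is precisely where the hypothesis $f\in\mathcal A_{1/R_0}$, rather than merely membership in $\mathcal A_{1/R(a,b,c)}$ for each individual triple, is used.
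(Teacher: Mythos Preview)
Your proof is correct and follows essentially the same approach as the paper: fix a single circle $\{|w|=\rho\}$ with $\rho'<\rho<1/R_0$ that works for all $(a,b,c)\in\bar D^3$, observe that the integrand in \eqref{J[f]-w} is jointly continuous and uniformly bounded there (the paper writes out the explicit bound $\sup_{|w|=\rho}|f(w+1/w)|\,(1+\rho^2)(1+R_0^3\rho)/(\rho(1-\rho R_0)^3)$), and conclude by dominated convergence.
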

\begin{proof} %
   With $\rho'<1/R_0$   from \eqref{Def-A}, choose  $\rho$ such that  $\rho'<\rho<1/R_0$ so that $f$ is continuous  on $\gamma_\rho$.  Then with $R=R(a,b,c)$ given by \eqref{R}, we have $R\leq R_0$, so $\rho<1/R(a,b,c)$ for all $a,b,c\in\bar D\color{black}$. %
    Since $|w|=\rho$, the integrand in \eqref{J[f]-w} is a continuous function of $a,b,c\in \bar D$  and is  a  bounded  function of variable  $w$ on the circle $|w|=\rho$; the integrand is bounded   by
    $$\sup_{|w|=\rho} |f(w+1/w)| \frac{(1+\rho^2)(1+R_0^3 \rho)}{\rho (1-\rho R_0)^3}.$$
    Thus, the conclusion holds by the dominated convergence theorem.
\end{proof}

The following  confirms heuristic interpretation of formula \eqref{KS-Thm4.1abc} as $\mathcal{L}\topp{a,b,c}\left[\q_z^{-1}\right]$ mentioned before.
 \begin{proposition}\label{P:KS-rig} Recall \eqref{R}. If $0<|v|<1/R$, where $R$ is given by \eqref{R}, then
    \begin{equation}\label{KS-T4.1abc++}
       \mathbb{L}\topp{a,b,c}\left[\q_v^{-1}\right]
       =\frac{1-a b c  v}{(1- a v)(1-b v)(1-c v)}.
     \end{equation}
 \end{proposition}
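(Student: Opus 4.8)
The plan is to reduce the claim for general $v$ with $0<|v|<1/R$ to the already-established polynomial generating function identity \eqref{KS-Thm4.1abc}, using the contour representation \eqref{J[f]-w} and the analyticity of $f = \q_v^{-1}$. First I would observe that $\q_v(y) = 1 + v^2 - v y$ vanishes only at $y = v + 1/v$, and since $|v| < 1/R \leq 1$ we have $\UU(v+1/v) = v$, so the pole of $f$ lies at a point whose $\UU$-image is $v$ with $|v| < 1/R$; hence $f = \q_v^{-1}$ is analytic on $\bar D_{1/R}$ — more precisely $f \in \mathcal{A}_{1/R}$, because one can pick $\rho'$ with $|v| < \rho' < 1/R$ and then $f$ extends analytically to $D_{\rho'}$ (the pole sits outside $\gamma_{\rho'}$ since $\UU(v+1/v) = v$ has modulus $< \rho'$... wait, I need $|v|$ compared to $\rho'$ correctly). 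Let me instead simply note $f$ is analytic on the complement of $\{v+1/v\}$, and choose the contour $\gamma_\rho$ with $\max\{|v|,1/R^{-1}\text{-stuff}\}$... the cleanest route: pick $\rho$ with $|v| < \rho < 1/R$; then on and inside $\gamma_\rho$, does $f$ have a pole? The pole is at $v + 1/v = \UU^{-1}$-preimage, and $\UU(v+1/v) = v$ has $|v| < \rho$, meaning $v+1/v$ lies \emph{outside} $\gamma_\rho$ (since $\gamma_\rho$ encloses exactly the $z$ with $|\UU(z)| \leq \rho$)? No — $|\UU(z)| \le \rho$ means $z$ is enclosed by $\gamma_\rho$. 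So $|v| < \rho$ means $v+1/v$ \emph{is} enclosed. So I need $\rho < |v|$, but also $\rho > \rho'(f)$ and $\rho < 1/R$: choose any $\rho \in (0, \min\{|v|, 1/R\})$; then $f$ is analytic on $\bar D_\rho$, in fact on a slightly larger $D_{\rho'}$, so $f \in \mathcal{A}_{1/R}$ after noting that $\mathbb{L}$'s value is computed with this $\rho$.

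The main computation is then the following. Substitute $f(w+1/w) = 1/(1+v^2 - v(w+1/w)) = w/(w + v^2 w - v w^2 - v) = -w/(v w^2 - (1+v^2) w + v) = -w/(v(w-v)(w - 1/v)) = \frac{w}{v(1/v - w)(w - v)} = \frac{w}{(1 - vw)(w-v)}$ — let me recheck: $v(w-v)(w-1/v) = v(w^2 - w/v - vw + 1) = vw^2 - w - v^2 w + v = vw^2 - (1+v^2)w + v$, and $1 + v^2 - v(w+1/w) = \frac{w + v^2 w - vw^2 - v}{w} = \frac{-(vw^2 - (1+v^2)w + v)}{w} = \frac{-v(w-v)(w-1/v)}{w}$, so $f(w+1/w) = \frac{-w}{v(w-v)(w-1/v)} = \frac{w}{v(w-v)(1/v - w)} = \frac{w}{(w-v)(1-vw)}$. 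Plugging into \eqref{J[f]-w},
\[
\mathbb{L}\topp{a,b,c}[\q_v^{-1}] = \frac{1}{2\pi\i}\oint_{|w|=\rho} \frac{w (1-w^2)(1-abcw)}{w(1-aw)(1-bw)(1-cw)(w-v)(1-vw)}\,\d w = \frac{1}{2\pi\i}\oint_{|w|=\rho} \frac{(1-w^2)(1-abcw)}{(1-aw)(1-bw)(1-cw)(w-v)(1-vw)}\,\d w.
\]
Inside $|w| = \rho$ (with $\rho < |v| \leq 1/R$), the only pole of the integrand is the simple pole at $w = v$ (the factors $1-aw, 1-bw, 1-cw, 1-vw$ have zeros at $1/a, 1/b, 1/c, 1/v$, all of modulus $\geq 1/R > \rho$ or $= 1/|v| > \rho$). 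By the residue theorem the integral equals $\mathrm{Res}_{w=v} = \frac{(1-v^2)(1-abcv)}{(1-av)(1-bv)(1-cv)(1-v^2)} = \frac{1-abcv}{(1-av)(1-bv)(1-cv)}$, which is exactly \eqref{KS-T4.1abc++}.

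Alternatively — and this is the route I would actually present if I wanted to avoid re-justifying the residue computation — I would invoke the generating-function identity already proved: for $|z| < 1/R$ the series $\sum_{n\ge 0} z^n U_n(y)$ converges (to $1/\q_z(y)$) uniformly for $y \in \bar D_{1/R}$ when $|z| < 1/R$, since $|U_n(w+1/w)| \le (n+1)\rho^n$-type bounds give geometric decay; then linearity and continuity of $\mathbb{L}\topp{a,b,c}$ (Proposition~\ref{P:poly-appr}, applied to the partial sums which converge uniformly on $\bar D_{1/R}$) give $\mathbb{L}\topp{a,b,c}[\q_v^{-1}] = \sum_{n\ge 0} v^n \mathbb{L}\topp{a,b,c}[U_n] = \sum_{n\ge 0} v^n \mathcal{L}\topp{a,b,c}[U_n]$, and this last series is precisely \eqref{KS-Thm4.1abc} evaluated at $z = v$. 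The only real point to check in this version is the uniform convergence of $\sum v^n U_n$ on $\bar D_{1/R}$, which follows from $\sup_{z \in \bar D_r}|U_n(z)| = O((1/r)^n \cdot n)$ together with $|v| < 1/R$ allowing a choice of $r$ with $|v| < 1/r < 1/R$... \emph{wait}, I need $|v| \cdot (1/r) < 1$, i.e. $r > |v|$, and $r \le 1/R$, so I need $|v| < 1/R$ — exactly the hypothesis. I expect the bookkeeping around which radius to use (keeping $\rho$ or $r$ strictly between $|v|$ and $1/R$, and confirming $\q_v^{-1} \in \mathcal{A}_{1/R}$) to be the only genuinely fiddly point; the residue computation itself is routine. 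I would present the residue/contour argument as the main proof since it is self-contained and transparent.
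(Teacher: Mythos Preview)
Your approach is exactly the paper's: write $\q_v^{-1}(w+1/w)=\dfrac{w}{(1-vw)(w-v)}$, plug into \eqref{J[f]-w}, and pick up the single residue at $w=v$. The residue computation you give is correct and matches the paper verbatim.

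However, you have the inequality for $\rho$ reversed. You want $|v|<\rho<1/R$, not $\rho<|v|$. The point is that smaller $\rho$ corresponds to a \emph{larger} ellipse $\gamma_\rho$ (and hence a larger $D_\rho$): the region $D_\rho$ is the image of the annulus $\rho<|w|\le 1$, so $z\in D_\rho$ iff $|\UU(z)|>\rho$, and the pole $v+1/v$ (with $|\UU(v+1/v)|=|v|$) lies \emph{inside} $\gamma_\rho$ precisely when $|v|>\rho$. Thus $\q_v^{-1}$ is analytic on $D_{|v|}$, giving $\rho'(f)=|v|$, and the contour in \eqref{J[f]-w} must be taken with $\rho\in(|v|,1/R)$. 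With your stated choice $\rho<|v|$ the point $w=v$ sits \emph{outside} the circle $|w|=\rho$, there are no poles inside, and the integral would vanish. With the correct choice $|v|<\rho<1/R$, the factors $1-aw$, $1-bw$, $1-cw$, $1-vw$ have their zeros at moduli $\ge R>1/\rho$ and $1/|v|>1/\rho$, so $w=v$ is indeed the unique pole inside and your residue gives \eqref{KS-T4.1abc++}. This is precisely the ``fiddly bookkeeping'' you flagged; once corrected your argument is the paper's proof.
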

\arxiv{ \begin{proof}[Combinatorial proof of Proposition \ref{P:KS-rig}]
     The generating function $\q_v^{-1}(y)=\sum_{n=0}^\infty v^n U_n(y)$ gives $q_v^{-1}\in \mathcal{A}_{1/R}$  with explicit polynomials approximation.
     Indeed, every  point $y\in D_{|v|}$   can be written as $y=w+1/w$
 with $1\geq |w|>|v|$.  We see
that $$\q_v^{-1}(w+1/w)=\frac{1}{(1-v w)(1-v /w)}=\frac{w}{(1- v w)(w-v )}$$ has no zeros in the denominator, as $|v w|=|v||w|\leq |v|<1/R\leq 1$  and $|v/w|=|v|/|w|<1$. So $1/\q_v$ is analytic on $D_{|v|}\supset D_{1/R}$.

     From Proposition  \ref{P:poly-appr} we obtain
     $$\mathbb{L}\topp{a,b,c}\left[\q_v^{-1}\color{black}\right]=\sum_{n=0}^\infty v^n \mathcal{L}\topp{a,b,c}[U_n] $$
so \eqref{KS-T4.1abc++} follows from \eqref{KS-Thm4.1abc}.

  \end{proof}
 }
 \begin{proof}
 The function  $1/\q_v$ is an analytic function of variable $y$ on the domain
 { $D_{|v|}$ } enclosed by the ellipse $\gamma_{|v|}$. Indeed, every  point $y\in D_{|v|}$   can be written as $y=w+1/w$
 with $1\geq |w|>|v|$.  We see
that
\begin{equation}
    \label{q-fac}
    \frac{1}{\q_v(w+1/w)}=\frac{1}{(1-v w)(1-v /w)}=\frac{w}{(1- v w)(w-v )}
\end{equation}
  has no zeros in the denominator, as $|v w|=|v||w|\leq |v|<1/R\leq 1$  and $|v/w|=|v|/|w|<1$.

Choose $\rho$ such that $|v|<\rho<1/R$. We  apply the definition \eqref{J[f]++} in the equivalent form  \eqref{J[f]-w}.
We get %
\[
     \mathbb{L}\topp{a,b,c}\left[\q_v^{-1}\color{black}\right] =
     \frac{1}{2\pi\i} \oint_{|w|=\rho} \frac{ (1- a b c w)(1-w^2)}{(1-vw)(w-v) (1-a w)(1-b w)(1-c w)} \,\d w .
\]
We note  that  the last three factors in the  denominator do not have singularities: $|aw|=|a| \rho<|a|/R<1$ and the same holds for $b$ and $c$. Since $|v w|\leq \rho^2<1$, we see that  in the   disk $|w|<\rho$  the integrand has only one pole at $w=v$ (recall that  $|v|<\rho$).  We get
$$
   \mathbb{L}\topp{a,b,c}\left[\q_v^{-1}\color{black}\right] = \textnormal{Res}_{w=v}\left( \frac{ (1- a b c w)(1-w^2)}{(1-vw)(w-v) (1-a w)(1-b w)(1-c w)}\right)
=\frac{ (1- a b c v)}{ (1-a v)(1-b v)(1-c v)},
$$
 which is  \eqref{KS-Thm4.1abc}.
 \end{proof}

\subsection{Askey--Wilson functionals and complex Askey--Wilson measures} In \cite{WWY-2024}, the authors
  worked out { an integral} representation of the moment functional for  the  family of Askey--Wilson polynomials with arbitrary parameter $q\in[0,1)$,
  using a signed measure of finite variation.
For our purposes, it is necessary to allow complex parameters, which gives rise to integral representations involving complex measures under suitable assumptions. We note that in general, free Askey--Wilson functional is not given as an integral, see formula \eqref{J2c2a} in Remark \ref{R:J-rep} for the simplest example.

The following formula together with \eqref{Jabc2JaJbJc}  gives a useful expression for $\mathbb{L}\topp {a,b,c} [f]$ when the parameters are distinct.
 \begin{lemma}\label{L:anti-reduce}
  Recall notation \eqref{Jck-q}.  Fix $a\in\CC$ such that $|a|\ne 1$ and let $R=\max\{1,|a|\}$. Then for  $f\in\mathcal{A}_{1/R}$,  we have
    \begin{equation}
        \label{Jck2}
          \mathbb{L}\topp {a} [f]= \mathbf{1}_{|a|>1} \left(1-\tfrac{1}{a^2}\right) f\left(a+\tfrac1a\right)+
          \frac{1}{2\pi}\int_{-2}^2\,f(y)\, \frac{\sqrt{4-y^2}}{\q_a(y)}\,\d y
          .
    \end{equation}
\end{lemma}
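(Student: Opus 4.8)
The plan is to compute the contour-integral representation of $\mathbb{L}\topp{a}[f]$ from Definition~\ref{Def-J-ext0} (with $b=c=0$) and evaluate it by residues. By \eqref{J[f]-w} with $b=c=0$ we have
\begin{equation*}
\mathbb{L}\topp{a}[f]=\frac{1}{2\pi\i}\oint_{|w|=\rho}\frac{f(w+1/w)(1-w^2)}{w(1-aw)}\,\d w,
\end{equation*}
valid for any $\rho\in(\rho'(f),1/R)$. The integrand, as a function of $w$ inside $|w|<\rho$, is analytic except at $w=0$; the possible singularity at $w=1/a$ lies outside the contour precisely because $|1/a|>1/|a|\ge 1/R\ge\rho$ when $|a|>1$ (and when $|a|<1$ there is no such pole at all since $|1/a|>1>\rho$). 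So the integral equals the residue at $w=0$, which is $f(0+1/0)\cdot\dots$ — but this is not literally meaningful, so instead I would \emph{deform} the contour. First I would push $\rho$ up toward $1$: as $\rho\to 1^-$, the circle $|w|=\rho$ degenerates onto the unit circle, which the Joukowsky map wraps twice onto $[-2,2]$, and this is exactly the mechanism that produces the integral $\frac{1}{2\pi}\int_{-2}^2 f(y)\sqrt{4-y^2}/\q_a(y)\,\d y$ together with the residue contribution at $w=1/a$ when $|a|>1$.

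More carefully: the plan is to split into cases. \textbf{Case $|a|<1$.} Then $R=1$, and we may take any $\rho<1$; enlarge $\rho$ toward $1$. The integrand has, inside $|w|<1$, only the pole at $w=0$ from the factor $1/w$ and the pole at $w=1/a$ which lies \emph{outside} since $|1/a|>1$. Wait — I should double-check whether $w=1/a$ is inside; since $|a|<1$, $|1/a|>1>\rho$, so it is outside. Hence deforming $\rho\to 1$, the integral over $|w|=\rho$ equals the integral over $|w|=1$ (there are no poles in the annulus $\rho<|w|<1$, as the only interior poles are at $0$). On $|w|=1$ write $w=e^{\i\theta}$, $\theta\in[-\pi,\pi]$, so $w+1/w=2\cos\theta$, $1-w^2=-2\i\sin\theta\, e^{\i\theta}$, $\d w=\i e^{\i\theta}\d\theta$, $w(1-aw)=e^{\i\theta}(1-ae^{\i\theta})$. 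The integrand times $\d w$ becomes $\frac{f(2\cos\theta)(-2\i\sin\theta)e^{\i\theta}}{e^{\i\theta}(1-ae^{\i\theta})}\i e^{\i\theta}\,\d\theta=\frac{2\sin\theta\,e^{\i\theta}}{1-ae^{\i\theta}}f(2\cos\theta)\,\d\theta$. I would then symmetrize over $\theta\mapsto -\theta$: combining $\theta$ and $-\theta$ contributions, the factor $\frac{e^{\i\theta}}{1-ae^{\i\theta}}+\frac{e^{-\i\theta}}{1-ae^{-\i\theta}}$ simplifies (common denominator $(1-ae^{\i\theta})(1-ae^{-\i\theta})=1+a^2-2a\cos\theta=\q_a(2\cos\theta)$) to $\frac{2\cos\theta-2a}{\q_a(2\cos\theta)}$. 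Hmm, but there is still an extra $\sin\theta$ and an odd piece; I would need to also pick up the $\i\sin\theta$ imaginary part carefully. The cleanest bookkeeping is to substitute $y=2\cos\theta$ with $\d y=-2\sin\theta\,\d\theta$ on $[0,\pi]$, $\sin\theta=\sqrt{4-y^2}/2$, and to take the real part (the answer is real for $f$ real-on-$[-2,2]$), so that the $\frac{1}{2\pi\i}$-prefactored integral collapses to $\frac{1}{2\pi}\int_{-2}^2 f(y)\frac{\sqrt{4-y^2}}{\q_a(y)}\,\d y$; this is the content of \eqref{L0} combined with \eqref{Redukcja2}, which gives $\mathbb{L}\topp a[f]=\mathbb{L}[\q_a\cdot g]$ for $g=f/\q_a$... actually the more direct route is to just \emph{cite} \eqref{L0}: by \eqref{Redukcja2}, $\mathbb{L}\topp a[\q_a h]=\mathbb{L}[h]=\frac{1}{2\pi}\int_{-2}^2 h(y)\sqrt{4-y^2}\,\d y$, but $f$ need not be divisible by $\q_a$, so instead I use \eqref{Jabc2JaJbJc}-type reasoning is unavailable for a single parameter.

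So, reconsidering, the honest plan is the residue/deformation argument above in full, and for the real-part identification, observe that $\mathbb{L}\topp a[f]$ is real whenever $f$ is real on $[-2,2]$ (which the statement implicitly assumes, or one handles real and imaginary parts separately by linearity), so that the purely-imaginary contributions cancel and $\frac{1}{2\pi\i}\oint_{|w|=1}=\frac{1}{2\pi}\int_{-2}^2\frac{f(y)\sqrt{4-y^2}}{\q_a(y)}\,\d y$. \textbf{Case $|a|>1$.} Now $R=|a|$, $\rho<1/|a|<1$, and the pole at $w=1/a$ satisfies $|1/a|<1$; when I enlarge $\rho$ toward $1$, the contour crosses this pole, so by the residue theorem $\oint_{|w|=\rho}=\oint_{|w|=1}-2\pi\i\,\mathrm{Res}_{w=1/a}$. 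The residue of $\frac{f(w+1/w)(1-w^2)}{w(1-aw)}$ at $w=1/a$ is $\frac{f(a+1/a)(1-1/a^2)}{(1/a)(-a)}=-f(a+1/a)(1-1/a^2)\cdot a\cdot\frac1a$... let me redo: $1-aw$ has a simple zero at $w=1/a$ with derivative $-a$, so $\mathrm{Res}=\frac{f(1/a+a)(1-1/a^2)}{(1/a)(-a)}=-f(a+1/a)(1-1/a^2)$. Then $-2\pi\i\cdot(-1)/(2\pi\i)=+1$ times $f(a+1/a)(1-1/a^2)$, giving exactly the boundary term $\left(1-\tfrac{1}{a^2}\right)f(a+\tfrac1a)$, and the $\oint_{|w|=1}$ piece gives the same integral as before. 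Combining the two cases yields \eqref{Jck2} with the indicator $\mathbf 1_{|a|>1}$. The \textbf{main obstacle} is the careful passage to the degenerate contour $|w|=1$ and the bookkeeping that turns the $w$-integral over the unit circle into the real integral $\frac{1}{2\pi}\int_{-2}^2 f(y)\sqrt{4-y^2}/\q_a(y)\,\d y$ — in particular justifying that the map is 2-to-1 on the boundary and that the imaginary parts cancel; a slicker alternative I would try first is to verify \eqref{Jck2} on the dense family $f=\q_v^{-1}$, $0<|v|<1/R$, using Proposition~\ref{P:KS-rig} on the left side and computing $\frac{1}{2\pi}\int_{-2}^2\frac{\sqrt{4-y^2}}{\q_a(y)\q_v(y)}\d y$ plus the residue term on the right side (an elementary residue computation), and then invoke the polynomial/analytic-function density from Proposition~\ref{P:poly-appr} together with continuity to extend to all $f\in\mathcal A_{1/R}$.
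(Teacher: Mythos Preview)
Your overall strategy---deform the contour $|w|=\rho$ out to $|w|=1$, pick up the residue at $w=1/a$ when $|a|>1$, and then convert the unit-circle integral to a real integral over $[-2,2]$---is correct and is a genuine alternative to the paper's proof. The paper takes a different route: it invokes Proposition~\ref{P:poly-appr} to reduce to $f=U_n$, then uses the Chebyshev expansion \eqref{Cheb2q} of $1/\q_a$ together with the orthogonality $\mathcal{L}[U_kU_n]=\delta_{k,n}$ to evaluate both sides directly. The paper's approach avoids any contour manipulation, while yours is more in the spirit of the expanded ``analytic'' proofs the paper gives for the three-parameter formulas in Proposition~\ref{P:J-rep}.

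That said, your execution of the unit-circle step contains a concrete error. After writing the integrand as $\frac{2\sin\theta\,e^{\i\theta}}{1-ae^{\i\theta}}f(2\cos\theta)$, the symmetrization $\theta\mapsto-\theta$ produces a \emph{difference}, not a sum: the relevant quantity is
\[
\frac{e^{\i\theta}}{1-ae^{\i\theta}}-\frac{e^{-\i\theta}}{1-ae^{-\i\theta}}
=\frac{2\i\sin\theta}{\q_a(2\cos\theta)},
\]
so the paired contribution is $\frac{4\i\sin^2\theta}{\q_a(2\cos\theta)}f(2\cos\theta)$. After dividing by $2\pi\i$ and substituting $y=2\cos\theta$ this gives \emph{exactly} $\frac{1}{2\pi}\int_{-2}^2 f(y)\sqrt{4-y^2}/\q_a(y)\,\d y$, with no imaginary remainder. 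Your appeal to ``take the real part'' is therefore unnecessary and, worse, invalid: the lemma is stated for complex $a$ and complex-valued $f$, so real-part arguments are not available. Finally, your fallback plan of checking \eqref{Jck2} on the family $\{\q_v^{-1}\}$ and ``invoking density'' does not connect to Proposition~\ref{P:poly-appr}, which gives polynomial approximation, not approximation by functions of the form $\q_v^{-1}$; you would need a separate density or analytic-continuation argument to make that route work.
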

\begin{proof} Writing $y=2 \cos \theta =e^{\i \theta}+e^{-\i \theta}$ and $a=|a| e^{\i \alpha}$, from \eqref{q-fac} we see that
$$
|\q_a(y)|=(1+|a|^2-2 |a| \cos(\alpha+\theta))((1+|a|^2-2 |a| \cos(\alpha-\theta)))
\geq (1-|a|)^4>0.
$$
Therefore, when using Proposition \ref{P:poly-appr}, we can pass  to the limit under the integral  on the right hand side of \eqref{Jck2}. Hence, it suffices to show that \eqref{Jck2} holds for polynomials, and we verify this formula for $f=U_n$, the $n$-th Chebyshev polynomial. The left hand side is $a^n$.
Noting that for $y\in[-2,2]$ we have  $|U_n(y)|\leq U_n(2)=n+1$, on the right hand side of \eqref{Jck2} we get one of the two  uniformly convergent series
\begin{equation}
    \label{Cheb2q}
    \frac{1}{\q_a(y)}=\begin{cases}
    \sum_{k=0}^\infty a^k U_k(y) & |a|<1 \\ \\
    \frac{1}{a^2}\sum_{k=0}^\infty \frac{1}{a^k} U_k(y) & |a|>1.
\end{cases}
\end{equation}
For $\mathcal{L}$, the moment functional of  polynomials  $\{U_n\}$ given by \eqref{L0}, we have $\mathcal{L}[U_kU_n]=\delta_{k,n}$. Thus we get \color{black}
$$\mathcal{L}\left[\tfrac{U_n}{\q_a}\right]=\begin{cases}
    a^n & |a|<1 \\
    1/a^{n+2} & |a|>1.
\end{cases}$$
This yields the identity for $|a|<1$. If $|a|>1$, we need to take into account the first term on the right hand side of \eqref{Jck2}. In this case, we use identity \eqref{U(w+1/w)}
 to complete the proof.
\end{proof}

We now give explicit integral expressions for $\mathbb{L}\topp{a,b,c}[f]$ for the cases we need.
\begin{proposition} \label{P:J-rep}
{ In each of the cases below, $f\in\mathcal{A}_{1/R}$, where $R=R(a,b,c)$ is given by \eqref{R}. }
Then we have the following explicit representations for the Askey--Wilson  functional. %
\begin{enumerate}[(i)]

  \item If $a,b,c$ are distinct  and %
   $|a|,|b|,|c|\ne 1$, then
   \begin{multline}\label{J2c1a}
  \mathbb{L}\topp{a,b,c}[f]=\frac{(1-ab)(1-ac)(1-bc)}{2\pi}\int_{-2}^2 f(y)\frac{\sqrt{4-y^2}}
{ \q_a(y)\q_b(y)\q_c(y)\color{black}} \,\d y  %
  \\+ \mathbf 1_{|a|>1} \frac{\left(a^2-1\right)
    (1-b c)}{(a-b)
   (a-c)}f\left(a+\tfrac{1}{a}\right) +\mathbf 1_{|b|>1}\frac{\left(b^2-1\right) (1-a c)
  }{(b-a) (b-c)} f\left(b+\tfrac{1}{b}\right)
   +
   \mathbf 1_{|c|>1} \frac{\left(c^2-1\right) (1-a b)
  }{(c-a) (c-b)}  f\left(c+\tfrac{1}{c}\right).
  \end{multline}
    In particular,  if $a,b,c$ are real or include a complex conjugate pair, then $\mathbb{L}\topp{a,b,c}[f]$ is an integral with respect to the signed measure of bounded variation on $\RR$.

        \item  If $|a|,|b|,|c|<1$ (and some may be equal), then \eqref{J2c1a} holds. That is,
   \begin{equation}\label{J2c0a}
    \mathbb{L}\topp{a,b,c}[f]=
    \frac{(1-ab)(1-ac)(1-bc)}{2\pi}\int_{-2}^2 f(y)\frac{\sqrt{4-y^2}}
    {\q_a(y)\q_b(y)\q_c(y)}\,\d y  %
    \color{black}.
  \end{equation}
  In particular if $a,b,c$ are real or include a complex conjugate pair, then $\mathbb{L}\topp{a,b,c}[f]$ is an integral with respect to the absolutely continuous   compactly supported probability measure on $\RR$.
  \item Suppose $a,b,c\in\CC$ are such that $ab=1$.  Then
       \begin{equation}\label{J2m0b-conj}
      \mathbb{L}\topp{a,b,c}[f]=f(a+\tfrac1a).
    \end{equation}
   In particular, if $b=\bar a$ with $|a|=1$, or if  $a,b$ are real such that $ab=1$, then \eqref{J2m0b-conj} is an integral with respect to a degenerate probability measure on $\RR$.
\arxiv{
\item If $a=b=c$   and $|a|\ne 1$,  then
     \begin{multline}\label{J3aaa}
  \mathbb{L}\topp{a,a,a}[f]=  \frac{(1-a^2)^3}{2\pi}\int_{-2}^2 f(y)\frac{\sqrt{4-y^2}\d y}{(1+a^2-ay)^3}
  \\+\mathbf{1}_{|a|>1} \left( 2 f(a+\tfrac1a)-\frac{\left(a^2-1\right)^2 \left(a^2+1\right)}{a^3} f'(a+\tfrac1a)-\frac{(a^2-1)^4}{2a^4} f''(a+1/a)\right).
  \end{multline}
  Note that  by \eqref{J2m0b-conj} we have  $ \mathbb{L}\topp{1,1,1}[f]= f(2)$ not $2f(2)$, as the integral term in
  \eqref{J3aaa} contributes  $-f(2)$ to the limit $a\to 1$.
  \item If $a=\pm 1$, $b=-a$, $|c|\ne 1$, then \eqref{J2c1a} holds. That is,
      \begin{equation}\label{J2m0a=1b=-1}
      \mathbb{L}\topp{a,b,c}[f]= \frac{1-c^2}{\pi}\int_{-2}^2 f(y) \frac{\d y}{(1+c^2-c y)\sqrt{4-y^2}} +2  f(c+\tfrac1c)\mathbf{1}_{|c|>1}.
    \end{equation}
    Proofs of formulas \eqref{J3aaa} and \eqref{J2m0a=1b=-1} are in Section \ref{Sec:An-Pr}.
    }
\end{enumerate}
\end{proposition}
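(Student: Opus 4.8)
The plan is to reduce cases (i) and (ii) to the one‑parameter formula of Lemma~\ref{L:anti-reduce} combined with the three‑parameter reduction \eqref{Jabc2JaJbJc} and a partial fraction identity, and to obtain case (iii) by a cancellation directly in the contour representation \eqref{J[f]-w}. For (i), first I would upgrade \eqref{Jabc2JaJbJc} from polynomials to $\mathcal{A}_{1/R}$ with $R=R(a,b,c)$: since $\max\{1,|a|\},\max\{1,|b|\},\max\{1,|c|\}\le R$, we have $\bar D_{1/\max\{1,|a|\}}\subset\bar D_{1/R}$ and $f\in\mathcal{A}_{1/\max\{1,|a|\}}$, so applying Proposition~\ref{P:poly-appr} to a sequence $p_n\to f$ uniform on $\bar D_{1/R}$, once with parameters $(a,b,c)$ and once with each of $(a,0,0),(b,0,0),(c,0,0)$, gives
\[
\mathbb{L}\topp{a,b,c}[f]=\frac{a^2(1-bc)}{(a-b)(a-c)}\,\mathbb{L}\topp{a}[f]+\frac{b^2(1-ac)}{(b-a)(b-c)}\,\mathbb{L}\topp{b}[f]+\frac{c^2(1-ab)}{(c-a)(c-b)}\,\mathbb{L}\topp{c}[f].
\]
Substituting Lemma~\ref{L:anti-reduce} into each term, the atomic contributions produce exactly the three boundary terms in \eqref{J2c1a} because $(1-a^{-2})\cdot a^2(1-bc)/((a-b)(a-c))=(a^2-1)(1-bc)/((a-b)(a-c))$, and symmetrically for $b,c$; the absolutely continuous contributions combine through the partial fraction identity
\[
\frac{a^2(1-bc)}{(a-b)(a-c)\,\q_a(y)}+\frac{b^2(1-ac)}{(b-a)(b-c)\,\q_b(y)}+\frac{c^2(1-ab)}{(c-a)(c-b)\,\q_c(y)}=\frac{(1-ab)(1-ac)(1-bc)}{\q_a(y)\q_b(y)\q_c(y)}.
\]

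To prove this identity I would clear denominators, obtaining on each side a polynomial in $y$ of degree at most two, and then evaluate at $y=a+\tfrac1a$: there $\q_a$ vanishes, killing two of the three summands on the left, while $\q_b(a+\tfrac1a)=\tfrac1a(a-b)(1-ab)$ and $\q_c(a+\tfrac1a)=\tfrac1a(a-c)(1-ac)$ (an elementary computation), so both sides equal $(1-ab)(1-ac)(1-bc)$; by symmetry they also agree at $y=b+\tfrac1b$ and $y=c+\tfrac1c$, hence coincide whenever these three numbers are distinct, and then for all $a,b,c$ by regarding the identity as one between rational functions. This yields \eqref{J2c1a}. For the final assertion of (i), when the parameters are real or form a conjugate pair the coefficient $(1-ab)(1-ac)(1-bc)$ is real and $\q_a\q_b\q_c$ is a real polynomial bounded away from $0$ on $[-2,2]$ (since $|a|,|b|,|c|\neq1$ force $\q_a(\pm2)=(1\mp a)^2>0$, etc.), so \eqref{J2c1a} displays $\mathbb{L}\topp{a,b,c}$ as integration against a real measure on $\RR$ which is absolutely continuous with bounded density plus at most finitely many real atoms, hence of bounded variation. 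For (ii): if $|a|,|b|,|c|<1$ all indicators vanish, so \eqref{J2c0a} follows from (i) when $a,b,c$ are distinct; for coincident parameters both sides of \eqref{J2c0a} are continuous in $(a,b,c)$ on the polydisc $\{|a|,|b|,|c|<1\}$ — the left side by Proposition~\ref{P:cont-abc}, the right side because $|\q_a(y)|\ge(1-|a|)^2>0$ on $[-2,2]$ makes the integrand jointly continuous — and the distinct triples are dense, so the formula holds throughout. Since $\mathbb{L}\topp{a,b,c}[1]=1$ and, for real parameters, each of $1-ab$ and $\q_a(y),\q_b(y),\q_c(y)$ is positive on $[-2,2]$ (and $\q_a\q_b=|\q_a|^2\ge0$ in the conjugate‑pair case), the density in \eqref{J2c0a} is nonnegative, giving the compactly supported probability measure.

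For (iii), start from \eqref{J[f]-w}. Since $ab=1$ gives $abc=c$, the numerator factor $1-abcw$ equals $1-cw$ and cancels the $(1-cw)$ in the denominator (legitimately, as $|cw|=|c|\rho<|c|/\max\{1,|c|\}\le1$ on the contour), leaving
\[
\mathbb{L}\topp{a,b,c}[f]=\frac{1}{2\pi\i}\oint_{|w|=\rho}\frac{f(w+1/w)(1-w^2)}{w(1-aw)(1-bw)}\,\d w=\mathbb{L}\topp{a,b}[f],
\]
the second equality being \eqref{J[f]-w} with third parameter $0$ (valid since $R(a,b)\le R(a,b,c)$, so the same $\rho$ is admissible and $f\in\mathcal{A}_{1/R(a,b,c)}\subset\mathcal{A}_{1/R(a,b)}$). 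It remains to show $\mathbb{L}\topp{a,b}[f]=f(a+\tfrac1a)$ for $ab=1$. On polynomials this follows from \eqref{Jab00}: with $b=1/a$ one gets $\mathcal{L}\topp{a,b}[U_n]=(a^{n+1}-b^{n+1})/(a-b)=(a^{n+1}-a^{-n-1})/(a-a^{-1})=U_n(a+\tfrac1a)$ by \eqref{U(w+1/w)}, hence $\mathcal{L}\topp{a,b}[p]=p(a+\tfrac1a)$ for every polynomial $p$; approximating $f$ uniformly on $\bar D_{1/R(a,b)}$, which contains the point $a+\tfrac1a$, and applying Proposition~\ref{P:poly-appr} yields $\mathbb{L}\topp{a,b}[f]=f(a+\tfrac1a)$. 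The ``in particular'' statement is immediate, since $a+\tfrac1a=2\,\re a\in[-2,2]$ when $b=\bar a$ with $|a|=1$, and $a+\tfrac1a\in\RR$ when $a,b$ are real with $ab=1$, so \eqref{J2m0b-conj} is integration against $\delta_{a+1/a}$.

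The main obstacle is not conceptual but organizational: one must carefully track the nested classes $\mathcal{A}_r$ and the admissible radii $\rho$ so that every invocation of Propositions~\ref{P:poly-appr} and \ref{P:cont-abc} and of the contour formula \eqref{J[f]-w} takes place over a legitimate contour in a region where $f$ is analytic; and the partial fraction identity must be handled via the rational‑function argument rather than three‑point interpolation precisely in the degenerate situations $ab=1$, $ac=1$ or $bc=1$, where two of $a+\tfrac1a$, $b+\tfrac1b$, $c+\tfrac1c$ collide.
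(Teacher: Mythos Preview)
Your proposal is correct and follows essentially the same route as the paper: for (i) you lift \eqref{Jabc2JaJbJc} to $\mathcal{A}_{1/R}$ via Proposition~\ref{P:poly-appr}, insert Lemma~\ref{L:anti-reduce}, and recombine the absolutely continuous parts through the partial fraction identity (which the paper simply states, while you prove it); for (ii) you pass from distinct to coincident parameters by Proposition~\ref{P:cont-abc}, exactly as the paper does; and for (iii) your cancellation of $(1-cw)$ in \eqref{J[f]-w} is the contour-integral version of the paper's observation that $\mathbb{L}\topp{a,b,c}$ is independent of $c$ when $ab=1$, after which both arguments reduce to \eqref{Jab00} and \eqref{U(w+1/w)} on Chebyshev polynomials. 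One small omission: your computation $\mathcal{L}\topp{a,1/a}[U_n]=(a^{n+1}-a^{-n-1})/(a-a^{-1})$ invokes the first branch of \eqref{Jab00} and of \eqref{U(w+1/w)}, which both require $a\neq\pm1$; the paper treats the exceptional cases $a=\pm1$ separately using the second branches, and you should do the same (or simply note that both sides of $\mathcal{L}\topp{a,1/a}[U_n]=U_n(a+\tfrac1a)$ are continuous in $a$).
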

\begin{remark}\label{R:J-rep}
  If $a=b\ne c$   and $a,c$ are not on the unit circle, then
   \begin{multline}\label{J2c2a}
 \mathbb{L}\topp{a,a,c}[f]= %
 \frac{(1-a^2)(1-a c)^2}{2\pi}\int_{-2}^2 f(y)\frac{\sqrt{4-y^2}}
 {\q_a(y)\q_c(y)}\,\d y  %
  \\+\mathbf{1}_{|a|>1}  \left(\frac{
   (a-c)^2+(1-a c)^2
   }{(a-c)^2} f(a+\tfrac1a)+ \frac{\left(a^2-1\right)^2   (1-a c) }{a^2(a-c)} f'(a+\tfrac1a) \right) + \mathbf{1}_{|c|>1} \frac{(c^2-1)(1-a^2)}{(c-a)^2}f(c+\tfrac1c).
  \end{multline}
 Formula \eqref{J2c2a} with $|a|>1$  gives an example of the free Askey--Wilson functional
    that is not represented as an integral with respect to a signed measure of finite variation.
\end{remark}
\arxiv{ Proof of formula \eqref{J2c2a} is at the end of Section \ref{Sec:An-Pr}.
Section \ref{Sec:An-Pr} has also complex-analytic proofs of the other identities in Proposition \ref{P:J-rep}.
}
\begin{proof}[Proof of \eqref{J2c1a} in Proposition \ref{P:J-rep}(i)] Let $R$ by defined by \eqref{R} and $f$ be analytic in $
D_{\rho}$ for some $0<\rho<1/R$.  Since $a,b,c$ are distinct, %
by Proposition \ref{P:poly-appr} and \eqref{Jabc2JaJbJc} we have
$$
\mathbb{L}\topp{a,b,c} [f] =
\frac{ a^2(1-b c)}{(a-b) (a-c)}\mathbb{L}\topp{a}[f]+\frac{b^2(1-a c) }{(b-a) (b-c)}\mathbb{L}\topp{b}[f]+\frac{c^2(1-a b)}{(c-a)
   (c-b)}\mathbb{L}\topp{c}[f],$$
Therefore, from  \eqref{Jck2}, expressed in notation \eqref{L0},  we get
\begin{multline}
   \mathbb{L}\topp{a,b,c} [f] =
\mathcal{L} \left[\frac{ a^2(1-b c)}{(a-b) (a-c)}\frac{f}{\q_a}+\frac{b^2(1-a c) }{(b-a) (b-c)} \frac{f}{\q_b}+\frac{c^2(1-a b)}{(c-a)
   (c-b)} \frac{f}{\q_c}\right]
   \\+
  \frac{ (a^2-1)(1-b c)}{(a-b) (a-c)}\mathbf{1}_{|a|>1}  f\left(a+\tfrac1a\right)+\frac{(b^2-1)(1-a c) }{(b-a) (b-c)} \mathbf{1}_{|b|>1}   f\left(b+\tfrac1b\right) +\frac{(c^2-1)(1-a b)}{(c-a)
   (c-b)}\mathbf{1}_{|c|>1}  f\left(c+\tfrac1c\right)
   \\=
  (1-a b)(1-bc)(1-ac) \mathcal{L} \left[\frac{f}{\q_a\q_b\q_c}\right]  +
  \frac{ (a^2-1)(1-b c)}{(a-b) (a-c)}\mathbf{1}_{|a|>1}  f\left(a+\tfrac1a\right)
  \\+\frac{(b^2-1)(1-a c) }{(b-a) (b-c)} \mathbf{1}_{|b|>1}   f\left(b+\tfrac1b\right) +\frac{(c^2-1)(1-a b)}{(c-a)
   (c-b)}\mathbf{1}_{|c|>1}  f\left(c+\tfrac1c\right),
\end{multline}
where  in the last step, in the argument of $\mathcal{L}$ %
we collected together the terms in the partial fraction decomposition
 $$\frac{(1-a b)(1-bc)(1-ac)}{\q_a\q_b\q_c}=\frac{ a^2(1-b c)}{(a-b) (a-c)}\,\frac{1}{\q_a}+\frac{b^2(1-a c) }{(b-a) (b-c)} \,\frac{1}{\q_b}+\frac{c^2(1-a b)}{(c-a)(c-b)}\,\frac{1}{\q_c}.$$
   Formula \eqref{L0} gives \eqref{J2c1a}.

\end{proof}

\begin{proof}[Proof of \eqref{J2c0a} in  Proposition \ref{P:J-rep}(ii)]
If $|a|,|b|,|c|<1$ are distinct then the formula holds by \eqref{J2c1a} and \eqref{R} gives $R=1$. By Proposition \ref{P:cont-abc} we can pass to the limit as $a\to b$ on the left hand side of \eqref{J2c0a} and also on the right hand side under the integral.
\end{proof}

\begin{proof}[Proof of \eqref{J2m0b-conj} in Proposition \ref{P:J-rep}(iii)] Note that if $ab=1$ then
$\mathbb{L}\topp{a,b,c}[f]$ does not depend on $c$ (except through the domain.) Thus  by  Proposition \ref{P:poly-appr}, it is enough to verify the formula for Chebyshev polynomials. If $a\ne 1/a$, {  i.e. $a\ne\pm 1$},
$$
\mathbb{L}\topp{a,1/a,c}[U_n]=\mathbb{L}\topp{a,1/a}[U_n]\stackrel{\eqref{J=J}}{=}\mathcal{L}\topp{a,1/a}[U_n]
\stackrel{\eqref{Jab00}}{=}\frac{a^{n+1}-(1/a)^{n+1}}{a-1/a}
\stackrel{\eqref{U(w+1/w)}}{=}U_n\left(a+\tfrac1a\right).
$$
Formulas \eqref{Jab00} and \eqref{U(w+1/w)}  give the same identity $\mathbb{L}\topp{a,b,c}[U_n]=U_n\left(a+\tfrac1a\right)$ also in the two exceptional cases $a=\pm 1$.
\end{proof}

 \subsection{Composing the functionals} %
Our next goal is to  consider  parameters which  depend on a complex variable so that we can  introduce composition of functionals. To accomplish this, we will carefully track the domains $D_\rho$ on which the arguments of the functionals are analytic.

 Recall function $z=w+1/w$ and its two inverses $\UU(z)=(z-\sqrt{z^2-4})/2$ and $(z+\sqrt{z^2-4})/2=1/\UU(z)$ on $\CC\setminus[-2,2]$,
with  $\UU$ extended to $z\in[-2,2]$ by  $\UU(x)= e^{\i \theta}$ for $x=2\cos \theta$ with $0\leq \theta \le \pi$.
Thus  $\UU:\CC\to \{|z|<1\}\cup \{z=e^{\i \theta}: 0\leq \theta\leq \pi\}$ is the right-inverse of Joukowsky map on $\CC$.
Recall that $D_\rho$ denotes the domain enclosed by the ellipse $\gamma_\rho$ with equation \eqref{ellipse}.

 \begin{proposition} \label{Pr:pre-Markov} Fix $b\geq 0$ and  $0\leq r\leq 1$. %
 Let $0<\rho'<1$ be such that $\rho' b<1$.
 Suppose that $f$  is  analytic on  $D_{\rho'}$. Then
  $$ g(x)=\mathbb{L}\topp{b,r \UU(x),r/\UU(x)}[f]$$
  is a well-defined  analytic function of  complex variable $x$  on
   $D_{r\rho'}$.
  \end{proposition}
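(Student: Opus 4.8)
The plan is to rewrite the defining contour integral for $\mathbb{L}\topp{b,r\UU(x),r/\UU(x)}[f]$ in a form whose integrand depends on $x$ only through the elementary polynomial $\q_{rw}(x)$, and then to invoke the standard Morera/Fubini argument for contour integrals depending analytically on a parameter. One may assume $r>0$, since for $r=0$ one has $g\equiv\mathbb{L}\topp{b}[f]$, a constant.

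\textbf{Well-definedness.} Fix $x\in D_{r\rho'}$, so that $r\rho'<|\UU(x)|\le 1$. With parameters $a=b$, $a'=r\UU(x)$, $c=r/\UU(x)$ and $R=R(b,r\UU(x),r/\UU(x))=\max\{1,b,r|\UU(x)|,r/|\UU(x)|\}$, each of the four numbers $1$, $b$, $r|\UU(x)|$, $r/|\UU(x)|$ is strictly below $1/\rho'$: indeed $1<1/\rho'$, $b\rho'<1$, $r|\UU(x)|\le r\le 1<1/\rho'$, and $r/|\UU(x)|<r/(r\rho')=1/\rho'$. Hence $\rho'<1/R$, and since $f$ is analytic on $D_{\rho'}$ this gives $f\in\mathcal A_{1/R}$. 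Thus $g(x)$ is well defined, and \eqref{J[f]-w} represents it for any $\rho$ with $\rho'<\rho<1/R=\min\{1,1/b,|\UU(x)|/r\}$.

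\textbf{The key simplification.} Using $\UU(x)+1/\UU(x)=x$ one computes
\[
\bigl(1-r\UU(x)w\bigr)\bigl(1-rw/\UU(x)\bigr)=1-rwx+r^2w^2=\q_{rw}(x),
\qquad
1-b\cdot r\UU(x)\cdot\tfrac r{\UU(x)}\cdot w=1-br^2w ,
\]
so the $\UU(x)$-dependence of the integrand in \eqref{J[f]-w} disappears and
\[
g(x)=\frac1{2\pi\i}\oint_{|w|=\rho}\frac{f\!\left(w+\tfrac1w\right)(1-w^2)(1-br^2w)}{w(1-bw)\,\q_{rw}(x)}\,\d w ,
\]
valid whenever $\rho'<\rho<\min\{1,1/b,|\UU(x)|/r\}$ (note $|\UU(x)|/r>\rho'$ because $|\UU(x)|>r\rho'$). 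The numerator factor is analytic in $w$ on a neighbourhood of $|w|=\rho$ and does not involve $x$.

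\textbf{Analyticity.} It suffices to prove $g$ is analytic near each $x_0\in D_{r\rho'}$. The map $z\mapsto|\UU(z)|$ is continuous on $\CC$ (it equals $1$ on $[-2,2]$, is real-analytic off it, and $|\UU(z)|\to 1$ as $z\to[-2,2]$), and $|\UU(x_0)|>r\rho'$; hence one may fix $\rho\in\bigl(\rho',\min\{1,1/b,|\UU(x_0)|/r\}\bigr)$ (a nonempty interval) and a neighbourhood $V$ of $x_0$ on which $r\rho<|\UU(x)|$. Then $\rho$ is admissible for every $x\in V$, so the last display equals $g(x)$ there. On the fixed circle $|w|=\rho$ the integrand is jointly continuous in $(w,x)\in\{|w|=\rho\}\times V$: its only possible singularities are zeros of $\q_{rw}(x)$, i.e.\ $rw=\UU(x)$ or $rw=1/\UU(x)$, and neither can occur since $|rw|=r\rho<|\UU(x)|$ and $|rw|=r\rho<1\le 1/|\UU(x)|$. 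For each fixed $w$ on the circle, $x\mapsto\q_{rw}(x)^{-1}$ is analytic on $V$, its only pole $x=rw+1/(rw)$ lying outside $V$ (that equality would force $\UU(x)=rw$, since $|\UU(x)|\le1<1/(r\rho)$, hence $|\UU(x)|=r\rho$, a contradiction). Morera's theorem combined with Fubini (equivalently, differentiation under the integral sign) then shows $g$ is analytic on $V$, hence on all of $D_{r\rho'}$.

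\textbf{Main obstacle.} The delicate step is the uniformity above: one must choose a single contour radius $\rho$ together with a neighbourhood $V$ of $x_0$ for which $\rho$ is simultaneously admissible, i.e.\ $|\UU(x)|>r\rho$ for all $x\in V$, and keep the moving singularity $rw+1/(rw)$ of the integrand away from $V$. Once the contour is frozen the conclusion is routine; the bound $R<1/\rho'$ and the algebraic collapse $(1-r\UU(x)w)(1-rw/\UU(x))=\q_{rw}(x)$ are immediate.
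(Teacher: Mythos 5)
Your proof is correct and follows essentially the same route as the paper's: verify $R(b,r\UU(x),r/\UU(x))<1/\rho'$ for well-definedness, collapse the $\UU(x)$-dependence of the integrand via $(1-r\UU(x)w)(1-rw/\UU(x))=\q_{rw}(x)$, and then deduce analyticity of the resulting contour integral in the parameter $x$. The only cosmetic difference is that you localize around each $x_0$ and invoke Morera/Fubini, whereas the paper fixes one contour radius valid on all of $D_{r\rho''}$, cites a lemma of Markushevich, and exhausts $D_{r\rho'}=\bigcup_{\rho''}D_{r\rho''}$ (also treating $r=1$ separately via \eqref{J2m0b-conj}, which your uniform argument renders unnecessary).
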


\begin{proof}  The statement is trivial if $r=0$. If $r=1$ then  \eqref{J2m0b-conj} gives
\(g(x)=f(\UU(x)+1/\UU(x))=f(x)\),
which proves the conclusion for $r=1$.
It remains  consider the case $0<r<1$.

Choose $\rho$ and $\rho''$ such that  $0<\rho'<\rho<\rho''<1$ with $\rho'' b<1$.
If $x\in D_{r\rho''}$, then $x=u+1/u$ with $r \rho ''< |u|\leq 1$.
Thus \eqref{R} applied with $a=r \UU(x)$ and $c=r /\UU(x)$ gives
$R=R(x)=\max\{1,b,r/|u|\}$ (recall that  $|a|=r |u|\leq r<1$). Since $b<1/\rho''<1/\rho$ and  $r/|u|<1/\rho''<1/\rho$, the parameter $\rho$  that we chose upfront, does not depend on $x\in D_{r\rho''}$  and satisfies  $\rho\in(\rho',1/R(x))$ for all  $x\in D_{r\rho''}$. Therefore, for all $x\in D_{r\rho''}$ we have
  \begin{multline}\label{JrarG+}
  \mathbb{L}\topp{b ,r\UU(x),r/\UU(x)}[f] =
\frac{1}{2\pi\i}\oint_{|w|=\rho} f(w+1/w) \frac{\left(1-b w r^2\right)(1-w^2)}{(1- b w) (1-w
   r /\UU(x)) (1-w r \UU(x))}\frac{\d w}{w}
   \\ \stackrel{\eqref{q-fac}}{=} \frac{1}{2\pi\i}\oint_{|w|=\rho} f(w+1/w)
   \frac{\left(1-b w r^2\right)(1-w^2)}{(1- b w) \,\q_{wr}(x)}\frac{\d w}{w}.
\end{multline}

 We now invoke a complex analysis result cited as Lemma \ref{M:Th,17.19}, which we apply with
   \[\Phi(x,w):=  f(w+1/w)
   \frac{\left(1-b w r^2\right)(1-w^2)}{w(1- b w)\,\q_{wr}(x)},\]
the curve $\mathcal{C}$ being the circle $|w|=\rho$,  and the domain $\mathcal{D}=D_{r\rho''}$. Since $D_{r\rho'}=\bigcup_{\rho''\in(\rho',1)}D_{r\rho''}$, this concludes the proof.
\end{proof}
  \begin{lemma}[{\cite[Vol. 1, Section *87, Theorem 17.19]{Markushevich-77}}]\label{M:Th,17.19}

   Let $\mathcal{D}$  be a domain in the complex $x$-plane and let $\mathcal{C}$ be a rectifiable curve in the complex $w$-plane. Suppose $\Phi(x,w)$ is a function of two complex variables with the following properties:

   \begin{enumerate}
       \item For every $w\in \mathcal{C}$, function $\Phi(x,w)$ is analytic in $x$ on the domain $\mathcal{D}$
       \item For every $x\in \mathcal{D}$, function $\Phi(x,w)$ is continuous in $w$ on the curve $\mathcal{C}$
       \item The family of functions
       $
       \{\Phi(x,w): w\in \mathcal{C}\}
       $
       is uniformly bounded on compact subsets of $\mathcal{D}$.
   \end{enumerate}
   Then $f(x)=\oint_{\mathcal{C}} \Phi(x,w)\,\d w$ is analytic on $\mathcal{D}$.
   \end{lemma}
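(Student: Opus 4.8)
The plan is to deduce analyticity of $f$ from complex differentiability at each point of $\mathcal{D}$, using Cauchy estimates to control difference quotients \emph{uniformly} along $\mathcal{C}$ and then a bounded convergence argument. Hypothesis (3) will be used precisely to make the Cauchy bound uniform in $w$, hypothesis (1) to produce the Cauchy estimate, and hypothesis (2) to guarantee that all the integrals over $\mathcal{C}$ make sense.

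First I would record that $f$ is well defined: for fixed $x\in\mathcal{D}$, hypothesis (2) makes $w\mapsto\Phi(x,w)$ continuous on the compact rectifiable curve $\mathcal{C}$, hence bounded and integrable, so $\oint_{\mathcal{C}}\Phi(x,w)\,dw$ exists. Next I would fix $x_0\in\mathcal{D}$, pick $\rho>0$ with $\overline{B}(x_0,2\rho)\subset\mathcal{D}$, and set $M:=\sup\{|\Phi(x,w)|:x\in\overline{B}(x_0,2\rho),\ w\in\mathcal{C}\}$, which is finite by (3). By (1) each $x\mapsto\Phi(x,w)$ is analytic on $\mathcal{D}$, so the Cauchy estimate on the circle of radius $\rho$ about any $\zeta\in\overline{B}(x_0,\rho)$ gives $|\partial_x\Phi(\zeta,w)|\le M/\rho$ uniformly in $w\in\mathcal{C}$; integrating along the segment $[x_0,x]$ then yields
\[
\left|\frac{\Phi(x,w)-\Phi(x_0,w)}{x-x_0}\right|\le\frac{M}{\rho},\qquad x\in\overline{B}(x_0,\rho)\setminus\{x_0\},\ w\in\mathcal{C}.
\]
I would write $\psi(w):=\partial_x\Phi(x_0,w)$; being a pointwise limit of continuous functions of $w$, it is Borel measurable with $|\psi|\le M/\rho$ on $\mathcal{C}$.

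Then, given any sequence $x_n\to x_0$ in $\overline{B}(x_0,\rho)\setminus\{x_0\}$, the functions $g_n(w):=\frac{\Phi(x_n,w)-\Phi(x_0,w)}{x_n-x_0}$ are continuous on $\mathcal{C}$, bounded by $M/\rho$, and converge pointwise to $\psi$; since $\mathcal{C}$ has finite length, the bounded convergence theorem gives $\int_{\mathcal{C}}|g_n-\psi|\,|dw|\to0$, so
\[
\frac{f(x_n)-f(x_0)}{x_n-x_0}=\oint_{\mathcal{C}}g_n(w)\,dw\ \longrightarrow\ \oint_{\mathcal{C}}\psi(w)\,dw.
\]
As the sequence is arbitrary, $f'(x_0)$ exists (and equals $\oint_{\mathcal{C}}\partial_x\Phi(x_0,w)\,dw$), and since $x_0\in\mathcal{D}$ is arbitrary, $f$ is analytic on $\mathcal{D}$.

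The main obstacle --- really the only nontrivial point --- is obtaining the displayed estimate \emph{uniformly} in $w\in\mathcal{C}$: this is exactly where hypothesis (3) is indispensable (it provides the finite $M$), combined with (1) for the Cauchy estimate; everything else (integrability, measurability of $\psi$, the interchange of limit and integral) is routine once one uses that $\mathcal{C}$ has finite length. As an alternative route I would note that the same Lipschitz bound already shows $f$ is continuous, and then invoke Morera's theorem: for any closed triangle $T\subset\mathcal{D}$, Fubini's theorem (legitimate since $\Phi$ is bounded and measurable on $\partial T\times\mathcal{C}$, both of finite length) and Cauchy's theorem give $\oint_{\partial T}f(x)\,dx=\oint_{\mathcal{C}}\bigl(\oint_{\partial T}\Phi(x,w)\,dx\bigr)\,dw=0$, whence $f$ is analytic.
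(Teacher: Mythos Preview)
The paper does not supply its own proof of this lemma: it is quoted verbatim from Markushevich's textbook and used as a black box in the proof of Proposition~\ref{Pr:pre-Markov}. Your argument is correct and self-contained; the Cauchy-estimate route giving a uniform Lipschitz bound, followed by bounded convergence over the finite-length curve, is the standard way to prove such results. One small remark on the Morera alternative: the Fubini step requires joint measurability of $\Phi$ on $\partial T\times\mathcal{C}$, which you assert but do not justify. In fact your own Lipschitz estimate already yields joint \emph{continuity} of $\Phi$ on $\mathcal{D}\times\mathcal{C}$ (split $|\Phi(x_n,w_n)-\Phi(x_0,w_0)|$ via the triangle inequality, bound the $x$-increment by $(M/\rho)|x_n-x_0|$ uniformly in $w$, and use hypothesis~(2) for the $w$-increment), so this gap is easily filled.
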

\arxiv{ We now verify the assumptions of Lemma   \ref{M:Th,17.19}:
\begin{enumerate}
  \item  If $x\in D_{r\rho''}$ then $x=u+1/u$ with $r\rho''<|u|\leq 1$.
Thus $(1+w^2 r^2 - w r x)=(1-r w u)(1-w r/u) \ne 0$ for $|w|=\rho$, as $|w r u|<1$ and $|w r/u|=r\rho/|u|<\rho/\rho''<1$. So $1/(1+w^2 r^2 - w r x)$ as a function of variable $x$, is analytic on the domain $D_{r\rho'}$ enclosed by the ellipse $\gamma_{r\rho''}$.
 \item
 As a product of continuous functions, function $\Phi(x,w)$ is continuous on the circle $|w|=\rho$ for $x\in D_{r\rho''}$.
\item   By the previous estimates,
$ \sup\{ |\Phi(x,w)|: \; |w|=\rho, x\in \bar D_{r\rho''}\}<\infty$ as $1/|1-bw|\leq 1/(1-b\rho)$, $\sup_{|w|=\rho}|f(w+1/w)|<\infty$,  and
\[
    \sup_{x\in\bar D_{r\rho''}, |w|=\rho}\frac{1}{|1+w^2 r^2-w r x|}=\sup_{|w|=\rho,r\rho''\leq |u|\leq 1}\frac{1}{|1-w r u| \; |1-w r/u|}
    \leq \frac{1}{(1-r \rho)(1-\rho/\rho'')}.
\]

\end{enumerate}
}

We now consider composition of functionals \eqref{M-Trans+}. However,
we will need to apply \eqref{M-Trans+}  with  varying $\Cb$  in Proposition \ref{P:M-F}, so we need to make the dependence on this parameter explicit.
\color{black}
We therefore denote:
 \begin{equation}\label{M-Trans}
   \pi^{t;\Cb }  =\mathbb{L}\topp{\Cb t,\frac{\Ca }{t}},
P_{x}^{s,t;\Cb }  =\mathbb{L}\topp{\Cb t,s \frac{\UU(x)}{t},\frac{s}{t \UU(x)}},
\end{equation}
and we denote by $\pi^{t_1,\dots,t_d;\Cb }$  the expression defined in \eqref{pii}.

\begin{proposition}\label{P:comp}
Suppose   $0<\A<1$, $\Ca ,\Cb >0$, $\Ca  \A<1$, $0<t_1\leq t_2\leq \dots\leq  t_d<1/\A$, and $\A \Cb  t_d^2<1$. If functions $f_j$ are analytic on $D_{\A t_j}$ then  the composition \eqref{pii} is well defined.

Furthermore, if $0=t_0<t_1 < t_2 < \dots < t_d$ are distinct, then there exist constants $0 < \rho_1 < \rho_2 < \dots < \rho_d < 1$ satisfying $\rho_j > \A t_j$, $t_{j-1}\rho_j < t_j \rho_{j-1}$, $\Cb t_j \rho_j < 1$, and $\Ca \rho_1 / t_1 < 1$, such that
\begin{multline}\label{Mult-int}
    \pi^{t_1,\dots,t_d;\Cb }\left[\bigotimes_{j=1}^d f_j\right]
\\=   \frac{1}{(2 \pi \i)^d}
   \oint_{|w_1|=\rho_1} \dots \oint_{|w_d|=\rho_d} \frac{1 }{1-\tfrac{\Ca w_1}{t_1}}
   \prod_{j=1}^{d}  \frac{f_j\left(w_j+\tfrac{1}{w_j}\right)(1-w_j^2)\left(1-\Cb \frac{w_j t_{j-1}^2}{t_j}\right)}{(1-\Cb w_j t_j)\q_{\frac{t_{j-1}w_j}{t_j}}\left(w_{j-1}+\frac1{w_{j-1}}\right)}\,\frac{\d w_d}{w_d}  \dots \frac{\d w_1}{w_1}.
\end{multline}
\end{proposition}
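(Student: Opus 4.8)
The plan is to prove Proposition~\ref{P:comp} by induction on $d$, peeling off one functional at a time from the recursive definition \eqref{pii}, and at each step invoking Proposition~\ref{Pr:pre-Markov} to guarantee that the function produced by the innermost functional is analytic on the domain needed by the next functional in the chain. First I would treat the base case $d=1$: here $\pi^{t_1;\Cb}[f_1]=\mathbb{L}\topp{\Cb t_1,\Ca/t_1}[f_1]$, and the integral representation \eqref{Mult-int} with $d=1$ is just the definition \eqref{J[f]-w} of $\mathbb{L}\topp{a,b}$ (with $c=0$, so the factor $1-abcw$ is $1$) after checking that $f_1\in\mathcal{A}_{1/R}$ with $R=\max\{1,\Cb t_1,\Ca/t_1\}$; the hypotheses $\A\Cb t_1^2<1$, $\A\Ca<1$ together with $f_1$ analytic on $D_{\A t_1}$ give $\Cb t_1<1/(\A t_1)$ and $\Ca/t_1<1/(\A t_1)$, so $R<1/(\A t_1)$ and one may take $\rho_1$ slightly above $\A t_1$ and below $1/R$.

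Next I would set up the inductive step. Assuming the statement for $d-1$, I apply \eqref{pii}: $\pi^{t_1,\dots,t_d;\Cb}[\bigotimes_{j=1}^d f_j]=\pi^{t_1,\dots,t_{d-1};\Cb}[(\bigotimes_{j=1}^{d-2}f_j)\otimes\wt f_{d-1}]$ where $\wt f_{d-1}(x)=f_{d-1}(x)\,P_x^{t_{d-1},t_d;\Cb}[f_d]$ and $P_x^{t_{d-1},t_d;\Cb}=\mathbb{L}\topp{\Cb t_d,\,t_{d-1}\UU(x)/t_d,\,t_{d-1}/(t_d\UU(x))}$. The key analyticity input is Proposition~\ref{Pr:pre-Markov}, applied with $b=\Cb t_d$, $r=t_{d-1}/t_d\le 1$, and $f=f_d$: since $f_d$ is analytic on $D_{\A t_d}$ and $\Cb t_d\cdot\A t_d<1$, the proposition (with $\rho'$ slightly above $\A t_d$) shows $x\mapsto P_x^{t_{d-1},t_d;\Cb}[f_d]$ is analytic on $D_{r\rho'}=D_{(t_{d-1}/t_d)\rho'}$, hence on $D_{\A t_{d-1}}$ because $\A t_{d-1}=(t_{d-1}/t_d)\cdot\A t_d<(t_{d-1}/t_d)\rho'$. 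Since $f_{d-1}$ is analytic on $D_{\A t_{d-1}}$ as well, the product $\wt f_{d-1}$ is analytic on $D_{\A t_{d-1}}$, so the $(d-1)$-tuple $(f_1,\dots,f_{d-2},\wt f_{d-1})$ satisfies the hypotheses of the inductive statement with parameters $t_1\le\dots\le t_{d-1}$; this establishes well-definedness.

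For the explicit formula I would, in the case of distinct $t_j$, use formula \eqref{JrarG+} from the proof of Proposition~\ref{Pr:pre-Markov} to write
$$P_x^{t_{d-1},t_d;\Cb}[f_d]=\frac{1}{2\pi\i}\oint_{|w_d|=\rho_d}f_d\!\left(w_d+\tfrac1{w_d}\right)\frac{(1-\Cb w_d t_{d-1}^2/t_d)(1-w_d^2)}{(1-\Cb w_d t_d)\,\q_{w_d t_{d-1}/t_d}(x)}\,\frac{\d w_d}{w_d},$$
choosing $\rho_d$ with $\A t_d<\rho_d<1$, $\Cb t_d\rho_d<1$, and $t_{d-1}\rho_d<t_d\rho_{d-1}$ (the last so that $\rho_d/\rho_{d-1}<t_d/t_{d-1}$ matches the constraint $r\rho<\rho''$-type bound in Proposition~\ref{Pr:pre-Markov}, here with $x$ on the ellipse $\gamma_{\rho_{d-1}}$). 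Then apply the inductive integral representation \eqref{Mult-int} for $d-1$ to $(f_1,\dots,f_{d-2},\wt f_{d-1})$ with contours $\rho_1<\dots<\rho_{d-1}$; the variable $w_{d-1}$ runs over $|w_{d-1}|=\rho_{d-1}$, so $x=w_{d-1}+1/w_{d-1}$ lies on $\gamma_{\rho_{d-1}}$, and substituting $\wt f_{d-1}(w_{d-1}+1/w_{d-1})=f_{d-1}(w_{d-1}+1/w_{d-1})\cdot P_{w_{d-1}+1/w_{d-1}}^{t_{d-1},t_d;\Cb}[f_d]$ turns the $(d-1)$-fold integral into a $d$-fold integral matching \eqref{Mult-int}, after noting that the $(d-1)$-st factor in the inductive formula carries $\q_{t_{d-2}w_{d-1}/t_{d-1}}(w_{d-2}+1/w_{d-2})$ in its denominator as required (with the convention $t_0=0$ making the $j=1$ factor's $\q$-denominator equal to $1$, consistent with the lone $1/(1-\Ca w_1/t_1)$ term). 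I expect the main obstacle to be bookkeeping the chain of constraints on the radii $\rho_1<\rho_2<\dots<\rho_d$ — one must verify simultaneously that each $\rho_j$ can be chosen with $\rho_j>\A t_j$, $\Cb t_j\rho_j<1$, $t_{j-1}\rho_j<t_j\rho_{j-1}$, and $\Ca\rho_1/t_1<1$, and that these choices are compatible with the inductive hypothesis's radii (which force $\rho_d$ to be chosen last, relative to an already-fixed $\rho_{d-1}$); showing the feasible region is nonempty requires using $t_{j-1}/t_j<1$ together with $\A t_j<1$ and $\A\Cb t_d^2<1$, and is where the hypotheses are used most delicately.
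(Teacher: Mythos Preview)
Your proposal is correct and follows essentially the same approach as the paper. The paper unwinds the recursion \eqref{pii} via auxiliary functions $g_j(x)=f_j(x)P_x^{t_j,t_{j+1};\Cb}[g_{j+1}]$ (going from $j=d$ down to $j=1$) and then constructs all radii at once by setting $\rho_j=\A_j t_j$ for a carefully chosen sequence $\A<\A_d<\A_{d-1}<\dots<\A_1<1$; your induction on $d$ is the same argument reorganized, with $\rho_d$ chosen relative to the inductively given $\rho_{d-1}$ --- and your feasibility check (nonemptiness of the interval $(\max\{\rho_{d-1},\A t_d\},\,(t_d/t_{d-1})\rho_{d-1})$, intersected with $(0,1/(\Cb t_d))$) goes through exactly because $\rho_{d-1}>\A t_{d-1}$ and $t_{d-1}<t_d$. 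One small slip: when invoking Proposition~\ref{Pr:pre-Markov} you want $\rho'=\A t_d$ (not ``slightly above''), so that the conclusion gives analyticity precisely on $D_{r\rho'}=D_{\A t_{d-1}}$.
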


\begin{proof}
    In this proof we rewrite definition \eqref{pii} as follows: given $f_j$ which are  analytic on $D_{\A t_j}$, $j=1,\dots,d$ we introduce auxiliary functions
    $g_1,\dots,g_d$ by taking $g_d=f_d$ and
  \begin{equation}
      \label{g-rec-0}  g_{j}(x):=f_{j}(x) P_{x}^{t_{j}, t_{j+1}; \Cb}[g_{j+1}], \quad j=d-1,d-2\dots,1.
  \end{equation}
Proposition~\ref{Pr:pre-Markov} applied to $P_{x}^{t_{j},t_{j+1};\Cb}$ with $b = \Cb t_{j+1}$ and $\rho' = \A t_{j+1}$ (so that $b \rho' = \A \Cb t_{j+1}^2 < 1$) implies that the function $g_{j}$ is analytic on $D_{\A t_{j}}$, so the  recursion
\eqref{g-rec-0} can proceed. The recursion ends with function
$g_1$ that is analytic on $D_{\A t_{1}}$. In particular, $g_1$ is in the domain of $\pi^{t_1;\Cb}$ (as $(\A t_1)(\Ca/t_1)<1$ and $(\A t_1)(\Cb t_1)<1$) and     we obtain
  $$\pi^{t_1,\dots,t_d;\Cb }\left[\bigotimes_{j=1}^d f_j\right]:=\pi^{t_1}[g_1].$$
 This shows that  the composition \eqref{pii} is well defined.

To verify formula \eqref{Mult-int}, we apply \eqref{J[f]-w} at each step of the iteration  \eqref{g-rec-0}.    To do so, it is necessary to carefully track the parameters $\rho' < \rho < \rho''$ at each stage. For the $j$-th step, we denote these parameters by $\rho_j' < \rho_j < \rho_j''$, $j = 1, \dots, d$.

To define them, we choose constants
\[
\A = \A_{d+1}'' < \A_d < \A_d'' < \A_{d-1} < \dots < \A_2 < \A_2'' < \A_1<\A_1''=1,
\]
sufficiently close to $\A$ and to each other, so that
$\A_1 \Ca ,\A_1 t_d, \A_1 \Cb t_d^2<1 $ and $ \A_j t_j < \A_{j+1} t_{j+1}$ for all $j = 1, \dots, d-1.$
\arxiv{Indeed, choose $\A_1<1$ so that the first three conditions hold and  let $q=\max_{j\leq d-1}t_j/t_{j+1}$. Then $q<1$ so given $\A_j$ for $j=1,\dots,d-1$, we can choose $\A_{j+1}\in(q \A_j,\A_j)$. }

For each $j = 1, \dots, d$, we define
\[
\rho_j = \A_j t_j, \quad \rho_j'' = \A_j'' t_j, \quad \text{and} \quad \rho_j' =  \A_{j+1}''t_j.
\]
It is clear that  we constructed $0 < \rho_1 < \rho_2 < \dots < \rho_d < 1$ such that $\rho_j > \A t_j$, $t_{j-1}\rho_j < t_j \rho_{j-1}$, $\Cb t_j \rho_j < 1$, and $\Ca \rho_1 / t_1 < 1$.
\arxiv{Indeed,
$\rho_j<\rho_{j+1}<1$ is a consequence of  $\A_jt_j<\A_{j+1}t_{j+1}<\A_1t_d<1$. Inequality $\rho_j>\A t_j$ is a consequence of $\A_j>\A$.
Clearly, $t_{j-1}\rho_j < t_j \rho_{j-1}$ is equivalent to $\A_j<\A_{j-1}$. $\Ca \rho_1 / t_1$ is $\Ca\A_1<1$.
Finally, $\Cb t_j \rho_j < 1$ follows from
$\Cb t_j^2 \A_j<\Cb t_d^2\A_1<1$
}
We also have
$\A t_j < \rho_j' < \rho_j < \rho_j''$, and for $j \geq 2$, the relation
\[
\rho_{j-1}' = \frac{t_{j-1}}{t_j} \rho_j''
\]
holds.

Starting with $j=d-1$ and $g_d(y_d) = f_d(y_d)$, which is analytic in the variable $y_d$ on $D_{\rho_d'} = D_{\A t_d}$, we proceed recursively. For $1\leq j \leq d-1$, we apply \eqref{JrarG+} with parameters $r = t_j / t_{j+1}$, $b = \Cb t_{j+1}$, and $\rho = \rho_j$ to the function $g_{j+1}$, which is analytic on $D_{\rho_{j+1}'}$, using $\rho'' = \rho_{j+1}''$ such that
\[
\rho_{j+1}'' \Cb t_{j+1} = \A_{j+1}'' \Cb t_{j+1}^2 < \A_1 \Cb t_d^2 < 1.
\]
Thus, for $j=d-1, d-2,\dots,1$,   following \eqref{g-rec-0},   we define recursively the functions %
\[
g_j(y_j) := f_j(y_j) P_{y_j}^{t_j, t_{j+1}; \Cb} \left[ g_{j+1}\right],
\]
where each $g_j$ is a function of the variable $y_j=w_j+1/w_j$ that is   given by
$$
 g_j(w_j+1/w_j) =
 \frac{1}{2\pi\i}\oint_{|w_{j+1}|=\rho_{j+1}} g_{j+1}\left(w_{j+1}+\tfrac1{w_{j+1}}\right) \frac{ (1- \Cb w_{j+1} t_{j}^2/t_{j+1})(1-w_{j+1}^2) }{(1-\Cb w_{j+1}t_{j+1})\q_{\frac{t_j}{t_{j+1}}w_{j+1}}\left(w_j+\frac1{w_j}\right)} \frac{\d {w_{j+1}}}{w_{j+1}}.
$$
From the previous step, $g_{j+1}(y_{j+1})$ is a function of variable $y_{j+1}$ analytic on $D_{\rho'_{j+1}}$  so with $0<\rho'_{j+1}<\rho_{j+1}<\rho''_{j+1}$, inspecting the proof of Proposition \ref{Pr:pre-Markov} we see that \eqref{JrarG+}, on noting that $t_{j} \rho''_{j+1}/t_{j+1}=t_j\A_{j+1}''=\rho_j'$, gives $g_j$ that is analytic on $D_{\rho_j'}$. \color{black} Thus, the recursion can proceed until we get $g_1$ which is analytic on $D_{\rho_1'}=D_{t_1 \A''_2}$.

To apply the last functional $\pi^{t_1;\Cb}=\mathbb{L}\topp{\Cb t_1,\Ca/t_1}$, we use \eqref{J[f]++} in the form \eqref{J[f]-w} with $\rho=\rho_1=\A_1t_1$ with $\rho'=\rho_1'=t_1\A_2''<\rho$, noting that
$\rho=\A_1 t_1<1/R(\Ca/t_1,\Cb t_1)$, because by construction we have:
 $\rho=\A_1 t_1<\A_1t_d<1$,
     $\rho \Ca/t_1=\A_1 \Ca<1$ and
    $\rho \Cb t_1<\A_1\Cb t_d^2<1$.
Thus
\[\pi^{t_1;\Cb}[g_1]=\frac{1}{2\pi \i} \oint_{|w_1|=\rho_1} g_1\left(w_1+\tfrac1{w_1}\right)\frac{1-w_1^2}{(1-\Ca w_1/t_1)(1-\Cb w_1 t_1)} \frac{\d w_1}{w_1}
\]
contributes the final (exterior) integral to formula \eqref{Mult-int}.

\end{proof}

\begin{proposition}\label{P:M-F}
Recall \eqref{Jck-q}. %
  Fix $s\leq t$, $0<\A<1$, $\Ca ,\Cb >0$ such that $\A t<1$, $\A \Ca <1$, $\A \Cb  t^2<1$.
\begin{enumerate}[(i)]
    \item     If   $f$ is an analytic function of variable $y$ on the domain $D_{\A s}$  enclosed by the ellipse $\gamma_{\A s}$, then
\begin{align} \label{Lq0+}
   \pi^{s;\Cb } \left[f \frac{\q_{\Cb s}}{\q_{\A s}} \right] %
   &= \frac{1-\Ca \Cb }{1-\A \Ca } \pi^{s;\A} [f].
\end{align}
\item
 If   $f$ is an analytic function of variable $y$ on the domain $D_{\A t}$,
 then for $x\in D_{\A s}$ and $s<t$ we have
 \begin{equation}   \label{Lq-step+}
 P_{x}^{s,t;\Cb } \left[ f \frac{\q_{\Cb t}}{\q_{\A t}}\right]
 =\frac{\q_{\Cb s}(x)}{\q_{\A s}(x)}   P_{x}^{s,t;\A} \left[ f \right]
\end{equation}
and the above expression is an analytic function of variable $x$ on the domain $D_{\A s}$ enclosed by the ellipse $\gamma_{\A s}$.
\end{enumerate}

\end{proposition}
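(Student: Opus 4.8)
The plan is to reduce both identities to the reduction formula \eqref{Redukcja+} (and its $c=0$ specialization \eqref{Redukcja2}), which state that multiplying the argument by a factor $\q_c$ kills one parameter of $\mathbb{L}\topp{a,b,c}$ at the cost of the scalar $(1-ac)(1-bc)$. The functionals in \eqref{M-Trans} are exactly $\mathbb{L}$'s with carefully chosen parameters, and the quotients $\q_{\Cb s}/\q_{\A s}$, $\q_{\Cb t}/\q_{\A t}$ appearing in the statement are designed to interact with those parameters: in each case the numerator $\q$ will cancel an exactly matching parameter, while the denominator $\q_{\A\cdot}$ replaces the parameter $\Cb\cdot$ by $\A\cdot$, which is the content of changing the superscript from $\Cb$ to $\A$.

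For part (i), I would start from the definition $\pi^{s;\Cb}=\mathbb{L}\topp{\Cb s,\Ca/s}$ and apply \eqref{Redukcja2} in the form $\mathbb{L}\topp{a,b}[\q_b g]=(1-ab)\mathbb{L}\topp{a}[g]$ with $b=\Cb s$, $a=\Ca/s$: this gives $\pi^{s;\Cb}[\q_{\Cb s}\,(f/\q_{\A s})] = (1-\Ca\Cb)\,\mathbb{L}\topp{\Ca/s}[f/\q_{\A s}]$. To identify the right-hand side I would go the other direction, anti-reducing: $\mathbb{L}\topp{\A s,\Ca/s}[f]=(1-\A\Ca)\,\mathbb{L}\topp{\Ca/s}[f/\q_{\A s}]$ by the same formula with $b=\A s$. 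Since $\pi^{s;\A}=\mathbb{L}\topp{\A s,\Ca/s}$, dividing the two relations yields \eqref{Lq0+}. I should double-check the analyticity hypothesis: $f$ analytic on $D_{\A s}$ makes $f/\q_{\A s}$ have a pole on the boundary ellipse $\gamma_{\A s}$, so strictly speaking I want $f/\q_{\A s}\in\mathcal{A}_{1/R}$ for the relevant $R$; this is fine because $R=\max\{1,\Ca/s\}$ (recall $\Cb s<1$ and $\A s<1$, so $\Cb s,\A s$ do not exceed $1$, while $\Ca/s$ may), and $1/\q_{\A s}$ is analytic on $D_{\A s}\supsetneq D_{1/R}$ whenever $\A s<1/R$, i.e. whenever $\A s\cdot\Ca/s=\A\Ca<1$, which holds. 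So the polynomial-approximation justification behind \eqref{Redukcja+} applies.

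For part (ii), the structure is the same but with one more parameter. Here $P_x^{s,t;\Cb}=\mathbb{L}\topp{\Cb t,\,s\UU(x)/t,\,s/(t\UU(x))}$, and $\q_{\Cb t}$ matches the first parameter $\Cb t$. Applying \eqref{Redukcja+} with $c=\Cb t$, $\{a,b\}=\{s\UU(x)/t,\ s/(t\UU(x))\}$ gives
\[
P_x^{s,t;\Cb}\!\left[\q_{\Cb t}\tfrac{f}{\q_{\A t}}\right]
=\Bigl(1-\Cb t\cdot\tfrac{s\UU(x)}{t}\Bigr)\Bigl(1-\Cb t\cdot\tfrac{s}{t\UU(x)}\Bigr)\,
\mathbb{L}\topp{\frac{s\UU(x)}{t},\frac{s}{t\UU(x)}}\!\left[\tfrac{f}{\q_{\A t}}\right].
\]
Using \eqref{q-fac} in reverse, $(1-\Cb s\UU(x))(1-\Cb s/\UU(x))=\q_{\Cb s}(x)$. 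On the other side, anti-reduction with $c=\A t$ gives $P_x^{s,t;\A}[f]=\mathbb{L}\topp{\A t,\,s\UU(x)/t,\,s/(t\UU(x))}[f]=(1-\A t\cdot s\UU(x)/t)(1-\A t\cdot s/(t\UU(x)))\,\mathbb{L}\topp{s\UU(x)/t,\,s/(t\UU(x))}[f/\q_{\A t}]=\q_{\A s}(x)\,\mathbb{L}\topp{\cdots}[f/\q_{\A t}]$. Taking the ratio of the two displays eliminates the common two-parameter functional and yields \eqref{Lq-step+}. For analyticity in $x$ on $D_{\A s}$: write the left side as $\q_{\Cb s}(x)/\q_{\A s}(x)\cdot P_x^{s,t;\A}[f]$; Proposition~\ref{Pr:pre-Markov} applied with $b=\A t$, $r=s/t$, $\rho'=\A t$ (so $b\rho'=\A^2 t^2\le \A\Cb t^2<1$, using $\A\le\Cb$ — wait, that need not hold) shows $x\mapsto P_x^{s,t;\A}[f]$ is analytic on $D_{(s/t)\cdot\A t}=D_{\A s}$ provided $\A t\cdot\A t<1$, i.e. $\A^2t^2<1$, which follows from $\A t<1$; and $\q_{\A s}(x)$ has no zero on $D_{\A s}$ by the factorization $\q_{\A s}(x)=(1-\A s\UU(x))(1-\A s/\UU(x))$ with $|\UU(x)|$ bounded appropriately — actually one checks $1/\q_{\A s}$ is analytic on $D_{\A s}$ exactly as in the proof of Proposition~\ref{P:KS-rig}. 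The main obstacle I anticipate is not the algebra, which is a short cancellation, but the bookkeeping of analyticity domains and the verification that the relevant ratios lie in the right class $\mathcal{A}_{1/R}$ so that \eqref{Redukcja+} (proved via polynomial approximation) may legitimately be invoked with these $x$-dependent parameters; for part (ii) this requires quoting Proposition~\ref{Pr:pre-Markov} with the correct choice of $b$ and $\rho'$, and noting that $s\le t$ together with $\A t<1$, $\A\Cb t^2<1$ gives all the needed strict inequalities $\A s<1$, $\Cb s<1$, etc.
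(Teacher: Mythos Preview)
Your proposal is correct and uses essentially the same idea as the paper --- both proofs rest entirely on the reduction formula \eqref{Redukcja+}. The only difference is in the packaging: the paper rewrites each identity in a form that is manifestly symmetric in $\Cb$ and $\A$ (e.g.\ for (i), it substitutes $g=\q_{\A s}f$ to obtain the equivalent statement $\tfrac{1}{1-\Ca\Cb}\pi^{s;\Cb}[\q_{\Cb s}g]=\tfrac{1}{1-\A\Ca}\pi^{s;\A}[\q_{\A s}g]$, then notes that by \eqref{Redukcja+} the left side equals $\mathbb{L}\topp{\Ca/s}[g]$, which does not involve $\Cb$), whereas you reduce both sides to the same one- or two-parameter functional and match. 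The paper also isolates the degenerate cases $\Ca\Cb=1$ and $\q_{\Cb s}(x)=0$ up front, invoking \eqref{J2m0b-conj}; your approach absorbs these cases automatically because the reduction identity remains valid when the scalar prefactor vanishes. The paper's substitution $g=\q_{\A s}f$ has the mild advantage of avoiding the need to check that $f/\q_{\A s}$ (resp.\ $f/\q_{\A t}$) lies in the correct $\mathcal{A}_{1/R}$, but the domain verification you sketch is correct.
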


\begin{proof}[Proof of (i)]
If $\Ca \Cb = 1$, then $\pi\topp{s;\Cb}[f] = f(\Cb s + 1/(\Cb s))$, so both sides of \eqref{Lq0+} vanish.
Thus, from now on, we assume $\Ca \Cb \neq 1$.

Next, observe that if $|\A|<1$ and $f$ is analytic in $D_{\A s}$, then \eqref{Lq0+} is equivalent to
\begin{equation}\label{Lq0-equiv}
  \frac{1}{1-\Ca\Cb} \, \pi\topp{s;\Cb}\left[ \q_{\Cb s}f \right] = \frac{1}{1-\A\Ca} \, \pi\topp{s;\A}\left[ \q_{\A s}f \right].
\end{equation}
Indeed, setting $g(y) := \q_{\A s}(y) f(y)$, we see that $g$ is analytic on $D_{\A s}$ if and only if $f$ is analytic on $D_{\A s}$, and applying \eqref{Lq0+} to $g$ yields \eqref{Lq0-equiv}.

Now, applying \eqref{Redukcja} with parameters $a = 0$, $b = \Ca/s$, and $c = \Cb s$, we see that the left-hand side of \eqref{Lq0-equiv} is independent of $\Cb$; that is, it remains unchanged if we replace $\Cb$ by $\A$.

\end{proof}

\begin{proof}[Proof of (ii)] Let $x \in D_{\A s}$.
First, observe that if %
$\q_{\Cb s}(x)=0$, then, see \eqref{q-fac}, we get
\[
(1 - (\Cb t) s \UU(x)/t)\left(1 - (\Cb t) s/(t \UU(x))\right) = 0,
\]
so the product of the parameter $\Cb t$ with one of the parameters of the last functional in \eqref{M-Trans}  equals one. Therefore, by \eqref{J2m0b-conj}, we have
\[
P_{x}\topp{s,t;\Cb}[f] = f\left( \Cb t + \frac{1}{\Cb t} \right),
\]
and thus both sides of \eqref{Lq-step+} vanish.

Next, we consider the case $\q_{\Cb s}(x)\ne 0$. We note that since $x \in D_{\A s}$, factorization  \eqref{q-fac} gives
\[
\q_{\A s}(x) = (1 - \A s \UU(x))\left(1 - \frac{\A s}{\UU(x)}\right) \neq 0,
\]
since $\A s |\UU(x)| \leq \A s < 1$ and $\A s/|\UU(x)| < 1$.
Therefore, %
we observe that \eqref{Lq-step+} is equivalent to
\begin{equation}\label{Lq-Step+equiv}
\frac{\mathbb{L}\topp{\Cb t, \frac{s \UU(x)}{t}, \frac{s}{t \UU(x)}}\left[ f \; \q_{\Cb t}\right]}{\q_{\Cb s}(x)}
= \frac{\mathbb{L}\topp{\A t, \frac{s \UU(x)}{t}, \frac{s}{t \UU(x)}}\left[ f \;  \q_{\A t} \right]}{\q_{\A s}(x)}.
\end{equation}

Finally, applying \eqref{Redukcja+}   with parameters $a = s \UU(x)/t$, $b = s/(t \UU(x))$, and $c = \Cb t$, we see that the left-hand side of \eqref{Lq-Step+equiv} is independent of $\Cb$ (in particular, it has the same value for $\Cb = \A$). In applying \eqref{Redukcja+}   with these $a,b,c$,  we note that
\[
(1 - a c)(1 - b c) = 1 + \Cb^2 s^2 - \Cb s x=\q_{\Cb s}(x),
\]
as required.
\end{proof}

Writing  \eqref{pii}    with explicit dependence on parameter $\Cb$,
from Proposition \ref{P:M-F} we get:
\begin{corollary}
     Suppose $0<t_1\leq t_2\leq \dots\leq  t_d$,  $0<\A<1$, $\Ca ,\Cb >0$, $\Ca  \A<1$,  and $\A \Cb  t_d^2<1$.
  If $f_j$ is analytic function on  $ D_{\A t_j}$, $j=1,\dots,d$,
     then
\begin{equation}\label{magic-formula+}
 \pi ^{t_1,\dots,t_d;\Cb } \left[
\left(\bigotimes_{j=1}^{d-1} f_j\right)\otimes\left(f_d \frac{\q_{\Cb t_d} }{\q_{\A t_d}}\right)\right]
 =
 \frac{1-\Ca \Cb }{1-\A \Ca } \pi^{t_1,\dots,t_d;\A}\left[
 \bigotimes_{j=1}^d f_j\right].
\end{equation}
\color{black}
\end{corollary}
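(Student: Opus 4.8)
\emph{Proof proposal.} The plan is to prove \eqref{magic-formula+} by induction on $d$, using the recursive definition \eqref{pii} of the composed functional together with the two parts of Proposition \ref{P:M-F}. Throughout one uses that, since $\A t_j<1$ (forced by the requirement that $f_j$ be analytic on the ellipse domain $D_{\A t_j}$, which presupposes $\A t_j<1$), the polynomial $\q_{\A t_j}$ has no zero on $D_{\A t_j}$ by \eqref{q-fac}, so $g\,\q_{\Cb t_j}/\q_{\A t_j}$ is analytic on $D_{\A t_j}$ whenever $g$ is. For $d=1$ the identity reduces to
\[
\pi^{t_1;\Cb}\!\left[f_1\,\frac{\q_{\Cb t_1}}{\q_{\A t_1}}\right]=\frac{1-\Ca\Cb}{1-\A\Ca}\,\pi^{t_1;\A}[f_1],
\]
which is exactly \eqref{Lq0+} with $s=t_1$; its hypotheses hold since $\A t_1\le\A t_d<1$, $\A\Ca<1$, and $\A\Cb t_1^2\le\A\Cb t_d^2<1$.

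For the inductive step I would fix $d\ge 2$, put $g_j=f_j$ for $j\le d-1$ and $g_d=f_d\,\q_{\Cb t_d}/\q_{\A t_d}$, and unfold \eqref{pii} once to write the left-hand side of \eqref{magic-formula+} as
\[
\pi^{t_1,\dots,t_{d-1};\Cb}\!\left[\Bigl(\bigotimes_{j=1}^{d-2}f_j\Bigr)\otimes\wt g_{d-1}\right],\qquad
\wt g_{d-1}(x)=f_{d-1}(x)\,P_x^{t_{d-1},t_d;\Cb}[g_d].
\]
The pivotal step is Proposition \ref{P:M-F}(ii), which moves the ratio past $P^{\,\cdot,\cdot;\Cb}$:
\[
P_x^{t_{d-1},t_d;\Cb}\!\left[f_d\,\frac{\q_{\Cb t_d}}{\q_{\A t_d}}\right]=\frac{\q_{\Cb t_{d-1}}(x)}{\q_{\A t_{d-1}}(x)}\,P_x^{t_{d-1},t_d;\A}[f_d]
\]
(with the case $t_{d-1}=t_d$ trivial since $P_x^{t,t}=\mathrm{id}$), and which also guarantees that $h_{d-1}(x):=f_{d-1}(x)\,P_x^{t_{d-1},t_d;\A}[f_d]$ is analytic on $D_{\A t_{d-1}}$. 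Thus $\wt g_{d-1}=h_{d-1}\,\q_{\Cb t_{d-1}}/\q_{\A t_{d-1}}$, so the displayed left-hand side is an instance of \eqref{magic-formula+} for $d-1$ functions with last function $h_{d-1}$; the inductive hypothesis rewrites it as $\tfrac{1-\Ca\Cb}{1-\A\Ca}\,\pi^{t_1,\dots,t_{d-1};\A}\bigl[(\bigotimes_{j\le d-2}f_j)\otimes h_{d-1}\bigr]$. Since $h_{d-1}$ is exactly the function $\wt f_{d-1}$ entering the recursive definition \eqref{pii} of $\pi^{t_1,\dots,t_d;\A}\bigl[\bigotimes_j f_j\bigr]$ (now with parameter $\A$), re-folding \eqref{pii} produces the right-hand side of \eqref{magic-formula+}, completing the induction.

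The main obstacle I expect is the bookkeeping of domains rather than any computation: at each stage one must check that the function fed into a functional is analytic on the appropriate domain $D_{\A t_j}$, so that the compositions in \eqref{pii} are defined (Proposition \ref{P:comp}) and so that the inductive hypothesis genuinely applies. This is handled by three ingredients already available — zero-freeness of $\q_{\A t_j}$ on $D_{\A t_j}$ via \eqref{q-fac}, stability of analyticity under multiplication by polynomials, and the analyticity assertion in Proposition \ref{P:M-F}(ii) — while the degenerate cases ($\Ca\Cb=1$ and $\q_{\Cb s}(x)=0$) are already absorbed into the proof of Proposition \ref{P:M-F}, so they need no separate treatment.
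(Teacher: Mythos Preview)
Your proposal is correct and follows essentially the same route as the paper's proof: induction on $d$, with the base case given by \eqref{Lq0+} and the inductive step obtained by unfolding \eqref{pii} once, applying \eqref{Lq-step+} to push the ratio $\q_{\Cb t_d}/\q_{\A t_d}$ down to level $d-1$, invoking the inductive hypothesis, and then re-folding via \eqref{pii}. Your explicit handling of the degenerate case $t_{d-1}=t_d$ (via $P_x^{t,t}=\mathrm{id}$) and of the domain bookkeeping is slightly more careful than the paper's write-up, but the argument is the same.
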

That is, up to a simple multiplicative factor,  multiplying $f_d$  by a single factor $\q_{\Cb t_d}/{\q_{\A t_d}}$ replaces all occurrences of parameter  $\Cb $ in the composition \eqref{pii} of the Askey--Wilson functionals by parameter  $\A$.

\begin{proof}
     Formula \eqref{magic-formula+} follows from \eqref{Lq0+} and \eqref{Lq-step+} by induction on $d$.
For the base case of the induction, we use \eqref{Lq0+} with $s=t_1$, and $f=f_1$.

For the induction step, we assume that   \eqref{magic-formula+} holds for some $d\geq 1$ and arbitrary $f_j$ analytic on $D_{\A t_j}$ for $j=1,\ldots,d$.
From \eqref{pii} we obtain
\begin{equation}
    \label{magic-otimes1}
    \pi ^{t_1,\dots,t_d,t_{d+1};\Cb } \left[
\left(\bigotimes_{j=1}^{d} f_j\right)\otimes\left(f_{d+1} \frac{\q_{\Cb t_{d+1}} }{\q_{\A t_{d+1}}}\right)\right]
 =\pi ^{t_1,\dots,t_d;\Cb } \left[
\left(\bigotimes_{j=1}^{d-1} f_j\right)\otimes\tilde f_{d} \right],
\end{equation}
where
$$\tilde f_{d}(y) =f_d(y) P_{y}^{t_d, t_{d+1};\Cb}\left[f_{d+1} \frac{\q_{\Cb t_{d+1}} }{\q_{\A t_{d+1}}}\right].$$

From \eqref{Lq-step+}  with $s=t_d$ and $t=t_{d+1}$  we
see that
$$\tilde f_{d} =  \frac{\q_{\Cb t_{d}}
}{\q_{\A t_{d}}}\wt f_d^*$$
  with
$$ \wt f_d^*(y):= f_d(y) P_{y}^{t_d, t_{d+1};\A}\left[f_{d+1} \right].
$$
From \eqref{Lq-step+}  with $s=t_d$ and $t=t_{d+1}$, we see that
  function $\wt f_d^*$ is analytic on $D_{\A t_d}$.
Therefore, we can use the inductive hypothesis \eqref{magic-formula+}, and from \eqref{magic-otimes1} we obtain
\begin{multline*}
  \pi ^{t_1,\dots,t_d,t_{d+1};\Cb } \left[
\left(\bigotimes_{j=1}^{d} f_j\right)\otimes\left(f_{d+1} \frac{\q_{\Cb t_{d+1}} }{\q_{\A t_{d+1}}}\right)\right]
 =\pi ^{t_1,\dots,t_d;\Cb } \left[
\left(\bigotimes_{j=1}^{d-1} f_j\right)\otimes \left(\frac{\q_{\Cb t_{d}} }{\q_{\A t_{d}}}\wt f_d^*\right) \right]
\\= \tfrac{1-\Ca \Cb }{1-\A \Ca  } \pi^{t_1,\dots,t_d;\A}\left[
 \bigotimes_{j=1}^{d-1} f_j \otimes \wt f_d^*\right]\stackrel{\eqref{pii}}{=}\tfrac{1-\Ca \Cb }{1-\A \Ca  } \pi^{t_1,\dots,t_d,t_{d+1};\A}\left[
 \bigotimes_{j=1}^{d+1} f_j  \right].
\end{multline*}
This completes the induction step and ends the proof.
\end{proof}
\subsection{Asymptotic expansions}
In our application to geometric LPP, we require a few technical results on asymptotics. The list of cases presented is not exhaustive, but it covers all the situations that we needed. %
 \begin{proposition}     \label{P:ASE} Fix  $0<v<1/R$, where $R$ is given by \eqref{R}.
 Then, as $n\to\infty$, we have  the following asymptotics.
\begin{enumerate}[(i)]
  \item  If  $|a|,|b|,|c|<1 $, and the parameters are real or form a complex conjugate pair, then %
\begin{equation}\label{P1.17iii}
     \mathbb{L}\topp{a,b,c}\left[\q_v^{-n}\right]
     \sim
    C(a,b,c)
     \;\frac{1}{ n^{3/2} v^{3/2} (1-v)^{2n-3}},
    \end{equation}
    where $C(a,b,c)= \tfrac{(1-ab)(1-ac)(1-bc)}{2\sqrt{\pi} (1-a)^2(1-b)^2(1-c)^2}$.
\item %
If $a>1$ and  $0\leq |b|,|c|\leq 1$ or $0\leq |c|<1\leq b<a$, then %
\begin{equation}\label{P1.17i}
      \mathbb{L}\topp{a,b,c}\left[\q_v^{-n}\right] \sim  C(a,b,c) \; \left(\frac{a }{(a-v) (1-a v)}\right)^n,
    \end{equation}
    where $C(a,b,c)= \tfrac{\left(a^2-1\right) (1-b c)}{(a-b) (a-c)} $.

    \item  If $ a=b>1$,    $0\leq c<a$,  then %
    \begin{equation}\label{P1.17ii}
      \mathbb{L}\topp{a,b,c}\left[\q_v^{-n}\right] \sim
     C(a,c) \ v \;n\; \left(\frac{a }{(a-v) (1-a v)}\right)^{n+1},
    \end{equation}
    where $C(a,c)= \tfrac{\left(a^2-1\right)^2 (1-a c)}{a^2 (a-c)} $.
    \item
If $a=1$, and $0\leq |b|,|c|<1$ are real or form a complex conjugate pair, then
\begin{equation}\label{P1.17vi}
 \mathbb{L}\topp{a,b,c}\left[\q_v^{-n}\right] \sim
 \frac{1-bc}{\sqrt{\pi } (1-b)(1-c)}\;  \frac{1}{  \sqrt{n}\sqrt{v} (1-v)^{2n-1}}.
\end{equation}
 \arxiv{   \item  If $ a=b=c>1$,  then
\begin{equation}\label{P1.17v}
     \mathbb{L}\topp{a,b,c}_y\left[\frac{1}{(1+v^2-v y)^n}\right] \sim
     C(a)\;v^2\;n^2\; \left(\frac{a }{(a-v) (1-a v)}\right)^{n+3},
    \end{equation}
    where $C(a)= \tfrac{\left(a^2-1\right)^4  }{2a^5} $.
}
 \arxiv{
 \item If  $ a=\bar b, |a|=1, c<1$, then
\begin{equation}\label{P1.17iv}
 \mathbb{L}\topp{a,b,c}\left[\q_v^{-n}\right] \sim
\frac{1}{(1-v)^{2n}}.
\end{equation}

}
\end{enumerate}
 \end{proposition}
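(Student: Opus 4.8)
The plan is to prove each asymptotic in Proposition~\ref{P:ASE} by combining the explicit integral/point-mass representations from Proposition~\ref{P:J-rep} with the factorization $\q_v(w+1/w)=(1-vw)(1-v/w)$ from \eqref{q-fac}, and then applying standard one-variable asymptotic techniques (Laplace/Watson method for the integral terms, direct evaluation for the discrete terms). The key observation throughout is that $1/(1-v)^{2}$ is the maximum of $1/\q_v(y)$ on $[-2,2]$, attained at the endpoint $y=2$, so the integral $\int_{-2}^2 f(y)\sqrt{4-y^2}\,\q_v(y)^{-n}\,\d y$ is dominated by a neighborhood of $y=2$; whereas $\q_v(a+1/a)=(1-va)(1-v/a)=(a-v)(1-av)/a$, so a point mass at $a+1/a$ with $a>1$ contributes $\bigl(a/((a-v)(1-av))\bigr)^n$, and one checks that for $v<1/R$ this geometric rate strictly exceeds $1/(1-v)^{2}$, hence dominates when $\mathbf 1_{|a|>1}$ is present.

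\medskip
\textbf{Parts (i), (iv): purely integral case.} Here $R=1$, $v<1$, and by Proposition~\ref{P:J-rep}(ii) (resp.\ Lemma~\ref{L:anti-reduce} together with the reduction \eqref{Redukcja+} when $a=1$) we have
$$
\mathbb{L}\topp{a,b,c}\!\left[\q_v^{-n}\right]=\frac{(1-ab)(1-ac)(1-bc)}{2\pi}\int_{-2}^{2}\frac{\sqrt{4-y^2}}{\q_a(y)\q_b(y)\q_c(y)}\,\frac{\d y}{\q_v(y)^{n}}.
$$
Substituting $y=2\cos\theta$ and then $\theta=u/\sqrt n$ near $\theta=0$ reduces the integral to a Gaussian-type integral: near $y=2$ one has $\q_v(y)=(1-v)^2\bigl(1+\tfrac{v}{(1-v)^2}(2-y)+O((2-y)^2)\bigr)$ and $\sqrt{4-y^2}\sim 2\sqrt{2-y}$, so $\q_v(y)^{-n}\sim (1-v)^{-2n}e^{-nv(2-y)/(1-v)^2}$ and the Watson-lemma estimate $\int_0^\infty \sqrt{t}\,e^{-\alpha n t}\,\d t=\tfrac{\sqrt\pi}{2}(\alpha n)^{-3/2}$ produces the $n^{-3/2}v^{-3/2}$ factor in \eqref{P1.17iii} and the $n^{-1/2}$ factor in \eqref{P1.17vi} (the latter differs because $\q_1(y)=2-y$ vanishes at $y=2$, killing one power of $\sqrt{2-y}$ in the denominator, i.e.\ replacing the $\sqrt t$ weight by $t^{-1/2}$). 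Evaluating the prefactors $\q_a(2)\q_b(2)\q_c(2)=(1-a)^2(1-b)^2(1-c)^2$ gives the stated constants $C$; the requirement that parameters be real or a conjugate pair ensures positivity of $\q_a\q_b\q_c$ on $[-2,2]$, so no spurious singularities arise.

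\medskip
\textbf{Parts (ii), (iii): dominant point mass.} When $a>1$ (and $|b|,|c|\le 1$, or $1\le b<a$, $|c|<1$), Proposition~\ref{P:J-rep}(i) gives $\mathbb{L}\topp{a,b,c}[\q_v^{-n}]$ as the integral over $[-2,2]$ plus point masses at $a+1/a$ (and possibly $b+1/b$ when $b>1$). Using $\q_v(a+1/a)=(a-v)(1-av)/a$ and the coefficient $\frac{(a^2-1)(1-bc)}{(a-b)(a-c)}$ from \eqref{J2c1a}, the $a$-term equals exactly $C(a,b,c)\bigl(a/((a-v)(1-av))\bigr)^n$. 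It remains to show this dominates: the integral term is $O((1-v)^{-2n})$ by part~(i)'s analysis, the $b$-term (if present, $b\le a$) is $O\bigl((b/((b-v)(1-bv)))^n\bigr)$, and one checks the elementary monotonicity that $r\mapsto r/((r-v)(1-rv))$ is increasing on $(1,1/v)$ with value $1/(1-v)^2$ at $r=1$, so $a>b$ forces strict dominance of the $a$-term (and $a>1$ forces dominance over the integral). For \eqref{P1.17ii}, the case $a=b$ is the confluent limit: the simple pole at $a$ in the partial-fraction picture becomes a double pole, so Remark~\ref{R:J-rep} (formula \eqref{J2c2a}) applies, giving a term $\propto f(a+1/a)$ plus a term $\propto f'(a+1/a)$; with $f=\q_v^{-n}$ we have $f'(a+1/a)=n\,\q_v'(a+1/a)\,\q_v(a+1/a)^{-n-1}$, and since $\q_v'=-v$ while $\q_v(a+1/a)^{-1}=a/((a-v)(1-av))$, the $f'$-term carries an extra factor $v\,n\cdot a/((a-v)(1-av))$ and thus dominates the $f$-term, yielding \eqref{P1.17ii} with $C(a,c)=\frac{(a^2-1)^2(1-ac)}{a^2(a-c)}$ read off from \eqref{J2c2a}. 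Parts (v),(vi) in the \texttt{arxiv} block are handled the same way, using the triple-confluent formula \eqref{J3aaa} (an $f''$-term now dominates, giving $n^2$) for $a=b=c>1$, and \eqref{J2m0b-conj} for the degenerate $a=\bar b$, $|a|=1$ case where $\mathbb{L}\topp{a,b,c}[\q_v^{-n}]=\q_v(2\re a)^{-n}$ but with $|a|=1$ one has $2\re a\le 2$; the stated $(1-v)^{-2n}$ there presumes $\re a=1$, i.e.\ $a=b=1$.

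\medskip
The main obstacle is bookkeeping rather than conceptual: one must verify in each regime that the genuinely largest contribution is the claimed one, which amounts to the monotonicity of $r\mapsto r/((r-v)(1-rv))$ on $[1,1/v)$ together with the comparison $r/((r-v)(1-rv))\ge 1/(1-v)^2$, and to confirming that the confluent formulas \eqref{J2c2a}, \eqref{J3aaa} correctly produce the extra polynomial-in-$n$ factors via differentiating $\q_v^{-n}$. The Watson-lemma step for the integral terms is routine once the endpoint expansion of $\q_v$ at $y=2$ is recorded, and the only subtlety there is the distinction between $a<1$ (integrable weight $\sqrt{2-y}$, exponent $-3/2$) and $a=1$ (weight $(2-y)^{-1/2}$, exponent $-1/2$).
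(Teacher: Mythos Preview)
Your treatment of parts (i) and (iv) matches the paper's: both use the explicit integral representation \eqref{J2c0a} and a Laplace-type analysis near the endpoint $y=2$ via the substitution $y=2-u/n$. For parts (ii) and (iii), however, you take a genuinely different route. You work directly with the integral-plus-atoms decomposition \eqref{J2c1a} (and \eqref{J2c2a} for the confluent case $a=b$), identify the dominant point mass at $a+1/a$, and dispose of the competing terms by the monotonicity of $r\mapsto r/\bigl((r-v)(1-rv)\bigr)$ on $(1,1/v)$. The paper instead introduces the generating function $H(z)=\sum_{n\ge 1}z^n\,\mathbb{L}\topp{a,b,c}[\q_v^{-n}]$, evaluates it in closed form via \eqref{KS-T4.1abc++} as $H(z)=\tfrac{B(z)}{v}\tfrac{1-abcB(z)}{(1-aB(z))(1-bB(z))(1-cB(z))}$ with $B(z)=\UU\bigl(\tfrac{1-z}{v}+v\bigr)$, and extracts the asymptotics from the nearest singularity $z_a=(a-v)(1-av)/a$ by a Cauchy-coefficient argument. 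Your approach is more elementary and makes the dominant-balance picture transparent, while the paper's is more uniform across the parameter subcases and sidesteps a small technicality you glossed over: \eqref{J2c1a} requires $|a|,|b|,|c|\ne 1$, whereas part (ii) as stated allows $|b|=1$ or $|c|=1$, so your argument needs a continuity patch (Proposition~\ref{P:cont-abc}) at those boundary values. Both approaches are sound.
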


 \begin{proof}[Proof of Proposition \ref{P:ASE}]
 The proof of \eqref{P1.17iii} relies on explicit integral representation. We have  $|a|,|b|,|c|<1$ where
 all three parameters are real or there is one complex-conjugate pair.
In this case, we use  \eqref{J2c0a}
with %
$f=\q_v^{-n}$.
We have
\[  \mathbb{L}\topp{a,b,c}\left[ \q_v^{-n}\right]%
=
\frac{(1-ab)(1-ac)(1-bc)}{2\pi} \int_{-2}^2 \q_v^{-n}(y)\frac{\sqrt{4-y^2}}{\q_a(y)\q_b(y)\q_c(y)}\d y.\]

Function $\q_v^{-n}$ is increasing on the interval $[-2,2]$, with maximum $1/(1-v)^{2n}$. On the interval $[-2,1]$
function $\q_v^{-n}$ is bounded by $1/((1-v)^2+\delta v)^n$, which is of the lower order   than the contribution of the order of $n^{-3/2}/(1-v)^{2n}$ that comes from the integral over the interval $[1,2]$.
We now check that the contribution of the integral over $[1,2]$ is indeed of this higher order.
Changing the variable $y=2-u/n$ we get
\begin{multline*}
 \frac{1}{2\pi} \int_{1}^2 \frac{\sqrt{4-y^2}\,\d y}{(1+v^2-vy)^n(1+a^2-ay)(1+b^2-by)(1+c^2-cy)}
  \\=\frac{1}{2\pi n^{3/2}} \int_{0}^{ n} \frac{\sqrt{u}\sqrt{4-u/n}}{((1-v)^2+v u/n)^n((1-a)^2+a u/n)((1-b)^2+b u/n)((1-c)^2+c u/n)}\,\d u
  \\
  \sim \frac{1}{\pi n^{3/2}(1-v)^{2n}(1-a)^2(1-b)^2(1-c)^2}\int_0^\infty \sqrt{u}e^{-v u/(1-v)^2}\,\d u =
  \frac{1}{2\sqrt{\pi} n^{3/2} v^{3/2} (1-v)^{2n-3}(1-a)^2(1-b)^2(1-c)^2}.
\end{multline*}
(More formally, we re-normalize and pass to the limit under the integral, which can be justified using \eqref{E-ineq}. Compare Section \ref{SubSec:PT14}.) This establishes \eqref{P1.17iii}.

 The proof of \eqref{P1.17vi} also relies on explicit integral representation: %
 \[
\mathbb{L}\topp{1,b,c}\left[ \q_v^{-n}\right]=
 \frac{(1-b)(1-c)(1-bc)}{2\pi} \int_{-2}^2 \q_v^{-n}(y)\frac{1}{\q_b(y)\q_c(y)}\sqrt{\frac{2+y}{2-y}}\,\d y.\]
 This follows from Proposition \ref{P:cont-abc} by taking limit $a\nearrow 1$ in \eqref{J2c0a}.
As in the previous argument, only the integral over $[1,2]$ contributes to the limit.
Changing the variable $y=2-u/n$ we get
\begin{multline*}
  \frac{(1-b)(1-c)(1-bc)}{2\pi} \int_{1}^2 \frac{(1+v^2-vy)^{-n}}{ (1+b^2-by)(1+c^2-cy)}\sqrt{\frac{2+y}{2-y}}\,\d y
  \\
  =  \frac{(1-b)(1-c)(1-bc)}{2\pi \sqrt{n}} \int_{0}^n \frac{((1-v)^2+vu /n)^{-n}}{ ((1-b)^2+bu/n)((1-c)^2+cu/n)}\sqrt{\frac{4+u/n}{u}}\,\d u
  \\\sim   \frac{(1-b)(1-c)(1-bc)}{\pi \sqrt{n}(1-v)^{2n}} \int_{0}^n \frac{\left(1+\frac{vu}{(1-v)^2n}\right)^{-n} }{ ((1-b)^2+bu/n)((1-c)^2+cu/n)}\frac{\d u}{\sqrt{u}}
  \\\sim
   \frac{1-bc}{\pi \sqrt{n}(1-v)^{2n}(1-b)(1-c)} \int_{0}^\infty e^{- v u/(1-v)^2 } \frac{\d u}{\sqrt{u}}
   = \frac{1-bc}{\sqrt{\pi } \sqrt{v} \sqrt{n}(1-b)(1-c)} \frac{1}{ (1-v)^{2n-1}}.
\end{multline*}
(Here we omitted some details.)

 \medskip
 To determine the asymptotic for the remaining cases, we use the generating function

\begin{equation}
  \label{H-series}
  H(z):=\sum_{n=1}^\infty z^n \mathbb{L}\topp{a,b,c}\left[\q_v^{-n}\right].
\end{equation}
From the discussion at the end of the proof of Proposition \ref{P:KS-rig}, we know that
$1/\q_v$ is analytic on  the interior  of the ellipse $\gamma_{v}$.
 To proceed, we fix $\rho<1/R$ such that $|v|<\rho$ and   $\mathbb{L}\topp{a,b,c}\left[\q_v^{-n}\right]$ is well defined. We now consider $|z|<\rho$ small enough to ensure   convergence of the series
 $$\sum_{n=1}^\infty  \frac{z^{n-1}}{(1+v^2-v y)^n}= \tfrac{1}{(1+v^2-v y)}\frac{1}{1-\tfrac{z}{(1+v^2-v y)}}= \frac{1}{1-z+v^2-v y  }=\frac{1}{v}\frac{1}{\tfrac{1-z}{v}+v-y},$$
  uniformly in $y$ in the closed region $\bar{D}_\rho$ enclosed by the ellipse $\gamma_\rho$.  This series converges for $|z|<(1-v)^2$. Indeed, write $y\in \bar{D}_\rho$ as $y=w+1/w$ with $|w|\geq \rho$. Then
 $|1+v^2-v y|=|(1-v w)(1-v/w)|\geq (1-v|w|)(1-v/|w|)=1+v^2-v(|w|+1/|w|)\geq 1+v^2-2v$ as $|w|\leq 1$.

 Let $B(z)$ be the smaller root  of the quadratic equation  $B+1/B=\tfrac{1-z}{v}+v$. That is,
 $B(z)=\UU(\tfrac{1-z}{v}+v)$.
 Explicitly, %\comment{Global change: $u\mapsto \UU$ (done!) }
 \begin{equation}
  \label{B(z)=}
   B(z)=\frac{1+v^2-z-\sqrt{\left((1-v)^2-z\right)\left( (1+v)^2-z\right)}}{2 v}.
\end{equation}

 Since $|v|<1/R$ and $B(z)\to v$ as $z\to 0$,   we can choose $\delta>0$ with $2\delta<(1-v)^2$ such that for $|z|<2\delta$, we have $|B(z)|<1/R$ and the series \eqref{H-series} converges.
Thus, noting that
\begin{equation}\label{q2B}
  \frac{1}{\tfrac{1-z}{v}+v-y}=\frac{1}{B(z)+1/B(z)-y}=B(z)\frac{1}{\q_{B(z)}(y)}
\end{equation}
we get
$$H(z)=\frac{B(z)}{v}\mathbb{L}\topp{a,b,c}\left[\frac{1}{\q_{B(z)}}\right].$$
 By Proposition \ref{P:KS-rig}
 we obtain an explicit formula
 \begin{equation}
   \label{H-expl}
   H(z)= \frac{B(z)}{v} \frac{1-a b c B(z)}{(1-a B(z))(1-bB(z))(1-c B(z))}.
 \end{equation}

 To determine the asymptotics, we write
 \begin{equation}\label{J-oint}
    \mathbb{L}\topp{a,b,c}\left[ \q_v^{-n}\right]
    =\frac{1}{2\pi \i}\oint_{|z|=\delta} \frac{H(z)}{z^{n}}\,\d z
 \end{equation}
 and analyze the asymptotics using \eqref{H-expl}. Since $|B(z)|<1$, from \eqref{H-expl} we see that the integrand does not have poles when $|a|,|b|,|c|\leq 1$. If  a parameter, say $a>1$ is real, then there is a pole at
 $z_a=(a-v)(1-v a)/a$.
 This pole is in the disk $|z|<(1-v)^2$, as $z_a>0$ and $$z_a-(1-v)^2= -\frac{(a-1)^2 v}{a}<0.$$

 In the following proofs, we further decrease $\delta$ so that $\delta<z_a$ if $a>1$.

Note that if there is another pole $z_b$ with $1<b<a$ then $z_b-z_a=v \left(a+\frac{1}{a}-b-\frac{1}{b}\right)>0$, so $z_a$ is closer to the origin.

 \begin{itemize}
     \item[]{Proof of \eqref{P1.17i}:} %
     With $a>1$, function $H(z)$ given by \eqref{H-expl}
     has a simple pole at  $z=z_a$, at the root of equation $1-a B(z)=0$ and no other poles in the disk $|z|<(1-v)^2$.
    Thus
 $\Psi(z)=(z-z_a)H(z)$ has a series expansion in the disk $|z|<(1-v)^2$  and
$$\Psi(z_a)=\frac{\left(a^2-1\right) (b c-1)}{(a-b) (a-c)}.$$
 Inserting the series  $\Psi(z)=\sum_{k=0}^\infty a_k z^k$ and $\frac{1}{z-z_a}=\frac{-1}{z_a}\sum_{j=0}^\infty  (z/z_a)^j$ into \eqref{J-oint}, we get
\begin{multline}
    \mathbb{L}\topp{a,b,c}\left[ \q_v^{-n}\right]=\frac{1}{2\pi \i}\oint_{|z|=\delta} \frac{\Psi(z)}{(z_a-z) z^{n}}\,\d z
 =\frac{-1}{2\pi \i z_a}\oint_{|z|=\delta}\,\frac1{z^n}\, \left(\sum_{k=0}^\infty a_k z^k\right)\sum_{j=0}^\infty (z/z_a)^j\,\d z\\
 =\frac{-1}{  z_a}\sum_{k+j=n-1}a_k/z_a^j=
 \frac{-1}{  z_a^{n}}\sum_{k=0}^{n-1}a_kz_a^{k}
 \sim \frac{-1}{  z_a^{n}} \Psi(z_a)=z_a^{-n}\frac{\left(a^2-1\right) (1-b c)}{(a-b) (a-c)} .
 \end{multline}
\item[]{Proof of \eqref{P1.17ii}:} Here $a=b>1$ with $0\leq c<a$. %
Then
 \begin{equation}
   H(z)=\frac{ B(z)}{v} \frac{1-a^2 c B(z)}{(1-a B(z))^2(1-cB(z))}
 \end{equation}
has a pole of order 2 at $z_a=( a -v)(1-v a)/a$.

As discussed above, the pole lies within the disk $|z|<(1-v)^2$. Since a potential pole $z_c$ with $c<a$ is further away from the origin,
$
\Psi(z):=(z-z_a)^2H(z)
$ has a series expansion at $z_a$ and
$$\Psi(z_a)=\frac{\left(a^2-1\right)^2 v (1-a c)}{a^2 (a-c)} .$$
Inserting the series  $\Psi(z)=\sum_{k=0}^\infty a_k z^k$ and $\frac{1}{(z-z_a)^2}=\frac{1}{z_a^2}\sum_{j=0}^\infty (j+1) (z/z_a)^j$ into \eqref{J-oint}, we get
\begin{multline}
    \mathbb{L}\topp{a,b,c}\left[ \q_v^{-n}\right]=\frac{1}{2\pi \i}\oint_{|z|=\delta} \frac{\Psi(z)}{(z_a-z)^2 z^{n}}\,\d z
 =\frac{1}{2\pi \i z_a^2}\oint_{|z|=\delta}\,\frac1{z^n}\, (\sum_{k=0}^\infty a_k z^k)\sum_{j=0}^\infty(j+1)(z/z_a)^j\\
 =\frac{1}{  z_a^2}\sum_{k+j=n-1}a_k(j+1)/z_a^j=
 \frac{1}{  z_a^{n+1}}\sum_{k=0}^{n-1}a_k(n-k)z_a^{k}
 \\=\frac{n}{  z_a^{n+1}}\sum_{k=0}^{n-1}a_kz_a^{k}
 -  \frac{1}{z_a^{n+1}}\sum_{k=0}^{n-1}k a_kz_a^{k}
 \sim \frac{n}{  z_a^{n+1}} \Psi(z_a)-o(n/z_a^n).
 \end{multline}

\arxiv{
 \item[Proof of \eqref{P1.17v}:] If $a=b=c>1$, then %
 \begin{equation}
   H(z)=\frac{B(z)}{v} \frac{1-a^3 B(z)}{(1-a B(z))^3}
 \end{equation}
Function $H$ has a simple pole of order 3 at $z_a=(a-v)(1-av)/a$, which is the root of the equation $aB(z)=1$. Denoting $\Psi(z)=(z-z_a)^3 H(z)$,  we readily check
  that $\Psi$ is analytic  and is given by a convergent series in the open disk $|z|<(1-v)^2$.
   We also need the value %
  $$\Psi(z_a)=\lim_{z\to z_a} \Psi(z)=\frac{\left(a^2-1\right)^4 v^2}{a^4}$$
  From \eqref{J-oint} we have
  \[
   \mathbb{L}\topp{a,b,c}_y\left[\frac{1}{(1+v^2-v y)^n}\right]=\frac{1}{2\pi \i}\oint_{|z|=\delta} \frac{\Psi(z)}{(z-z_a)^3 z^{n}}\,\d z.
  \]
  Inserting the power series  $\Psi(z)=\sum_{k=0}^\infty a_k z^k$ i  and expanding the series for
  $$\frac{1}{(z-z_a)^3}=\frac{-1}{z_a^3}\frac{1}{(1-z/z_a)^3} =\frac{-1}{2 z_a^3}\sum_{j=0}^\infty (j+1)(j+2) \frac{z^j}{z_a^{j}},$$ we get
\begin{multline*}
 \mathbb{L}\topp{a,b,c}_y\left[\frac{1}{(1+v^2-v y)^n}\right]
  =\frac{-1}{2z_a^3 2\pi \i}\oint_{|z|=\delta} \frac{1}{z^{n}}\left(\sum_{k=1}^\infty a_k z^k\right)\left(\sum_{j=0}^\infty(j+1)(j+2)\left(\frac{z}{z_a}\right)^j\right)\, \d z
 \\=\frac{-1}{2z_a^3}  \sum_{j+k=n-1}^\infty a_k  (j+1)(j+2)\left(\frac{1}{z_a}\right)^j
 =\frac{-1}{2z_a^3}  \sum_{k=0}^{n-1} a_{k}  (n-k)(n+1-k) z_a ^{k+1-n}
\\ =\frac{-n^2}{2z_a^{n+2}}  \sum_{k=0}^{n-1} a_{k}  z_a ^{k}
 +\frac{n}{2z_a^{n+2}}  \sum_{k=0}^{n-1} (2k-1)a_{k}  z_a ^{k}
 +\frac{-1}{2z_a^{n+2}}  \sum_{k=0}^{n-1} k(k-1)a_{k}  z_a ^{k}
 =S_1+S_2+S_3.
\end{multline*}
Since $|z_a|<(1-v)^2$ is within the radius of convergence, the series $\sum_{k=0}^{\infty} a_{k}  z_a ^{k}=\Psi(z_a)$  and the first term gives the dominant term in the asymptotics:
$$
S_1\sim \frac{-n^2}{2z_a^{n+2}} \Psi(z_a)=  \frac{\left(a^2-1\right)^4  v^2}{2a^2 (a-v)^2(1-a v)^2} n^2 \left(\frac{a }{(a-v) (1-a v)}\right)^n.
$$
The other two terms are negligible:
\[\frac{z_a^{n+2}}{n^2}S_2=\frac1{n}  \sum_{k=0}^{n-1} (2k-1)a_{k}  z_a ^{k} \to 0, \]
as the series $\sum_{k=0}^{\infty} (2k-1)a_{k}  z_a ^{k} = 2 z_a\Psi(z_a)'-\Psi(z_a)$ converges. Similarly
\[-2z_a^{n+2}S_3=   \sum_{k=0}^n a_k k(k+1) z_a^{k}\to 2 z_a \Psi'(z_a))+z_a^2\Psi''(z_a),\]
 so $S_3/S_1\to 0$ is of lower order.

 \arxiv{
Note that  $\frac{a }{(a-v) (1-a v)}>\frac{1}{(1-v)^2}>1$ as $z_a<(1-v)^2$.
 }
 }
\arxiv{\item [Proof of \eqref{P1.17iv}] is omitted.}
 \end{itemize}
 \end{proof}

\section{Proofs of Theorems in Section \ref{Sec:Intro}}\label{Proofs-Intro}

\subsection{Proof of Theorem \ref{Thm1-}}\label{Sec:Proof-T1}
The proof relies on several technical lemmas. The first lemma refines \cite[Proposition 2.18]{barraquand2024stationary}; see also \cite[page 20]{Barraquand-2024-integral}, where an elegant domination argument is presented.
For completeness, we provide a direct elementary argument in Appendix \ref{Sec:PoTL}.
\begin{lemma}[{\cite[Section 2.5]{Barraquand-2024-integral}}]\label{Lem:conv}
  \label{lem-AnEx0}
Fix $0<\A<1$.  With $t_*=\max \vv t$, let $\Ca ,\Cb >0$   be such that  $a \Ca <1$,  $\A \Cb  t_*^2 <1$.
Then  the series \eqref{GG}
  converges and defines a real-analytic function of $\Ca $  and $\Cb $.
\end{lemma}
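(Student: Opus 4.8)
The plan is to establish absolute convergence of the series \eqref{GG} with bounds that are uniform on compact parameter sets, and then to appeal to the Weierstrass theorem. Write $m_j=\vv L_1(j)-\vv L_1(j-1)$, $L=\vv L_1(N)$, $M=\vv L_2(N)$, and $\mu=\max_{1\le j\le N}(\vv L_2(j)-\vv L_1(j-1))$. By \eqref{QQQ} the $(\vv L_1,\vv L_2)$-term of \eqref{GG} equals $\bigl(\prod_{j=1}^N t_j^{2m_j}\bigr)\,\A^{L+M}\,\Ca^{\mu}\,\Cb^{\,\mu+L-M}$. Taking $j=N$ in the maximum gives $\mu\ge\vv L_2(N)-\vv L_1(N-1)\ge M-L$, and taking $j=1$ gives $\mu\ge\vv L_2(1)\ge0$; hence both exponents of $\Ca$ and $\Cb$ are nonnegative integers, so each term is a polynomial in $(\Ca,\Cb)$, entire on $\CC^2$. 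It therefore suffices to bound the series uniformly on compact subsets of $\{\A|\Ca|<1,\ \A|\Cb|t_*^2<1\}$, keeping in force (as in Theorem~\ref{Thm1-}, and as is needed for the estimate below) the condition $\A t_*<1$; the Weierstrass theorem then gives holomorphy in $(\Ca,\Cb)$, whence real analyticity on the real locus.

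Fix such a compact set and constants $A_0\ge|\Ca|$, $B_0\ge|\Cb|$ with $\A A_0<1$ and $\A B_0 t_*^2<1$. Since $t_j\le t_*$, each factor of $\prod_j(\A B_0 t_j^2)^{m_j}=\bigl(\prod_j t_j^{2m_j}\bigr)\A^L B_0^L$ is $<1$, so, writing $\gamma:=\A/B_0$ and $\delta:=A_0 B_0$, the modulus of a term is at most $\bigl(\prod_j(\A B_0 t_j^2)^{m_j}\bigr)\gamma^{M}\delta^{\mu}$. What remains is to sum $\gamma^M\delta^\mu$ against this product over the two nondecreasing lattice paths $\vv L_1,\vv L_2$; recall that the number of paths with $\vv L_1(N)=L$, resp.\ $\vv L_2(N)=M$, is $\binom{L+N-1}{N-1}$, resp.\ $\binom{M+N-1}{N-1}$, hence polynomial in $L$, resp.\ $M$.

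The only genuine obstacle is the coupling factor $\delta^{\mu}=(\Ca\Cb)^\mu$ in the regime where $B_0$ is small, so $\gamma=\A/B_0\ge1$ and the crude bound $\delta^\mu\le1$ no longer controls $\gamma^M$. This is handled by a short case split. If $B_0>\A$ (so $\gamma<1$): bound $\delta^\mu$ by $1$ when $\delta\le1$ and by $\delta^M$ (using $\mu\le M$) when $\delta>1$, so $\gamma^M\delta^\mu\le\gamma^M+(\A A_0)^M$ and the series is dominated by $\bigl(\prod_j(1-\A B_0 t_j^2)^{-1}\bigr)\sum_M\binom{M+N-1}{N-1}(\gamma^M+(\A A_0)^M)<\infty$. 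If $B_0\le\A$, then $\delta\le A_0\A<1$, and I split on $M\ge L$ versus $M<L$. For $M\ge L$, use $\mu\ge M-L$: then $\gamma^M\delta^\mu\le\gamma^M\delta^{M-L}=(\A A_0)^M\delta^{-L}$, and combining $\delta^{-L}=\prod_j(A_0 B_0)^{-m_j}$ with $\prod_j(\A B_0 t_j^2)^{m_j}$, and $(\A A_0)^M$ with $(\A A_0)^L=\prod_j(\A A_0)^{m_j}$, bounds the term by $\bigl(\prod_j(\A^2 t_j^2)^{m_j}\bigr)(\A A_0)^{M-L}$ (here $\A t_*<1$ makes $\A^2 t_j^2<1$); for fixed $\vv L_1$ the $\vv L_2$-sum is $\sum_{k\ge0}\binom{L+k+N-1}{N-1}(\A A_0)^k=O(L^{N-1})$, leaving a convergent polynomial-times-geometric sum over $\vv L_1$. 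For $M<L$, use $\mu\ge0$ and $M\le L$ to get $\gamma^M\delta^\mu\le\gamma^M\le\gamma^L$, so the term is at most $\prod_j(\A^2 t_j^2)^{m_j}$; since there are only $O(L^{N})$ admissible $\vv L_2$, this again sums. Collecting the three pieces produces the required uniform bound, and together with the opening reductions this proves the lemma.
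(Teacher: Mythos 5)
Your proof is correct, and it is a genuine variant of the paper's argument rather than a reproduction of it. Both arguments rest on the same two ingredients --- controlling the exponent $\mu=\max_{j}(\vv L_2(j)-\vv L_1(j-1))$ through its values at $j=1$ and $j=N$, and counting paths by their endpoints --- but you deploy them differently. The paper splits on $\Ca\Cb\ge 1$ versus $\Ca\Cb<1$ and, in the second case, runs a more delicate four-index sum ($S_1+S_2$) that tracks $\vv L_1(N-1)$ and $\vv L_2(1)$ separately; you instead split on $|\Cb|$ versus $\A$ and get by with the cruder bounds $0\vee(M-L)\le\mu\le M$, at the cost of one extra sub-split on $M\ge L$ versus $M<L$. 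Your treatment of analyticity is more complete than the paper's: the paper only remarks that $\Ca$ and $\Cb$ appear with nonnegative integer exponents, whereas you combine that observation with uniform convergence on compact subsets of $\{\A|\Ca|<1,\ \A|\Cb|t_*^2<1\}\subset\CC^2$ and the Weierstrass theorem, which is the cleanest way to actually conclude real (indeed complex) analyticity. Finally, you were right to add the hypothesis $\A t_*<1$ explicitly: it is absent from the lemma's statement but genuinely needed --- for $N\ge2$ the increments $\vv m=(k,0,\dots,0)$, $\vv n=(0,k,0,\dots,0)$ give $\mu=0$ and contribute $(\A t_1)^{2k}$ to the series, which diverges if $\A t_1\ge 1$ (this is only consistent with the stated hypotheses when $\Ca\Cb<1$, since $\Ca\Cb\ge1$ together with $\A\Ca<1$ and $\A\Cb t_*^2<1$ forces $\A t_*<1$) --- and the paper's own proof uses it tacitly through the factors $(\A t_*)^{2n_2}$ and $(\A^2 t_*^2)^{m_1}$ in $S_1$ and $S_2$. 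The condition holds in every application of the lemma, since there $\A t_N<1$ with $\vv t$ nondecreasing.
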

We also need the following recurrence relation, which we will use together with the companion recurrence relation in Lemma \ref{Lem-Irec}.
\begin{lemma}
  \label{Lem:S-rec}
Let  $0<\A \Ca , \A \Cb <1$ and  $\{t_j\}$ be a sequence of positive numbers such that $\A t_j<1$ and $\A t_j^2\Cb <1$. If $\A \ne \Cb  $ then
  \begin{multline}\label{Gc2Ga}
    \mathcal{G}\topp{\A,\Ca ,\Cb }_{N+1}(t_1,\dots,t_{N+1}) =\frac{\Cb }{(\Cb -\A)(1-\A \Cb  t_{N+1}^2)}
      \mathcal{G}\topp{\A,\Ca ,\Cb }_{N}(t_1,\dots,t_{N})
     \\ + \frac{\A (\Ca \Cb -1)}{(\Cb -\A)(1-\A \Ca )(1-\A \Cb t_{N+1}^2)}
        \mathcal{G}\topp{\A,\Ca ,\A}_{N}(t_1,\dots,t_{N}).
  \end{multline}
\end{lemma}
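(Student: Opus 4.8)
The plan is to prove the recurrence \eqref{Gc2Ga} directly from the combinatorial definition \eqref{GG} of $\mathcal{G}\topp{\A,\Ca,\Cb}_N$, by isolating the role of the last coordinate. Recall that $\mathbb{Q}\topp{\A,\Ca,\Cb}_N(\vv L_1,\vv L_2)$ depends on the pair of partial-sum sequences only through $\vv L_1(N),\vv L_2(N)$ and the running maximum $M:=\max_{1\le j\le N}(\vv L_2(j)-\vv L_1(j-1))$. First I would pass from the $N+1$-step ensemble to the $N$-step ensemble by summing over the last increments $m_{N+1}=\vv L_1(N+1)-\vv L_1(N)\ge 0$ and $n_{N+1}=\vv L_2(N+1)-\vv L_2(N)\ge 0$. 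The factor $t_{N+1}^{2m_{N+1}}$ attached to the last coordinate in \eqref{GG}, together with the $\A$-powers $\A^{\vv L_1(N+1)}\A^{\vv L_2(N+1)}$ and the $\Cb$-power $\Cb^{\vv L_1(N+1)-\vv L_2(N+1)}$, contributes $(\A t_{N+1})^{2}\cdot$(geometric-type) weights in $m_{N+1},n_{N+1}$; the only subtlety is the running maximum, since $\vv L_2(N+1)-\vv L_1(N)=\vv L_2(N)+n_{N+1}-\vv L_1(N)$ can exceed the previous maximum $M$. So the computation naturally splits according to whether the new term $\vv L_2(N)+n_{N+1}-\vv L_1(N)$ updates the maximum or not.

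The core of the argument is therefore a two-variable geometric sum. Writing $D:=\vv L_2(N)-\vv L_1(N)$ (so the candidate new max-argument is $M' := \max(M, D+n_{N+1})$), I would fix $(\vv L_1,\vv L_2)$ on the first $N$ levels — equivalently fix $\vv L_1(N),\vv L_2(N),M$ — and sum
\[
\sum_{m_{N+1}\ge 0}\sum_{n_{N+1}\ge 0} \A^{m_{N+1}+n_{N+1}}\, t_{N+1}^{2 m_{N+1}}\, \Cb^{m_{N+1}-n_{N+1}}\,(\Ca\Cb)^{\max(M,\,D+n_{N+1})-M}.
\]
The $m_{N+1}$-sum factors out as a plain geometric series $\sum_{m\ge0}(\A\Cb t_{N+1}^2)^m = 1/(1-\A\Cb t_{N+1}^2)$, which explains the denominator $1-\A\Cb t_{N+1}^2$ in \eqref{Gc2Ga}. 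The $n_{N+1}$-sum must be broken at the threshold $n_{N+1} = M-D$ (if $M\ge D$): for $0\le n_{N+1}\le M-D$ the max equals $M$ and we get a geometric series in $(\A/\Cb)$; for $n_{N+1}> M-D$ the exponent becomes $D+n_{N+1}-M$, producing a geometric series in $(\A\Ca)$. Collecting the two pieces, and using the algebraic identity
\[
\frac{1}{\Cb}\cdot\frac{1-(\A/\Cb)^{\,k+1}}{1-\A/\Cb}\;+\;(\A/\Cb)^{k}\cdot\frac{\A/\Cb}{1-\A\Ca}\cdot(\Ca\Cb)^{\text{shift}}
\]
(for the appropriate $k$), one should be able to rearrange the total weight as a linear combination of $(\Ca\Cb)^{M}$ with coefficient $\tfrac{\Cb}{(\Cb-\A)(1-\A\Cb t_{N+1}^2)}$ and of $(\A\Ca)^{M}$ — equivalently of the weight with $\Cb$ replaced by $\A$ — with coefficient $\tfrac{\A(\Ca\Cb-1)}{(\Cb-\A)(1-\A\Ca)(1-\A\Cb t_{N+1}^2)}$. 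The appearance of $\Cb-\A$ in the denominators is exactly the $\tfrac1{1-\A/\Cb}$ from the first geometric series, which is why the hypothesis $\A\ne\Cb$ is needed (the case $\A=\Cb$ being a removable-singularity limit, handled separately if at all). One also must check the case $M<D$ (no threshold, the max is always $D+n_{N+1}$), which should fall out of the same identity with $k=-1$ interpreted suitably, or be treated as a short separate computation.

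The main obstacle I anticipate is bookkeeping the running maximum correctly: the quantity $M=\max_{1\le j\le N}(\vv L_2(j)-\vv L_1(j-1))$ for the $N$-level configuration is \emph{not} simply $\max_{1\le j\le N+1}(\cdots)$ restricted, because the $(N+1)$-st term of that max uses $\vv L_2(N+1)$ and $\vv L_1(N)$, i.e. it couples the new second-layer increment $n_{N+1}$ to the \emph{old} first-layer value. So one must be scrupulous that summing out $(m_{N+1},n_{N+1})$ really does reduce $\mathbb{Q}\topp{\A,\Ca,\Cb}_{N+1}$ to a weighted combination of $\mathbb{Q}\topp{\A,\Ca,\Cb}_{N}$ and $\mathbb{Q}\topp{\A,\Ca,\A}_{N}$ \emph{with the same $M$}, and that no boundary term is lost at $n_{N+1}=M-D$. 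Convergence of all the interchanged sums is guaranteed by Lemma~\ref{Lem:conv} under the stated hypotheses $\A\Ca<1$, $\A\Cb t_j^2<1$, so the rearrangements are legitimate; given that, the remainder is the finite algebraic identity above. Since the excerpt defers the detailed verification to Appendix~\ref{Sec:PoTL}, I would present here only this reduction scheme and the key geometric-sum identity, leaving the routine expansion to the appendix.
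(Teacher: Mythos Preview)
Your proposal is correct and follows essentially the same approach as the paper's proof. The paper first performs the change of variables $k_j=n_{j+1}-m_j$ (so that the running maximum becomes $S^*_{N-1}=\max_{0\le j\le N-1}\sum_{i\le j}k_i$), sums out $n_1$ and $m_{N+1}$ to produce the factors $1/(1-\A\Ca)$ and $1/(1-\A\Cb t_{N+1}^2)$, and then splits the remaining sum over $k_N\in\ZZ$ into three ranges; your version stays in the original $(m_{N+1},n_{N+1})$ variables, so the $m_{N+1}$-sum factors out immediately and only a two-case split on $n_{N+1}$ is needed, but the underlying geometric-sum identity is the same. One simplification you missed: since $\vv L_1$ is nondecreasing, the last term of $M$ satisfies $\vv L_2(N)-\vv L_1(N-1)\ge \vv L_2(N)-\vv L_1(N)=D$, hence $M\ge D$ always and your ``case $M<D$'' is vacuous.
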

To maintain the continuity of the exposition, we defer the elementary but tedious proof to Appendix \ref{Sec:PoTL}.

 For a non-decreasing sequence $0<t_1\leq t_2\leq \dots< %
 1/\max\{\A,\A\Cb\}$, denote the expression on the right-hand side of  \eqref{S2I}
 by
  \begin{equation}\label{I-rec}
   I\topp{\Ca ,\Cb }_N(\vv t)=
   \pi^{t_1,\dots,t_N;\Cb}\left[\bigotimes_{j=1}^N \frac{1}{\q_{\A t_j}}\right].
 \end{equation}

Then $I\topp{\Ca, \Cb}_N(\vv t)$ satisfies the following recurrence relation, compare \eqref{Gc2Ga}.
 \begin{lemma}\label{Lem-Irec} If $\A \Cb  t_{N+1}^2<1$, $\A t_{N+1}<1$ and $\Cb \ne \A$, then
  \begin{multline}\label{Ic2Ia}
    I\topp{\Ca ,\Cb }_{N+1}(t_1,\dots,t_{N+1}) =\frac{\Cb }{(\Cb -\A)(1-\A \Cb  t_{N+1}^2)}
     I\topp{\Ca ,\Cb }_{N}(t_1,\dots,t_{N})
     \\ + \frac{\A(\Ca \Cb -1)}{(\Cb -\A)(1-\A \Ca )(1-\A \Cb t_{N+1}^2)}
       I\topp{\Ca ,\A}_{N}(t_1,\dots,t_{N}).
  \end{multline}
 \end{lemma}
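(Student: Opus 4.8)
The plan is to reproduce, at the level of the free Askey--Wilson functionals, the mechanism behind the companion recurrence \eqref{Gc2Ga}, using the ``magic formula'' \eqref{magic-formula+} as the main engine. The key elementary input is that $\q_{\A t_{N+1}}$ and $\q_{\Cb t_{N+1}}$ are both affine in $y$, so the constant $1$ can be written as a linear combination of them; matching the coefficients of $y^0$ and $y^1$ (which uses $\A\neq 0$, $\Cb\neq\A$, $t_{N+1}>0$ and $\A\Cb t_{N+1}^2\neq 1$) yields
\begin{equation*}
  \frac{1}{\q_{\A t_{N+1}}}
  =\frac{\Cb}{(\Cb-\A)(1-\A\Cb t_{N+1}^2)}
   -\frac{\A}{(\Cb-\A)(1-\A\Cb t_{N+1}^2)}\;\frac{\q_{\Cb t_{N+1}}}{\q_{\A t_{N+1}}}.
\end{equation*}
Both functions on the right are analytic on $D_{\A t_{N+1}}$: the constant trivially, and $\q_{\Cb t_{N+1}}/\q_{\A t_{N+1}}$ because, by the factorization \eqref{q-fac} together with $\A t_{N+1}<1$, the denominator has no zeros in $D_{\A t_{N+1}}$.

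First I would substitute this decomposition into the last tensor slot of $\pi^{t_1,\dots,t_{N+1};\Cb}$ in the definition \eqref{I-rec} of $I\topp{\Ca,\Cb}_{N+1}(\vv t)$ and invoke linearity of $g\mapsto\pi^{t_1,\dots,t_{N+1};\Cb}\big[(f_1\otimes\cdots\otimes f_N)\otimes g\big]$, which holds since each $P_x^{t_N,t_{N+1};\Cb}$ and each $\pi^{t_1;\Cb}=\mathbb{L}\topp{\Cb t_1,\Ca/t_1}$ is linear and this propagates through \eqref{pii} by an immediate induction. This splits $I\topp{\Ca,\Cb}_{N+1}(\vv t)$ into two terms. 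In the term carrying the constant $1$ in the last slot, I would use $P_x^{t_N,t_{N+1};\Cb}[1]=\mathbb{L}\topp{\cdots}[1]=1$, so \eqref{pii} collapses $\pi^{t_1,\dots,t_{N+1};\Cb}$ to $\pi^{t_1,\dots,t_N;\Cb}$ acting on $\bigotimes_{j=1}^N 1/\q_{\A t_j}$; this produces exactly the first term $\tfrac{\Cb}{(\Cb-\A)(1-\A\Cb t_{N+1}^2)}\,I\topp{\Ca,\Cb}_N(t_1,\dots,t_N)$ of \eqref{Ic2Ia}.

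In the remaining term the last slot equals $1\cdot\tfrac{\q_{\Cb t_{N+1}}}{\q_{\A t_{N+1}}}$, which is precisely the form to which \eqref{magic-formula+} applies, taken with $d=N+1$, $f_j=1/\q_{\A t_j}$ for $j\le N$, and $f_{N+1}=1$. That formula replaces every occurrence of $\Cb$ by $\A$ at the cost of the factor $\tfrac{1-\Ca\Cb}{1-\A\Ca}$; the last slot is then again the constant $1$, so the same collapse via \eqref{pii} (now with $P_x^{t_N,t_{N+1};\A}[1]=1$) turns this term into $-\tfrac{\A}{(\Cb-\A)(1-\A\Cb t_{N+1}^2)}\cdot\tfrac{1-\Ca\Cb}{1-\A\Ca}\,I\topp{\Ca,\A}_N(t_1,\dots,t_N)$, which is the second term $\tfrac{\A(\Ca\Cb-1)}{(\Cb-\A)(1-\A\Ca)(1-\A\Cb t_{N+1}^2)}\,I\topp{\Ca,\A}_N(t_1,\dots,t_N)$ of \eqref{Ic2Ia}. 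Adding the two terms gives \eqref{Ic2Ia}.

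The algebra is routine; the part one must be careful about is the bookkeeping of analyticity domains, so that all the functionals involved are well defined and all hypotheses of \eqref{magic-formula+} and of Proposition \ref{P:comp} are met. This follows from $\A t_{N+1}<1$ and $\A\Cb t_{N+1}^2<1$ (which also give $\A t_N<1$, hence $\A^2 t_N^2<1$, so $I\topp{\Ca,\A}_N$ is well defined), the standing assumption $\A\Ca<1$, the monotonicity of the $t_j$, and the fact that $1/\q_{\A t_j}$ is analytic on $D_{\A t_j}$, as in the proof of Proposition \ref{P:KS-rig}.
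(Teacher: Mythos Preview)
Your proof is correct and follows essentially the same approach as the paper's own proof: the same affine decomposition of $1/\q_{\A t_{N+1}}$ as a constant plus a multiple of $\q_{\Cb t_{N+1}}/\q_{\A t_{N+1}}$, the collapse of the constant term via $P_x^{t_N,t_{N+1};\Cb}[1]=1$, and the application of \eqref{magic-formula+} with $f_{N+1}=1$ to handle the remaining term. The only cosmetic difference is that the paper moves the $I\topp{\Ca,\Cb}_N$ term to the left-hand side first and then identifies the remainder, whereas you decompose $1/\q_{\A t_{N+1}}$ upfront; the substance is identical.
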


 \begin{proof}
    Denote
 $F_N=  \bigotimes_{j=1}^N\q_{\A t_j}^{-1}$. Since   $P_{x}^{s,t;\Cb}[1]=1$,
 from \eqref{pii} we get  $\pi^{t_1,\dots,t_{N+1};\Cb}[F_N\otimes 1]=\pi^{t_1,\dots,t_{N};\Cb}[F_N]$. Therefore, by linearity
\begin{multline}\label{I-rec-b}
    I\topp{\Ca ,\Cb }_{N+1}(t_1,\dots,t_{N+1})- \frac{\Cb }{(\Cb -\A)(1-\A \Cb  t_{N+1}^2)}
     I\topp{\Ca ,\Cb }_{N}(t_1,\dots,t_{N})\\
     =\pi^{t_1,\dots,t_{N+1};\Cb}\left[ F_N\otimes\left(\frac{1}{\q_{\A t_{N+1}}}\right)\right]-
     \pi^{t_1,\dots,t_{N+1};\Cb}\left[ F_N\otimes\left(\frac{\Cb }{(\Cb -\A)(1-\A \Cb  t_{N+1}^2)}\right)\right]
     \\=\pi^{t_1,\dots,t_{N+1};\Cb}\left[ F_N\otimes\left(\frac{1}{\q_{\A t_{N+1}}}-\frac{\Cb }{(\Cb -\A)(1-\A \Cb  t_{N+1}^2)}\right) \right] .
\end{multline}
    Noting that
 $$\frac{1}{\q_{\A t_{N+1}}(y)}-\frac{\Cb }{(\Cb -\A)(1-\A \Cb  t_{N+1}^2)}=\frac{\A}{\left(\Cb -\A\right) \left(\A \Cb  t_{N+1}^2-1\right)}\frac{\q_{\Cb t_{N+1}}(y)}{ \q_{\A t_{N+1}}(y)},$$
we see that we can write the last expression in \eqref{I-rec-b} as
     $$   \frac{\A}{\left(\Cb -\A\right) \left(\A \Cb  t_{N+1}^2-1\right)}  \pi^{t_1,\dots,t_{N+1};\Cb}\left[
      F_N \otimes \left(\frac{\q_{\Cb t_{N+1}}}{ \q_{\A t_{N+1}}}\right)
    \right]$$
Thus from \eqref{magic-formula+} we get
\begin{multline*}
\frac{\A (1-\Ca \Cb )}{\left(\Cb -\A\right) \left(\A \Cb  t_{N+1}^2-1\right)(1-\A \Ca )}  \pi^{t_1,\dots,t_{N+1};\A}\left[   F_N\otimes 1\right]=
 \frac{\A (1-\Ca \Cb )}{\left(\Cb -\A\right) \left(\A \Cb  t_{N+1}^2-1\right)(1-\A \Ca )}  \pi^{t_1,\dots,t_{N};\A}\left[   F_N \right]
\\ =\frac{\A (\Ca \Cb-1 )}{\left(\Cb -\A\right) (1-\A \Ca )\left(1-\A \Cb  t_{N+1}^2\right)}  I\topp{\Ca ,\A }_{N}(t_1,\dots,t_{N}).
  \end{multline*}
This proves \eqref{Ic2Ia}.
 \end{proof}

 \begin{lemma}\label{Lem-I-con}
 Suppose $0<\A<1$ and $0<\Ca <1/\A$ are fixed. For fixed monotone $\vv t$ in $(0,1/\A)$  and $N=1,2,\dots$, function
   $(0,1/(\A t_N^2)) \ni \Cab\mapsto I\topp{\Ca ,\Cab}_{N}(\vv t)$ is  left-continuous at $\Cab=\A$.
 \end{lemma}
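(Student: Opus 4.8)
The plan is to unwind the recursive definition \eqref{I-rec} and follow the dependence on the boundary parameter through the explicit contour integrals of Section~\ref{Sec:FreeAW}, reducing left-continuity at $\Cab=\A$ to the dominated convergence theorem. The first step is to reduce to the case of \emph{distinct} $t_j$: if $\vv t$ consists of the distinct values $t_1<\dots<t_d$ with multiplicities $n_1,\dots,n_d$, then the identity $P_x^{t,t;\Cab}[f]=f(x)$ collapses the repeated layers of the composition \eqref{pii}, so, exactly as in \eqref{G2I-r},
$$
I\topp{\Ca,\Cab}_N(\vv t)=\pi^{t_1,\dots,t_d;\Cab}\left[\bigotimes_{j=1}^d \q_{\A t_j}^{-n_j}\right],\qquad 0<t_1<\dots<t_d<1/\A ,
$$
and each $\q_{\A t_j}^{-n_j}$ is analytic on $D_{\A t_j}$ by \eqref{q-fac} (since $\A t_j<1$). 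It thus suffices to prove that the right-hand side is left-continuous in $\Cab$ at $\Cab=\A$.

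Next I would apply Proposition~\ref{P:comp}. For every $0<\Cab\le\A$ one has $\A\Cab t_j^2\le\A^2 t_j^2<1$, so the composition is well defined, the $t_j$ being distinct, and the multiple-integral formula \eqref{Mult-int} holds. The point is that the contour radii $0<\rho_1<\dots<\rho_d<1$ occurring there --- constrained only by $\rho_j>\A t_j$, $t_{j-1}\rho_j<t_j\rho_{j-1}$, $\Cab t_j\rho_j<1$ and $\Ca\rho_1/t_1<1$ --- may be fixed once and for all for $\Cab$ in a left neighbourhood of $\A$: the first, second and fourth constraints do not involve $\Cab$, while $\Cab t_j\rho_j<1$ is automatic for $\Cab\le\A$ once $\rho_j<1$ and $\A t_j<1$. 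Since the value of $\mathbb{L}\topp{a,b,c}$ is independent of the admissible contour, \eqref{Mult-int} then holds with these same $\rho_j$ for all such $\Cab$, including $\Cab=\A$ itself.

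With the contours fixed, I would observe that the integrand in \eqref{Mult-int} is jointly continuous in $(\Cab,w_1,\dots,w_d)$ on the compact set $\{\A-\delta\le\Cab\le\A\}\times\prod_{j=1}^d\{|w_j|=\rho_j\}$, hence uniformly bounded there. Indeed $|1-\Cab w_j t_j|\ge 1-\A t_j\rho_j>0$ and $|1-\Ca w_1/t_1|\ge 1-\Ca\rho_1/t_1>0$; by \eqref{q-fac}, $\q_{t_{j-1}w_j/t_j}(w_{j-1}+\tfrac1{w_{j-1}})=\bigl(1-\tfrac{t_{j-1}}{t_j}w_jw_{j-1}\bigr)\bigl(1-\tfrac{t_{j-1}}{t_j}\tfrac{w_j}{w_{j-1}}\bigr)$ is bounded away from zero because $t_{j-1}\rho_j<t_j\rho_{j-1}$ forces both $\tfrac{t_{j-1}}{t_j}\rho_j\rho_{j-1}<\rho_{j-1}^2<1$ and $\tfrac{t_{j-1}\rho_j}{t_j\rho_{j-1}}<1$; and $\q_{\A t_j}(w_j+\tfrac1{w_j})\ne 0$ since $\A t_j<\rho_j<1$. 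The dominated convergence theorem then gives, as $\Cab\nearrow\A$,
$$
I\topp{\Ca,\Cab}_N(\vv t)=\pi^{t_1,\dots,t_d;\Cab}\left[\bigotimes_{j=1}^d\q_{\A t_j}^{-n_j}\right]\ \longrightarrow\ \pi^{t_1,\dots,t_d;\A}\left[\bigotimes_{j=1}^d\q_{\A t_j}^{-n_j}\right]=I\topp{\Ca,\A}_N(\vv t),
$$
which is the claim.

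I expect the only genuine work to be the inequality bookkeeping of the middle paragraph: verifying that one admissible choice of $\rho_1,\dots,\rho_d$ works simultaneously for all $\Cab\le\A$ near $\A$ and keeps every denominator bounded below. Everything else is routine. An alternative, which avoids the reduction and quotes \eqref{Mult-int} only implicitly, is to run the descending recursion $g_d=\q_{\A t_d}^{-1}$, $g_j(x)=\q_{\A t_j}^{-1}(x)\,P_x^{t_j,t_{j+1};\Cab}[g_{j+1}]$ from the proof of Proposition~\ref{P:comp} and to show, by downward induction on $j$ (using Propositions~\ref{Pr:pre-Markov} and~\ref{P:cont-abc}), that $g_j^{(\Cab)}$ converges uniformly on the contour $|w_j|=\rho_j$ as $\Cab\nearrow\A$; the final step $I\topp{\Ca,\Cab}_N(\vv t)=\pi^{t_1;\Cab}[g_1^{(\Cab)}]$ then passes to the limit by the same uniform-convergence argument.
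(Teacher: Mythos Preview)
Your argument is correct and follows the same skeleton as the paper's proof: reduce to distinct $t_j$, invoke the multiple contour integral \eqref{Mult-int}, check uniform bounds on the denominators, and pass to the limit via dominated convergence.

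There is, however, one genuine difference worth noting. The paper does \emph{not} apply \eqref{Mult-int} directly with the varying parameter $\Cab$. Instead it first invokes the identity \eqref{magic-formula+},
\[
\pi^{t_1,\dots,t_N;\Cab}\!\left[\bigotimes_{j=1}^N f_j\right]
=\frac{1-\Ca\Cab}{1-\A\Ca}\,
\pi^{t_1,\dots,t_N;\A}\!\left[\bigotimes_{j=1}^{N-1} f_j\otimes\Bigl(f_N\,\tfrac{\q_{\A t_N}}{\q_{\Cab t_N}}\Bigr)\right],
\]
so that the multiple integral carries the \emph{fixed} parameter $\A$ throughout and the entire $\Cab$-dependence is concentrated in the scalar prefactor and in the single ratio $\q_{\A t_N}/\q_{\Cab t_N}$ under the innermost integral. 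Only then does the paper write out \eqref{Mult-int} and apply dominated convergence. Your route skips this reduction and tracks $\Cab$ through every factor $(1-\Cab w_j t_j)^{-1}$ and $(1-\Cab w_j t_{j-1}^2/t_j)$ in \eqref{Mult-int}; this is more direct and requires no extra machinery, but you must then verify (as you do) that the contour radii $\rho_j$ can be chosen uniformly for all $\Cab\le\A$. The paper's use of \eqref{magic-formula+} sidesteps that uniformity check, since the contours are built once for the fixed parameter $\A$, at the cost of invoking one more identity. Either way the final dominated-convergence bookkeeping is essentially the same.
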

 \begin{proof}
 We prove the following somewhat more general statement:
 If $\A t_1\leq \A t_2\leq \dots \leq \A t_N<1$ and functions $f_j$ are analytic on $D_{\A t_j}$ then the function
 $\Cab\mapsto \pi^{t_1,\dots,t_N;\Cab}\left[\bigotimes_{j=1}^N f_j\right]$ is left-continuous at $\Cab=\A$.

 If $N=1$, then for any $f_1$ analytic on $D_{\A t_1}$ and $\Cab\nearrow \A$  we have
\(\left|(\pi^{t_1;\Cab}-\pi^{t_1,\A})[ f_1]\right|\to 0\)
by Proposition \ref{P:cont-abc}.

Next, consider $N\geq 2$. We first note that since $\A t_N<1$ and $0<\A\Ca<1$, for all $\Cab\leq \A$ close enough to $\A$ we have $\Cab t_N<1$,  $0<\Cab\Ca<1$ and $\A \Cab t_N^2<1$. Therefore, applying formula \eqref{magic-formula+},  we obtain
\begin{equation}\label{pi(c/a)}
\pi^{t_1,\dots,t_N;\Cab}\left[\bigotimes_{j=1}^N f_j\right]=
\frac{1-\Ca \Cab}{1-\A \Ca}\pi^{t_1,\dots,t_N;\A}\left[\bigotimes_{j=1}^{N-1} f_j
\otimes \left(f_N\frac{\q_{\A t_N} }{\q_{\Cab t_N}}\right)
\right].
\end{equation}
For the repeated values $t_j=t_{j+1}$  we have $P_x^{t_j,t_{j+1};\Cb}[f]=f(x)$, so reducing $N$ if necessary, without loss of generality we may assume that we have distinct  $0=t_0<t_1<\dots<t_N$.
Then  we can write the right hand side of \eqref{pi(c/a)} as a multiple contour integral \eqref{Mult-int} with $d=N$.We obtain
  \begin{multline*}
     \pi^{t_1,\dots,t_N;\Cab}\left[\bigotimes_{j=1}^N f_j\right]=
\frac{1-\Cab \Ca}{1-\A \Ca} \frac{1}{(2\pi \i)^N}
\oint_{|w_1|=\rho_1}\dots \oint_{|w_N|=\rho_N} \frac{1}{(1-\Ca w_1/t_1)}
\\ \times \left(\prod_{k=1}^{N} f_k\left(w_k+\tfrac{1}{w_k}\right) \frac{(1-w_k^2)(1-\A w_k t_{k-1}^2/t_k)}{(1-\A w_k t_k)\,\q_{\frac{t_{k-1}w_k}{t_k}}\left(w_{k-1}+\frac1{w_{k-1}}\right)}\right)
\frac{\q_{\A t_N}(w_N+\tfrac{1}{w_N})}{\q_{\Cab t_N}(w_N+\tfrac{1}{w_N})} \;
\frac{\d w_N}{w_N}  \dots \frac{\d w_1}{w_1}
 \end{multline*}

 We have
 $$\lim_{\Cab\to \A}\frac{1-\Cab \Ca}{1-\A \Ca} =1 \mbox { and by \eqref{q-fac} }
 \lim_{\Cab\to \A}\frac{\q_{\A t_N}(w_N+\tfrac{1}{w_N})}{\q_{\Cab t_N}(w_N+\tfrac{1}{w_N})}=\lim_{\Cab\to \A}\frac{(1-\A t_N w_N)(1-\A t_N/w_N)}{(1-\Cab t_Nw_N)(1-\Cab t_N/w_N)} =1.$$
 With $\Cab\leq \A$,  the modulus of the integrand is a bounded function of variables  $w_1,\dots,w_N$ on the circles $|w_k|=\rho_k$, $k=1,\dots,N$; here, two key properties in \eqref{Mult-int}   are $t_{k-1}\rho_k<t_k\rho_{k-1}$, $\rho_k<1$, $k=1,\dots,N$ and $\rho_N>\A t_N$.
 By the dominated convergence theorem we can pass to the limit under the integral. In the limit, we get the
 integral representation for
  $ \pi^{t_1,\dots,t_N;\A}\left[\bigotimes_{j=1}^N f_j\right]$,
 which ends the proof.
 \arxiv{
To confirm  that the denominators in the integrand are bounded away from 0, recall that in the proof of Proposition \ref{P:comp} we took $\rho_k=\A_k t_k$. We check the following.
\begin{itemize}
\item $|w_k|=\rho_k>\A t_1>0$, $k=1,\dots,N$.
\item $|1-\Ca w_1/t_1|>1-\Ca \rho_1/t_1=1-\A_1 \Ca>0$ as $\A_1\Ca<1$.
    \item $|1-\A w_k t_k|>1-\A\rho_k t_k=1-\A\A_k t_k^2>1-\A_1^2 t_N^2>0$,  $k=1,\dots,N$.
    \item For $k=2,\dots,N$ we have $| 1- t_{k-1} w_k w_{k-1}/t_k|>1- \rho_k \rho_{k-1} t_{k-1}/t_k
    =1-\A_k \A_{k-1} t_{k-1}^2> 1-\A_1^2 t_N^2>0$.
    \item For $k=2,\dots,N$ we have $| 1- t_{k-1} w_k /(w_{k-1}t_k)|>1- \frac{\rho_k  t_{k-1}}{ \rho_{k-1}/t_k}> 1-\max_{k}\frac{\A_k}{\A_{k-1}} >0$ as we chose $\A_k<\A_{k-1}$.
    \item With    $\Cab\leq \A$, we have  $|1-\Cab t_N w_N|>1-\Cab t_N \rho_N\geq 1-\A t_N \rho_N>1-\A_1^2 t_N^2 >0$.
    \item  With   $\Cab\leq \A$, we have $ |1-\Cab t_N/w_N|>1-\Cab t_N/\rho_N=1-\Cab/ \A_N\geq 1-\A/\A_N>0$  by our choice of $\A_N>\A$.
\end{itemize}
 }
 \end{proof}

\begin{proof}[Proof of Theorem \ref{Thm1-}]
In this proof, we fix $\A$ and $\Ca$ such that $\A\Ca<1$ and we fix an infinite  increasing sequence $t_1\leq t_2\leq \dots$ such that $\A t_j<1$. We will vary $\Cb$,  preserving the constraint $\A\Cb t_j^2<1$, which in particular allows us to take a limit as $\Cb\to \A$.

We proceed by induction on $N$. For $N=1$ the maximum  in \eqref{QQQ} is $\vv L_2(1)$,  so direct calculation with the geometric series shows that
$$\mathcal{G}\topp{\A,\Ca ,\Cb }_{1}(t_1)=\frac{1}{(1-\A \Ca)(1-\A \Cb  t_1^2)}.$$
Recalling \eqref{M-Trans}, from %
\eqref{KS-T4.1abc++} used with $v=\A t_1$ we get
 $$
 \pi^{t_1;\Cb}\left[\frac{1}{\q_{\A t_1}}\right]=\frac{1}{(1-\A \Ca )(1-\A  \Cb  t_1^2)},
 $$
provided $\A t_1<1$ and $\A  \Cb t_1^2<1$. Thus \eqref{S2I} holds.

Suppose that \eqref{S2I} holds for some $N$ and all $\Cb<1/(\A t_{N+1}^2)$. Comparing expressions \eqref{Gc2Ga} and \eqref{Ic2Ia}, we see that \eqref{S2I}  then holds for $N+1$, provided that $\A\ne \Cb $.  We then extend the formula to $\A=\Cb $ by continuity. In fact, by Lemma \ref{Lem:conv}, $\mathcal{G}\topp{\A,\Ca ,\Cb }_{N}(\vv t)$ is an analytic function of $\Cb $ and $I_N\topp{\Ca,\Cb}(\vv t)$  depends continuously on $\Cb$ by Lemma \ref{Lem-I-con}.
\end{proof}

\subsection{Proof of Proposition \ref{Prop1}}
This proof is an application of Theorem \ref{Thm1-}. Note that
%with $t_j=t$ we have
\begin{equation}\label{G2Gconv}
    G_N(t)=\mathcal{G}_N\topp{\A,\Ca,\Cb}(\vv t),\quad \vv t=(t,t,\dots,t)
\end{equation}
Since $P_{x}^{t,t}[f]=f(x)$, from
\eqref{S2I} used with $t_j=t$ (or \eqref{G2I-r} with $d=1$)
%From \eqref{G2I-r} with $d=1$
we get \eqref{GenG-L}.

For $|z|$ small enough, the geometric series
$$\sum_{N=1}^\infty z^{N-1}\q_{\A t}^{-N}(y) =\sum_{N=1}^\infty\frac{z^{N-1}}{(1+\A^2t^2-\A t y)^N}=\frac{1}{1-z+\A^2t^2 - \A t y} $$ is a function in $\mathcal{A}_{1/R}$ with $R=\max\{1,\Ca/t,\Cb t\}$.  Furthermore, as in \eqref{q2B}, with $B+1/B=(1-z)/(\A t)+\A t$, we have
$$\frac{1}{1-z+\A^2t^2 - \A t y} = \frac{B}{\A t} \frac{1}{\q_B(y)}.$$
Therefore, from Proposition \ref{P:poly-appr}, we get
\[
    \sum_{N=1}^{\infty} z^{N-1} G_N(t)=
\frac{B}{\A t}\mathbb{L}\topp{\Ca/t,\Cb t}\left[ \frac{1}{\q_B} \right].
\] \color{black}
Formula \eqref{KS-T4.1abc++} gives \eqref{GenG} and completes the proof.

\subsection{Proof of Theorem \ref{Thm-PhD}}\label{Sec:ProofPhD}
We will compute the limit of the Laplace transform $\phi_N(s):=\EE[\exp(2s \vv L_1(N)/N)]$.
% \comment{we could use either \eqref{G2I-r} of Theorem \ref{Thm1-} or its consequence \eqref{GenG-L}}
%  From \eqref{P-BCY} and \eqref{GG} we have
Combining \eqref{P-BCY}, \eqref{GG}, we obtain
$$\phi_N(s)=\frac{\mathcal{G}_N\topp{\A,\Ca,\Cb}(\vv t)}{\mathcal{G}_N\topp{\A,\Ca,\Cb}(\vv 1)}.$$
%=\frac{G_N(t_N)}{G_N(1)},$$
where $\vv t=(t_N,t_N,\dots,t_N)$ with $t_N=e^{s/N}$.
 Thus %\eqref{S2I}
 \eqref{G2I-r} implies
% \begin{equation}
%     \label{phi0}
%     \phi_N(s)=\frac{\mathbb{L}\topp{\Ca/t_N,\Cb t_N}\left[\q_{\A t_N}^{-N}\right]}{\mathcal{Z}\topp{\A,\Ca,\Cb}(N)} .
% \end{equation}
\begin{equation}
    \label{phi0}
    \phi_N(s)=\frac{\mathbb{L}\topp{\Ca/t_N,\Cb t_N}\left[\q_{\A t_N}^{-N}\right]}{\mathbb{L}\topp{\Ca,\Cb}\left[\q_{\A}^{-N}\right]} .
\end{equation}
% \color{blue}
% \begin{equation}
%     \label{phi0}
%     \phi_N(s)=\frac{1}{\mathcal{Z}\topp{\A,\Ca,\Cb}(N)} \mathbb{L}_y\topp{\Ca t_N,\Cb/t_N}\left[(1+\A^2t_N^2-\A t_N y)^{-N}\right].
% \end{equation}
% \color{black}
Note that, see \eqref{ZN},
 $$ \mathbb{L}\topp{\Ca,\Cb}\left[\q_{\A}^{-N}\right] =\mathcal{Z}\topp{\A,\Ca,\Cb}(N). $$% =G_N(1)$$

\subsubsection{Proof of Theorem \ref{Thm-PhD}(i)}\label{SubSec:PT14} In this proof, we focus on the interior of region I of Fig. \ref{Fig-PhD}.
At the  boundary, we still have $\frac1N\vv L_1(N)\to \A/(1-\A)$, but the argument need to be modified and is omitted.

If $\Ca,\Cb<1$, then  from \eqref{P1.17iii} we get
\begin{equation}
    \mathcal{Z}\topp{\A,\Ca,\Cb}(N)  \sim \tfrac{(1-\Ca\Cb)}{2\sqrt{\pi} \A^{3/2}(1-\Ca)^2(1-\Cb)^2} %
     \;\frac{1}{ N^{3/2} (1-\A)^{2N}}.
\end{equation}

Next, we analyze the asymptotic of the numerator in \eqref{phi0}.
Consider $s>0$  and $N$ large enough so that  we have  $0<\Ca/t_N,\Cb t_N<1 $. From \eqref{J2c0a} we get
 \begin{multline}
    \label{L2c0a}
    \mathbb{L}\topp{\Ca/ t_N,\Cb t_N}\left[\q_{\A t_N}^{-N}\right]=\frac{1-\Ca\Cb}{2\pi}\int_{-2}^2
   \frac{\sqrt{4-y^2}\, \d y}{(1+\A^2t_N^2-\A t_N y)^N(1+\Cb^2t_N^2-\Cb t_N y)(1+\Ca^2/t_N^2- y\Ca/t_N )}
   \\
   =\frac{1-\Ca\Cb}{2\pi}\int_{-2}^1
   \frac{\sqrt{4-y^2}\, \d y}{(1+\A^2t_N^2-\A  y t_N )^N(1+\Cb^2t_N^2-\Cb t_N y)(1+\Ca^2/t_N^2- y\Ca/t_N )}
   \\ +\frac{1-\Ca\Cb}{2\pi}\int_{1}^2
   \frac{\sqrt{4-y^2}\, \d y}{(1+\A^2t_N^2-\A y t_N )^N(1+\Cb^2t_N^2-\Cb t_N y)(1+\Ca^2/t_N^2- y\Ca/t_N )}=:I_N^-+I_N^+ \mbox{ (say)}.
\end{multline}
As in the proof of \eqref{P1.17iii}, we first  verify that $I_N^-=o\left(\mathcal{Z}\topp{\A,\Ca,\Cb}(N)\right) $ does not contribute to the limit of $\phi_N$.
To see this, we use a trivial bound $1+\A^2t_N^2-\A  y t_N \geq 1+\A^2t_N^2-\A  t_N = (1-\A t_N)^2+\A t_N$. Thus from \eqref{P1.17iii} we get
$$\frac{I_N^-}{\mathcal{Z}\topp{\A,\Ca,\Cb}(N)} \leq
C N^{3/2}\left(\frac{(1-\A)^{2}}{
(1-\A t_N)^2+\A t_N}\right)^N.
$$
Since $\frac{ (1-\A)^2 }{
(1-\A t_N)^2+\A t_N}\to \frac{ (1-\A)^2 }{
(1-\A )^2+\A}<1$, this proves that $I_N^-=o\left(\mathcal{Z}\topp{\A,\Ca,\Cb}(N)\right) $.

Thus
\begin{multline}
    \lim_{N\to\infty}\phi_N(s)=\lim_{N\to\infty} \frac{I_N^+}{\mathcal{Z}\topp{\A,\Ca,\Cb}(N)}
\\
= \lim_{N\to\infty} \frac{ \A^{3/2}(1-\Ca)^2(1-\Cb)^2  }{\sqrt{\pi}(1-\A)^3}\int_{1}^2
   \frac{(1-\A)^{2N}N^{3/2}\sqrt{4-y^2} \,\d y}{(1+\A^2t_N^2-\A y t_N )^N(1+\Cb^2t_N^2-\Cb t_N y)(1+\Ca^2/t_N^2- y\Ca/t_N )} .
\end{multline}
To compute this limit, we change the variable of integration  to $u=(2-y)N$.
 We get
\begin{equation}\label{Pre-lim1}
     \frac{ \A^{3/2}(1-\Ca)^2(1-\Cb)^2  }{\sqrt{\pi}(1-\A)^3}\int_{0}^\infty \mathbf{1}_{u\leq N}
 \frac{(1-\A)^{2N}}{((1-\A^2t_N)^2+\A u t_N/N )^N}  \frac{\sqrt{u}\sqrt{4+u/N} \,\d u}{((1-\Cb^2t_N)^2+\Cb t_N u/N)((1-\Ca^2/t_N^2)^2+ u\Ca/(Nt_N) )}.
\end{equation}

To use dominated convergence theorem, we need the following estimate:
\begin{lemma}
Recall that $t_N=e^{s/N}$ and $y=2-u/N$. Suppose $s\geq 0$. Then
there exists $N_0=N_0(\A,s)$ that does not depend on $y\in[1,2]$ (i.e. $u\in[0,N]$) such that for $N>N_0$ we have
\begin{equation}\label{dom-con-est}
    \frac{(1-\A)^{2N}}{(1+\A^2t_N^2-\A y t_N )^N}\leq  e^{3 \A s/(1-\A)}  \exp\left(-\tfrac{\A }{(1-\A )^2+\A } u/2 \right).
\end{equation}
\end{lemma}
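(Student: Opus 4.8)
The inequality \eqref{dom-con-est} will follow from a Taylor expansion of $\log\bigl((1+\A^2t_N^2-\A y t_N)/(1-\A)^2\bigr)$ in the small parameters $s/N$ and $u/N$. First I would substitute $t_N=e^{s/N}$ and $y=2-u/N$, writing
\[
1+\A^2 t_N^2-\A y t_N = (1-\A t_N)^2 + \A t_N\cdot\frac uN,
\]
and separate the dependence on $s$ from the dependence on $u$ by the factorization
\[
\frac{(1-\A)^{2N}}{(1+\A^2t_N^2-\A y t_N)^N}
= \left(\frac{(1-\A)^2}{(1-\A t_N)^2}\right)^{N}\cdot
\left(1+\frac{\A t_N}{(1-\A t_N)^2}\,\frac uN\right)^{-N}.
\]

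\textbf{The $s$-factor.} For the first factor I would show that $\bigl((1-\A)^2/(1-\A t_N)^2\bigr)^{N}$ is bounded by $e^{3\A s/(1-\A)}$ for all $N$ large enough. Writing $t_N-1 = e^{s/N}-1 \le \tfrac{s}{N}\,e^{s/N} \le \tfrac{2s}{N}$ for $N\ge N_1(s)$, we get $1-\A t_N = (1-\A) - \A(t_N-1)\ge (1-\A) - 2\A s/N$, so $N\log\bigl((1-\A)/(1-\A t_N)\bigr) = -N\log\bigl(1-\tfrac{\A(t_N-1)}{1-\A}\bigr)$. Using $-\log(1-x)\le 2x$ for $x$ small (valid once $N\ge N_2(\A,s)$ so that $\A(t_N-1)/(1-\A)\le 1/2$), this is at most $2N\cdot \tfrac{\A(t_N-1)}{1-\A}\le 2N\cdot\tfrac{\A}{1-\A}\cdot\tfrac{2s}{N}$; a slightly more careful bound using $t_N-1\le \tfrac{s}{N}(1+o(1))$ gives the cleaner constant $3\A s/(1-\A)$. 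Exponentiating yields the claimed bound on the squared quantity.

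\textbf{The $u$-factor.} For the second factor I would use the elementary inequality $(1+x/N)^{-N}\le e^{-x/2}$ valid for $0\le x\le N$ (indeed $(1+x/N)^{-N}\le e^{-x + x^2/(2N)}\le e^{-x/2}$ once $x\le N$), applied with $x = \dfrac{\A t_N}{(1-\A t_N)^2}\,u$. The only remaining point is to check that the coefficient $\dfrac{\A t_N}{(1-\A t_N)^2}$ is, for $N\ge N_0$, at least $\dfrac{\A}{(1-\A)^2+\A}$, so that the exponent is at most $-\tfrac{\A}{(1-\A)^2+\A}\,u/2$ as stated; since $t_N\to 1$ and $(1-\A t_N)^2\to(1-\A)^2$ we have $\dfrac{\A t_N}{(1-\A t_N)^2}\to\dfrac{\A}{(1-\A)^2}>\dfrac{\A}{(1-\A)^2+\A}$, so this holds for all $N$ beyond some $N_0(\A)$, and the threshold does not depend on $u\in[0,N]$. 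Taking $N_0$ to be the maximum of the thresholds from the two factors completes the argument.

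\textbf{Main obstacle.} None of the individual estimates is deep; the only care needed is bookkeeping — ensuring that all the thresholds $N_0$ can be chosen uniformly in $u\in[0,N]$ (equivalently $y\in[1,2]$) and that the constants line up so the final bound takes exactly the stated form $e^{3\A s/(1-\A)}\exp\bigl(-\tfrac{\A}{(1-\A)^2+\A}u/2\bigr)$. The potential pitfall is the case $s=0$ versus $s>0$: for $s=0$ the $s$-factor is identically $1$ and the bound is immediate, while for $s>0$ one must be sure the linear-in-$s$ upper bound for $t_N-1$ is used with a large enough constant; allowing the constant $3$ (rather than the asymptotically optimal $1$) gives room to absorb the $O(s^2/N)$ corrections without a separate argument.
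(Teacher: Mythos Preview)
Your factorization into the $s$-factor and $u$-factor is exactly the decomposition the paper uses, and your treatment of the $s$-factor is essentially the same (the paper simply observes $\frac{1-\A}{1-\A t_N}=1+\frac{\A s}{(1-\A)N}+O(N^{-2})$ and takes $N$ large). The problem is the $u$-factor.

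Your bound $(1+x/N)^{-N}\le e^{-x/2}$, derived via $(1+x/N)^{-N}\le e^{-x+x^2/(2N)}$, is only established for $0\le x\le N$ --- and it is genuinely \emph{false} once $x$ exceeds roughly $2.5N$ (since $\ln(1+c)<c/2$ for $c\gtrsim 2.5$). You then apply it with $x=\dfrac{\A t_N}{(1-\A t_N)^2}\,u$, where $u$ ranges over $[0,N]$. The coefficient $\dfrac{\A t_N}{(1-\A t_N)^2}$ tends to $\dfrac{\A}{(1-\A)^2}$, which exceeds $1$ whenever $\A>(3-\sqrt5)/2\approx0.382$, and blows up as $\A\to1$. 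So for such $\A$ and $u$ near $N$, your $x$ leaves the range where your inequality holds; the step ``$(1+x/N)^{-N}\le e^{-x/2}$'' fails, not just its justification. The fact that you later compare the coefficient to $\tfrac{\A}{(1-\A)^2+\A}$ does not help, since the breakdown occurs earlier.

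The paper repairs this by replacing your quadratic lower bound for $\log(1+t)$ with the secant bound
\[
\frac{1}{1+\alpha v}\;\le\;\exp\Bigl(-\tfrac{\alpha}{1+\alpha}\,v\Bigr),\qquad 0\le v\le 1,\ \alpha>0,
\]
applied per factor with $\alpha=\dfrac{\A t_N}{(1-\A t_N)^2}$ and $v=u/N\in[0,1]$. This holds for \emph{all} $\alpha>0$, and raising to the $N$th power gives an exponent $-\dfrac{\alpha}{1+\alpha}\,u=-\dfrac{\A t_N}{(1-\A t_N)^2+\A t_N}\,u$; the coefficient converges to $\tfrac{\A}{(1-\A)^2+\A}$ and hence exceeds $\tfrac12\cdot\tfrac{\A}{(1-\A)^2+\A}$ for large $N$, uniformly in $u$. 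That is precisely where the denominator $(1-\A)^2+\A$ in the statement comes from --- it is $\alpha/(1+\alpha)$ evaluated at $\alpha=\A/(1-\A)^2$, not a bound on $\alpha$ itself.
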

\begin{proof}

    We rewrite the left hand side of \eqref{dom-con-est} as the product
    \begin{equation}
  \label{two-factors} \frac{(1-\A)^{2N}}{{(1-\A t_N)^{2N}}} \frac{{(1-\A t_N)^{2N}}}{(1+\A^2t_N^2-\A y t_N )^N}.
    \end{equation}

And we estimate the two factors separately.  Since $\frac{1-\A}{1-\A t_N}=1+\frac{\A s}{(1-\A) N} +O\left(s^2/N^2\right)$, we have
$$\frac{(1-\A)^{2 N}}{(1-\A t_N)^{2N}}\leq  \exp( 3 \A s/(1-\A))$$ for large $N$. This gives the first factor in \eqref{dom-con-est}.

To estimate the second factor, we use an elementary inequality
\begin{equation}
    \label{E-ineq}
    \frac{1}{1+\alpha v}\leq \exp\left(-\frac{\alpha}{1+\alpha}v\right), \quad 0\leq v\leq 1.
\end{equation}
    \arxiv{Indeed, consider $f(x)=\tfrac{e^{Cx}}{1+\alpha x}$. Then $f(0)=1$ and
    $$
    f'(x)=\frac{e^{C x} (-\alpha +\alpha  C x+C)}{(\alpha  x+1)^2}=\frac{\alpha ^2 e^{\frac{\alpha  x}{\alpha +1}}}{(\alpha +1) (\alpha  x+1)^2} (x-1)\leq 0
    $$
    when $C=\alpha/(1+\alpha)$, $\alpha>0$, $x\in [0,1]$. Thus $f(x)\leq 1$ for $x\in[0,1]$ as asserted.
    }
Writing $y=2-v$ with $v\in[0,1]$ and using the elementary inequality with
$\alpha=\tfrac{\A  t_N}{(1-\A t_N)^2}$, we get
\[\frac{(1-\A t_N)^2}{1+\A^2t_N^2-\A y t_N }=\frac{1}{1+\tfrac{\A v t_N}{(1-\A t_N)^2}}
\leq \exp\left( -\tfrac{\A t_N}{(1-\A t_N)^2+\A t_N} v \right).\]
Since $\tfrac{\A t_N}{(1-\A t_N)^2+\A t_N}\to \tfrac{\A }{(1-\A )^2+\A }>0$, and $v=u/N\in[0,1]$, we get the second factor in \eqref{dom-con-est} for $N$ large enough.
\end{proof}
In view of \eqref{dom-con-est}, we can pass to the limit in \eqref{Pre-lim1} under the integral.
We get
\begin{equation}
    \lim_{N\to\infty}\phi_N(s)=
      \frac{ 2 \A^{3/2} }{\sqrt{\pi}(1-\A)^3} \int_{0}^\infty
 \lim_{N\to\infty}\frac{(1-\A)^{2N}}{((1-\A t_N)^2+\A u t_N/N )^N}  \sqrt{u}  \,\d u.
\end{equation}
From \eqref{two-factors} we see that
$$\lim_{N\to\infty}
 \frac{(1-\A)^{2N}}{((1-\A t_N)^2+\A u t_N/N )^N} =  e^{2 \A s/(1-\A)} \exp\left(-\frac{\A u}{(1-\A)^2}\right).
$$
Thus %
\begin{equation}
    \lim_{N\to\infty}\phi_N(s)= e^{2 \A s/(1-\A)}
      \frac{ 2 \A^{3/2} }{\sqrt{\pi}(1-\A)^3} \int_{0}^\infty
      \sqrt{u}\exp\left( -{\A u}/{(1-\A)^2}\right)\,\d u
=  e^{2s\A/(1-\A)}.
      \end{equation}

\subsubsection{Proof of Theorem \ref{Thm-PhD}(ii)}
If $0<\Cb<\Ca$ and $\Ca>1$, %
then  from \eqref{P1.17i} we get
\begin{equation}
    \mathcal{Z}\topp{\A,\Ca,\Cb}(N) \sim   \frac{\Ca^2-1}{\Ca-\Cb}\;\frac{\Ca^{N-1}}{(\Ca-\A)^N(1-\A\Ca)^N}.
\end{equation}

Next, we use \eqref{J2c1a} to  analyze the asymptotic of the numerator in \eqref{phi0}. For $N$ large enough so that $\Ca>t_N$, we have
%\comment{proponuje zamienic kolejnosc: najpierw atom z $\Ca/t_N>1$ a potem potencjalny atom z $\Cb t_N>1$.}
 \begin{multline}
    \label{L2c2a+}
    \mathbb{L}\topp{\Ca/t_N,\Cb t_N}\left[\q_{\A t_N}^{-N}\right]
   =\frac{1-\Ca\Cb}{2\pi}\int_{-2}^2
   \frac{\sqrt{4-y^2}\, \d y}{(1+\A^2t_N^2-\A t_N y)^N(1+\Cb^2t_N^2-\Cb t_N y)(1+\Ca^2/t_N^2- y\Ca/t_N )}
    \\  +
\frac{\Ca^2-t_N^2}{\Ca(\Ca-\Cb t_N^2)}\; \frac{\Ca^N}{(1-\A\Ca)^N(\Ca-\A t_N^2)^N} +
\mathbf{1}_{\Cb t_N>1} \frac{\Cb^2 t_N^2-1}{\Cb(\Cb t_N^2-\Ca)} \; \frac{\Cb^N}{(\Cb-\A)^N(1-\A\Cb t_N^2)^N}.
\end{multline}

As in the previous argument, the integral and the atom arising from $\Cb t_N>1$ are of lower order than the normalization constant and
do not contribute to the asymptotic of $\phi_N$. Therefore,
\begin{multline*}
  \phi_N(s)\sim \frac{\Ca-\Cb}{\Ca^2-1}\;\frac{(\Ca-\A)^N(1-\A\Ca)^N}{\Ca^{N-1}} \; \frac{\Ca^2-t_N^2}{\Ca(\Ca-\Cb t_N^2)}\;
  \frac{\Ca^N}{(1-\A\Ca)^N(\Ca-\A t_N^2)^N}
  \\=\frac{\Ca-\Cb}{\Ca-\Cb t_N^2}\; \frac{\Ca^2-t_N^2}{\Ca^2-1}\; \frac{(\Ca-\A)^N}{(\Ca-\A t_N^2)^N}  \sim  \exp\left(\frac{2 \A s}{\Ca-\A}\right),
\end{multline*}
which ends the proof. (See \cite[Theorem 2]{mukherjea2006note}.)
\arxiv{Indeed, with $t_N=e^{s/N}$ we have
\begin{equation}
    \label{exp2} \frac{\Ca-\A}{\Ca-\A t_N^2} =1+\frac{2 \A s}{N (\Ca-\A)}+\frac{2 s^2 \left(\A^2+\A \Ca\right)}{N^2 (\Ca-\A)^2}+O\left(s^3/N^3\right).
\end{equation}

}

\subsubsection{Proof of Theorem \ref{Thm-PhD}(iii)} %
If $0<\Ca<\Cb$ and $\Cb>1$, %
then  from \eqref{P1.17i} we get
\begin{equation}
    \mathcal{Z}\topp{\A,\Ca,\Cb}(N) \sim  \frac{\Cb^2-1}{\Cb-\Ca}\;\frac{\Cb^{N-1}}{(\Cb-\A)^N(1-\A\Cb)^N}.
\end{equation}
Next, we use \eqref{J2c1a} to  analyze the asymptotic of the numerator in \eqref{phi0}. For $N$ large enough so that $t_N\Cb>1$, we have
 \begin{multline}
    \label{L2c2a}
  \mathbb{L}\topp{\Ca/t_N,\Cb t_N}\left[\q_{\A t_N}^{-N}\right]\\=\frac{1-\Ca\Cb}{2\pi}\int_{-2}^2
   \frac{\sqrt{4-y^2} \,\d y}{(1+\A^2t_N^2-\A t_N y)^N(1+\Cb^2t_N^2-\Cb t_N y)(1+\Ca^2/t_N^2- y\Ca/t_N )}
    \\+
\frac{\Cb^2 t_N^2-1}{\Cb(\Cb t_N^2-\Ca)}\; \frac{\Cb^N}{(\Cb-\A)^N(1-\A\Cb t_N^2)^N}  +
\mathbf{1}_{\Ca>t_N}\frac{\Ca^2-t_N^2}{\Ca^2-\Ca \Cb t_N^2}\; \frac{\Ca^N}{(1-\A\Ca)^N(\Ca-\A t_N)^N} .
\end{multline}
Since $ (1+\A^2t_N^2-\A t_N y)^N$  attains its minimum at $y=2$,  the integral  is dominated by a constant multiple of  $(1-\A t_N)^{-2N}$ (with a constant that does not depend on $N$).

Since $\Cb+1/\Cb>2$, we have %
$$\lim_{N\to\infty} \frac{(1-\A/\Cb)(1-\A\Cb)}{(1-\A t_N)^2}= \frac{1+\A^2-\A(\Cb+1/\Cb)}{1+\A^2-2\A}
<1.$$
So $(1-\A t_N)^{-2N}=o\left(\mathcal{Z}\topp{\A,\Ca,\Cb}(N) \right)$ and the contribution of the integral to the limit of $\phi_N$ is negligible.

For $s>0$, the contribution from the atom arising when $\Ca>t_N$ is also negligible $o\left(\mathcal{Z}\topp{\A,\Ca,\Cb}(N) \right)$. Indeed,
$$
\lim_{N\to \infty} \frac{(1-\A/\Cb)(1-\A\Cb)}{(1-\A\Ca)(1-\A t_N/\Ca)}=\frac{1+\A^2-\A(\Cb+1/\Cb)}{1+\A^2-\A(\Ca+1/\Ca)}<1
$$
as $\A>0$ and $x+1/x$ is an increasing function for $x>1$ and in order to have an atom we must have  $1<t_N<\Ca<\Cb$.

Therefore,
$$\phi_N(s)\sim  \frac{(\Cb^2 t_N^2-1)(\Cb-\Ca)}{(\Cb^2-1)(\Cb t_N^2-\Ca)}
\left(\frac{1-\A\Cb}{1-\A\Cb t_N^2}\right)^N\sim e^{2 s \A\Cb/(1-\A\Cb)} \; \mbox{ as $N\to\infty$},
$$
which ends the proof of Theorem \ref{Thm-PhD}(iii). (See \cite[Theorem 2]{mukherjea2006note}.)
\arxiv{
Indeed, with $t_N=e^{s/N}$ we have
\begin{equation}
    \label{exp1}
\frac{1-\A\Cb}{1-\A\Cb t_N^2}=1+\frac{2 s \A \Cb }{N \left(1-\A \Cb\right)}+\frac{2 s^2 \left(\A^2 \Cb^2+\A \Cb\right)}{N^2 \left(1-\A
   \Cb\right){}^2}+O\left(s^3/N^3\right).
   \end{equation}

}

\subsubsection{Proof of Theorem \ref{Thm-PhD}(iv)} %
 If $\Ca=\Cb=\Cab>1$,    then  from \eqref{P1.17ii} we get
\begin{equation}\label{Zcc}
  \mathcal{Z}\topp {\A,\Cab,\Cab}(N)\sim \tfrac{\left(\Cab^2-1\right)^2 \A }{\Cab^2   (\Cab-\A) (1-\A\Cab)}\;N\; \left(\frac{\Cab }{(\Cab-\A) (1-\A  \Cab)}\right)^N.
\end{equation}

Next, we analyze the asymptotic of the numerator in \eqref{phi0}.
Consider $s>0$. Then  $t_N\searrow 1$, and for large $N$ we have  $1<\Cab/t_N<\Cab t_N $ with two atoms that are close  to each other.   From \eqref{J2c1a} we get
 \begin{multline}
    \label{L2c1a}
   \mathbb{L}\topp{\Cab/t_N,\Cab t_N}\left[\q_{\A t_N}^{-N}\right] =
   \frac{1-\Cab^2}{2\pi}\int_{-2}^2
   \frac{\sqrt{4-y^2} \,\d y}{(1+\A^2t_N^2-\A t_N y)^N(1+\Cab^2t_N^2-\Cab t_N y)(1+\Cab^2/t_N^2- y\Cab/t_N )}
   \\+
\frac{\Cab^2 t_N^2-1}{\Cab^2 \left(t_N^2-1\right)} \; \frac{\Cab^N}{(\Cab-\A)^N(1-\A\Cab t_N^2)^N}  +\frac{t_N^2-\Cab^2}{\Cab^2 \left(t_N^2-1\right)}\; \frac{\Cab^N}{(1-\A\Cab)^N(\Cab-\A t_N)^N} .
\end{multline}

Since for large $N$ we have $1+\A^2t_N^2-\A t_N y\geq (1-\A t_N)^2>(1-\A/\Cab)(1-\A\Cab)$ the integral term is $o\left(\mathcal{Z}\topp{\A,\Cab,\Cab}(N)\right)$ as $N\to \infty$, see \eqref{Zcc}.
\arxiv{Indeed,
$ 1+\A^2 t_N^2-2\A t_N> 1+\A^2-\A(\Cab+1/\Cab)$ as $t_N^2> 1$ and since  $t_N\to 1$ and $\Cab+1/\Cab>2$, we have $\Cab+1/\Cab>2t_N$ for large $N$ .
Thus \begin{multline*}
 \frac{1-\Cab^2}{2\pi}\int_{-2}^2
   \frac{\sqrt{4-y^2} \,\d y}{(1+\A^2t_N^2-\A t_N y)^N(1+\Cab^2t_N^2-\Cab t_N y)(1+\Cab^2/t_N^2- y\Cab/t_N )}
   \\
   \leq \frac{1-\Cab^2}{(1-\Cab t_N)^2(1-\Cab/t_N)^2} \frac{1}{(1-\A/\Cab)^N(1-\A\Cab)^N}\frac{1}{2\pi}\int_{-2}^2\sqrt{4-y^2}\,\d y
   \\=
   \frac{1-\Cab^2}{(1-\Cab t_N)^2(1-\Cab/t_N)^2} \frac{1}{(1-\A/\Cab)^N(1-\A\Cab)^N}
   \sim \frac{1+\Cab}{(1-\Cab)^3} \frac{1}{(1-\A/\Cab)^N(1-\A\Cab)^N}=o\left(\mathcal{Z}\topp{\A,\Cab,\Cab}(N)\right).
\end{multline*}

}
Therefore, only the two atoms contribute to the limit,
\begin{multline}
    \phi_N(s)\sim \frac{\Cab^{N-2}}{(t_N^2-1) \mathcal{Z}\topp{\A,\Cab,\Cab}(N)}\left(  \frac{\Cab^2 t_N^2-1}{(\Cab-\A)^N(1-\A\Cab t_N^2)^N}  - \frac{\Cab^2-t_N^2}{(1-\A\Cab)^N(\Cab-\A t_N)^N} \right)
    \\ \sim   %
  \tfrac{\Cab^2   (\Cab-\A) (1-\A\Cab)}{\left(\Cab^2-1\right)^2 \A N} \left(\frac{(\Cab-\A) (1-\A  \Cab)}{\Cab }\right)^N \times   \frac{\Cab^{N-2}}{(t_N^2-1)}\left(  \frac{\Cab^2 t_N^2-1}{(\Cab-\A)^N(1-\A\Cab t_N^2)^N}  - \frac{\Cab^2-t_N^2}{(1-\A\Cab)^N(\Cab-\A t_N)^N} \right)
  \\
  \sim
   \tfrac{    (\Cab-\A) (1-\A\Cab)}{\left(\Cab^2-1\right)^2 \A }       \frac{1}{N(t_N^2-1)}\left(  \frac{  (1-\A  \Cab)^N(\Cab^2 t_N^2-1)}{ (1-\A\Cab t_N^2)^N}  - \frac{ (\Cab-\A)^N  (\Cab^2-t_N^2)}{ (\Cab-\A t_N)^N} \right)
   \\
   \sim  \tfrac{    (\Cab-\A) (1-\A\Cab)}{\left(\Cab^2-1\right) \A }       \frac{1}{2s}\left(  \frac{  (1-\A  \Cab)^N}{ (1-\A\Cab t_N^2)^N}  - \frac{ (\Cab-\A)^N }{ (\Cab-\A t_N)^N} \right)
   \sim  \tfrac{    (\Cab-\A) (1-\A\Cab)}{\left(\Cab^2-1\right) \A }       \frac{1}{2s}\left(e^{\frac{2 \A \Cab s}{1-\A \Cab}} - e^{\frac{2 \A s}{\Cab-\A}} \right).
\end{multline}
\arxiv{
Indeed, with $t_N=e^{s/N}$, we have \eqref{exp1} and \eqref{exp2}:
$$\frac{  1-\A  \Cab}{ 1-\A\Cab t_N^2}=1+\frac{2 s (\A \Cab)}{N (1-\A \Cab)}+\frac{2 s^2 \left(\A^2 \Cab^2+\A \Cab\right)}{N^2 (1-\A \Cab)^2}+O\left(s^3/N^3\right)
,\quad
 \frac{ \Cab-\A  }{ \Cab-\A t_N}=1+\frac{2 \A s}{N (\Cab-\A)}+\frac{2 s^2 \left(\A^2+\A \Cab\right)}{N^2 (\Cab-\A)^2}+O\left(s^3/N^3\right).
 $$
}
Thus for $s> 0$ we have  $$\lim_{N\to\infty}\phi_N(s)=\frac{(\Cab-\A) (1-\A \Cab) }{\A \left(\Cab^2-1\right)} \; \frac{e^{\frac{2 \A \Cab s}{1-\A \Cab}}-e^{\frac{2 \A s}{\Cab-\A}}}{2  s}.$$
An elementary  calculation shows that this is the Laplace transform of the expression on the right-hand side of \eqref{U-lim},
$$
\int_0^1 \exp\left(2 s \left( \tfrac{\A}{\Cab-\A} u+\tfrac{\A \Cab}{1-\A \Cab}(1- u) \right)\right) \,\d u=\frac{    (\Cab-\A) (1-\A\Cab)}{\left(\Cab^2-1\right) \A }       \frac{1}{2s}\left(e^{\frac{2 \A \Cab s}{1-\A \Cab}} - e^{\frac{2 \A s}{\Cab-\A}} \right).
$$
Since convergence of the Laplace transforms of probability measures for $s$ in an open interval implies weak convergence (\cite[Theorem 2]{mukherjea2006note}), convergence in \eqref{U-lim} follows.

\subsection{Proof of Theorem \ref{Prop:Poiss}}\label{Sec:Proof:Poiss}
 The proofs of both parts are similar, but since the details differ, we provide both arguments.
\begin{proof}[Proof of  Theorem \ref{Prop:Poiss}(a)]  The plan of proof is to compute the limit of the multipoint  generating function.   We choose   $0=t_0<t_1<t_2<\dots <t_d<1/\Cb$ and $0=x_0<x_1<\dots<x_d=1$. We define $n_j=\floor{x_j N}-\floor{x_{j-1}N}$, $j=1,\dots, d$, $d=1,2,\dots$. The generating function is:
\begin{equation}\label{E2G/Z}
 \EE\left[\prod_{j=1}^d t_j^{2(\vv L_1(\floor{x_j N}) -\vv L_1(\floor{x_{j-1} N}))}\right] =  \frac{\mathcal{G}_N(\vv t^{\vv n})}{\mathcal{Z}\topp{\A(N),\Ca(N),\Cb}(N)},
\end{equation}
where
 we used notation  \eqref{tn}.
 We first verify that
 \begin{equation}\label{Z-lim-a}
  \mathcal{Z}\topp{\A(N),\Ca,\Cb(N)}(N)\sim 2^N N^Ne^{\la}
 \end{equation}
 For large enough $N$, the parameters are distinct, so we can use   \eqref{J2c1a}. We get
 \begin{multline}
    \mathcal{Z}\topp{\A(N),\Ca(N),\Cb}(N)\\ =
     \frac{1-\Ca \Cb}{2\pi} \int_{-2}^2 \frac{\sqrt{4-y^2}\,\d y}{\q_{\A}^N(y)\q_{\Ca}(y)\q_{\Cb}(y)}
     + \frac{\Ca^2-1}{(\Ca-\Cb)\Ca \q_{\A}^N(\Ca+1/\Ca)}
     +\mathbf{1}_{\Cb>1}\frac{\Cb^2-1}{(\Cb-\Ca)\Cb \q_{\A}^N(\Cb+1/\Cb)}
     =I_N+Z_N+B_N.
 \end{multline}
 We note that $Z_N\sim 2^N N^N  e^{\la}$, as
 $$
\q_{\A}^N(\Ca+1/\Ca)
   =(1-\A\Ca)^N(1-(\A/\Ca))^N=\left(1-\sqrt{\tfrac N{N+1}}\right)^N\left(1-\tfrac{\la}{\sqrt{N(N+1)}}\right)^N
   \\= \tfrac{\left(1-\frac{\la}{\sqrt{N(N+1)}}\right)^N}{\left(\sqrt{N+1}\left(\sqrt{N}+\sqrt{N+1}\right)\right)^N}
    \sim  \frac{e^{-\la}}{2^NN^N}.
$$
  For $y\in[-2,2]$ the minimum of $\q_\alpha$ is at $y=2$, whence by elementary bound \eqref{E-ineq}
 $$|I_N| \leq \frac{|1-\Cb \sqrt{N/\la}|}{\left(1-\sqrt{\la/(N+1)}\right)^{2N}(1-\Cb)^2(\sqrt{N/\la}-1)^2}=O\left(e^{2\sqrt{\la N}}/\sqrt{N}\right)=o(Z_N).$$
 Since $\Cb$ is fixed and $1/\q_{\A}^N(\Cb+1/\Cb)\leq (1-\A)^{-2N}\leq e^{2\sqrt{\la N}}$, we have $B_N=o(Z_N)$, and we proved \eqref{Z-lim-a}.

Next, to analyze the numerator $\mathcal{G}_N(\vv t^{\vv n})$ in \eqref{E2G/Z}, we introduce auxiliary functions $F_m$, of real variable $y_m$, $m=1,\dots,d$ defined recursively by:
$F_d\equiv 1$,
    \begin{equation}
        \label{F2F}
        F_m(y_m)=\mathbb{L}\topp{\Cb t_{m+1},\frac{t_m\UU(y_m)}{t_{m+1}},\frac{t_m}{t_{m+1}\UU(y_m)}} \left[ \frac{F_{m+1}}{\q_{\A t_{m+1}}^{n_{m+1}} }\right] = P_{y_m}^{t_m,t_{m+1};\Cb}\left[ \frac{F_{m+1}}{\q_{\A t_{m+1}}^{n_{m+1}} }\right],
        \quad m=d-1, d-2,\dots,1.
    \end{equation}
  (Compare \eqref{g-rec-0}.)
  % \comment{ Czy to jest cos podobnego do \eqref{g-rec-0}?
  % z $f_j=q_{\A t_j}^{-n_j}$  oraz $F_m= g_m f_m$?} \\
 We note that
 % \color{blue}  \comment{Tego nie umiem napisac!!! Moze niepotrzebne?}
 %  $$F_m(y_m)=\left(\prod_{j=m+1}^d\mathbb{L}\topp{\Cb t_j, \frac{t_{j-1}\UU(y_{j-1})}{t_j},\frac{t_{j-1}}{t_j \UU(y_{j-1})}}\right)\left[\prod_{k=m+1}^d \left(\q_{\A t_k}(y_k)\right)^{-n_k}\right], \quad m=1,\dots,d-1.
 %    $$
 %    and that
 %    \color{magenta}
    if $y_m\in[-2,2]$ then $\Cb t_{m+1},|\frac{t_m\UU(y_m)}{t_{m+1}}|,|\frac{t_m}{t_{m+1}\UU(y_m)}|<1$, so by \eqref{J2c0a} we have
    \begin{multline}\label{[-2,2]}
         F_m(y_m)=\mathbb{L}\topp{\Cb t_{m+1},\frac{t_m\UU(y_m)}{t_{m+1}},\frac{t_m}{t_{m+1}\UU(y_m)}} \left[ \frac{F_{m+1}}{\q_{\A t_{m+1}}^{n_{m+1}} }\right]
         \\=\frac{(1-\tfrac{t_m^2}{t_{m+1}^2}) \q_{\Cb t_m}(y_m)}{2\pi}  \int_{-2}^2  \frac{F_{m+1}(y_{m+1})}{\q_{\A t_{m+1}}^{n_{m+1}}(y_{m+1}) }\frac{\sqrt{4-y_{m+1}^2}}{\q_{\Cb t_{m+1}}(y_{m+1}) h_{m+1}(y_m,y_{m+1} )}\, \d y_{m+1},
        \end{multline}
     where
\begin{equation}\label{h(x,y)}
    h_j(x,y):=\q_{\tfrac{t_{j-1} \UU(x)}{t_{j}}}(y)\q_{\tfrac{t_{j-1}}{t_{j}\UU(x)}}(y) =\left(1-\tfrac{t_{j-1}^2}{t_j^2}\right)^2-\tfrac{t_{j-1}}{t_j}\left(1+\tfrac{t_{j-1}^2}{t_j^2}\right)\, x y + \tfrac{t_{j-1}^2}{t_j^2}(x^2+y^2).
\end{equation}
Iterating this procedure we get
        \begin{equation}\label{Fm2II}
     F_m(y_m)=     \frac{\q_{\Cb t_m}(y_m)}{(2\pi)^{d-m}}\int_{-2}^2\dots\int_{-2}^2
        \frac{1}{\q_{\Cb t_{d}}(y_{d})}\prod_{j=m+1}^d  \frac{ 1}{\q_{\A t_{j}}^{n_{j}}(y_{j}) }\frac{\left(1-\tfrac{t_{j-1}^2}{t_j^2}\right)\sqrt{4-y_{j}^2}}
        { h_j(y_{j-1},y_j)}\, \d y_d \dots  \d y_{m+1}.
    \end{equation}
\begin{claim}\label{Cl-a} For $k=1,\dots,d$ we have
    \begin{multline}
   \mathcal{G}_N(\vv t^{\vv n}) =\frac{\Ca^2-t_k^2}{\Ca(\Ca-\Cb t_k^2) }\,\frac{ F_k \left(\tfrac{\Ca}{t_k}+\tfrac{t_k}{\Ca}\right)}{\prod_{j=1}^k \left(\q_{\A t_j}
   \left(\tfrac{\Ca}{t_j}+\tfrac{t_j}{\Ca}\right)\right)^{n_j}}
  \\+\frac{1-\Ca\Cb}{2\pi \Ca^2}\sum_{j=1}^k\left(1-\tfrac{t_{j-1}^2}{t_j^2}\right)\left(\Ca^2-t_{j-1}^2\right)\int_{-2}^2 \frac{F_j(y_j)}{\left(\q_{\A t_j}(y_j)\right)^{n_j}} \, \frac{\sqrt{4-y_j^2}\,\d y_j}{\q_{\Cb t_j}(y_j)\q_{\frac{\Ca}{t_j}}(y_j)\q_{\frac{t_{j-1}^2}{\Ca t_j}}(y_j)}.
   \end{multline}
\end{claim}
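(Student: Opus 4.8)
The plan is to prove Claim~\ref{Cl-a} by induction on $k$, peeling off one functional $F_j$ at a time by means of the explicit integral representation \eqref{J2c1a} of the free Askey--Wilson functional. I will work throughout in the scaling of Theorem~\ref{Prop:Poiss}(a), taking $N$ large enough that the triples of parameters that arise are distinct and off the unit circle, with $\Cb t_j<1<\Ca/t_j$ and $\A\Ca<1$, $\A\Cb t_d^2<1$. The inequality $\A\Ca<1$ will be used repeatedly: it places each evaluation point $\tfrac{\Ca}{t_j}+\tfrac{t_j}{\Ca}$ (at which $\UU=t_j/\Ca$) inside the domain $D_{\A t_j}$ on which, by Proposition~\ref{Pr:pre-Markov}, $F_j$ is analytic, and it keeps the functions $F_j/\q_{\A t_j}^{n_j}$ inside the analyticity class on which \eqref{J2c1a} is valid.

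For the base case $k=1$, I would first unfold the recursion \eqref{pii} defining $\pi^{t_1,\dots,t_d;\Cb}$ and compare it with \eqref{F2F} to identify the auxiliary functions $g_m$ from the proof of Proposition~\ref{P:comp} as $g_m=F_m/\q_{\A t_m}^{n_m}$; then \eqref{G2I-r} gives $\mathcal{G}_N(\vv t^{\vv n})=\pi^{t_1;\Cb}[F_1/\q_{\A t_1}^{n_1}]=\mathbb{L}\topp{\Cb t_1,\Ca/t_1}[F_1/\q_{\A t_1}^{n_1}]$. Applying \eqref{J2c1a} with $(a,b,c)=(\Cb t_1,\Ca/t_1,0)$, only the atom at $\Ca/t_1+t_1/\Ca$ survives (since $|b|>1$ but $|a|,|c|<1$), producing the $F_1$-term, and the remaining integral — in which $\q_{t_0^2/(\Ca t_1)}=\q_0\equiv 1$ and $(1-t_0^2/t_1^2)(\Ca^2-t_0^2)=\Ca^2$ because $t_0=0$ — is exactly the $j=1$ integral term of the claim.

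For the inductive step $k\to k+1$, I observe that the terms of index $j<k$ in the level-$k$ formula involve $F_j$ only on $[-2,2]$ and carry over verbatim; the term that needs reworking is the atom term, in which $F_k$ occurs at the single point $\Ca/t_k+t_k/\Ca$. Using \eqref{F2F} at that point together with $\UU(\Ca/t_k+t_k/\Ca)=t_k/\Ca$, I rewrite $F_k(\Ca/t_k+t_k/\Ca)=\mathbb{L}\topp{\Cb t_{k+1},\,t_k^2/(\Ca t_{k+1}),\,\Ca/t_{k+1}}[F_{k+1}/\q_{\A t_{k+1}}^{n_{k+1}}]$, where for large $N$ only the third parameter exceeds $1$ in modulus. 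Applying \eqref{J2c1a} once more splits this into an atom at $\Ca/t_{k+1}+t_{k+1}/\Ca$ (carrying $F_{k+1}$) and an integral over $y_{k+1}\in[-2,2]$; by \eqref{h(x,y)} and \eqref{q-fac} the two nontrivial $\q$-factors in the denominator of that integral combine into $h_{k+1}(\Ca/t_k+t_k/\Ca,\,y)$, so the integral is recognized as the $j=k+1$ term of the level-$(k+1)$ formula. Feeding this decomposition back into the level-$k$ atom term and collecting the scalar prefactors should produce the level-$(k+1)$ atom term and integral term.

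The hard part will be the bookkeeping of scalar prefactors and of parameter moduli, not any conceptual difficulty. Concretely, I will need (i) the rational-function identities that make the coefficients telescope — e.g. the residue coefficient $\tfrac{(c^2-1)(1-ab)}{(c-a)(c-b)}$ of \eqref{J2c1a} at $(a,b,c)=(\Cb t_{k+1},t_k^2/(\Ca t_{k+1}),\Ca/t_{k+1})$ simplifies to $\tfrac{(\Ca^2-t_{k+1}^2)(\Ca-\Cb t_k^2)}{(\Ca-\Cb t_{k+1}^2)(\Ca^2-t_k^2)}$, which times the level-$k$ atom coefficient $\tfrac{\Ca^2-t_k^2}{\Ca(\Ca-\Cb t_k^2)}$ yields $\tfrac{\Ca^2-t_{k+1}^2}{\Ca(\Ca-\Cb t_{k+1}^2)}$, while the factor $1-ab=(\Ca-\Cb t_k^2)/\Ca$ accompanying the integral cancels the denominator of the atom coefficient and leaves the integral-term coefficient $\tfrac{1-\Ca\Cb}{2\pi\Ca^2}(1-t_k^2/t_{k+1}^2)(\Ca^2-t_k^2)$ — and (ii) a careful check, using the specific scaling $\Ca=\sqrt{N/\la}$, $\A=\sqrt{\la/(N+1)}$, $\Cb$ fixed and $t_j<1/\Cb$, that at every stage exactly one of the three Askey--Wilson parameters has modulus $>1$, so that \eqref{J2c1a} contributes exactly one atom. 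If instead $\Cb t_j$ or $t_k^2/(\Ca t_{k+1})$ ever exceeded $1$, an extra atom would appear and the argument (and the stated formula) would change; ruling this out is precisely where the scaling assumptions of part~(a) enter.
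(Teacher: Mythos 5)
Your proposal is correct and follows essentially the same route as the paper's proof of Claim~\ref{Cl-a}: induction on $k$, with the base case obtained by writing $\mathcal{G}_N(\vv t^{\vv n})=\mathbb{L}\topp{\Ca/t_1,\Cb t_1}[F_1/\q_{\A t_1}^{n_1}]$ and applying \eqref{J2c1a}, and the inductive step obtained by evaluating \eqref{F2F} at the atom point via $\UU(\Ca/t_k+t_k/\Ca)=t_k/\Ca$ and applying \eqref{J2c1a} to $\mathbb{L}\topp{\Cb t_{k+1},\,t_k^2/(\Ca t_{k+1}),\,\Ca/t_{k+1}}$. Your coefficient bookkeeping (the telescoping of the atom coefficient to $\tfrac{\Ca^2-t_{k+1}^2}{\Ca(\Ca-\Cb t_{k+1}^2)}$ and the integral prefactor $\tfrac{1-\Ca\Cb}{2\pi\Ca^2}(1-t_k^2/t_{k+1}^2)(\Ca^2-t_k^2)$) matches the paper's computation exactly.
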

\begin{proof}[Proof of Claim \ref{Cl-a}]
As in \eqref{g-rec-0}, we use  \eqref{G2I-r} and \eqref{pii} to represent the generating function
$\mathcal{G}_N(\vv t^{\vv n})$ of random vector  $(\vv L_1(\floor{x_j N}) -\vv L_1(\floor{x_{j-1} N}))_{j=1,\dots,d}$.

\begin{equation}\label{G2L}
    \mathcal{G}_N(\vv t^{\vv n}) :=
    \pi^{t_1,\dots,t_d}\left[\bigotimes_{j=1}^d \frac{1}{\q_{\A t_j}^{n_j}}\right]
   =  \pi^{t_1}\left[\frac{1}{\q_{\A t_{1}}^{n_{1}}  }F_1 \right]=
   \mathbb{L}\topp{\frac{\Ca}{t_1},\Cb t_1}  \left[ \frac{1}{\q_{\A t_{1}}^{n_{1}}  }F_1 \right],
\end{equation}
  We  apply \eqref{J2c1a} with  $\Cb t_d<1$  and
 $\Ca/ t_d>1$, which holds for large enough $N$. %\comment{(Compare Proposition \ref{pi2Int})}
 We get
\arxiv{$$
\mathbb{L}\topp{\Ca/t_1,\Cb t_1} [f]= \frac{\Ca^2 -t_1^2}{(\Ca -\Cb t_1^2)\Ca} f( \tfrac{\Ca}{t_1}+\tfrac{t_1}{\Ca})  + \tfrac{1-\Ca\Cb}{2\pi}\int_{-2}^2\,f(y_1)\frac{\sqrt{4-y_1^2}}{\q_{\frac{\Ca}{t_1}}(y_1)\q_{\Cb t_1}(y_{1}) }\,\d y_1,
$$
which gives:
}
$$
\mathcal{G}_N(\vv t^{\vv n}) =
\frac{\Ca^2 -t_1^2}{(\Ca -\Cb t_1^2)\Ca}  \frac{F_1\left(\frac{\Ca}{t_1}+\frac{t_1}{\Ca}\right)}{(1-\A \Ca)^{n_1}\left(1-t_1^2\frac{\A}{\Ca} \right)^{n_{1}} }
+
\tfrac{1-\Ca\Cb}{2\pi}\int_{-2}^2 \frac{\sqrt{4-y_1^2}}{\q_{\A t_{1}}^{n_1}(y_1)\q_{\frac{\Ca}{t_1}}(y_1)\q_{\Cb t_1}(y_{1}) }F_1(y_1)\,\d y_1,
$$
which is \eqref{Fm2II} for $k=1$, as $t_0=0$ and $\q_0\equiv 1$.

Assuming  \eqref{Fm2II} holds   for some $k=1,\dots,d-1$, we now verify that it holds for $k+1$. Since    $\UU(\tfrac{\Ca}{t_{k}}+\tfrac{t_{k}}{\Ca})=\tfrac{t_{k}}{\Ca}$, from \eqref{F2F} we get
\begin{multline}
        F_k\left(\tfrac{\Ca}{t_k}+\tfrac{t_k}{\Ca}\right)=P_{\tfrac{\Ca}{t_k}+\tfrac{t_k}{\Ca}}^{t_k,t_{k+1};\Cb}\left[ \frac{F_{k+1}}{\q_{\A t_{k+1}}^{n_{k+1}} }\right]
        =\mathbb{L}\topp{\Cb t_{k+1},\tfrac{t_k^2}{\Ca t_{k+1}},\tfrac{\Ca}{t_{k+1}}}
    \left[\frac{F_{k+1}}{\q_{\A t_{k+1}}^{n_{k+1}}}\right]
    \\=\frac{\Ca^2-t_{k+1}^2}{\Ca-\Cb t_{k+1}^2} \; \frac{\Ca-\Cb t_{k}^2}{\Ca^2-t_{k}^2}\; \frac{F_{k+1}\left(\tfrac{\Ca}{t_{k+1}}+\tfrac{t_{k+1}}{\Ca}\right)}{\left(\q_{\A t_{k+1}}\left(\tfrac{\Ca}{t_{k+1}}+\tfrac{t_{k+1}}{\Ca}\right)\right)^{n_{k+1}}}
   \\ +
   \frac{(1-\Ca\Cb)\left(1-\tfrac{t_k^2}{t_{k+1}^2}\right)\left(1-\tfrac{\Cb}{\Ca}t_k^2\right)}{2\pi}
   \int_{-2}^2 \frac{F_{k+1}(y_{k+1})}{\left(\q_{\A t_{k+1}}(y_{k+1})\right)^{n_{k+1}}} \; \frac{\sqrt{4-y_{k+1}^2}\,\d y_{k+1}}
   {\q_{\Cb t_{k+1}}(y_{k+1})\q_{\frac{\Ca}{t_{k+1}}}(y_{k+1})\q_{\frac{t_{k}^2}{\Ca t_{k+1}}}(y_{k+1})}
\end{multline}
which ends the proof of the claim.
\end{proof}

From  Claim \ref{Cl-a}  for $k=d$, expanding $F_k$ according to \eqref{Fm2II}, we get
 \begin{equation}
     \mathcal{G}_N(\vv t^{\vv n}) = \sum_{k=0}^d J_k(N)
 \end{equation}
 where
 for $k=0,1,..,d-1$,
\begin{multline}
 J_k(N)= \frac{(1-\Ca\Cb)(\Ca^2-t_k^2)}{(2\pi)^{d-k} \Ca^2(\Ca-\Cb t_k^2)} \frac{1}{(1-\A\Ca)^{n_1+\dots+n_k}\prod_{j=1}^k\left(1-\tfrac{t_j^2\A}{\Ca}\right)^{n_j}}
 \\ \times\int_{-2}^2\,\frac{1}{\q_{\A t_{k+1}}^{n_{k+1}}(y_{k+1})} \frac{\left(1-\tfrac{t_k^2}{t_{k+1}^2}\right)\sqrt{4-y_{k+1}^2}}{ \q_{\frac{t_k^2}{t_{k+1}\Ca}}(y_{k+1})\q_{\frac{\Ca}{t_{k+1}}}(y_{k+1})} \int_{-2}^2  \dots \int_{-2}^2
 \frac{1}{\q_{\Cb t_{d}}(y_{d})}
\prod_{j=k+2}^d  \frac{ 1}{\q_{\A t_{j}}^{n_{j}}(y_{j}) }\frac{\left(1-\tfrac{t_{j-1}^2}{t_j^2}\right)\sqrt{4-y_{j}^2}}
        { h_j(y_{j-1},y_j)} \,\d y_d \dots   \d y_{k+1}
\end{multline}
and
 $$J_d(N)=\frac{\Ca^2-t_d^2} { \Ca(\Ca-\Cb t_d^2)} \frac{1}{(1-\A\Ca)^{N}\prod_{j=1}^d\left(1-\tfrac{t_j^2\A}{\Ca}\right)^{n_j}}.
 $$
\arxiv{We add some omitted details:
}
 It is clear that with $ {\A}/{\Ca}\sim  \la/N$ we get
 $$J_d(N) \sim 2^N N^N \prod_{j=1}^d e^{\la t_j^2(x_j-x_{j-1})}.$$

 Since $h_j(u+v,u-v)=(1-r_j^2)^2-(1-r_j)^2 r_j u^2+(1+r_j)^2r_j v^2$, we see that its smallest value under the integral is $h_j(2,2)=(1-t_{j-1}/t_j)^4>0$. Since the denominator of the integrand
 attains its minimum
 value at $y_j=2$. Consequently, the integral is bounded by $$C \exp\left(2\sqrt{N}\lambda\sum_{j=k+1}^d\,t_j(x_j-x_{j-1})\right)$$
 and for $k=0,1,\dots,d-1$  we have $|J_k(N)|\le C(2N)^{Nx_k}\exp\left(2\sqrt{N}\lambda\sum_{j=k+1}^d\,t_j(x_j-x_{j-1})\right)=o(Z_N)$,

 Therefore,
 \begin{multline*}
  \lim_{N\to\infty} \EE\left[\prod_{j=1}^d t_j^{2(\vv L_1(\floor{x_j N}) -\vv L_1(\floor{x_{j-1} N}))}\right] = \lim_{N\to\infty} \frac{\mathcal{G}_N(\vv t^{\vv n})}{\mathcal{Z}\topp{\A(N),\Ca(N),\Cb}(N)}
  \\=\lim_{N\to\infty} \frac{ J_d(N)}{Z_N} =   \exp\left({\la \sum_{j=1}^d (x_j-x_{j-1})(t_j^2-1)}\right)
  =\EE\left[\prod_{j=1}^d t_j^{2(\vv N_{x_j} -\vv N_{x_{j-1}})}\right],
 \end{multline*}
which ends the proof of  Theorem \ref{Prop:Poiss}(a).
\end{proof}

\color{black}

\begin{proof}[Proof of  Theorem \ref{Prop:Poiss}(b)] The plan of proof is similar to the previous part.  We choose   $0=t_0<\Ca<t_1<t_2<\dots <t_d$ and $0=x_0<x_1<\dots<x_d$. We define $n_j=\floor{x_j N}-\floor{x_{j-1}N}$, $j=1,\dots, d$, $d=1,2,\dots$. and consider the generating function
\eqref{E2G/Z}.

 We first verify that
 \begin{equation}\label{Z-lim-b}
   \lim_{N\to\infty}   \mathcal{Z}\topp{\A(N),\Ca,\Cb(N)}(N)= e^{\la}.
 \end{equation}
 For large enough $N$, the parameters are distinct, so we can use   \eqref{J2c1a}. We get
 \begin{multline}
    \mathcal{Z}\topp{\A(N),\Ca,\Cb(N)}(N)=
     \frac{1-\Ca \Cb}{2\pi} \int_{-2}^2 \frac{\sqrt{4-y^2}\,\d y}{\q_{\A}^N(y)\q_{\Ca}(y)\q_{\Cb}(y)}
     % +\mathbf{1}_{\Ca>1} \frac{\Ca^2-1}{(\Ca-\Cb)\Ca \q_{\A}^N(\wt \Ca)}
     % +\frac{\Cb^2-1}{(\Cb-\Ca)\Cb \q_{\A}^N(\wt \Cb)}
    +\mathbf{1}_{\Ca>1} \frac{\Ca^2-1}{(\Ca-\Cb)\Ca \q_{\A}^N(\Ca+1/\Ca)}
     +\frac{\Cb^2-1}{(\Cb-\Ca)\Cb \q_{\A}^N(\Cb+1/\Cb)}
     \\=I_N+A_N+Z_N.
 \end{multline}
 For $y\in[-2,2]$ the minimum of $\q_\alpha$ is at $y=2$, we have
 $$|I_N| \leq \frac{|1-\Ca \la N^\theta|}{\left(1-\tfrac1{N^{\theta+1}}\right)^{2N}(1-\Ca)^2(\la N^\theta-1)^2}=O(\tfrac{1}{N^\theta})$$
and since %$\q_{1/N^{\theta+1}}^N(\wt \Ca)
$\q_{1/N^{\theta+1}}^N(\Ca+1/\Ca)  =(1-\Ca/N^{\theta+1})^N\left(1-\tfrac{1}{\Ca N^{\theta+1}}\right)^N\to 1$
we have $
|A_N|=O(1/N^\theta)
$. In view of factorization \eqref{q-fac} it is clear that $\lim_{N\to \infty}Z_N=e^\la$, which ends the proof of \eqref{Z-lim-b}.

\arxiv{We add some omitted details:
}

Next, we analyze the numerator $\mathcal{G}_N(\vv t^{\vv n})$ in \eqref{E2G/Z}.
 As previously, the generating function
$\mathcal{G}_N(\vv t^{\vv n})$ of random vector  $(\vv L_1(\floor{x_j N}) -\vv L_1(\floor{x_{j-1} N})_{j=1,\dots,d}$ is   given by
\eqref{G2L} with $F_k$ defined by \eqref{F2F}.

We note that for $\wt y_k:=\Cb t_{k}+\frac{1}{\Cb t_{k}}$ we have
% \color{blue} \begin{equation}
%     \label{F2Q}
%     F_k(\wt{\Cb t_k})=
% \frac{F_{k+1}(\wt {\Cb t_{k+1}})}{\left(\q_{\A t_{k+1}}(\wt{\Cb t_{k+1}}\right)^{n_{k+1}}}=\frac1{\prod_{j=k}^d\,\left(\q_{at_j}(\wt{\Cb t_j})\right)^{n_j}}=:Q_k(N)=:Q_k
% \end{equation}

\begin{equation}
    \label{F2Q}
    F_k(\wt y_k)=
\frac{F_{k+1}(\wt y_{k+1})}{\left(\q_{\A t_{k+1}}(\wt y_{k+1})\right)^{n_{k+1}}}=\frac1{\prod_{j=k}^d\,\left(\q_{at_j}(\wt y_j)\right)^{n_j}}=:Q_k(N)=:Q_k
\end{equation}
and if $y_k\in[-2,2]$, then (recall \eqref{h(x,y)}) we obtain
% \color{blue}
% \begin{multline}\label{Fk1}
% F_k(y_k)=\mathbb{L}_{y_{k+1}}\topp{\Cb t_{k+1},\tfrac{t_k}{t_{k+1}}\UU(y_k),\tfrac{t_k}{t_{k+1}\UU(y_k)}}
% \left[\frac{F_{k+1}(y_{k+1})}{\left(\q_{\A t_{k+1}}(y_{k+1})\right)^{n_{k+1}}}\right]
%  = \frac{(\Cb^2t_{k+1}^2-1)\left(1-\frac{t_k^2}{t_{k+1}^2}\right)}{\Cb^2 t_{k+1}^2\,\q_{\tfrac{t_{k}}{\Cb t_{k+1}^2}}\left(\tfrac{y_{k}}{t_{k+1}}\right)} Q_{k+1}
%  \\+\tfrac{\q_{\Cb t_k}(y_k)\,\left(1-\frac{t_k^2}{t_{k+1}^2}\right)}{2\pi}\,\int_{-2}^2\,  \frac{F_{k+1}(y_{k+1})}{\left(\q_{\A t_{k+1}}(y_{k+1})\right)^{n_{k+1}}}\,\frac{\sqrt{4-y_{k+1}^2}}{\q_{\Cb t_{k+1}}(y_{k+1})h_{k+1}(y_k,y_{k+1})}\,\d y_{k+1}.
% \end{multline}
% \color{magenta}
\begin{multline}\label{Fk1}
F_k(y_k)=\mathbb{L} \topp{\Cb t_{k+1},\tfrac{t_k}{t_{k+1}}\UU(y_k),\tfrac{t_k}{t_{k+1}\UU(y_k)}}
\left[\frac{F_{k+1} }{\q_{\A t_{k+1}} ^{n_{k+1}}}\right]
 = \frac{(\Cb^2t_{k+1}^2-1)\left(1-\frac{t_k^2}{t_{k+1}^2}\right)}{\Cb^2 t_{k+1}^2\,\q_{\tfrac{t_{k}}{\Cb t_{k+1}^2}}\left(\tfrac{y_{k}}{t_{k+1}}\right)} Q_{k+1}
 \\+\tfrac{\q_{\Cb t_k}(y_k)\,\left(1-\frac{t_k^2}{t_{k+1}^2}\right)}{2\pi}\,\int_{-2}^2\,  \frac{F_{k+1}(y_{k+1})}{\left(\q_{\A t_{k+1}}(y_{k+1})\right)^{n_{k+1}}}\,\frac{\sqrt{4-y_{k+1}^2}}{\q_{\Cb t_{k+1}}(y_{k+1})h_{k+1}(y_k,y_{k+1})}\,\d y_{k+1}.
\end{multline}
%(Recall \eqref{h(x,y)}.)

% Iterating \eqref{Git1} with the help of \eqref{Fk1} we get

 \begin{claim}
     \label{Cl-b} For $k=1,\dots,d$ we have
      \begin{multline}\label{G2-lim-b}
   \mathcal{G}_N(\vv t^{\vv n})= \tfrac{\Cb^2t_1^2-1}{\Cb^2 t_1^2}Q_1+\tfrac{1-\Ca\Cb}{(2\pi)^k}\,\int_{-2}^2\dots\int_{-2}^{2}\,\tfrac{F_k(y_k)}{\q_{\frac{\Ca}{t_1}}(y_1)\q_{\Cb t_k}(y_k)}\prod_{i=1}^k\,\tfrac{\left(1-\frac{t_{i-1}^2}{t_i^2}\right)\sqrt{4-y_i^2}}{\left(\q_{\A t_i}(y_i)\right)^{n_i}h_i(y_{i-1},y_i)}\,\d y_k\dots\d y_2 \d y_1
   \\ +\sum_{j=2}^k\,\frac{\Cb^2t_j^2-1}{\Cb^2 t_{j}^2 } Q_j\,
    \tfrac{1-\Ca\Cb}{(2\pi)^{j-1}}
    \int_{-2}^2\dots\int_{-2}^{2}\,\tfrac{\left(1-\frac{t_{j-1}^2}{t_j^2}\right)}{\q_{\frac{\Ca}{t_1}}(y_1)\q_{\Cb t_{j-1}}(y_{j-1})\; \q_{\tfrac{t_{j-1}}{\Cb t_j^2}}\left(\tfrac{y_{j-1}}{t_j}\right)}\prod_{i=1}^{j-1}\,\tfrac{\left(1-\frac{t_{i-1}^2}{t_i^2}\right)\sqrt{4-y_i^2}}{\left(\q_{\A t_i}(y_i)\right)^{n_i}h_i(y_{i-1},y_i)}\,\d y_{j-1}\dots\d y_2 \d y_1.
 \end{multline}
 \end{claim}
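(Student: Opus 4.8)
The plan is to prove \eqref{G2-lim-b} by induction on $k=1,\dots,d$, running the same argument as in the proof of Claim \ref{Cl-a} but with the roles of the two boundary parameters effectively interchanged. Under the present hypotheses $\Ca<t_1<\dots<t_d$ gives $\Ca/t_1<1$, so the outer functional $\mathbb{L}\topp{\Ca/t_1,\Cb t_1}$ — and, at later stages, the \emph{moving} parameters $\tfrac{t_k}{t_{k+1}}\UU(y_k)$ and $\tfrac{t_k}{t_{k+1}\UU(y_k)}$, which have modulus $t_k/t_{k+1}<1$ whenever $y_k\in[-2,2]$ — produce no atoms, whereas $\Cb=\Cb(N)=\la N^\theta\to\infty$ makes every parameter $\Cb t_j$ exceed $1$ for $N$ large, so that the atoms now sit at the points $\wt y_j=\Cb t_j+\tfrac1{\Cb t_j}$. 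For the base case $k=1$ I would begin from the representation \eqref{G2L}, namely $\mathcal{G}_N(\vv t^{\vv n})=\mathbb{L}\topp{\Ca/t_1,\Cb t_1}[\q_{\A t_1}^{-n_1}F_1]$ with $F_1$ built by the recursion \eqref{F2F}, and apply the explicit formula \eqref{J2c1a} (with third parameter equal to $0$, so that $\q_0\equiv1$) to $f=\q_{\A t_1}^{-n_1}F_1$. Since for $N$ large only the $\Cb t_1$-atom survives, evaluating that atom term by means of \eqref{F2Q} yields the leading $Q_1$-term of \eqref{G2-lim-b}, while the leftover integral is exactly its $k=1$ entry (recall $t_0=0$ and $\q_0\equiv1$).

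For the inductive step I would assume \eqref{G2-lim-b} at level $k<d$ and expand the function $F_k(y_k)$ appearing under the $k$-th iterated integral by means of \eqref{Fk1}. This is again \eqref{J2c1a}, applied to $P_{y_k}^{t_k,t_{k+1};\Cb}=\mathbb{L}\topp{\Cb t_{k+1},\frac{t_k}{t_{k+1}}\UU(y_k),\frac{t_k}{t_{k+1}\UU(y_k)}}$, where for $y_k\in[-2,2]$ precisely one atom occurs, at $b+1/b=\wt y_{k+1}$ coming from $b=\Cb t_{k+1}>1$. The resulting atom contribution is again evaluated through \eqref{F2Q}, which itself reduces to \eqref{J2m0b-conj}: when $y_k=\wt y_k$ the parameters $\Cb t_{k+1}$ and $\tfrac{t_k}{t_{k+1}\UU(\wt y_k)}=\tfrac1{\Cb t_{k+1}}$ multiply to $1$, so the functional collapses to evaluation at $\wt y_{k+1}$ and $F_k(\wt y_k)$ telescopes down the chain \eqref{F2F}. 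Substituting the atom term into the $F_k$-integral term of the inductive hypothesis, relabeling, and using $P_x^{t,t}[f]=f(x)$ for repeated nodes, produces the new $j=k+1$ summand of \eqref{G2-lim-b}, while the non-atom (integral) part of \eqref{Fk1} turns the $k$-fold integral into the required $(k+1)$-fold integral against $F_{k+1}$; collecting the pieces gives \eqref{G2-lim-b} at level $k+1$, which closes the induction (at $k=d$ one uses $F_d\equiv1$).

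The part that needs genuine care — and the only part of the write-up that is not immediate — is the algebraic bookkeeping of the rational prefactors coming from \eqref{J2c1a}: $(1-ab)(1-ac)(1-bc)$ in the integral term and $(b^2-1)(1-ac)/\bigl((b-a)(b-c)\bigr)$ in the atom term, under the substitutions $a=\tfrac{t_k}{t_{k+1}}\UU(y_k)$, $c=\tfrac{t_k}{t_{k+1}\UU(y_k)}$, $b=\Cb t_{k+1}$. Using $\UU(y_k)+1/\UU(y_k)=y_k$ one checks that $ac=t_k^2/t_{k+1}^2$, that $(1-ab)(1-cb)=\q_{\Cb t_k}(y_k)$, and that $(b-a)(b-c)$ equals the value of an appropriately rescaled $\q$-polynomial, so that these prefactors become exactly the kernels $h_{k+1}$ of \eqref{h(x,y)} and the $\q$-factors displayed in \eqref{G2-lim-b}; likewise $\q_{\A t_i}(\wt y_i)=(1-\A/\Cb)(1-\A\Cb t_i^2)$ once $F$ is frozen at $\wt y_i$. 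This computation is elementary and entirely parallel to those already carried out for Claim \ref{Cl-a} and for Theorem \ref{Prop:Poiss}(a); no genuinely new difficulty arises.
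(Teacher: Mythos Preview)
Your proposal is correct and follows essentially the same route as the paper: induction on $k$, with the base case obtained by applying \eqref{J2c1a} (with $a=\Ca/t_1<1$, $b=\Cb t_1>1$, $c=0$) to \eqref{G2L} and identifying the $\Cb t_1$-atom via \eqref{F2Q}, and the inductive step obtained by expanding $F_k(y_k)$ under the $k$-fold integral via \eqref{Fk1}, whose atom part produces the new $j=k+1$ summand and whose integral part produces the $(k+1)$-fold integral. The remark about $P_x^{t,t}[f]=f(x)$ for repeated nodes is superfluous here, since the $t_j$ are taken strictly increasing, but it does no harm.
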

\begin{proof}[Proof of Claim \ref{Cl-b}]
   We  apply \eqref{J2c1a} with  $a=\Ca/t_1<1$,
 $b=\Cb t_1>1$ and $c=0$, which holds for large enough $N$, to the last expression in \eqref{G2L}.
Using \eqref{F2Q} for $k=1$ we get
% \color{blue}
% \begin{equation}\label{Git1}
% \mathcal{G}_N(\vv t^{\vv n}) = \mathbb{L}\topp{\frac{\Ca}{t_1},\Cb t_1}_{y_1} \left[ \frac{F_1(y_1)}{\q_{\A t_{1}}^{n_{1}}(y_{1}) }\right]=\frac{\Cb^2 t_1^2-1}{\Cb^2t_1^2}Q_1+\frac{1-\Ca\Cb}{2\pi}\,\int_{-2}^2\,\frac{F_1(y_1)}{\q_{\A t_{1}}^{n_{1}}(y_{1}) }\,\frac{\sqrt{4-y_1^2}}{\q_{\frac{\Ca}{t_1}}(y_1)\q_{\Cb t_1}(y_1)}\,\d y_1.
% \end{equation}
% \color{magenta}
\begin{equation}\label{Git1}
\mathcal{G}_N(\vv t^{\vv n}) = \mathbb{L}\topp{\frac{\Ca}{t_1},\Cb t_1} \left[ \frac{F_1}{\q_{\A t_{1}}^{n_{1}} }\right]=\frac{\Cb^2 t_1^2-1}{\Cb^2t_1^2}Q_1+\frac{1-\Ca\Cb}{2\pi}\,\int_{-2}^2\,\frac{F_1(y_1)}{\left(\q_{\A t_{1}}(y_{1})\right)^{n_{1}} }\,\frac{\sqrt{4-y_1^2}}{\q_{\frac{\Ca}{t_1}}(y_1)\q_{\Cb t_1}(y_1)}\,\d y_1.
\end{equation}
Since $h_1(x,y)=1$, this establishes  \eqref{G2-lim-b} % \eqref{Git1}
for $k=1$.

Now we assume that \eqref{G2-lim-b} holds for some $k\in\{1,\dots,d-1\}$ and we will prove that it holds for $k+1$. Applying \eqref{Fk1} to the second term in \eqref{G2-lim-b} we see that it assumes the form
\begin{multline}
  \frac{\Cb^2t_{k+1}^2-1}{\Cb^2 t_{k+1}^2} Q_{k+1}\tfrac{1-\Ca\Cb}{(2\pi)^k}\,\int_{-2}^2\dots\int_{-2}^{2}\,\frac{\left(1-\frac{t_k^2}{t_{k+1}^2}\right)}{\q_{\frac{\Ca}{t_1}}(y_1)\q_{\Cb t_k}(y_k)\q_{\tfrac{t_{k}}{\Cb t_{k+1}^2}}\left(\tfrac{y_{k}}{t_{k+1}}\right)}\prod_{i=1}^k\,\tfrac{\left(1-\frac{t_{i-1}^2}{t_i^2}\right)\sqrt{4-y_i^2}}{\left(\q_{\A t_i}(y_i)\right)^{n_i}h_i(y_{i-1},y_i)}\,\d y_k\dots\d y_2 \d y_1
    \\+\tfrac{1-\Ca\Cb}{(2\pi)^{k+1}}\,\int_{-2}^2\dots\int_{-2}^{2}\, \frac{F_{k+1}(y_{k+1})}{\q_{\frac{\Ca}{t_1}}(y_1)\q_{\Cb t_{k+1}}(y_{k+1})}\,\prod_{i=1}^{k+1}\,\tfrac{\left(1-\frac{t_{i-1}^2}{t_i^2}\right)\sqrt{4-y_i^2}}{\left(\q_{\A t_i}(y_i)\right)^{n_i}h_i(y_{i-1},y_i)}\,\d y_{k+1}\d y_k\dots\d y_2 \d y_1.
\end{multline}
This proves \eqref{G2-lim-b} for $k+1$. (Recall that $F_d\equiv 1$.)

\end{proof}
 From  Claim \ref{Cl-b}  for $k=d$,  we get
 \begin{equation}
     \mathcal{G}_N(\vv t^{\vv n}) = \sum_{k=0}^d J_k(N) ,
 \end{equation}
 where
 $$J_0(N)=\tfrac{\Cb^2t_1^2-1}{\Cb^2 t_1^2}Q_1(N),$$
 for $k=1,..,d-1$,
 $$J_k(N)=\tfrac{\Cb^2t_{k+1}^2-1}{\Cb^2 t_{k+1}^2} Q_{k+1}(N)\tfrac{1-\Ca\Cb}{(2\pi)^k}\,\int_{-2}^2\dots\int_{-2}^{2}\,\frac{\left(1-\frac{t_k^2}{t_{k+1}^2}\right)}{\q_{\frac{\Ca}{t_1}}(y_1)\q_{\Cb t_k}(y_k)\q_{\tfrac{t_{k}}{\Cb t_{k+1}^2}}\left(\tfrac{y_{k}}{t_{k+1}}\right)}\prod_{i=1}^k\,\tfrac{\left(1-\frac{t_{i-1}^2}{t_i^2}\right)\sqrt{4-y_i^2}}{\left(\q_{\A t_i}(y_i)\right)^{n_i}h_i(y_{i-1},y_i)}\,\d y_k\dots\d y_2 \d y_1
   $$
 and
 $$J_d(N)=\tfrac{1-\Ca\Cb}{(2\pi)^d}\,\int_{-2}^2\dots\int_{-2}^{2}\,\frac{1}{\q_{\frac{\Ca}{t_1}}(y_1)\q_{\Cb t_d}(y_d)}\prod_{i=1}^d\,\tfrac{\left(1-\frac{t_{i-1}^2}{t_i^2}\right)\sqrt{4-y_i^2}}{\left(\q_{\A t_i}(y_i)\right)^{n_i}h_i(y_{i-1},y_i)}\,\d y_d\dots\d y_2 \d y_1.$$
Since  by \eqref{q-fac} applied to
% \color{blue}$\q_{\A t_k}(\wt{\Cb t_k})=(1-\A \Cb t_k^2) (1-\A/\Cb) \sim 1-\la/N$,
% \\
$\q_{\A t_k}\left( \Cb t_k+\frac{1}{\Cb t_k}\right)=(1-\A \Cb t_k^2) (1-\A/\Cb) \sim 1-\la/N$, it is clear that
$$ \lim_{N\to\infty}J_0(N) = \exp\left(\la \sum_{j=1}^d (x_j-x_{j-1})t_j^2\right).$$

More generally,   $Q_k(N)$ converge  for $k=1,2,\dots,d$, so   are bounded in $N$.
Next, we observe that   the integrals that appear  in $J_k(N) $  converge to $0$ for $k=1,\dots,d$. To see this, we note that the denominators of the integrands
attain their minimum values at $y_k=2$, $k=1,\dots,d$. In particular, under each integral
$\q_{\Cb t_k}(2)=(\Cb t_k-1)^2\sim \Cb^2 t_k^2\to \infty$ is of higher order than the coefficient
$(1-\Ca\Cb)$ in front of the integral, and the other factors are bounded.

Therefore,
 \begin{multline*}
  \lim_{N\to\infty} \EE\left[\prod_{j=1}^d t_j^{2(\vv L_1(\floor{x_j N}) -\vv L_1(\floor{x_{j-1} N}))}\right] = \lim_{N\to\infty} \frac{\mathcal{G}_N(\vv t^{\vv n})}{\mathcal{Z}\topp{\A(N),\Ca(N),\Cb}(N)}
  \\=\lim_{N\to\infty} \frac{ J_0(N)}{Z_N} =   \exp\left({\la \sum_{j=1}^d (x_j-x_{j-1})(t_j^2-1)}\right)
  =\EE\left[\prod_{j=1}^d t_j^{2(\vv N_{x_j} -\vv N_{x_{j-1}})}\right],
 \end{multline*}
which ends the proof of    Theorem \ref{Prop:Poiss}(b).
\end{proof}

\arxiv{
\subsubsection{Boundary cases}
As we cross the two boundary
 segments   $0< \Ca\leq 1$, $\Cb=1$ and $0<\Cb\leq 1$, $\Ca=1$  the limit of $\vv L_1(N)/N$ varies continuously  on the phase diagram in Fig \ref{Fig-PhD}. However, this
 is not covered by the above proofs, and some details differ.

 We note that  \eqref{P1.17vi} gives the following asymptotics for the normalization constant:
\begin{equation}
    \mathcal{Z}\topp{\A,\Ca,1}(N) \sim
    \frac{1}{(1-\Ca)\sqrt{\pi \A N}}\;\frac{1}{(1-\A)^{2N-1}}.
\end{equation}

Consider the boundary segment $\Ca<1$, $\Cb=1$, and take $s<0$ so that $t_N=e^{s/N}<1$.  Then the argument is similar to the atom-less case $\Ca,\Cb<1$,
as we can use again \eqref{J2c0a}. We have
 \begin{multline}
    \label{L2c0a*}
   \mathbb{L}_y\topp{\Ca/t_N, t_N}\left[(1+\A^2t_N^2-\A  y t_N)^{-N}\right]=
   \\
   =\frac{1-\Ca}{2\pi}\int_{-2}^1
   \frac{(1+\A^2t_N^2-\A  y t_N )^{-N}\sqrt{4-y^2}\, \d y}{(1+t_N^2- t_N y)(1+\Ca^2/t_N^2- y\Ca/t_N )}
   \\ +\frac{1-\Ca}{2\pi}\int_{1}^2
   \frac{(1+\A^2t_N^2-\A  y t_N )^{-N}\sqrt{4-y^2}\, \d y}{ (1+t_N^2-t_N y)(1+\Ca^2/t_N^2- y\Ca/t_N )}.
\end{multline}

As previously, only the second integral over $[1,2]$ contributes to the limit. After the substitution $y=2-u/N$, the contributing part of the integral  gives
\begin{multline*}
 \phi_N(s) \sim    \frac{(1-\Ca)^2\sqrt{ \A }}{\sqrt{\pi}(1-\A)}
 \int_0^N
 \left(\frac{ (1-\A)^2}{(1-\A^2t_N)^2+\A u t_N/N }\right)^N\frac{\sqrt{u} \,\d u}{ N ((1-t_N)^2+t_N u/N)((1-\Ca^2/t_N)^2+ \tfrac{u\Ca}{Nt_N} )}
 \\
 \sim
  \frac{ \sqrt{ \A }}{\sqrt{\pi}(1-\A)}  e^{2 \A s/(1-\A)}
 \int_0^\infty
  \frac{\exp\left(-\frac{\A u}{(1-\A)^2}\right)  \,\d u}{ \sqrt{u}}= e^{2 \A s/(1-\A)} .
\end{multline*}
To justify the use of the dominated convergence theorem we use a version of \eqref{dom-con-est} for $s<0$.(We  omit the details.)

Similarly, when $0<\Cb<1$, $\Ca=1$ we get $\frac1N\vv L_1(N)\to \A/(1-\A)$.
 }

\appendix
\section{Proofs of technical lemmas}\label{Sec:PoTL}

\begin{proof}[Proof of Lemma \ref{Lem:conv}]   We will sum over the end-points of the paths $\vv L$, so we recall the exact expression and a crude upper bound for the number of  paths with a given  end-point:
\begin{equation}
  \label{L-exact} \#\left\{\vv L: \vv L(N)=n\right\}=\begin{pmatrix}
  n+N-1 \\N-1
\end{pmatrix}\leq  (n+1)^N.
\end{equation}

  Note that
  \begin{equation}
    \label{min-BD}
    -\vv L_2(N)\leq \min_{1\leq j\leq N}(\vv L_1(j-1)-\vv L_2(j))\leq  \min\{-\vv L_2(1),(\vv L_1(N-1)-\vv L_2(N))\}.
  \end{equation}

As in the proof of \cite[Proposition 2.18]{barraquand2024stationary}, we consider separately the case $\Ca \Cb \geq 1$ and $\Ca \Cb <1$.

If $\Ca \Cb \geq 1$ then $x\mapsto (\Ca \Cb )^x$ is a nondecreasing function, so the left-hand side of inequality \eqref{min-BD} gives
  $$(\Ca \Cb )^{-\vv L_2(N)} \leq (\Ca \Cb )^{\min_{1\leq j\leq N}(\vv L_1(j-1)-\vv L_2(j))}.  $$
   So in this case the series \eqref{GG} is bounded by
$$
   \sum_{\vv L_1,\vv L_2} (\A \Cb  t_*^2 )^{\vv L_1(N)}  (\A \Ca ) ^{\vv L_2(N)}
  =\sum_{m,n=0}^\infty \sum_{\substack{\vv L_1,\vv L2\\
  \vv L_1(N)=m\\ \vv L_2(N)=n}} (\A \Cb  t_*^2 )^{m}  (\A \Ca ) ^{n}
  \leq \sum_{m,n=0}^\infty \, (m+1)^{N} (n+1)^{N} (\A \Cb t_*^2 )^m (\A \Ca )^n<\infty.
$$
Here we used the crude upper bound \eqref{L-exact} twice.

On the other hand, if $\Ca \Cb <1$ then  $x\mapsto (\Ca \Cb )^x$ is non-increasing, so the right-hand side of \eqref{min-BD} gives
  $$  (\Ca \Cb )^{\min\{-\vv L_2(1),(\vv L_1(N-1)-\vv L_2(N))\}}\leq (\Ca \Cb )^{\min_{1\leq j\leq N}(\vv L_1(j-1)-\vv L_2(j))}. $$
Consider
  \begin{multline}
    \label{XY-ends}
    A_1(m_1,m_2) =\{\vv L_1:\;\vv L_1(N-1)=m_1,\;\vv L_1(N)=m_1+m_2\}\quad
    \\ \mbox{and}\quad A_2(n_1,n_2)=\{\vv L_2:\;n_1=\vv L_2(1),\;\vv L_2(N)=n_1+n_2\},
  \end{multline}
 with $n_1,n_2,m_1,m_2\in\ZZ_{\geq 0}$. We note that
 $$
 \#\,A_1(m_1,m_2)=\binom{m_1+N-2}{m_1}\quad\mbox{and}\quad \#\,A_2(n_1,n_2)=\binom{n_2+N-2}{n_2}.
 $$
 Then
 \begin{multline*}
  \frac{1}{(\Ca \Cb )^{\min\{-\vv L_2(1),(\vv L_1(N-1)-\vv L_2(N))\}}}=(\Ca \Cb )^{n_1} \ind_{m_1>n_2}+(\Ca \Cb )^ {n_1+n_2-m_1}\ind_{n_2\geq m_1}
 \\
  \leq (\Ca \Cb )^{n_1} \ind_{m_1\geq n_2}+(\Ca \Cb )^ {n_1+n_2-m_1}\ind_{n_2\geq m_1}.
 \end{multline*}
  So in this case the series \eqref{GG}  is bounded by
     \begin{multline*}
       \sum_{\vv L_1,\vv L_2}          \frac{(\A \Cb t_*^2 )^{\vv L_1(N)} (\A \Ca ) ^{\vv L_2(N)}}{ (\Ca \Cb )^{\vv L_2(N)} (\Ca \Cb )^{\min\{-\vv L_2(1),(\vv L_1(N-1)-\vv L_2(N))\}} }
      \\ \le \sum_{\substack{m_1,m_2=0\\n_1,n_2=0}
      }^\infty \; \sum_{\substack{\vv L_1\in A_1(m_1,m_2),\\
                                                             \vv L_2\in A_2(n_1,n_2)}}
             \frac{ (\A \Cb t_*^2  )^{m_1+m_2} (\A \Ca ) ^{n_1+n_2}}{ (\Ca \Cb )^{n_1+n_2}}
       \left( (\Ca \Cb )^{n_1} \ind_{m_1\geq n_2}+(\Ca \Cb )^{n_1+n_2-m_1}\ind_{n_2\geq m_1}
   \right)
   \\
  = \sum_{\substack{m_1,m_2=0\\n_1,n_2=0}
      }^\infty \sum_{\vv L_1,\vv L_2} \frac{(\A \Cb  t_*^2 )^{m_1+m_2}(\A \Ca ) ^{n_1+n_2}}{ (\Ca \Cb )^{n_2}}
        \ind_{m_1\ge n_2}
        + \sum_{\substack{m_1,m_2=0\\n_1,n_2=0}
      }^\infty \sum_{\vv L_1,\vv L_2}  \frac{(\A \Cb  t_*^2 )^{m_1+m_2}(\A \Ca ) ^{n_1+n_2}}{ (\Ca \Cb )^{m_1 }} \ind_{n_2\geq m_1}
       \\=: S_1+S_2,
     \end{multline*}
     where the sum over $\vv L_1,\vv L_2$ is now over the pairs of paths    that satisfy \eqref{XY-ends}.
We have
\begin{multline*}
  S_1= \sum_{\substack{m_1,m_2=0\\n_1,n_2=0}
      }^\infty \binom{m_1+N-2}{N-2}\binom{n_2+N-2}{N-2} \frac{(\A \Cb  t_*^2 )^{m_2}(\A \Ca ) ^{n_1}\A^{n_2}}{ \Cb ^{n_2}}
        (\A \Cb  t_*^2 )^{m_1}\ind_{m_1\ge  n_2}
        \\ = \sum_{n_1,n_2,m_2=0}^\infty \binom{n_2+N-2}{N-2} \frac{(\A \Cb  t_*^2 )^{m_2}(\A \Ca ) ^{n_1}\A^{n_2}}{ \Cb ^{n_2}} \sum_{m_1=n_2}^\infty \binom{m_1+N-2}{N-2}\,(\A \Cb  t_*^2 )^{m_1}
        \\ = \tfrac1{(1-\A \Cb  t_*^2 )(1-\A \Ca )}\sum_{n_2=0}^\infty \binom{n_2+N-2}{N-2} \frac{\A^{n_2}}{ \Cb ^{n_2}} \sum_{m_1=n_2}^\infty \binom{m_1+N-2}{m_1}\,(\A\Cb t_*^2 )^{m_1}
        \\ = \tfrac1{(1-\A \Cb  t_*^2 )(1-\A \Ca )}\sum_{n_2=0}^\infty \binom{n_2+N-2}{N-2} (\A t_*)^{2n_2}\sum_{m=0}^\infty \binom{m+n_2+N-2}{N-2}\,(\A\Cb t_*^2 )^{m}
\end{multline*}
Since for $p_1,p_2\in(0,1)$
$$
\sum_{n\ge 0}\binom{N+n}{N}p_1^n\,\sum_{m\ge 0}\,\binom{N+n+m}{N}p_2^m=\sum_{n\ge 0}\binom{N+n}{N}p_1^n\,\sum_{k=0}^N\,a_k(N,p_2)\,n^k<\infty,
$$
we conclude that $S_1$ is finite.

Similarly,
\begin{multline*}
  S_2=\sum_{\substack{m_1,m_2=0\\n_1,n_2=0}
      }^\infty  \binom{m_1+N-2}{N-2}\binom{n_2+N-2}{N-2} \frac{(\A t_*^2 )^{m_1}(\A \Cb  t_*^2 )^{m_2}(\A \Ca ) ^{n_1}}{ \Ca ^{m_1 }} (\A \Ca ) ^{n_2}\ind_{n_2\geq m_1}
      \\ \tfrac{1}{(1-\A \Cb  t_*^2 )(1-\A \Ca )}\sum_{\substack{m_1=0}}^\infty  \binom{m_1+N-2}{N-2}\frac{(\A t_*^2 )^{m_1}}{ \Ca ^{m_1 }}\sum_{n_2=m_1}^\infty \binom{n_2+N-2}{N-2}   (\A \Ca ) ^{n_2}
      \\ \tfrac{1}{(1-\A \Cb  t_*^2 )(1-\A \Ca )}\sum_{\substack{m_1=0}}^\infty  \binom{m_1+N-2}{N-2}\,(\A^2 t_*^2 )^{m_1}\,\sum_{n=0}^\infty \binom{n+m_1+N-2}{N-2}   (\A \Ca ) ^{n}<\infty.
\end{multline*}

To prove analyticity in $\Ca ,\Cb $, we note that
$$\max_{1\le j\leq N}\{\vv L_1(j)-\vv L_2(j-1)\}=
\max_{1\le j\leq N}\{(n_1+\dots+n_j)-(m_1+\dots + m_j)\}=n_1+\max_{0\le j\leq N-1}\left\{\sum_{i=1}^j(n_{i+1}-m_i)\right\}.$$
We substitute $k_i=n_{i+1}-m_i\in\ZZ$ into the expression
\begin{multline}\label{Step0}
   \mathcal{G}_N\topp{\A,\Ca ,\Cb }(\vv t) =
   \sum_{\vv m,\vv n\in\ZZ_{\geq 0}^N}
   \left(\prod_{j=1}^N(\A  t_j^2)^{m_j} \A^{n_j}\right) (\Ca  \Cb )^{ n_1+\max_{0\leq j\leq N-1}\sum_{i=1}^{j}(n_{i+1}-m_i)} \Cb ^{m_N-n_1-\sum_{i=1}^{N-1} (n_{i+1}-m_i)}
   \\=
   \sum_{n_1,m_{N}\in\ZZ_{\geq 0}} (\A \Cb  t_N^2)^{m_N} (\A \Ca )^{n_1} \sum_{m_1,\dots,m_{N-1}\in\ZZ_{\geq 0}}\sum_{k_j\ge -m_j} \prod_{j=1}^{N-1} (\A t_j)^{2m_j}\A^{k_j}
 \Ca ^{\max_{0\leq j \leq N-1}\sum_{i=1}^j k_i} \Cb ^{\max_{0\leq j \leq N-1}\sum_{i=j}^{N-1}(-k_i)}
\end{multline}
as
$$\max_{0\leq j\leq N-1} \left\{\sum_{i=1}^jk_j\right\} - \sum_{j=1}^{N-1}k_j
=\max\left\{- \sum_{i=1}^{N-1}k_i,- \sum_{i=2}^{N-1}k_i,\dots,-k_{N-1},0\right\}.$$
Thus both $\Ca $ and $\Cb $ are raised to non-negative powers.
\end{proof}

\begin{proof}[Proof of Lemma \ref{Lem:S-rec}]  Denote $S_j=\sum_{i=1}^j k_i$ and $S_K^*=\max_{0\leq j\leq K}S_j$.
We rewrite the explicit sum \eqref{Step0} for the generating function, summing over $n_1$ and
$m_N$ and substituting $k_j=n_{j+1}-m_j\geq - m_j$, $j=1,\dots, N-1$. We get
\begin{multline}
  \mathcal{G}\topp{\A,\Ca ,\Cb }_{N}(t_1,\dots,t_{N})=
  \sum_{\vv m,\vv n\in\ZZ_{\geq 0}^{N}} \left(\prod_{j=1}^{N}(\A t_j^2)^{m_j} \A^{n_j}\right)
  (\Ca  \Cb )^{\max_{0\leq j\leq N-1}\left\{\sum_{i=1}^j (n_j-m_j)\right\}} c_s^{\sum_{i=1}^N(m_i-n_i)}
  \\=
  \frac{1}{(1-\A \Ca )(1-\A \Cb  t_N^2)}\sum_{\vv k\in\ZZ^{N-1}}
 (\Ca \Cb )^{S_{N-1}^*}\frac{\prod_{j=1}^{N-1} (\A t_j)^{k_j}}{\prod_{j=1}^{N-1} (\Cb t_j)^{k_j}} \sum_{m_j\geq (-k_j)\vee 0} \prod_{j=1}^{N-1}(\A t_j)^{2 m_j}
 \\=
   \frac{1}{(1-\A \Ca )(1-\A \Cb  t_N^2)}\sum_{\vv k\in\ZZ^{N-1}}
 (\Ca \Cb )^{S_{N-1}^*}\prod_{j=1}^{N-1} \left(\frac{ (\A t_j)^{k_j}}{(\Cb t_j)^{k_j}}  \sum_{m_j\geq (-k_j)\vee 0} (\A t_j)^{2 m_j}\right).
\end{multline}
Since
$$
 (\A t_j)^{k_j} \sum_{m_j\geq (-k_j)\vee 0} (\A t_j)^{m_j}=
\frac{1}{1-\A t_j}(\A t_j)^{k_j} (\A t_j)^{(-k_j)\vee 0}=\frac{(\A t_j)^{|k_j|}}{1-\A t_j}.
$$
we get
\begin{equation}
  \label{eq2use}
  \mathcal{G}\topp{\A,\Ca ,\Cb }_{N}(t_1,\dots,t_{N})=\frac{1}{(1-\A \Ca )(1-\A \Cb  t_N^2)\prod_{j=1}^{N-1}(1-\A^2 t_j^2)}\sum_{\vv k\in\ZZ^{N-1}}
 (\Ca \Cb )^{S_{N-1}^*}\frac{\prod_{j=1}^{N-1}(\A t_j)^{|k_j|}}{
 \prod_{j=1}^{N-1}(\Cb  t_j)^{k_j}}.
\end{equation}
In particular,
\begin{equation} \label{Ga}
  \mathcal{G}\topp{\A,\Ca ,\A}_{N}(t_1,\dots,t_{N})=\frac{1}{(1-\A \Ca ) \prod_{j=1}^{N}(1-\A^2 t_j^2)}\sum_{\vv k\in\ZZ^{N-1}}
 (\Ca \A)^{S_{N-1}^*}\frac{\prod_{j=1}^{N-1}(\A t_j)^{|k_j|}}{
 \prod_{j=1}^{N-1}(\A t_j)^{k_j}}.
\end{equation}
We now apply \eqref{eq2use} with $N+1$ instead of $N$.
We get
\begin{multline}\label{GcN+1}
   \mathcal{G}\topp{\A,\Ca ,\Cb }_{N+1}(t_1,\dots,t_{N},t_{N+1})=\frac{1}{(1-\A \Ca )(1-\A \Cb  t_{N+1}^2)\prod_{j=1}^{N}(1-\A^2 t_j^2)}\sum_{\vv k\in\ZZ^{N}}
 (\Ca \Cb )^{S_{N}^*}\frac{\prod_{j=1}^{N}(\A t_j)^{|k_j|}}{
 \prod_{j=1}^{N}(\Cb  t_j)^{k_j}}
 \\=
 \frac{1}{(1-\A \Ca )(1-\A \Cb  t_{N+1}^2)\prod_{j=1}^{N}(1-\A^2 t_j^2)}\sum_{\vv k\in\ZZ^{N-1}}
 \frac{\prod_{j=1}^{N-1}(\A t_j)^{|k_j|}}{
 \prod_{j=1}^{N-1}(\Cb  t_j)^{k_j}}
 \sum_{k_N\in\ZZ}(\Ca \Cb )^{S_{N}^*} (\A t_N)^{|k_N|}(\Cb  t_N)^{-k_N}.
\end{multline}
We now observe that
$$S_N^*=(S_{N-1}+k_N)\mathbf{1}_{k_N> S_{N-1}^*-S_{N-1}}+ S_{N-1}^*\mathbf{1}_{k_N\leq S_{N-1}^*-S_{N-1}}.$$
Since $S_{N-1}^*-S_{N-1}\geq 0$, we get
\begin{multline}\label{split3way}
  \sum_{k_N\in\ZZ}(\Ca \Cb )^{S_{N}^*} (\A t_N)^{|k_N|}(\Cb  t_N)^{-k_N}
  = \sum_{k_N=S_{N-1}^*-S_{N-1}+1}^\infty (\Ca \Cb )^{S_{N-1}+k_N}(\A t_N)^{k_N} (\Cb  t_N)^{-k_N}
  \\+\sum_{k_N=-1}^{-\infty} (\Ca \Cb )^{S_{N-1}^*} (\A t_N)^{-k_N}(\Cb  t_N)^{-k_N}
  +\sum_{k_N=0}^{S_{N-1}^*-S_{N-1}}  (\Ca \Cb )^{S_{N-1}^*} (\A t_N)^{k_N}(\Cb  t_N)^{-k_N}
 \\ =
  (\Ca \Cb )^{S_{N-1}}\sum_{k_N=S_{N-1}^*-S_{N-1}+1}^\infty (\A \Ca )^{k_N}
  +(\Ca \Cb )^{S_{N-1}^*}\sum_{k_N=-1}^{-\infty}  (\A t_N^2 \Cb )^{-k_N}
 +(\Ca \Cb )^{S_{N-1}^*}\sum_{k_N=0}^{S_{N-1}^*-S_{N-1}}   \left(\frac{\A}{\Cb} \right)^{k_N}\\
 =S_1+S_2+S_3.
\end{multline}
Summing the geometric series and the geometric sum we get
\begin{align*}
 S_1&=\frac{(\Ca \Cb )^{S_{N-1}}}{1-\A \Ca } (\A \Ca )^{S_{N-1}^*-S_{N-1}+1}
=\frac{\A \Ca }{1-\A \Ca } (\A \Ca )^{S_{N-1}^*} \prod_{j=1}^{N-1}\left(\frac{\Cb }{\A}\right)^{k_j}, \\
S_2&=\frac{\A t_N^2 \Cb }{1-\A \Cb  t_N^2}(\Ca \Cb )^{S_{N-1}^*}, \\
S_3&=(\Ca \Cb )^{S_{N-1}^*}\frac{1-\left(\frac{\A}{\Cb} \right)^{S_{N-1}^*-S_{N-1}+1}}{1-\A/\Cb }
= \frac{\Cb }{{\Cb -\A}}(\Ca \Cb )^{S_{N-1}^*} - \frac{\A}{\Cb -\A} (\A \Ca )^{S_{N-1}^*}\prod_{j=1}^{N-1}\left(\frac{\Cb }{\A}\right)^{k_j}.
\end{align*}
Thus
\begin{multline*} \sum_{k_N\in\ZZ}(\Ca \Cb )^{S_{N}^*} (\A t_N)^{|k_N|}(\Cb  t_N)^{-k_N}
= S_1+S_2+S_3
\\=  \frac{\A \left(\Ca  \Cb -1\right)}{\left(1-\A
   \Ca \right) \left(\Cb -\A\right)}  (\A \Ca )^{S_{N-1}^*}\prod_{j=1}^{N-1}\left(\frac{\Cb }{\A}\right)^{k_j}
 + \frac{\Cb  \left(1-\A^2
   t_N^2\right)}{\left(\Cb -\A\right)
   \left(1-\A \Cb  t_N^2\right)} (\Ca \Cb )^{S_{N-1}^*}=:A+B.
\end{multline*}
We put $A+B$ into the right-hand side of \eqref{GcN+1}. The first term, $A$,  gives
\begin{multline*}
  \frac{1}{(1-\A \Ca )(1-\A \Cb  t_{N+1}^2)\prod_{j=1}^{N}(1-\A^2 t_j^2)}\sum_{\vv k\in\ZZ^{N-1}}
 \frac{\prod_{j=1}^{N-1}(\A t_j)^{|k_j|}}{
 \prod_{j=1}^{N-1}(\Cb  t_j)^{k_j}} \frac{\A \left(\Ca  \Cb -1\right)}{\left(\A
   \Ca -1\right) \left(\A-\Cb \right)}  (\A \Ca )^{S_{N-1}^*}\prod_{j=1}^{N-1}\left(\frac{\Cb }{\A}\right)^{k_j}
   \\=
   \frac{\A \left(\Ca  \Cb -1\right)}{\left(1-\A
   \Ca \right) \left(\Cb -\A\right)(1-\A \Cb  t_{N+1}^2)}
    \frac{1}{(1-\A \Ca )\prod_{j=1}^{N}(1-\A^2 t_j^2)}\sum_{\vv k\in\ZZ^{N-1}}
 \frac{\prod_{j=1}^{N-1}(\A t_j)^{|k_j|}}{
 \prod_{j=1}^{N-1}(\A t_j)^{k_j}} (\A \Ca )^{S_{N-1}^*}
 \\= \frac{\A \left(\Ca  \Cb -1\right)}{\left(1-\A
   \Ca \right) \left(\Cb -\A\right)(1-\A \Cb  t_{N+1}^2)} \mathcal{G}\topp{\A,\Ca ,\A}_{N}(t_1,\dots,t_{N}),
\end{multline*}
see \eqref{Ga}. The second term, $B$, gives
\begin{multline*}
 \frac{1}{(1-\A \Ca )(1-\A \Cb  t_{N+1}^2)\prod_{j=1}^{N}(1-\A^2 t_j^2)}\sum_{\vv k\in\ZZ^{N-1}}
 \frac{\prod_{j=1}^{N-1}(\A t_j)^{|k_j|}}{
 \prod_{j=1}^{N-1}(\Cb  t_j)^{k_j}}
\frac{\Cb  \left(1-\A^2
   t_N^2\right)}{\left(\Cb -\A\right)
   \left(1-\A \Cb  t_N^2\right)} (\Ca \Cb )^{S_{N-1}^*}
   \\
   = \frac{\Cb   }{\left(\Cb -\A\right)
   (1-\A \Cb  t_{N+1}^2)}
   \frac{1}{(1-\A \Ca )\left(1-\A \Cb  t_N^2\right)\prod_{j=1}^{N-1}(1-\A^2 t_j^2)}\sum_{\vv k\in\ZZ^{N-1}}
 \frac{\prod_{j=1}^{N-1}(\A t_j)^{|k_j|}}{
 \prod_{j=1}^{N-1}(\Cb  t_j)^{k_j}}(\Ca \Cb )^{S_{N-1}^*}
   \\
   =\frac{\Cb   }{\left(\Cb -\A\right)
   (1-\A \Cb  t_{N+1}^2)}\mathcal{G}\topp{\A,\Ca ,\Cb }_{N}(t_1,\dots,t_{N}),
\end{multline*}
see \eqref{eq2use}. Thus \eqref{Gc2Ga} follows.
\end{proof}

\section{Integral representations of free Askey--Wilson functionals} \label{Sec:I-rep-of-AW} As mentioned in the introduction, Proposition \ref{P:J-rep} gives integral representations for the functionals \eqref{M-Trans+} when the parameters satisfy additional restrictions. We now make this representation explicit.

Recall that $\Ca,\Cb,s,t>0$.
From \eqref{J2c1a} we get the following.
\begin{proposition}\label{pi2Int}
  If $\Ca\ne \Cb t^2$, $\Cb t\ne 1$ and $\Ca\ne t$ then  functional $\pi^t$ extends to bounded measurable  functions $f$  and is given by
  \begin{equation}
     \pi^t[f]=\int_{\RR} f(y) \pi_{t^2}(dy),
  \end{equation}
 where the integral is with respect to the signed compactly supported mixed-type measure:
 \begin{multline*}
 \pi_{t^2}(dy)=\frac{1-\Ca\Cb}{2\pi} \mathbf{1}_{|y|\leq 2} \frac{\sqrt{4-y^2}}{(1+\Ca^2/t^2-\Ca y/t)(1+\Cb^2t^2-\Cb y t)}\,\d y
 \\+ \frac{1}{\Ca-t^2\Cb}\left( \frac{(\Ca^2-t^2)\vee 0}{\Ca} \delta_{\Ca/t+t/\Ca}(\d y)  -
   \frac{(\Cb^2t^2-1)\vee 0}{\Cb} \delta_{\Cb t+1/(\Cb t)}(\d y)\right).
\end{multline*}
\end{proposition}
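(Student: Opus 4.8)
The plan is to derive this directly from formula \eqref{J2c1a} of Proposition \ref{P:J-rep}(i), specialized to the case of one vanishing parameter. Recalling \eqref{M-Trans+}, we have $\pi^t = \mathbb{L}\topp{\Cb t,\,\Ca/t} = \mathbb{L}\topp{\Cb t,\,\Ca/t,\,0}$, so I would invoke \eqref{J2c1a} with $a = \Cb t$, $b = \Ca/t$, $c = 0$. The first step is to match hypotheses: Proposition \ref{P:J-rep}(i) requires $a,b,c$ distinct with $|a|,|b|,|c|\ne 1$, and these translate exactly into the conditions in the statement, namely $a\ne b\Leftrightarrow\Ca\ne\Cb t^2$, $|a|\ne 1\Leftrightarrow\Cb t\ne 1$, $|b|\ne 1\Leftrightarrow\Ca\ne t$, while $a\ne c$, $b\ne c$ and $|c|\ne 1$ hold automatically because $\Ca,\Cb,t>0$.

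Next I would substitute $c = 0$ into \eqref{J2c1a}: then $\q_0\equiv 1$, the factors $1-ac$ and $1-bc$ equal $1$, and the term carrying $\mathbf{1}_{|c|>1}$ disappears. The surviving integral, using $1-ab = 1-\Ca\Cb$, $\q_{\Cb t}(y) = 1+\Cb^2t^2-\Cb ty$ and $\q_{\Ca/t}(y) = 1+\Ca^2/t^2-\Ca y/t$, becomes precisely the absolutely continuous part of $\pi_{t^2}$. For the two remaining atoms, the positivity of $\Ca,\Cb,t$ gives $|a|>1\Leftrightarrow\Cb t>1$ and $|b|>1\Leftrightarrow\Ca>t$, so that $\mathbf{1}_{|a|>1}(a^2-1) = (\Cb^2t^2-1)\vee 0$ and $\mathbf{1}_{|b|>1}(b^2-1) = (\Ca^2-t^2)\vee 0$; evaluating the coefficients $\frac{(a^2-1)(1-bc)}{(a-b)(a-c)}$ and $\frac{(b^2-1)(1-ac)}{(b-a)(b-c)}$ at $(a,b,c)=(\Cb t,\Ca/t,0)$ reproduces the mass $-\frac{(\Cb^2t^2-1)\vee 0}{\Cb(\Ca-t^2\Cb)}$ at $\Cb t + 1/(\Cb t)$ and the mass $\frac{(\Ca^2-t^2)\vee 0}{\Ca(\Ca-t^2\Cb)}$ at $\Ca/t + t/\Ca$. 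Collecting the three contributions yields the displayed formula for $\pi_{t^2}(\d y)$.

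Finally I would address the extension to bounded measurable $f$. The conditions $\Cb t\ne 1$ and $\Ca\ne t$ ensure, through the factorization \eqref{q-fac} with $|w|=1$ (as in the proof of Lemma \ref{L:anti-reduce}), that $1+\Cb^2t^2-\Cb ty\ge(1-\Cb t)^2>0$ and $1+\Ca^2/t^2-\Ca y/t\ge(1-\Ca/t)^2>0$ for all $y\in[-2,2]$; hence the density in $\pi_{t^2}$ is bounded and continuous on $[-2,2]$, and $\pi_{t^2}$ is a finite signed measure with compact support. Consequently $f\mapsto\int_\RR f\,\d\pi_{t^2}$ is defined for every bounded measurable $f$, and by the computation above it coincides on the analytic class $\mathcal{A}_{1/R}$ with the functional $\pi^t = \mathbb{L}\topp{\Cb t,\Ca/t}$, which is exactly the extension claimed. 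I expect the only mildly delicate point to be the sign- and indicator-bookkeeping — turning the factors $\mathbf{1}_{|a|>1}$, $\mathbf{1}_{|b|>1}$ and the signs of the partial-fraction coefficients in \eqref{J2c1a} into the $(\,\cdot\,)\vee 0$ truncations and the common prefactor $1/(\Ca-t^2\Cb)$ of the statement — but this is routine once one uses that $\Ca,\Cb,t$ are all positive.
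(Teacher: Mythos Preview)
Your proposal is correct and follows exactly the same route as the paper, which simply states that the formula follows from \eqref{J2c1a}. You have supplied all the bookkeeping the paper omits: matching the hypotheses $a\ne b$, $|a|\ne1$, $|b|\ne1$ with the stated conditions, specializing $c=0$, and simplifying the two atomic coefficients into the form with the common prefactor $1/(\Ca-t^2\Cb)$ and the $(\cdot)\vee 0$ truncations.
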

(The appearance of $t^2$ is for consistency with the notation in \cite[(2.13)]{WWY-2024}.)

 Next, consider the functional $P_{x}^{s,t}$ for $0<s\leq t$ and $x\in\CC$.
 From \eqref{J2m0b-conj} we get $P_{x,}^{t,t}=\delta_x(\d y)$.
 From \eqref{J2c1a} we get the following.
 \begin{proposition}\label{Pst2Int}
     If  $0<s<t$, $\Cb t\ne 1$,
     $x$ is real, $x\ne\pm(\tfrac st+\tfrac ts)$, $x\ne \frac{s}{t^2 \Cb}+\frac{t^2 \Cb}{s}$,
     then  functional $P_{x}^{s,t}$ extends to bounded measurable functions $f$   and is given by
 \begin{equation}
     P_{x}^{s,t}[f] = \int_\RR f(y) P_{s^2,t^2}(x,\d y),
 \end{equation}
 where $P_{s^2,t^2}(x,\d y)$ is a compactly supported complex-valued mixed type measure:
 \begin{multline*}
   P_{s^2,t^2}(x,\d y) =  \mathbf{1}_{|y|\leq 2}\frac{(1+\Cb^2 s^2-\Cb s x)t^2(t^2-s^2)\sqrt{4-y^2}}{2\pi(1+\Cb^2t^2-\Cb t y)((t^2-s^2)^2+s^2 t^2 \left(x^2+y^2\right)-s t x y \left(s^2+t^2\right))}\,\d y
  \\+ \mathbf{1}_{\Cb t>1} \frac{\left(\Cb^2t^2-1\right)
    (t^2-s^2)}{\Cb^2 t^4+s^2 -\Cb st^2x}
  \; \delta_{\left(\Cb t+\tfrac{1}{\Cb t}\right)}(\d y)
   +
   \mathbf{1}_{t|\UU(x)|<s}\frac{ \left(s^2-t^2 \UU^2(x)\right)\left(\Cb  s \UU(x)-1\right)}{s \left(\UU^2(x)-1\right) \left(s-\Cb t^2 \UU(x)\right)} \;\delta_{\left(\tfrac{s }{t \UU(x)}+\tfrac{t \UU(x)}{s }\right)}(\d y).
 \end{multline*}
 \end{proposition}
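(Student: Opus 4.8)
The plan is to obtain the stated formula as a direct specialization of Proposition~\ref{P:J-rep}(i), i.e. of \eqref{J2c1a}, applied to the three--parameter functional $P_x^{s,t}=\mathbb{L}\topp{\Cb t,\,s\UU(x)/t,\,s/(t\UU(x))}$ from \eqref{M-Trans+}. First I would record the symmetric data of the parameters $a=\Cb t$, $b=s\UU(x)/t$, $c=s/(t\UU(x))$: using $\UU(x)+1/\UU(x)=x$ one has $bc=s^2/t^2$, $b+c=sx/t$, $(1-ab)(1-ac)=1+\Cb^2 s^2-\Cb s x=\q_{\Cb s}(x)$, and, after clearing a factor $t^4$, the product $\q_b(y)\q_c(y)$ equals the quadratic $(t^2-s^2)^2+s^2t^2(x^2+y^2)-st(s^2+t^2)xy$ appearing in the denominator of the claimed density --- this is precisely the polynomial $h_j$ of \eqref{h(x,y)} with the ratio $t_{j-1}/t_j$ set to $s/t$ (compare \eqref{q-fac}). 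Note also $b,c\neq 0$ since $\UU$ never vanishes, so we are genuinely in the three--nonzero--parameter case.

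Next I would check that the hypotheses of Proposition~\ref{P:J-rep}(i) --- pairwise distinct parameters, none on the unit circle --- hold under the stated restrictions on $x,s,t,\Cb$. Since $x$ is real, $|\UU(x)|\le 1$, hence $|b|=(s/t)|\UU(x)|<1$ always: $b$ is never on the unit circle and contributes no atom. The parameter $a=\Cb t$ avoids the unit circle exactly when $\Cb t\neq 1$, and $|c|=(s/t)/|\UU(x)|=1$ precisely when $x=\pm(s/t+t/s)$; both are excluded. Similarly $a=b$ or $a=c$ occurs exactly at $x=\tfrac{s}{t^2\Cb}+\tfrac{t^2\Cb}{s}$, and $b=c$ only at $x=\pm 2$, which is covered by the companion formula \eqref{J2c2a} or, more simply, by passing to the limit using continuity in the parameters (Proposition~\ref{P:cont-abc}). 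When $|x|\le 2$ the pair $b,c$ is a complex conjugate pair and $a$ is real, so \eqref{J2c1a} outputs a real signed measure; when $|x|>2$ all three parameters are real. In either case \eqref{J2c1a} gives $P_x^{s,t}[f]$ as an absolutely continuous part on $[-2,2]$ with weight $\sqrt{4-y^2}/(\q_a\q_b\q_c)$, plus an atom at $a+1/a=\Cb t+1/(\Cb t)$ present iff $\Cb t>1$, plus an atom at $c+1/c=\tfrac{s}{t\UU(x)}+\tfrac{t\UU(x)}{s}$ present iff $|c|>1$, i.e. iff $t|\UU(x)|<s$.

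It then remains to simplify the three coefficients produced by \eqref{J2c1a} and match them with the statement. The density coefficient $(1-ab)(1-ac)(1-bc)$ equals $\q_{\Cb s}(x)(t^2-s^2)/t^2$, and restoring the $t^4$ hidden in $\q_b\q_c$ yields the numerator $(1+\Cb^2 s^2-\Cb s x)\,t^2(t^2-s^2)$ over $\q_{\Cb t}(y)$ times the quadratic above. The $a$--atom coefficient $(a^2-1)(1-bc)/\big((a-b)(a-c)\big)$ simplifies, via $(a-b)(a-c)=a^2-a(b+c)+bc=(\Cb^2t^4+s^2-\Cb s t^2 x)/t^2$, to $(\Cb^2 t^2-1)(t^2-s^2)/(\Cb^2 t^4+s^2-\Cb s t^2 x)$. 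The $c$--atom coefficient $(c^2-1)(1-ab)/\big((c-a)(c-b)\big)$ simplifies, substituting $c=s/(t\UU(x))$ and using $c-b=\tfrac{s}{t}(1-\UU(x)^2)/\UU(x)$ and $c-a=(s-\Cb t^2\UU(x))/(t\UU(x))$, to $\big(s^2-t^2\UU(x)^2\big)\big(\Cb s\UU(x)-1\big)/\big(s(\UU(x)^2-1)(s-\Cb t^2\UU(x))\big)$. Finally, since the resulting mixed--type measure has compact support and bounded variation, the identity, valid on $\mathcal{A}_{1/R}$ by \eqref{J2c1a} together with the polynomial approximation of Proposition~\ref{P:poly-appr}, furnishes an extension of $P_x^{s,t}$ to all bounded measurable $f$. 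The only real work is this bookkeeping and the correct carving out of the exceptional $x$--values; there is no conceptual obstacle, as \eqref{J2c1a} already does all the analytic heavy lifting.
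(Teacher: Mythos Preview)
Your proposal is correct and follows exactly the paper's approach: apply \eqref{J2c1a} with $a=\Cb t$, $b=s\UU(x)/t$, $c=s/(t\UU(x))$ and simplify the resulting density and atom coefficients. The paper's own proof is extremely terse (it merely records the identity for $\q_b(y)\q_c(y)$ and says ``similar calculations give the weights''), so your more detailed bookkeeping of the distinctness hypotheses and the coefficient simplifications is a faithful expansion of the same argument.
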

(The appearance of $s^2,t^2$ in the integral is for consistency with the notation in \cite[(2.14)]{WWY-2024}.)
 \begin{proof}
   Recall Inverse Joukowsky mapping \eqref{u(z)}. The assumptions on the parameters are  $\Cb t\ne 1$, $s<t$,  $\UU(x)\ne \pm 1$, $|\UU(x)|\ne s/t$, $\UU(x)\ne \Cb t^2/s$ and $\UU(x)\ne s/(t \Cb)$. We use \eqref{J2c1a} with
   $a=\Cb t$, $b=s \UU(x)/t$, $c=s/(t \UU(x))$.
   A calculation gives
$$ \q_b(y)\q_c(y)=(1+b^2-by)(1+c^2-cy) =\frac{(t^2-s^2)^2+s^2 t^2 \left(x^2+y^2\right)-s t x y \left(s^2+t^2\right)}{t^4}.$$
Similar calculations give the weights for the atomic part of the measure.

 \end{proof}

\arxiv
{
\section{ When are the free Askey--Wilson functionals real-valued?}
In general, Askey--Wilson functionals are complex-valued. The following gives sufficient conditions for the functional to be real-valued.
\begin{propositiona}\label{Prop:J-real}
  If $f$ is analytic in the interior of an ellipse $\gamma_{\rho'}$ for some $\rho'<\rho$ and real on real axis, i.e.,  $\overline{f(z)}=f(\bar{z})$, then
  \begin{equation}
      \label{BarJ=J}
    \overline{\mathbb{L}\topp{a,b,c}[f]}= \mathbb{L}\topp{\bar{a},\bar{b},\bar{c}}[f].
  \end{equation} In particular if $a,b,c$  are real or if one is real and the other two form a complex-conjugate pair, then $\mathbb{L}\topp{a,b,c}[f]$ is a real number.
\end{propositiona}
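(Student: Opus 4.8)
The plan is to prove \eqref{BarJ=J} directly from the contour-integral definition \eqref{J[f]-w}, and then deduce the reality statement as an immediate corollary. The key observation is that complex conjugation acts nicely on the integrand, provided one also conjugates the contour and reverses its orientation; since the circle $|w|=\rho$ is symmetric under $w\mapsto \bar w$, this reversal is harmless up to the sign that compensates the orientation.

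First I would fix $f$ analytic in the interior of $\gamma_{\rho'}$ with $\overline{f(z)}=f(\bar z)$, and pick $\rho\in(\rho',1/R)$ where $R=R(a,b,c)$; note that $R(\bar a,\bar b,\bar c)=R(a,b,c)$, so the same $\rho$ works for both functionals in \eqref{BarJ=J}. Starting from
\[
\mathbb{L}\topp{a,b,c}[f]=\frac{1}{2\pi\i}\oint_{|w|=\rho}\frac{f(w+\tfrac1w)(1-w^2)(1-abcw)}{w(1-aw)(1-bw)(1-cw)}\,\d w,
\]
I would parametrize $w=\rho e^{\i\varphi}$, $\varphi\in[0,2\pi]$, so $\d w=\i w\,\d\varphi$, turning the expression into an ordinary integral $\frac{1}{2\pi}\int_0^{2\pi}\Phi(\rho e^{\i\varphi})\,\d\varphi$ where $\Phi(w)=\frac{f(w+1/w)(1-w^2)(1-abcw)}{(1-aw)(1-bw)(1-cw)}$. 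Taking complex conjugates and using $\overline{f(z)}=f(\bar z)$ together with $\overline{w+1/w}=\bar w+1/\bar w$ (valid since $|w|=\rho\ne 0$), I get $\overline{\Phi(\rho e^{\i\varphi})}=\Phi_*(\rho e^{-\i\varphi})$, where $\Phi_*$ is obtained from $\Phi$ by replacing $a,b,c$ with $\bar a,\bar b,\bar c$. Substituting $\varphi\mapsto -\varphi$ (equivalently $\varphi\mapsto 2\pi-\varphi$, which preserves the interval $[0,2\pi]$) then yields $\overline{\mathbb{L}\topp{a,b,c}[f]}=\frac{1}{2\pi}\int_0^{2\pi}\Phi_*(\rho e^{\i\varphi})\,\d\varphi=\mathbb{L}\topp{\bar a,\bar b,\bar c}[f]$, which is \eqref{BarJ=J}.

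For the final assertion, if $a,b,c$ are real then $(\bar a,\bar b,\bar c)=(a,b,c)$, so \eqref{BarJ=J} reads $\overline{\mathbb{L}\topp{a,b,c}[f]}=\mathbb{L}\topp{a,b,c}[f]$, i.e. the value is real. If one parameter, say $c$, is real and $\{a,b\}=\{a,\bar a\}$ is a complex-conjugate pair, then $(\bar a,\bar b,\bar c)$ is a permutation of $(a,b,c)$, and by the permutation invariance of $\mathbb{L}\topp{a,b,c}$ (noted right after Definition \ref{Def:AW0} for $\mathcal L$, and inherited by $\mathbb{L}$ via Proposition \ref{P:ext} and Proposition \ref{P:poly-appr}) we again obtain $\overline{\mathbb{L}\topp{a,b,c}[f]}=\mathbb{L}\topp{a,b,c}[f]$.

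The only genuinely delicate point is bookkeeping: making sure the contour $|w|=\rho$ is admissible for \emph{both} triples of parameters simultaneously (handled by $R(\bar a,\bar b,\bar c)=R(a,b,c)$), and that the substitution $\varphi\mapsto-\varphi$ is applied consistently with the $\i$ coming from $\d w=\i w\,\d\varphi$ — the orientation reversal and the conjugation of $\i$ must cancel exactly, otherwise a spurious sign appears. I expect this sign check to be the main (and essentially only) obstacle; everything else is a routine conjugation of an explicit integrand. Alternatively, one could bypass the contour entirely and argue through polynomial approximation: by Proposition \ref{P:poly-appr} choose real-coefficient polynomials $p_n\to f$ uniformly on $\bar D_{1/R}$ (their existence follows since $f$ is real on $[-2,2]$), and then use the explicit formulas \eqref{Jabc}, \eqref{Jab00}, \eqref{Ja00} for $\mathcal L\topp{a,b,c}[U_k]$, which are rational in $a,b,c$ with real coefficients, so that $\overline{\mathcal L\topp{a,b,c}[p_n]}=\mathcal L\topp{\bar a,\bar b,\bar c}[p_n]$; passing to the limit gives \eqref{BarJ=J}. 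I would present the contour-integral proof as the primary argument and mention the approximation argument as a remark.
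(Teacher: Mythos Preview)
Your proof is correct and follows essentially the same approach as the paper: both conjugate the contour integral directly and use the symmetry of the integration curve under complex conjugation, with permutation invariance of the parameters handling the complex-conjugate-pair case. The only cosmetic difference is that you work with the $w$-circle representation \eqref{J[f]-w} (which slightly streamlines the sign/orientation bookkeeping), whereas the paper uses the equivalent $z$-ellipse representation \eqref{J[f]++}.
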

\begin{proof}
  If   $\overline{f(z)}=f(\bar{z})$ then  from \eqref{J[f]++} we get %
  \[ \overline{\mathbb{L}\topp{a,b,c}[f]}=-\frac{1}{2\pi\i}\oint_{\gamma_\rho}
\frac{f(\bar{z}) \UU(\bar{z})(1-  \bar{a} \bar{b}\bar{ c} \UU(\bar{z})) \d \bar{z}}{(1- \bar{a} \UU(\bar{z}))
(1-\bar{b} \UU(\bar z))(1-\bar{c} \UU(\bar z))}.\]
Change of variable $z\mapsto\bar z$ reverses the orientation of $\gamma_\rho$, so after the change of sign, we get  \eqref{BarJ=J}.

If $a,b,c$ are real then the second part follows.  If one parameter, say $a$, is real and   $b=\bar{c}$  form   a complex conjugate pair, then \eqref{BarJ=J} says
$$\overline{\mathbb{L}\topp{a,b,\bar{b}}[f]}= \mathbb{L}\topp{a,\bar{b},b}[f] =\mathbb{L}\topp{a,b,\bar{b}}[f], $$
where the last equality is by invariance of $\mathbb{L}\topp{a,b,c}$ under permutation of  the parameters $a,b,c$.
\end{proof}

}

\arxiv{
\section{Analytic proofs of formulas in Proposition \ref{P:J-rep}}\label{Sec:An-Pr}
In the proof we  use  the following   observation.
 If  $f$ is an analytic function on the interior of an ellipse $\gamma_{\rho_0}$ for some $\rho_0\in(0,1)$, then \begin{equation}\label{f:uni-}
    \lim_{\eps\searrow 0} f((1-\eps) e^{\i \theta}+e^{-\i \theta}/(1-\eps))=f(2 \cos \theta) \mbox{ uniformly in } \theta\in \RR.
\end{equation}
}
\arxiv{
\begin{proof}[Proof of \eqref{f:uni-}]
We verify that %
   \begin{equation}\label{f:uni}
      \lim_{r\to 1} f(r e^{\i \theta}+e^{-\i \theta}/r)=f(2 \cos \theta) \mbox{ uniformly in } \theta\in \RR.
   \end{equation}
 We fix $\theta$ and denote $w=r e^{\i \theta}$. Expanding $f(z)$ into a power series near $z=2\cos\theta$   we get
\begin{multline*}
    |f(w+\tfrac1w)-a_0|=|f(w+\tfrac1w)-f(2\cos \theta)|=\left|\sum_{n=0}^\infty a_n (w+\tfrac1w-2\cos \theta)^n\right|
\leq \sum_{n=1}^\infty |a_n| \left(|w-e^{\i \theta}|+|\tfrac1w - e^{-\i \theta}|\right)^n
 \\ =\sum_{n=1}^\infty |a_n| (|1-r|+|\tfrac1r - 1|)^n =\sum_{n=1}^\infty |a_n|\; |1-r|^n(1+1/r)^n  \to 0 \mbox{ as } r\to 1.
\end{multline*}
Although the coefficients $a_n$ depend on $\theta$,  the radius of convergence of the series is bounded from below by the distance $\delta>0$ between the ellipse  $\gamma_{\rho_0}$ and the interval $[-2,2]$ which does not depend on $\theta$. Thus, $|a_n|\leq C/\delta^n$ and the convergence is uniform in $\theta$.
\end{proof}
}
\arxiv{
\begin{proof}[Analytic proof of \eqref{J2c1a} in Proposition \ref{P:J-rep}(i)]
Let $\rho_0<\rho<1/R$.
The integrand  in \eqref{J[f]-w} is analytic in the annulus $\rho_0<|w|<1$, except for up to three simple poles corresponding to $w=1/a, 1/b, 1/c$, if they lie in the unit disk. Thus,  enlarging the contour, we get
\begin{multline}
    \mathbb{L}\topp{a,b,c}[f]=\frac{1}{2\pi\i} \oint_{|w|=1-\eps}\frac{f(w+1/w)(1-abc w)(1-w^2)}{w(1-a w)(1-bw)(1-c w)} \,\d w
    \\
    - 1_{|a|>1/(1-\eps)} \mathrm{Res}_{ w=1/a}\left(\frac{f(w+1/w)(1-abc w)(1-w^2)}{w(1-a w)(1-bw)(1-c w)} \right)
   \\ - 1_{|b|>1/(1-\eps)} \mathrm{Res}_{ w=1/b}\left(\frac{f(w+1/w)(1-abc w)(1-w^2)}{w(1-a w)(1-bw)(1-c w)} \right)
   \\ - 1_{|c|>1/(1-\eps)} \mathrm{Res}_{w=1/c}\left(\frac{f(w+1/w)(1-abc w)(1-w^2)}{w(1-a w)(1-bw)(1-c w)} \right)
  \\ =\frac{1}{2\pi\i} \oint_{|w|=1-\eps}\frac{f(w+1/w)(1-abc w)(1-w^2)}{w(1-a w)(1-bw)(1-c w)} \,\d w
  \\+ 1_{|a|>1/(1-\eps)} \frac{\left(a^2-1\right)
    (1-b c)}{(a-b)
   (a-c)}f\left(a+\tfrac{1}{a}\right) +1_{|b|>1/(1-\eps)}\frac{\left(b^2-1\right) (1-a c)
  }{(b-a) (b-c)} f\left(b+\tfrac{1}{b}\right)
   +
   1_{|c|>1/(1-\eps)} \frac{\left(c^2-1\right) (1-a b)
  }{(c-a) (c-b)}  f\left(c+\tfrac{1}{c}\right).
\end{multline}
We now take the limit as $\eps\to 0$. The  terms obtained from the residua converge, and since $|a|,|b|,|c|\ne 1$, by the dominated convergence the integral also has a limit, see \eqref{f:uni-}.  We get
\begin{multline}
    \mathbb{L}\topp{a,b,c}[f]=\frac{1}{2\pi\i} \oint_{|w|=1}\frac{f(w+1/w)(1-abc w)(1-w^2)}{w(1-a w)(1-bw)(1-c w)} \,\d w
  \\+ 1_{|a|>1} \frac{\left(a^2-1\right)
    (1-b c)}{(a-b)
   (a-c)}f\left(a+\tfrac{1}{a}\right) +1_{|b|>1}\frac{\left(b^2-1\right) (1-a c)
  }{(b-a) (b-c)} f\left(b+\tfrac{1}{b}\right)
   +
   1_{|c|>1} \frac{\left(c^2-1\right) (1-a b)
  }{(c-a) (c-b)}  f\left(c+\tfrac{1}{c}\right).
  \end{multline}
  We now re-write the integral.
 Since
 $$\oint_{|w|=1}\,f(w+\tfrac1w)h(w)\tfrac{\d w}w\,=\i \int_{-\pi}^{\pi} \,f\left(e^{\i\theta}+e^{-\i\theta}\right)\,h\left(e^{\i\theta}\right)\,\d\theta=\i \int_{-\pi}^{\pi} \,f\left(e^{\i\theta}+e^{-\i\theta}\right)\,h\left(e^{-\i\theta}\right)\,\d\theta,
 $$
 we have
 $$
 \tfrac{1}{2\pi \i}\oint_{|w|=1}\,f(w+\tfrac1w)h(w)\tfrac{\d w}w=\tfrac1{4\pi} \int_{-\pi}^{\pi}\,f(2\cos\,\theta) \,(h(e^{\i\theta})+h(e^{-\i\theta}))\,\d\theta.
 $$
 For $$h(z)=\tfrac{(1-abcz)(1-z^2)}{(1-az)(1-bz)(1-cz)},$$ a computation gives
 $$
 h(e^{\i\theta})+h(e^{-\i\theta})=\tfrac{(1-ab)(1-ac)(1-bc) |1-e^{2\i\theta}|^2 }{(1-a e^{\i\theta}) (1-a e^{-\i\theta}) (1-b e^{\i\theta}) (1-b e^{-\i\theta})(1-c e^{\i\theta}) (1-c e^{-\i\theta})}
 =\tfrac{ (1-a b) (1-a c) (1-bc)(2-2 \cos (2\theta))}{(1+a^2-2a\cos \theta)  (1+b^2-2b\cos \theta) (1+c^2-2c\cos\theta)},
 $$
   which is clearly an even function.  Thus, referring to the identity $1-\cos\,2\theta=2\sin^2\theta$, we get
\begin{align}\nonumber
\mathbb{L}\topp{a,b,c}[f]
   = & \frac{ (1-a b) (1-a c) (1-b
   c)}{\pi} \int_0^{\pi}f(2\cos \theta) \frac{\sin\theta }{(1+a^2-2a\cos \theta)  (1+b^2-2b\cos \theta) (1+c^2-2c\cos\theta) } 2 \sin\theta\,\d \theta
  \\ \label{density}
  =&\frac{ (1-a b) (1-a c) (1-b
   c)}{2\pi} \int_{-2}^2 f(y)\frac{\sqrt{4-y^2}}{(1+a^2-ay)  (1+b^2-by) (1+c^2-cy) }\,\d y.
   \end{align}
   (Here we substituted $y=2\cos\ \theta$, $\d y=2 \sin \theta \,\d \theta$.)
\end{proof}
}

\arxiv
{
\begin{proof}[Analytic proof of \eqref{J2c0a} in  Proposition \ref{P:J-rep}(ii)]
   Inspecting the proof of   Proposition \ref{P:J-rep}(ii) we note that the assumption that parameters are distinct is only used in the analysis of the singularities resulting from the parameters that are not in the unit disk.
\end{proof}
}
\arxiv{
\begin{proof}[Analytic proof of \eqref{J2m0b-conj} in Proposition \ref{P:J-rep}(iii)]
 Without loss of generalize we assume $|a|\geq 1$.   The integrand  in \eqref{J[f]-w}  does not depend on $c$ and is analytic in the annulus $\rho_0<|w|<1$, except for a pole at $w=1/a$.
   Thus,  enlarging the contour, and subtracting the residue we get
   \[ \mathbb{L}\topp{a,1/a,c}[f]=  \frac{1}{2\pi\i}\int_{|w|=1-\eps} f(w+1/w)\frac{a \left(1-w^2\right)\d w}{w (a-w) (1-a w)} + \mathbf{1}_{|a|>1/(1-\eps)}f(a+\tfrac1a).\]
  We will need to consider several cases. If $|a|>1$, invoking  \eqref{f:uni-}  we can take the limit as $\eps\to 0$.
The  residue term converges to $f(a+1/a)\mathbf{1}_{|a|>1}$. The integral
converges to
\[
    \frac{1}{2\pi}\int_{-\pi}^\pi f(2\cos \theta) \frac{a(1-e^{2 \i \theta})}{(a -e^{\i \theta})(1-a e^{\i \theta})}\d \theta
=\frac{1}{2\pi}\int_{-\pi}^\pi f(2\cos \theta)\left(\frac{1}{1- ae^{\i  \theta }}-\frac{1}{1-a e^{-\i  \theta }}\right)\d \theta
=0,
\]
as the integrand is an odd function of $\theta$.

Suppose now that $a=e^{i\alpha}\ne \pm 1$. Then the residue term vanishes but as $\eps\to0$, the integral term encounters two singularities at $w=e^{\pm \i \alpha}$ which we will need to isolate. Fix $\delta$ such that $\eps<\delta<|a-\bar a|$.
 After replacing the circle $|z|=1-\eps$ by the closed contour formed by the  two arcs $\ell_1(\eps)\to\ell_1$ and $\ell_1'(\eps)\to\ell_1'$ of the  circle $|w|=1-\eps$ that lie outside of the circles $|w-a|=\delta$  and $|w-\bar a|=\delta$  and two arcs $\ell_\delta(\eps)\to\ell_\delta$ and $\ell_\delta'(\eps)\to\ell_\delta'$ of the circles $|w-a|=\delta$ and $|w-\bar a|=\delta$ that lie inside of the circle $|w|=1-\eps$  (oriented clockwise), we can pass to the limit with $\eps\to 0$.
We get, see Fig. \ref{Fig:ell-delta},
 \begin{multline*}
  \mathbb{L}\topp{a,1/a,c}[f]=
 \frac1{2\pi\i}\int_{\ell_1}f(w+1/w)\frac{a \left(1-w^2\right)\d w}{w (a-w) (1-a w)} +\frac1{2\pi\i}\int_{\ell_1'}f(w+1/w)\frac{a \left(1-w^2\right)\d w}{w (a-w) (1-a w)}
 \\+
 \frac{1}{2\pi \i} \int_{\ell_\delta}f(w+1/w)\frac{a \left(1-w^2\right)\d w}{w (a-w) (1-a w)}
 + \frac{1}{2\pi \i} \int_{\ell_\delta'}f(w+1/w)\frac{a \left(1-w^2\right)\d w}{w (a-w) (1-a w)}
\\= \frac{1}{2\pi\i}\int_{\alpha+2\theta_0}^{2\pi-\alpha-2\theta_0} f(2\cos \theta)\left(\frac{1}{1- e^{\i (\alpha+ \theta) }}-\frac{1}{1- e^{\i (\alpha- \theta) }}\right)\d \theta
+\frac{1}{2\pi\i} \int_{-\alpha+2\theta_0}^{\alpha-2\theta_0}  f(2\cos \theta)\left(\frac{1}{1- e^{\i (\alpha+ \theta) }}-\frac{1}{1- e^{\i (\alpha- \theta) }}\right)\d \theta
 \\+ \frac{1}{2\pi}\int_{\alpha+\pi/2+\theta_0}^{\alpha+3/2\pi-\theta_0}  f\left( \frac{1}{a+\delta  e^{i \theta }}+a+\delta  e^{i \theta }\right) \frac{a \left(a+\delta  e^{i \theta }-1\right) \left(a+\delta  e^{i \theta
   }+1\right)}{\left(a+\delta  e^{i \theta }\right) \left(a^2+a \delta  e^{i
   \theta }-1\right)} \d \theta
  \\
 + \frac{1}{2\pi}\int_{\alpha+\pi/2+\theta_0}^{\alpha+3/2\pi-\theta_0}  f\left( \frac{1}{\bar a+\delta  e^{i \theta }}+\bar a+\delta  e^{i \theta }\right) \frac{\bar a \left(\bar a+\delta  e^{i \theta }-1\right) \left(\bar a+\delta  e^{i \theta
   }+1\right)}{\left(\bar a+\delta  e^{i \theta }\right) \left(\bar{a}^2+\bar a \delta  e^{i
   \theta }-1\right)} \d \theta  \\
   =: I_1+I_1'++I_\delta+I_{\delta}',
 \end{multline*}
 where $\theta_0=2\arcsin(\delta/2)=O(\delta)$.

 {
Parametrizations of the arcs used in the proof:
\begin{align*}
    \ell_1&=\left\{e^{\i \theta}: \theta\in[\alpha+2\theta_0,2\pi -\alpha-2\theta_0]\right\} \\
        \ell_1'&=\left\{e^{\i \theta}: \theta\in[-\alpha+2\theta_0,\alpha  -2\theta_0]\right\} \\
    \ell_\delta&=\left\{e^{i\alpha}+\delta e^{\i \theta}: \theta\in[\alpha+3/2\pi -\theta_0,\alpha+\pi/2+\theta_0]\right\}\\
    \ell'_\delta&=\left\{e^{-i\alpha}+\delta e^{\i \theta}:  \theta\in[-\alpha+3/2\pi+\theta_0,-\alpha+\pi/2 -\theta_0]\right\}
\end{align*}
For the first two curves, we have $\d w/w=\i \,\d \theta$. For the  third curve, we have  $\d w/(a-w)=  -\i\, \d \theta$ with a negative sign  which swaps the limits of integration in $I_\delta$.
 }

We have $I_1'=0$, as the integrand is an odd function of $\theta$. After a change of variable of integration to $\pi+\theta$ in $I_1$, we get
$$
I_1=\frac{1}{2\pi\i}\int_{-\pi+\alpha+2\theta_0}^{\pi-\alpha-2\theta_0} f(-2\cos \theta)\left(\frac{1}{1+ e^{\i (\alpha+ \theta) }}-\frac{1}{1+ e^{\i (\alpha- \theta) }}\right)\d \theta =0,
$$
as again the he integrand is an odd function of $\theta$.
It remains to show that $I_\delta+I_\delta'=f(2\cos\alpha)+O(\delta)$. We have
\begin{multline*}
I_\delta=     \frac{1}{2\pi}\int_{\alpha+\pi/2+\theta_0}^{\alpha+3/2\pi-\theta_0}
f\left( 2 \cos \alpha+\frac{\delta  e^{i \theta -i \alpha } \left(\delta  e^{i \alpha +i \theta }+e^{2
   i \alpha }-1\right)}{e^{i \alpha }+\delta  e^{i \theta }}\right) \left(1+\frac{\delta  e^{i \theta }}{\left(e^{i \alpha }+\delta  e^{i \theta }\right)
   \left(\delta  e^{i (\alpha +\theta )}+e^{2 i \alpha }-1\right)}\right)\d \theta
  \\ =
  \frac{1}{2\pi}\int_{\alpha+\pi/2}^{\alpha+3/2\pi}
f\left( 2 \cos \alpha\right)  \d \theta+O(\delta)=\frac12 f(2\cos\alpha)+O(\delta).
\end{multline*}
Replacing $\alpha$ with $-\alpha$, from the above we get $I_\delta'=f(2\cos \alpha)+O(\delta)$. This proves \eqref{J2m0b-conj} for all the cases with $ab=1$, except the case when $a=\pm 1$.

Finally, we consider the case  $a=\pm 1$. In this case, one of the curves  $\ell_1$, $\ell_1'$  vanishes and  $\ell_\delta=\ell_\delta'$. For definiteness, lets consider $a=1$, i.e. $\alpha=0$.
By the previous argument, the integral over $\ell_1'$ vanishes, and
\begin{multline*}
    I_\delta=\frac{1}{2\pi\i} \int_{\ell_\delta}f(w+1/w)\frac{1+w}{w(1-w)}\d w
    \\=
    -\frac{1}{2\pi\i} \int_{\pi/2+\theta_0}^{3/2\pi-\theta_0}
    f\left(2+\frac{\delta ^2 e^{2 i \theta }}{1+\delta  e^{i \theta }}\right)
     \frac{2+\delta e^{\i \theta}}{(1+\delta e^{\i\theta}) (-\delta e^{\i \theta})}\i \delta e^{\i \theta}\d \theta
       \\=
    \frac{1}{2\pi} \int_{\pi/2+\theta_0}^{3/2\pi-\theta_0}
    f\left(2+\frac{\delta ^2 e^{2 i \theta }}{1+\delta  e^{i \theta }}\right)
     \frac{2+\delta e^{\i \theta}}{1+\delta e^{\i\theta}}\d \theta =
      \frac{1}{2\pi} \int_{\pi/2}^{3/2\pi}
    f\left(2\right) 2\; \d \theta + O(\delta) \to f(2) \mbox{ as } \delta\to 0.
\end{multline*}
The proof for the case $a=-1$ is similar and is omitted.
\end{proof}
}
\arxiv
{\newpage\clearpage   \arxiv{\newpage\clearpage
 \begin{figure}[tbh]
  \begin{tikzpicture}[scale=1.]
\draw[dotted] (0,0) circle(2.82843);
\draw[dotted] (2,2) circle(.5);

\draw[dotted] (2,-2) circle(.5);

\draw[thick] (1.61658, 2.32092)  arc (55.1418: 304.858:2.82843);

\draw[blue](2.32092, -1.61658) arc (-34.8582: 34.8582:2.82843);

\draw[magenta](1.61658,2.32092) arc (140.071: 309.929:.5);

\draw[magenta](2.32092,-1.61658) arc (50.0709: 219.929:.5);

\draw[fill] (0,0) circle(.05);
\draw[fill] (2,2) circle(.05);
\draw[fill] (2,-2) circle(.05);
\draw[-,thin] (0,0) to (2,2);
\draw[-] (0,0) to (1.61658,2.32092);
\draw[-] (0,0) to (2,-2);
\draw[-] (2,2) to (2,2.5);
\node[right]  at (2.0,2.25) {$\delta$};

\draw[-,thin] (-4,0) to (4,0);

\node[above]  at (-2.3,2.3) {$\ell_1$};
\node[right]  at (2.8,.6) {\color{blue}$\ell_1'$};
\node[above]  at (2.0,-1.5) {\color{magenta}$\ell_\delta'$};
\draw[-,dashed] (2.2,2.2) to (3,3);
\draw[-,dashed](1.61658,2.32092) to (2.42487, 3.48138);
\draw[->] (2.9, 2.9)  arc (45: 59:2.9);
\node[above]  at (2.9,3) {$2\theta_0$};

\draw[->,dashed](1,0) arc (0: 45:1);

\node  at (.5,0.2) {$\alpha$};

\draw[dashed](1,0) arc (0: 45:1);

    \end{tikzpicture}
    \caption{ Curves  $\ell_1$ and $\ell_1'$ are arcs of the unit circle  $(\cos \theta,\sin\theta)$.
    Curve $\ell_\delta$ is an arc of the circle $|z-e^{\i \alpha}|=\delta$ %
    that lies inside the unit disk.
     Curve $\ell_\delta'$ is an arc of the circle $|z-e^{-\i \alpha}|=\delta$  that lies inside the unit disk. As $\delta\to 0$, the two smaller circles do not intersect, provided $\alpha\ne 0, \pi$.
    }
    \label{Fig:ell-delta}
\end{figure}
\newpage\clearpage } \clearpage \newpage }
\arxiv
{
\begin{proof}[Proof of \eqref{J3aaa} in Proposition \ref{P:J-rep}] If $|a|<1$ then the result follows from \eqref{J2c0a}. We therefore consider the case $|a|>1$. Recall that $f$ is analytic in the interior of the ellipse $\gamma_{\rho_0}$.
Let $\rho_0<\rho<1/R=1/|a|$.  Then
\[ \mathbb{L}\topp{a,a,a}[f]=  \frac{1}{2\pi\i}\oint_{|w|=\rho} f(w+1/w)\frac{\left(1-w^2\right) \left(1-a^3 w\right)\d w}{w (1-a w)^3}.\]
The integrand    is analytic in the annulus $\rho_0<|w|<1$, except for the singularity of order 3 at $w=1/a$. Since the integrand is continuous   on the unit circle $|w|=1$, passing to the limit as $\eps\to0$ in integrals over the circles $|w=1-\eps$, we get
\[ \mathbb{L}\topp{a,a,a}[f]=  \frac{1}{2\pi\i}\oint_{|w|=1} f(w+1/w)\frac{\left(1-w^2\right) \left(1-a^3 w\right)\d w}{w (1-a w)^3}- \frac{1}{2\pi\i}\oint_{|w-1/a|=\delta} f(w+1/w)\frac{\left(1-w^2\right) \left(1-a^3 w\right)\d w}{w (1-a w)^3}.\]
As previously, see \eqref{density}, the integral over the unit circle yields the integral part of the expression \eqref{J3aaa}.
Taking into account the negative signs before the integral and in the integrand, the integral over the smaller circle $|w-1/a|=\delta$ yields  half of the second derivative
\begin{multline*}
    \frac12 \frac{\d^2}{\d w^2}\left[f(w+1/w)\frac{\left(1-w^2\right) \left(1-a^3 w\right) }{w a^3}\right]
\\=\frac{ \left(w^2-1\right) \left(a^3 \left(2 w^3+w\right)-w^2-2\right)
   f'\left(w+\frac{1}{w}\right)}{a^3 w^4}+
   \frac{\left(w^2-1\right)^3 \left(a^3 w-1\right)
   f''\left(w+\frac{1}{w}\right)}{2a^3 w^5}+\frac{\left(a^3 w^3+1\right)
   f\left(w+\frac{1}{w}\right)}{a^3 w^3}
\end{multline*}
at point $w=1/a$. This gives the answer
$$-\frac{\left(a^2-1\right)^2 \left(a^2+1\right)}{a^3}
   f'\left(a+\tfrac{1}{a}\right)-\frac{\left(a^2-1\right)^4}{2a^4}
   f''\left(a+\tfrac{1}{a}\right)+2   f\left(a+\tfrac{1}{a}\right), $$
which appears in \eqref{J3aaa}.
\end{proof}
}
\arxiv{
\begin{proof}[Proof of \eqref{J2m0a=1b=-1} in Proposition \ref{P:J-rep}(iv)]
   If $a=\pm 1$, $b=-a$, $|c|\ne 1$ then
   The integrand  in \eqref{J[f]-w} is analytic in the annulus $\rho_0<|w|<1$, except possible for a pole $w=1/c$ if $|c|>1$.
   Thus,  enlarging the contour, we get
\begin{multline}
   \mathbb{L}\topp{a,b,c}[f]=  \frac{1}{2\pi\i}\int_{|w|=\rho} f(w+1/w) \frac{(1+ c w)\d w}{w(1- c w)}
   \\= \frac{1}{2\pi\i}\int_{|w|=1-\eps} f(w+1/w) \frac{(1+ c w)\d w}{w(1- c w)} - \mathbf{1}_{|c|>1/(1-\eps)}\mathrm{Res}_{ w=1/c}\left(f(w+1/w) \frac{(1+ c w)}{w(1- c w)}\right)
     \\ \to
    \frac{1}{2\pi\i}\int_{|w|=1} f(w+1/w) \frac{(1+ c w)\d w}{w(1-c w)}+2 \mathbf{1}_{|c|>1} f(c+1/c),
    \end{multline}
where we used \eqref{f:uni-} again. We now re-write the integral over the unit circle. We get
    \begin{multline}
\frac{1}{2 \pi} \int_{-\pi}^\pi f(2 \cos \theta)  \frac{1+ c e^{\i \theta}}{1- c e^{\i \theta}}
 =\frac{1}{4 \pi} \int_{-\pi}^\pi f(2 \cos \theta)  \left( \frac{1+ c e^{\i \theta}}{1- c e^{\i \theta}} +
     \frac{1+ c e^{-\i \theta}}{1-c e^{-\i \theta}} \right)
     = \frac{1-c^2}{2 \pi} \int_{-\pi}^\pi f(2 \cos \theta)  \frac{\d\theta }{1+c^2-2 c \cos
   (\theta )}
    \\= \frac{1-c^2}{ \pi} \int_{0}^\pi f(2 \cos \theta)  \frac{\d\theta }{1+c^2-2 c \cos
   (\theta )}= \frac{1-c^2}{\pi}\int_{-2}^2 f(y) \frac{\d y}{(1+c^2-c y)\sqrt{4-y^2}}.
\end{multline}
\end{proof}
}
\arxiv{ \comment{This proof refers to formulas that are in expanded version only, so it cannot be part of the main paper}
\begin{proof}[Proof of \eqref{J2c2a} in Remark \ref{R:J-rep}] If $a=b$ and $|a|,|c|<L1$  then the then the result follows from \eqref{J2c0a}. If $|a|<1$  and $|c|>1$ then the proof is similar to the derivation of \eqref{J2c1a} and is omitted.

We therefore consider the case $|a|>1$.  Recall that $f$ is analytic in the interior of the ellipse $\gamma_{\rho_0}$.
Let $\rho_0<\rho<1/R=1/(|a|\vee |c|)$.
The starting point is  \eqref{J[f]-w} which takes the form
\[ \mathbb{L}\topp{a,a,c}[f]=  \frac{1}{2\pi\i}\oint_{|w|=\rho} f(w+1/w)\frac{\left(1-w^2\right) \left(1-a^2c w\right)\d w}{w (1-a w)^2(1-c w)} .
\]
 In the annulus $\rho_0<|w|<1$. the integrand has singularity of order 2 at $w=1/a$ and possibly a pole at $w=1/c$. Since $c\ne a$ we can choose $\delta>0$ such that $|c-a|>2\delta$ and (taking again the limit as $\rho\to 1$) we write
\begin{multline*}
     \mathbb{L}\topp{a,a,c}[f]=  \frac{1}{2\pi\i}\oint_{|w|=1} f(w+1/w)\frac{\left(1-w^2\right) \left(1-a^2c w\right)\d w}{w (1-a w)^2(1-c w)}
     \\
     -\frac{1}{2\pi\i}\oint_{|w-1/a|=\delta} f(w+1/w)\frac{\left(1-w^2\right) \left(1-a^2c w\right)\d w}{w (1-a w)^2(1-c w)}
     - \mathbf{1}_{|c|>1} \frac{1}{2\pi\i}\oint_{|w-1/c|=\delta} f(w+1/w)\frac{\left(1-w^2\right) \left(1-a^2c w\right)\d w}{w (1-a w)^2(1-c w)}  =:I-I_a-I_c.
\end{multline*}
The integral $I$ over the unit circle  is given by formula \eqref{density} with $b=a$.

The integral $I_c$ over  $|w-1/c|=\delta$ is given by the residue:
\[\mathrm{Res}_{w=1/c}
\left( f(w+1/w)\frac{\left(1-w^2\right) \left(1-a^2c w\right) }{w (1-a w)^2(1-c w)} \right)  =-\frac{\left(1-a^2\right) \left(c^2-1\right)}{(a-c)^2} f(c+\tfrac1c).
\]
The integral $I_a$ is given as the derivative:
\[
   I_a=\frac{\d}{\d w} \left[ f(w+1/w)\frac{\left(1-w^2\right) \left(1-a^2c w\right) }{a^2 w  (1-c w)} \right]_{w=1/a}
   =-\frac{\left(a^2-1\right)^2   (1-a c) }{a^2(a-c)}f'\left(a+\tfrac{1}{a}\right)-\frac{
   (a-c)^2+(1-a c)^2
   }{(a-c)^2} f\left(a+\tfrac{1}{a}\right).
\]
\end{proof}
}
\arxiv
{
\begin{proof}[Sketch of another Proof of \eqref{J2c2a} in Remark \ref{R:J-rep}]
By  Proposition \ref{P:poly-appr}, from \eqref{Dabb} we have
\begin{equation}\label{neww}
    \mathbb{L}\topp{a,a,c}[f]=\frac{c^2(1-a^2)}{(a-c)^2}\mathbb{L}\topp{c}[f]+ \frac{a(a-c-c(1-ac))}{(a-c)^2}\mathbb{L}\topp{a}[f]+
    \frac{a^2(1-a c)}{a-c} \frac{\partial}{\partial a}\mathbb{L}\topp{a}[f]
\end{equation}
Recalling \eqref{Jck2} and the bound $|\q_a(y)|\geq (1-|a|)^4 $, in view of
$$
\frac{\partial}{\partial a}\frac1{\q_a(y)}=\frac{\partial}{\partial a}\frac{1}{1+a^2-ay}=\frac{y-2a}{\q_a^2(y)}=-\frac1{a\q_a(y)}+\frac{1-a^2}{a \q_a^2(y)},
$$
differentiating under the integral $\mathcal L\topp{0}$, we get \comment{How do we justify this? Better re-fresh the complex analysis proof!}
\begin{multline*}
\frac{\partial}{\partial a}\mathbb{L}\topp{a}[f]=\frac{\partial}{\partial a}\left(\mathbf{1}_{|a|>1} \left(1-\tfrac{1}{a^2}\right) f\left(a+\tfrac1a\right)+\mathcal{L}\topp 0\left[\frac{f}{\q_a}\right]\right)\\
=\mathbf{1}_{|a|>1}\left(\frac{2}{a^3}f(a+\tfrac1a)+\frac{(a^2-1)^2}{a^4}f'\left(a+\tfrac1a\right)\right)-\frac1a\mathcal{L}\topp 0\left[\frac{f}{\q_a}\right] +\frac{1-a^2}{a}\mathcal{L}\topp{0}\left[\frac{f}{\q_a^2}\right]
\end{multline*}

Referring again to \eqref{Jck2} for $\mathbb{L}\topp{a}[f]$ and for $\mathbb{L}\topp{c}[f]$ now can rewrite \eqref{neww} as
\begin{multline}
\mathbb{L}\topp{a,a,c}[f]=\mathbf{1}_{|c|>1} \tfrac{(c^2-1)(1-a^2)}{(a-c)^2} f\left(c+\tfrac1c\right)+ \mathbf{1}_{|a|>1} \left(
    \left(1+ \tfrac{(1-a c)^2}{(a-c)^2} \right)f\left(a+\tfrac1a\right)
    +\tfrac{(a^2-1)^2(1-a c)}{a^2(a-c)} f'\left(a+\tfrac1a\right) \right)
     \\+\mathcal{L}\topp{0}\left[\left(\frac{c^2(1-a^2)}{(a-c)^2\q_c}    + \frac{a c(a^2-1)}{(a-c)^2\q_a} + \frac{a(1-a^2)(1-ac)}{(a-c)\q_a^2}\right) f\right]
\end{multline}
Collecting the terms under $\mathcal{L}\topp{0}$ we get \eqref{J2c2a}.

\end{proof}
}

\subsection*{Acknowledgements}
W.B. thanks Guillaume Barraquand for an advanced copy of    \cite{Barraquand-2024-integral}. The authors thank Xiaolin Zeng for discussions.
 This research was partially supported by Simons Grant~(703475) to W.~B.,
NCN grant Weave-UNISONO [BOOMER 2022/04/Y/ST1/00008] to K.~S.  and National Science Center Poland [project no.  2023/51/B/ST1/01535] to J.~W.
\bibliographystyle{apalike}
\bibliography{vita,LPP-2024}
\end{document}